\newcommand\redsout{\bgroup\markoverwith{\textcolor{red}{\rule[0.5ex]{2pt}{0.4pt}}}\ULon}
\newcommand{\var}{\mbox{\boldmath$\varepsilon$}}
\newcommand{\R}{\mathbb{R}}
\newcommand{\N}{\mathbb{N}}
\numberwithin{equation}{section}
\numberwithin{equation}{section}
\newtheorem{theorem}{\quad Theorem}[section]
\newtheorem{lemma}[theorem]{\quad Lemma}
\newtheorem{remark}[theorem]{\quad Remark}
\newtheorem{proposition}[theorem]{\quad Proposition}
\newtheorem{definition}{Definition}[section]
\newcommand {\bu}{\mathbf{u}}
\newcommand {\vu}{\mathbf{v}}
\newcommand {\bw}{{\bm w}}
\newcommand {\ru}{\mathbf{R}}
\newcommand{\norm}[1]{\left\|   #1   \right\|}
\newcommand{\paar}[1]{\left(   #1   \right)}
\def\N{\mathbb{N}}
\newcommand{\lam}{\lambda}
\begin{document}

  \thanks{
   $^\ast$Corresponding author. \\Department of Mathematics, Nazarbayev University, Astana 010000, Kazakhstan.\\
	  	E-mail: {\tt vicente.alvarez@nu.edu.kz, amin.esfahani@nu.edu.kz.}
  }
	
	\title[Multi-solitons of Boussinesq equation]{ Multi-solitons of one-dimensional Boussinesq equation
		% \footnotetext{2020 Mathematical subject classification: 35Q35, 35C08, 35Q51, 35B40, 37K40}
		% \footnotetext{Keywords:  Boussinesq equation, Multi-soliton, Asymptotic behavior.
		% }
	}   
	
	\maketitle
	
	\begin{center}
		
		{\bf Vicente  Alvarez and  Amin Esfahani$^{\ast}$   
			\vskip.2cm}
	\end{center}
	
	\vskip.3cm
	
	%	\noindent{{\bm Abstract.} In this work,
		%	}
	
	\begin{abstract}

The existence of multi-speed solitary waves for the one-dimensional good Boussinesq
equation with a power nonlinearity is proven. These solutions are shown to behave at large times
as a pair of scalar solitary waves traveling at different speeds. Both subcritical and supercritical
cases are treated. The proof is based on the construction of approximations of the multi-speed
solitary waves by solving an equivalent system backward in time and using energy methods to
obtain uniform estimates.

\vspace{6mm}

\noindent KEYWORDS: BOUSSINESQ EQUATION, MULTI-SOLITON, ASYMPTOTIC BEHAVIOR\\
2020 MSC: 35Q35, 35C08, 35Q51, 35B40, 37K40

\end{abstract}
	\vspace{8mm}

	\section{Introduction}
The Boussinesq equation, initially derived to describe the propagation of long waves in shallow water, has been a subject of extensive study in both its classical and modified forms. In particular, the ``good'' Boussinesq equation, which can be written as:
	\begin{equation}\label{sistema1}
					\partial_{tt}u_{1} - \partial_{xx}u_{1} + \partial_{xxxx}u_{1}+\partial_{xx}(\varphi(u_1)) = 0,
	\end{equation}
 has attracted significant attention due to its rich mathematical structure and the phenomena it models \cite{bouss}. Here,   $u_1(x,t)$ represents the wave profile and $\varphi(s)=|s|^{2p}s$ with $p>0$.

The study of soliton solutions, which are localized waves that maintain their shape over time, is a central theme in the theory of nonlinear dispersive equations. Solitons are remarkable due to their stability and the ability to interact with each other without losing their identities, a property often referred to as soliton collision. While single soliton solutions provide insight into the fundamental dynamics of these equations, the existence and interaction of multi-soliton solutions reveal deeper aspects of the underlying nonlinear dynamics. A multi-soliton   of \eqref{sistema1}, is a solution $u_1(x,t)$ to \eqref{sistema1} defined on a semi-infinite interval of time, such that is behaves at infinity like a sum of solitons of \eqref{sistema1} with different speeds.

In this paper, our objective is to investigate the construction of nontrivial asymptotic behaviors of solutions to \eqref{sistema1}. In this context, multi-solitons (see Definition \eqref{multisolitons}) are fundamental solutions that, at large times, behave as a sum of several solitons in the energy space.
 We investigate the parameter ranges corresponding to subcritical and supercritical cases, examining how nonlinearity affects the formation and interaction of solitons. 

  The question of the existence and properties of multi-solitons in nonlinear models has a long history, beginning with the pioneering works of Fermi, Pasta, and Ulam \cite{fermi}, and Kruskal and Zabusky \cite{kruskal1965}. These studies are closely linked to the investigation of completely integrable equations via the inverse scattering transform (IST). For an overview of multi-solitons in the Korteweg–de Vries equation, see the review by Miura \cite{miura1976}, and for multi-solitons in (1.2), see Zakharov and Shabat \cite{zakharov1972}.
Beyond the scope of completely integrable models, there have been significant advances in the study of multi-solitons in non-integrable dispersive equations over the past two decades.  Initially, these were addressed in the \(L^2\)-critical and subcritical cases by Merle \cite{Mer}, Martel \cite{Yvan}, and Martel-Merle \cite{Martel}, leveraging their developed theories of stability and asymptotic stability. The existence of multi-solitons was later extended in \cite{CoteMartel} to the \(L^2\) supercritical case.  In this latter scenario, each individual soliton is unstable, making the multi-soliton a highly unstable solution. See also \cite{cote-munoz}.   For the high-speed excited multi-solitons, we refer to \cite{cote}.

The Cauchy problem associated with  \eqref{sistema1} can be equivalently written as a first-order system given by:
	\begin{equation}\label{sistemaeq}
		\begin{cases}
			\partial_{t}u_{1}= \partial_{x}u_2, \\
			\partial_{t}u_2=\partial_{x}(u_1-\partial_{xx}u_1-\varphi(u_1))\\
   (u_{1}(0),u_{2}(0))=(u_{1}^{0},u_{2}^{0})
		\end{cases}
	\end{equation}
Moreover,   solutions of \eqref{sistema1} and \eqref{sistemaeq} the energy $\mathcal{E}$ and momentum $\mathcal{M}$ are formally conserved under the flow of \eqref{sistemaeq}, where 
\begin{equation}\label{energia}
		\mathcal{E}(\bu)=\frac{1}{2}\int_{\R}\left(|u_1|^2+|u_2|^2+|\partial_{x}u_1|^2-\frac1{p+1}|u_1|^{2p+2}\right)\,dx  
 \end{equation}
	and
\begin{equation}\label{moment}
		\mathcal{M}(\bu)=\frac{1}{2}\int_{\R}u_1 u_2\,dx.
\end{equation}

Several researchers have extensively studied system \eqref{sistemaeq}. The foundational work by Bona and Sachs \cite{bonasachs} utilized Kato's abstract methods to establish local and global well-posedness of the Cauchy problem for small data. These results were subsequently enhanced in \cite{linares}, where global well-posedness of \eqref{sistemaeq} in the energy space $H=H^1(\R)\times L^2(\R)$ for small data was demonstrated. More recently, the inverse scattering transform and a Riemann-Hilbert approach for \eqref{sistema1} with quadratic nonlinearity were developed in \cite{chalen}. The  local well-posedness of \eqref{sistemaeq} in the energy space $H$ was established by Liu in \cite{liu93}, and was improved recently in
 \cite{cgs}. 
		\begin{theorem}\label{LWP2}
			Let $\sigma_0= \frac{1}{2}-\frac{1}{p} $  and $\max\{0,\sigma_0\}\leq s\leq 1$. For any $\mathbf{u}_0 \in H^s(\R)\times H^{s-1}(\R)$, 
   there exists a maximal time interval $(-T{\star}, 
			T^{\star})$ and a unique solution $\mathbf{u}$ to    \eqref{sistemaeq} in $\mathbf{C}((-T_{\star}, T^{\star});H^s(\R)\times H^{s-1}(\R))$. Moreover, $T^{\star}=\infty$ or $T^{\star}<\infty$ and  \[\lim_{t\to T^{\star}}\|\mathbf{u}(t)\|_{ H^s(\R)\times H^{s-1}(\R)}=\infty.\]
			A similar result occurs if $T_{\star}=-\infty$ or $T_{\star}<\infty$.
			Additionally, the solution $\mathbf{u} \in H$ of \eqref{sistemaeq} conserves energy and momentum.
				\end{theorem}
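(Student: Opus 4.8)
The plan is to solve the integral (Duhamel) formulation of \eqref{sistemaeq} by a contraction argument, after first understanding the linear flow. I would diagonalize the linear system in Fourier space: writing $\mathbf{u}=(u_1,u_2)$, the linear part has symbol matrix with eigenvalues $\pm i\omega(\xi)$, where $\omega(\xi)=|\xi|\sqrt{1+\xi^2}$. Since $\omega(\xi)\sim\xi^2$ at high frequencies, the propagators $e^{\pm it\omega(D)}$ behave like the Schr\"odinger group there (and are harmless bounded multipliers at low frequencies), so one expects the full family of one-dimensional Schr\"odinger--Strichartz estimates after a Littlewood--Paley separation into low and high frequencies. A useful structural observation is that, in the scalar reduction $\partial_{tt}u_1+\omega(D)^2 u_1=-\partial_{xx}\varphi(u_1)$, the operator multiplying the Duhamel nonlinear term, $\omega(D)^{-1}\partial_{xx}$, has symbol $|\xi|/\sqrt{1+\xi^2}$ and is therefore a bounded multiplier of order zero; no genuine derivative is lost in the nonlinearity.

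Next I would fix the resolution space $X_T=\mathbf{C}([-T,T];H^s(\R)\times H^{s-1}(\R))$ intersected with an auxiliary Strichartz space $L^q_tL^r_x$ adapted to the dispersion, and establish the corresponding homogeneous and inhomogeneous linear estimates for the Boussinesq group; here the pairing $H^s\times H^{s-1}$ is natural because $u_2$ scales like $|D|u_1$ under the flow. The heart of the matter is the nonlinear estimate: controlling $\varphi(u_1)=|u_1|^{2p}u_1$ in the dual Strichartz / $H^{s-1}$-type norm by $\|\mathbf{u}\|_{X_T}^{2p+1}$, with a favorable power of $T$ in the subcritical regime $s>\sigma_0$. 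One uses the fractional Leibniz and fractional chain rules to distribute $s$ derivatives over the $2p+1$ factors; a direct H\"older computation shows that the Strichartz exponents close precisely when $s\ge\sigma_0=\frac12-\frac1p$, which is exactly the scaling-critical Sobolev exponent of \eqref{sistema1}. This identifies the lower endpoint of the admissible range and explains the hypothesis $\max\{0,\sigma_0\}\le s$.

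With these estimates in hand, a standard contraction mapping on a ball of $X_T$ yields, for $T$ small, a unique local solution; in the subcritical case $T$ depends only on $\|\mathbf{u}_0\|_{H^s\times H^{s-1}}$, while at the critical exponent $s=\sigma_0$ one instead runs the fixed point in the critical Strichartz norm using smallness of the high-frequency tail of the data. The maximal interval $(-T_\star,T^\star)$ and the blow-up alternative then follow by the usual continuation argument: since the existence time is controlled from below by the norm, if $T^\star<\infty$ the norm must diverge as $t\to T^\star$ (and symmetrically at $T_\star$). Finally, to obtain conservation of $\mathcal{E}$ and $\mathcal{M}$ in the energy space $H=H^1(\R)\times L^2(\R)$, where both functionals are finite by the Sobolev embedding $H^1(\R)\hookrightarrow L^{2p+2}(\R)$, I would verify the conservation laws first for smooth solutions by differentiating in time and substituting the equations, and then pass to general $H$-data by continuity of the data-to-solution map.

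The step I expect to be the main obstacle is the sharp nonlinear estimate at the endpoint $s=\sigma_0$: because $\varphi(s)=|s|^{2p}s$ is only finitely, and for non-integer $p$ non-smoothly, differentiable, the fractional chain rule cannot freely place all $s$ derivatives on a single factor, and the crude subcritical argument, which wastes a positive power of $T$, degenerates at criticality. Closing the critical case therefore requires the refined small-data-in-critical-norm scheme together with Besov/Littlewood--Paley refinements of the Strichartz estimates (or an $X^{s,b}$ / $U^p$--$V^p$ functional framework), rather than the elementary contraction that suffices for $s>\sigma_0$.
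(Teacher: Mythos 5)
The paper does not actually prove Theorem \ref{LWP2}: it is quoted as a known result from Liu \cite{liu93} and its recent improvement in \cite{cgs}, so there is no internal proof to compare against. Your sketch --- diagonalizing the linear flow with dispersion $\omega(\xi)=|\xi|\sqrt{1+\xi^{2}}$, observing that the Duhamel multiplier $\omega(D)^{-1}\partial_{xx}$ is of order zero so no derivative is lost, Schr\"odinger-type Strichartz estimates at high frequency, contraction in $\mathbf{C}_{t}(H^{s}(\R)\times H^{s-1}(\R))$ intersected with Strichartz norms, the continuation argument, and conservation laws via smooth approximation --- is essentially the standard argument used in \cite{cgs}, and your identification of $\sigma_0=\tfrac12-\tfrac1p$ as the scaling-critical exponent is correct. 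The two caveats you flag yourself are the genuine ones: at the endpoint $s=\sigma_0$ the existence time is not controlled by the data norm alone, so the blow-up alternative in the stated form needs the critical small-data scheme rather than your subcritical continuation argument (though note that the present paper only ever invokes the alternative in $H=H^{1}(\R)\times L^{2}(\R)$, which is strictly subcritical since $\sigma_0<\tfrac12<1$); and justifying conservation of $\mathcal{E}$ and $\mathcal{M}$ at $s=1$ by density requires persistence of regularity above the energy space, which your outline uses implicitly.
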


A solitary wave (soliton) associated with  system \eqref{sistemaeq} is a solution $\ru\in H$ in the form of
		\[ 
		\ru(x,t)=(\Phi_{\omega}\left(x-\omega t-x_0\right),\Psi_{\omega}\left(x-\omega t-x_0\right)),
		\]
		where  $x_{0} \in \R$  is the inital wave-position and   $\omega$ is the wave-speed with $|\omega|<1$; so that $\ru$  satisfies the     elliptic system 
 \begin{equation}\label{sistema2}
 \begin{cases}
				-  \Phi_{\omega}''+ \Phi_{\omega}-\left|\Phi_{\omega}\right|^{2p} \Phi_{\omega}+ \omega \Psi_{\omega}=0, \\
				\Psi_{\omega}+ \omega \Phi_{\omega}=0.
 \end{cases}     
 \end{equation}
		So, the solutions of this system  are clearly in the form $\left(\Phi_{\omega},- \omega \Phi_{\omega}\right)$, where $\Phi_{\omega}$ satisfies the scalar equation
		\begin{equation}\label{ellipticeq}
			-   \Phi_{\omega}''+\left(1-\omega^{2}\right) \Phi_{\omega}-\left|\Phi_{\omega}\right|^{2p} \Phi_{\omega}=0.
		\end{equation} 
		 
		It is well-known that $\Phi (x)=\paar{(p+1){\rm sech}^2(px)}^{\frac{1}{2p}}$ is the unique (ground state) solution of 
		\[
		- \Phi'' +\Phi -\left|\Phi \right|^{2p} \Phi =0;
		\]
		so that  $\Phi_\omega(x)=(1-\omega^2)^{\frac1{2p}}\Phi(\sqrt{1-\omega^2}x) $ is solution of \eqref{ellipticeq}, and
		\[
		\ru(x,t)=(\Phi_{\omega}\left(x-\omega t-x_0\right),\Psi_{\omega}\left(x-\omega t-x_0\right)),
		\]
		with $\Psi_\omega=-\omega\Phi_\omega$
		is a solution of \eqref{sistemaeq}.
		Recall that a solution $\mathbf{\Phi}$ of \eqref{sistema2} is called a ground state if it minimizes the action $\mathbf{J}$, that is,
		\[
  \mathbf{J}(\mathbf{\Phi})=\min\left\{\mathbf{J}(\mathbf{u}),\, \mathbf{u}\in H\setminus\{0\} \,\,\text{is a solution of \eqref{sistema2}}\right\},
		\]
		where $\mathbf{J}(\mathbf{u})=\mathcal{E}(\bu)+\omega\mathcal{M}(\bu)$.
		
		The following regularity result for solutions of   \eqref{ellipticeq} is well-known (see e.g. \cite[Theorem 8.1.1]{cazenave}).
		\begin{proposition}\label{decaimentoquadratico}
			Consider $\Phi \in H^1(\R) $ a solution of \eqref{ellipticeq}.
			Then,
			\begin{itemize}
				\item[(i)] $\Phi \in W^{3, p}(\R)$ for $2 \leq p<\infty$. In particular, $\Phi \in C^{2}(\R)$ and $\left|\partial_x^{\beta} \Phi(x)\right|\stackrel{|x|\to \infty}{\longrightarrow } 0$ for all $|\beta| \leq 2$.
				\item[(ii)] For any  $0<\epsilon <1$,
				\[
				e^{\epsilon|x|} \left(\left|\Phi(x)\right|+\left|\partial_{x} \Phi(x)\right|\right) \in L^{\infty}(\R).
				\]    
			\end{itemize} 
		\end{proposition}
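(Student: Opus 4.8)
The statement splits into two assertions of rather different character, and I would handle them separately: part (i) is an elliptic bootstrap, while part (ii) is an exponential decay estimate obtained by a comparison/maximum-principle argument.

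For (i) the plan is a standard bootstrap exploiting the one-dimensional Sobolev embeddings. Writing the equation as $\Phi'' = (1-\omega^2)\Phi - |\Phi|^{2p}\Phi =: f$, I would first use $H^1(\R)\hookrightarrow L^\infty(\R)\cap C(\R)$ to get $\Phi\in L^\infty$, hence $\Phi\in L^q$ for every $q\in[2,\infty]$ and likewise $|\Phi|^{2p}\Phi\in L^q$. Therefore $f\in L^q$ and $\Phi\in W^{2,q}(\R)$ for all $q\in[2,\infty)$ (this $q$ is the exponent denoted $p$ in the statement); in particular $\Phi'\in W^{1,q}\hookrightarrow L^\infty$. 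Differentiating the equation gives $\Phi''' = (1-\omega^2)\Phi' - (2p+1)|\Phi|^{2p}\Phi'$, and since $|\Phi|^{2p}\in L^\infty$ and $\Phi'\in L^q$, I obtain $\Phi'''\in L^q$, i.e. $\Phi\in W^{3,q}(\R)$ for every $q\in[2,\infty)$. The embedding $W^{3,q}\hookrightarrow C^2$ (valid in one dimension for $q>1$) yields $\Phi\in C^2$, and finally $\Phi,\Phi',\Phi''\in H^1(\R)\hookrightarrow C_0(\R)$, which gives $|\partial_x^{\beta}\Phi(x)|\to 0$ as $|x|\to\infty$ for all $|\beta|\le 2$.

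For (ii) I would set $V(x):=(1-\omega^2)-|\Phi(x)|^{2p}$, so the equation reads $\Phi''=V\Phi$. By part (i) we have $\Phi(x)\to0$, hence $V(x)\to(1-\omega^2)>0$; thus for any $\epsilon$ with $\epsilon^2<1-\omega^2$ there is $R>0$ such that $V(x)\ge\epsilon^2$ for $|x|\ge R$. On $[R,\infty)$ I would compare $\Phi$ with $h(x)=Me^{-\epsilon(x-R)}$, which satisfies $h''=\epsilon^2 h\le Vh$. Taking $M\ge\sup_{|x|=R}|\Phi|$, the difference $w=\Phi-h$ obeys $w''\ge Vw$ with $w(R)\le0$ and $w(x)\to0$; since $V\ge\epsilon^2>0$, at a hypothetical positive interior maximum $x_0$ one would have $0\ge w''(x_0)\ge V(x_0)w(x_0)>0$, a contradiction, so $w\le0$ and $\Phi(x)\le Me^{-\epsilon(x-R)}$. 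Applying the same to $-\Phi$ and symmetrically on $(-\infty,-R]$ yields $|\Phi(x)|\le C e^{-\epsilon|x|}$. The decay of the derivative then follows from $|\Phi''|=|V\Phi|\le Ce^{-\epsilon|x|}$ together with the representation $\Phi'(x)=-\int_x^{\infty}\Phi''(t)\dd t$ (using $\Phi'(\pm\infty)=0$ from (i)), giving $|\Phi'(x)|\le Ce^{-\epsilon|x|}$. For the normalized equation $1-\omega^2=1$, this covers every $\epsilon\in(0,1)$.

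The regularity step is routine; the delicate part is the decay estimate (ii). The comparison argument is cleanest when $\Phi$ is real (as it is for the ground state, which up to translation is positive); if one wishes to treat a general sign-changing solution, I would instead invoke the phase-plane description of the autonomous system $\tfrac{d}{dx}(\Phi,\Phi')=\bigl(\Phi',\,(1-\omega^2)\Phi-|\Phi|^{2p}\Phi\bigr)$ near the hyperbolic equilibrium at the origin, whose stable eigenvalue $-\sqrt{1-\omega^2}$ pins down the sharp exponential rate. The two technical points to be careful about are therefore (a) upgrading the merely qualitative decay from (i) to an exponential rate, with the admissible threshold being any $\epsilon<\sqrt{1-\omega^2}$, and (b) transferring the decay from $\Phi$ to $\Phi'$, which the integral representation above settles.
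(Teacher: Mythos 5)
Your proposal is correct, but it is worth noting that the paper does not prove this proposition at all: it is stated as a known result with a citation to Cazenave's book (Theorem 8.1.1), so your argument is a self-contained reconstruction rather than a parallel of anything in the text. Your part (i) is the same bootstrap that underlies the cited theorem and is sound, including the chain-rule step $\partial_x\left(|\Phi|^{2p}\Phi\right)=(2p+1)|\Phi|^{2p}\Phi'$, which is legitimate since $s\mapsto|s|^{2p}s$ is $C^1$ for $p>0$ and $\Phi\in L^\infty$. For part (ii) you use a pointwise comparison/maximum-principle argument with the barrier $Me^{-\epsilon(x-R)}$, whereas the proof behind the citation runs through weighted integral estimates with truncated exponential weights $e^{|x|/(1+\varepsilon|x|)}$; in one dimension your route is more elementary and gives the decay rate transparently, at the cost of requiring $\Phi$ real (which is the relevant case here, and which you flag, together with the Kato-inequality or phase-plane fix for sign-changing or complex solutions). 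The one substantive point you raise that deserves emphasis is the threshold: your argument yields $e^{\epsilon|x|}\left(|\Phi|+|\Phi'|\right)\in L^\infty$ only for $\epsilon<\sqrt{1-\omega^2}$, and this is sharp, since the explicit solution $\Phi_\omega(x)=(1-\omega^2)^{\frac{1}{2p}}\Phi\bigl(\sqrt{1-\omega^2}\,x\bigr)$ decays exactly at rate $\sqrt{1-\omega^2}$. Hence the proposition as literally stated (``for any $0<\epsilon<1$'') is accurate only for the normalized equation ($\omega=0$); for \eqref{ellipticeq} with $\omega\neq0$ it should read $0<\epsilon<\sqrt{1-\omega^2}$. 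This is a defect of the paper's statement, not of your proof, and your proof establishes the correct version.
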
 
		The stability of solitary waves of \eqref{sistemaeq}  has been a subject of various studies. Bona and Sachs \cite{bonasachs} employed the fundamental theory developed in \cite{gss} to prove that for \( p < 2\) and \(2\omega^2>p \), the solitary waves  are orbitally stable. Liu proved the orbital instability of solitary waves in \cite{liu93} for \(  p < 2\) and \(2\omega^2 < p \), or \(p \geq 2\) and \(\omega^2 < 1\). Furthermore, he demonstrated in \cite{liu95} that the solitary wave is strongly unstable by blow-up if \(\omega = 0\). Liu, Ohta, and Todorova \cite{lot} showed that for any $p>0$ such that \(0 < 2(p+1)\omega^2 < p\), the solitary wave is strongly unstable. Recently,   the orbital instability in the degenerate case \(2\omega^2 = p \) for \(p < 2\) was proved  in \cite{lowx}. See also \cite{stud} for the stability of angular waves of \eqref{sistema1} in the high-dimensional case.

 Regardless of the
instability of the solitons, by leveraging techniques from the study of KdV, NLS, and Klein-Gordon equations, we aim to shed light on the behavior of multi-solitons of  \eqref{sistemaeq}. 
		
		\section*{Main result, tools and strategy}

			For $j\in \mathbb{N}$, let $|\omega_{j}|\leq 1$, $x_{j}  \in \R$, and $\mathbf{\Phi}_{\omega_{j}}=(\Phi_{\omega_{j}}^{(1)},\Phi_{\omega_{j}}^{(2)}) \in H$ be a solution of  \eqref{sistema2}.
			We will denote the corresponding soliton associated with the \eqref{sistema2}  the traveling along the line $x=x_j-\omega_jt$  is defined by    $\mathbf{R}_j=\left(R_j^{(1)},R_{j}^{(2)}\right)$ such that 
			\[
			R_{j}^{(m)}=\Phi_{\omega_{j}}^{(m)}\left(x- \omega_{j} t-x_{j}\right), \qquad     m=1,2,
			\]
			with  $R_{j}^{(2)}=-\omega_jR_{j}^{(1)}$.
		   
						Our main goal is to demonstrate the existence of solutions to the system \eqref{sistemaeq} that exhibit asymptotic behavior similar to a sum of different solitons. In other words, we aim to find solutions that can be approximated by a combination of solitary waves with different wave speeds.
			
			\begin{definition}\label{multisolitons}
				Let $N \in \mathbb{N} \setminus\{0,1\}$. Consider $\omega_{1}, \ldots, \omega_{N}\in\R$, $x_{1}, \ldots, x_{N} \in \R$, and $\mathbf{\Phi}_{\omega_{1}}, \ldots, \mathbf{\Phi}_{\omega_{N}} \in H$ be solutions of \eqref{sistema2}. We denote by
\[
				R_{1}(x,t):=\sum_{j=1}^{N} R_{j}^{(1)}(x,t), \quad 
				R_{2}(x,t):=\sum_{j=1}^{N} R_{j}^{(2)}(x,t).
\]
				A multi-soliton is a solution $\mathbf{u}$ of \eqref{sistema1} for which there exists $T_{0} \in \R$ such that $\mathbf{u}$ is defined on $[T_{0}, +\infty)$ and
				\[\lim_{t \rightarrow +\infty}\|\bu(t)-\ru(t)\|_{H}=0,\]
				where $\ru=(R_{1},R_{2})$.
			\end{definition}
			
			It is important to note that $\mathbf{R}_{j}=\left(R_j^{(1)},R_{j}^{(2)}\right)$ is a solution of system \eqref{sistema1}, but due to the nonlinearity of the system, the vector solution $\mathbf{R}$  
  does not remain a solution.

			\subsection{Construction}
First, we will lay the groundwork by presenting the main result, followed by constructing arguments to support this finding.
			\begin{theorem}\label{maintheorem}
				For $j\in \{1,2,\ldots,N\}$ ,  $|\omega_{j}|\leq 1$, and $x_{j} \in \R$, let $\left(\Phi_{\omega_{j}}\right)$ be the associated ground state profiles. Define
\[
				R_{j}^{(m)}(t, x):=\Phi_{\omega_{j}}^{(m)}\left(x-\omega_{j} t-x_{j}\right), \qquad m=1,2,
\]
				and 
\[
				\begin{aligned}
					\omega_{\star}:=\frac{1}{256}\min \left\{1-\omega_{j}^2, \left|\omega_{j}-\omega_{k}\right|:  j, k=1, \ldots, N, j \neq k\right\}. 
				\end{aligned}
\]
				If $\omega_{j} \neq \omega_{k}$ for any $j \neq k$, then there exist $T_{0} \in \R$ and a solution $\bu$ for \eqref{sistemaeq} defined on $\left[T_{0},+\infty\right)$ such that for all $t \in\left[T_{0},+\infty\right)$ the following estimate holds
				\begin{equation}\label{desprin}
					\left\|\bu(t)- \ru(t)\right\|_{H}
					\leq e^{- \omega_{\star}^{\frac32} t}.
				\end{equation}
			\end{theorem}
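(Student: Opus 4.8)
The plan is to follow the now-classical compactness scheme for constructing multi-solitons of non-integrable dispersive models, adapting it to the Hamiltonian structure of \eqref{sistemaeq}. First I would fix an increasing sequence of times $T_n \to +\infty$ and, for each $n$, solve \eqref{sistemaeq} \emph{backward} in time with the final datum $\bu_n(T_n) = \ru(T_n)$, where $\ru = \sum_{j=1}^N \mathbf{R}_j$. By the local well-posedness of Theorem \ref{LWP2}, each $\bu_n$ exists on a maximal interval ending at $T_n$; the entire difficulty is to show that all the $\bu_n$ are in fact defined on a common interval $[T_0,+\infty)$, with $T_0$ independent of $n$, and satisfy there the uniform bound
\[
\|\bu_n(t) - \ru(t)\|_H \leq e^{-\omega_\star^{3/2} t}, \qquad t \in [T_0, T_n].
\]
Once such uniform estimates are available, I would extract a limit $\bu = \lim_n \bu_n$ using weak sequential compactness of bounded sets in $H$ together with the local strong continuity of the flow, and conclude by weak lower semicontinuity of the norm that $\bu$ is a solution on $[T_0,+\infty)$ satisfying \eqref{desprin}.

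The core of the argument is a bootstrap estimate on $[T_0, T_n]$. Writing $\bw = \bu_n - \ru$, note that $\bw(T_n)=0$, and assume as a bootstrap hypothesis that $\|\bw(t)\|_H \leq e^{-\omega_\star^{3/2} t}$ on some subinterval $[t^*, T_n]$; the goal is to improve the constant by a factor of two, which by continuity forces $t^* = T_0$. Because the solitons travel at distinct speeds $\omega_j$, their centers $x_j - \omega_j t$ separate linearly in time, so the nonlinear interaction between distinct $\mathbf{R}_j$ and $\mathbf{R}_k$ is exponentially small; this is where the rate $\omega_\star$, built from $\min_{j\ne k}|\omega_j - \omega_k|$ and $\min_j (1-\omega_j^2)$, enters. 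To control the error I would introduce modulation parameters $\tilde\omega_j(t), \tilde x_j(t)$ tracking the speed and position of each soliton, chosen so that $\bw$ satisfies orthogonality conditions eliminating the kernel directions of the linearized operator—those generated by translation ($\partial_x \mathbf{R}_j$) and by differentiation of the profile along the speed family ($\partial_\omega \mathbf{R}_j$), whose exponential localization is guaranteed by Proposition \ref{decaimentoquadratico}.

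The principal tool is a Lyapunov-type functional adapted to the sum of solitons, built from localized versions of the conserved energy $\mathcal{E}$ and momentum $\mathcal{M}$ weighted by cutoffs that separate the $N$ soliton regions. Two ingredients make this functional effective: almost-monotonicity of the localized momenta (a virial/monotonicity estimate, valid precisely because the solitons are well separated), and coercivity of the second variation of $\mathbf{J}_{\omega_j}=\mathcal{E}+\omega_j\mathcal{M}$ at each $\mathbf{R}_j$, modulo the kernel directions removed by the orthogonality conditions. Differentiating the functional in time and using the equation, I would obtain a differential inequality that closes the bootstrap. In the $L^2$-subcritical/stable regime the localized second variation is positive definite on the orthogonal complement, so coercivity is immediate; this is the easy case.

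The main obstacle is the $L^2$-supercritical regime, where each soliton is orbitally unstable and the operator linearized at $\mathbf{R}_j$ carries a finite number of genuinely unstable directions on which the Lyapunov functional fails to be coercive. Following the strategy of C\^ote--Martel for the supercritical setting, I would not impose the final condition $\bu_n(T_n)=\ru(T_n)$ exactly, but rather allow a finite-dimensional perturbation of the final datum along these unstable modes, and then invoke a topological (Brouwer-type) fixed-point argument to select, for each $n$, the finitely many parameters that suppress the exponentially growing components over $[T_0,T_n]$. Making this selection uniform in $n$, and verifying that the resulting modulated error still obeys the differential inequality, is the delicate technical heart of the proof; everything else reduces to careful but routine energy and interaction estimates exploiting the exponential decay of the profiles.
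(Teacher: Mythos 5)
Your proposal follows essentially the same strategy as the paper: backward-in-time approximate solutions with final data $\bu^n(T^n)=\ru(T^n)$, uniform exponential estimates obtained by a bootstrap argument combining modulation (orthogonality conditions), localized energy--momentum functionals with cutoffs, soliton interaction estimates, almost-conservation of the localized momenta, and coercivity of the linearized action, followed by a compactness argument to extract the limit solution; in the supercritical case the paper likewise perturbs the final data along the unstable Pego--Weinstein directions and selects the parameters by the C\^ote--Martel topological argument. The proposal is correct and matches the paper's proof in both the subcritical and supercritical regimes.
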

			Before proceeding with the demonstration of this result, it is necessary to clarify that the proof will be conducted under two premises: firstly, assuming the subcritical case $p<2$ for the system \eqref{sistema1}, and subsequently, under conditions of the supercritical case $p>2$.
   
			\vspace{3mm}
			%\subsection{ Subcritical case}
   
			To proceed with the proof of the main theorem, we first define a sequence of solutions known as approximate solutions. After deriving some estimates and taking the limit, we construct the solution we seek. Specifically, let ${T^{n}}$ be an increasing sequence converging to infinity. Our strategy involves solving a final-value problem associated with \eqref{sistema2}, where the final data consists of a pair of solitons at time $T^{n}$. More precisely, for each $n \in \mathbb{N}$, let $\mathbf{u}^{n}$ denote the solution of \eqref{sistema2} defined on the interval $(T_{n}^{\star},T^{n}]$, where $T_{n}^{\star}$ is the maximum time of existence for each $n$, and such that $\mathbf{u}^{n}(T^{n}) =\mathbf{R}(T^{n})$.
			
			We will  establish that our approximate solutions $\mathbf{u}^{n}$ indeed satisfy the conclusion of the main theorem.
						\begin{proposition}[Uniform Estimates]\label{UniformEstimates}
				There exist $T_{0}\in \R$ and $n_{0} \in\N$ such that, for every $n \geq n_{0}$, each approximate solution $\mathbf{u}^{n}$ is defined in $[T_{0},T^{n}]$ and for all $t \in [T_{0},T^{n}]$
				\begin{equation}\label{desiesti}
					\|\mathbf{u}^{n}(t)-\mathbf{R}(t)\|_{H }\leq e^{-\omega_{\star}^{\frac32}t}.
				\end{equation}
			\end{proposition}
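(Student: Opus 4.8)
The plan is to run a bootstrap (continuity) argument on the error $\mathbf{w}^{n}:=\mathbf{u}^{n}-\mathbf{R}$, closed by an almost-conserved localized action functional together with a coercivity estimate. Since $\mathbf{w}^{n}(T^{n})=0$ and the flow is continuous in $H$ by Theorem~\ref{LWP2}, for each fixed $n$ there is a maximal interval $[t^{n},T^{n}]$ on which the a priori bound $\|\mathbf{w}^{n}(t)\|_{H}\le e^{-\omega_{\star}^{3/2}t}$ holds; the whole point is to show that on this interval the bound self-improves to $\tfrac12 e^{-\omega_{\star}^{3/2}t}$ for $t\ge T_{0}$, with $T_{0}$ independent of $n$, which forces $t^{n}\le T_{0}$ and yields \eqref{desiesti}.

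First I would write down the evolution equation satisfied by $\mathbf{w}^{n}$. Because each $\mathbf{R}_{j}$ solves \eqref{sistemaeq} but the sum $\mathbf{R}$ does not, the equation for $\mathbf{w}^{n}$ carries a source term $\mathbf{S}$ measuring the defect of $\mathbf{R}$, which comes entirely from the nonlinear cross terms $\partial_{x}\big(\varphi(\sum_{j}R_{j}^{(1)})-\sum_{j}\varphi(R_{j}^{(1)})\big)$. The speed separation built into $\omega_{\star}$ guarantees that at time $t$ the solitons are pairwise separated by a distance $\gtrsim \omega_{\star}t$, so Proposition~\ref{decaimentoquadratico} (exponential decay of $\Phi_{\omega_{j}}$ and its derivatives) gives $\|\mathbf{S}(t)\|_{H}\lesssim e^{-c\omega_{\star}t}$ for some fixed $c>0$; the exponent $\omega_{\star}^{3/2}$ in \eqref{desiesti} is chosen well below this interaction rate.

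The core is a Lyapunov functional adapted to the $N$ moving solitons. I would introduce smooth cutoffs $\psi_{j}(t,x)$ localized around each trajectory $x=\omega_{j}t+x_{j}$, forming a partition of unity transported with the flow, and set
\[
\mathcal{F}(t)=\mathcal{E}(\mathbf{u}^{n}(t))+\sum_{j=1}^{N}\omega_{j}\,\mathcal{M}_{j}(t),
\]
where $\mathcal{M}_{j}$ is the momentum \eqref{moment} localized by $\psi_{j}$. Two properties must be established. Almost-conservation: differentiating in time, the bulk terms cancel by the conservation laws for \eqref{sistemaeq}, and the remainder produced by the moving cutoffs is supported where the profiles are exponentially small, so $|\mathcal{F}'(t)|\lesssim e^{-c\omega_{\star}t}(1+\|\mathbf{w}^{n}(t)\|_{H}^{2})$. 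Coercivity: expanding $\mathcal{F}(\mathbf{u}^{n})$ around $\mathbf{R}$, the linear term is $O(e^{-c\omega_{\star}t}\|\mathbf{w}^{n}\|_{H})$ because each $\mathbf{R}_{j}$ is a critical point of the local action $\mathcal{E}+\omega_{j}\mathcal{M}$, and the quadratic form decouples, up to exponentially small cross terms, into the sum of the linearized action Hessians $\mathcal{H}_{j}$ at the individual ground states. For each $j$ one has the spectral lower bound $\mathcal{H}_{j}(v,v)\ge \mu\|v\|_{H}^{2}$ on the subspace orthogonal to the generalized kernel generated by translation (and scaling), after subtracting the negative direction(s) $Y_{j}^{-}$ of $\mathcal{H}_{j}$ present in particular when $p>2$.

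Finally I would close the estimate. The kernel directions are removed by choosing modulation parameters (shifts $x_{j}$, and the scaling/speed parameter when needed) so that $\mathbf{w}^{n}$ satisfies the corresponding orthogonality conditions; these parameters are shown to stay $O(e^{-c\omega_{\star}t})$ and contribute only lower-order terms. Integrating the almost-conservation identity from $t$ to $T^{n}$ and using $\mathbf{w}^{n}(T^{n})=0$ gives $\mathcal{F}(\mathbf{u}^{n}(t))-\mathcal{F}(\mathbf{R}(t))\lesssim e^{-c\omega_{\star}t}$, whence coercivity yields $\|\mathbf{w}^{n}(t)\|_{H}^{2}\lesssim e^{-c\omega_{\star}t}$; since $2\omega_{\star}^{3/2}<c\omega_{\star}$ for $\omega_{\star}$ small, this improves the a priori bound and closes the bootstrap once $T_{0}$ is taken large enough. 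The main obstacle is the supercritical case $p>2$: there each $\mathcal{H}_{j}$ has a genuine negative direction, so coercivity fails on all of $H$. I expect to control the unstable coefficients $a_{j}(t)=\langle \mathbf{w}^{n}(t),Y_{j}^{-}\rangle$ by a separate differential inequality --- exploiting that, integrated backward from $T^{n}$, the unstable mode is contracting --- so that the $a_{j}$ are dominated by the interaction source and reabsorbed; making every localization error uniform in $n$ is the accompanying technical difficulty.
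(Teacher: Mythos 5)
For the subcritical case $p<2$, your plan is essentially the paper's own proof. Your continuity argument on a maximal interval $[t^{n},T^{n}]$ with self-improvement to $\tfrac12 e^{-\omega_{\star}^{3/2}t}$ is exactly the combination of Proposition \ref{Bootstrap3} with the $t_{\sharp}$-argument in the paper's proof of Proposition \ref{UniformEstimates}; your functional $\mathcal{F}=\mathcal{E}+\sum_{j}\omega_{j}\mathcal{M}_{j}$ coincides with the paper's $\mathcal{S}=\sum_{j}(\mathcal{E}_{j}+\omega_{j}\mathcal{M}_{j})$ (the cutoffs sum to one); your almost-conservation, decoupling of the Hessian, and coercivity-after-modulation are Lemma \ref{conservationquase}, Lemma \ref{taylorS}, and Proposition \ref{coercivity} with Lemma \ref{lemamodulation}. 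One step you treat as automatic but which needs a separate argument: since the speeds are modulated, the expansion of $\mathcal{S}$ carries $O(|\tilde{\omega}_{j}-\omega_{j}|^{2})$ terms and the target norm contains $\sum_{j}(|\tilde{\omega}_{j}-\omega_{j}|+|\tilde{x}_{j}-x_{j}|)$; the paper controls $|\tilde{\omega}_{j}-\omega_{j}|$ by comparing localized momenta at times $t$ and $T^{n}$ (estimates \eqref{pp1}--\eqref{omega}) and the shifts by integrating the modulation ODEs from $T^{n}$. These are routine within your framework, so for $p<2$ your proposal is sound.

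The genuine gap is your treatment of the supercritical case. When $p>2$ the coercivity that replaces Proposition \ref{coercivity} (Propositions \ref{coercivity1} and \ref{coercivity2}) requires controlling, for each soliton, the projections of the error onto \emph{both} directions $\bm{Z}_{j}^{\pm}$, coming from the pair of real eigenvalues $\pm\lambda_{0}^{j}$ of $J\mathcal{L}_{j}$ (Pego--Weinstein); there are $2N$ bad modes, not $N$. Your plan tracks one coefficient per soliton and asserts that backward integration makes it contract. But the two projections satisfy ODEs of the form $\tfrac{d}{dt}\gamma_{j}^{\pm}\approx \mp\lambda_{0}^{j}\omega_{j}^{3/2}\gamma_{j}^{\pm}+O\bigl(\|\var\|_{H}^{2}+e^{-3\omega_{\star}^{3/2}t}\bigr)$ (Lemma \ref{direction}): one family contracts under backward integration, while the other expands backward like $e^{\lambda_{0}^{j}\omega_j^{3/2}(T^{n}-t)}$. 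For the expanding family, with your final data $\mathbf{w}^{n}(T^{n})=0$, Duhamel gives only the bound $\int_{t}^{T^{n}}e^{\lambda_{0}^{j}\omega_j^{3/2}(s-t)}e^{-2\omega_{\star}^{3/2}s}\,ds$, which diverges as $T^{n}\to\infty$ because, by the very definition of $\omega_{\star}$ in Theorem \ref{2maintheorem}, the source decay rate is far smaller than the linear rate. So these modes cannot be ``dominated by the interaction source and reabsorbed,'' and the bootstrap does not close uniformly in $n$. This is precisely why the paper abandons the final condition $\mathbf{u}^{n}(T^{n})=\mathbf{R}(T^{n})$ for $p>2$: it redefines the approximate solutions as $\mathbf{u}^{n}(T^{n})=\tilde{\mathbf{R}}(T^{n})+\sum_{j}\alpha_{j}^{+,n}\tilde{\bm{\Upsilon}}_{j}^{+}$ and selects the coefficients by an inversion plus a topological (shooting) argument --- Lemmas \ref{finaldata} and \ref{existencea}, the latter quoting \cite[Lemma 6]{CoteMartel} --- so that the backward-expanding projections stay below $e^{-2\omega_{\star}^{3/2}t}$ on all of $[T_{0},T^{n}]$ (Definition \ref{definia}, Proposition \ref{UniformEstimates2}). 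Without this modification of the final data, your argument proves the proposition only in the subcritical range.
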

   
The proof of the main theorem will be achieved using the following crucial result.

			\begin{proposition}\label{Compactness}
				There exists $\mathbf{u}^{0} \in H $ such that, up to a subsequence, $\mathbf{u}^{n}(T_{0}) \to\mathbf{u}^{0} $ strongly in $H^s(\R)\times H^{s-1}(\R)$.
			\end{proposition}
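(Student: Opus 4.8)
The plan is to extract a weak limit from the uniform bound, upgrade it to strong convergence on bounded intervals by Rellich compactness, and then rule out escape of mass toward spatial infinity by a monotonicity argument propagated backward from the final time $T^n$. The last step is the only substantial one: the uniform bound together with weak convergence does not, by itself, preclude a small amount of mass from drifting off to $x=\pm\infty$ as $n\to\infty$.

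First I would evaluate the uniform estimate \eqref{desiesti} at $t=T_0$, obtaining for all $n\ge n_0$
\[
\|\bu^n(T_0)-\ru(T_0)\|_H\le e^{-\omega_\star^{3/2}T_0},
\]
so that $\{\bu^n(T_0)\}_{n\ge n_0}$ is bounded in the Hilbert space $H=H^1(\R)\times L^2(\R)$. Hence, up to a subsequence, there is $\bu^0\in H$ with $\bu^n(T_0)\rightharpoonup\bu^0$ weakly in $H$, and by weak lower semicontinuity $\|\bu^0-\ru(T_0)\|_H\le e^{-\omega_\star^{3/2}T_0}$. Since $\sigma_0<\frac{1}{2}$, the admissible exponent may be fixed with $s<1$, so on every bounded interval $I=(-M,M)$ the embeddings $H^1(I)\hookrightarrow H^s(I)$ and $L^2(I)\hookrightarrow H^{s-1}(I)$ are compact. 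Combined with the $H$-bound and a diagonal extraction over $M\in\N$, this gives $\bu^n(T_0)\to\bu^0$ strongly in $H^s_{\mathrm{loc}}(\R)\times H^{s-1}_{\mathrm{loc}}(\R)$.

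The crux is a uniform-in-$n$ tail estimate, namely
\[
\lim_{M\to\infty}\ \sup_{n\ge n_0}\ \|\bu^n(T_0)-\ru(T_0)\|_{H(|x|>M)}=0.
\]
To establish it I would work with the exact solutions $\bu^n$ (not with $\bu^n-\ru$, which solves an equation with a source since $\ru$ is not a solution) in a far-field region lying beyond all soliton cores, where $\ru$ is exponentially small by Proposition~\ref{decaimentoquadratico}; there it suffices to bound the localized quadratic energy of $\bu^n$. Introducing smooth cutoffs $\psi$ transported at a suitable speed relative to the velocities $-\omega_j$, I would track the localized functional $\int_\R\psi\,(|u_1^n|^2+|u_2^n|^2+|\partial_x u_1^n|^2)\,dx$, modelled on the quadratic part of the energy \eqref{energia}, along the flow. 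Using the system \eqref{sistemaeq} and the uniform bound \eqref{desiesti}, the flux terms produced by the transport, by the fourth-order dispersion $\partial_{xxxx}u_1$, and by the nonlinearity $\varphi(u_1)$ should be, up to exponentially small errors, of a definite sign, so that this quantity is almost monotone in $t$. Since the final data satisfy $\bu^n(T^n)=\ru(T^n)$, whose far-field energy is bounded by $Ce^{-cM}$ uniformly in $n$, propagating the almost-monotonicity backward to $t=T_0$ transfers this smallness to $\bu^n(T_0)$ uniformly in $n$.

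Finally I would combine the two ingredients. Given $\varepsilon>0$, the uniform tail estimate together with the fact that the fixed function $\bu^0-\ru(T_0)\in H$ has vanishing tails allows me to choose $M$ so large that the contributions to $\|\bu^n(T_0)-\bu^0\|_{H^s(\R)\times H^{s-1}(\R)}$ coming from $|x|>M$ are below $\varepsilon$ uniformly in $n$; the local strong convergence on $(-M,M)$ then makes the remaining part below $\varepsilon$ for $n$ large, yielding $\bu^n(T_0)\to\bu^0$ strongly in $H^s(\R)\times H^{s-1}(\R)$. I expect the main obstacle to be precisely the tail estimate, and its technical heart to be the sign analysis of the flux terms generated by $\partial_{xxxx}$ and by $\varphi$ along the moving weights; controlling the error terms will rely essentially on the exponential localization in Proposition~\ref{decaimentoquadratico} and on the uniform control \eqref{desiesti}.
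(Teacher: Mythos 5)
Your skeleton coincides with the paper's: extract a weak limit from the uniform bound \eqref{desiesti}, establish a uniform-in-$n$ smallness of tails at $t=T_0$, and combine Rellich compactness on bounded sets with that tail control to upgrade weak convergence to strong convergence in $H^s(\R)\times H^{s-1}(\R)$. The paper does exactly this, isolating the tail control as Lemma \ref{L2-Compactness} and performing the final splitting with a cutoff $\chi_\delta$. The divergence — and the genuine gap — is in how the tail estimate is proved.

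The paper never runs a monotonicity argument in the far field. It fixes an intermediate time $T_\delta$ (independent of $n$) at which \eqref{desiesti} together with the exponential decay of the solitons (Proposition \ref{decaimentoquadratico}) makes the exterior part of $\bu^n(T_\delta)$ of size $\delta$; it then truncates $\bu^n(T_\delta)$ outside $|x|>\rho_\delta$, invokes continuous dependence for the small truncated final-value problem, and uses uniqueness on light cones to conclude that $\bu^n$ coincides with that small solution on the exterior cone $\{|x|>2\rho_\delta+(T_\delta-t)\}$, thereby transporting the smallness to $t=T_0$ across the \emph{fixed} interval $[T_0,T_\delta]$. Your replacement — backward almost-monotonicity of $\int_\R\psi\,(|u_1^n|^2+|u_2^n|^2+|\partial_x u_1^n|^2)\,dx$ from $T^n$ — cannot be closed at the regularity of $H=H^1(\R)\times L^2(\R)$. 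Differentiating this functional along \eqref{sistemaeq}, the fourth-order dispersion contributes the flux
\[
-2\int_{\R}\partial_x u_1\,\partial_x u_2\,\psi'\,dx-\int_{\R}u_2\,\partial_x u_1\,\psi''\,dx,
\]
(equivalently, before integrating by parts, terms of the form $\int u_2\,\partial_{xx}u_1\,\psi'\,dx$). The leading term involves $\partial_x u_2$, one derivative beyond what the $H$-norm controls: for an $H$-solution it is not even well defined, and there is no uniform-in-$n$ bound on $\|\bu^n\|_{H^2\times H^1}$ that would let you estimate it, since \eqref{desiesti} is only an $H$-bound. Moreover, the sign-definiteness you hope for is not available: unlike gKdV, where dispersion is unidirectional and such flux terms acquire a favorable sign, the Boussinesq dynamics is bidirectional (speeds $\omega_j$ of either sign, dispersion spreading both ways), and $-2\int\partial_x u_1\,\partial_x u_2\,\psi'\,dx$ has no sign. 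It is telling that the paper's own localized almost-conservation law (Lemma \ref{conservationquase}) is written for the \emph{momentum} $\int u_1u_2\,\psi_j\,dx$, precisely because its flux closes at the $H^1\times L^2$ level; and even there no sign is exploited — all flux terms are merely small because $\psi_j'$ is supported between the solitons where the bootstrap makes everything exponentially small — and that quantity does not control far-field energy, so it cannot substitute for your tail estimate. To repair your argument you would either need uniform higher-regularity control of the $\bu^n$ (not available here), or replace the energy-monotonicity mechanism by the truncation-plus-finite-speed-of-propagation device that the paper actually uses.
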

			
			Now, we define some operators that will be used later on. Indeed,
			let us $\mathcal{S}_{j}: H  \to \R$ the functional defined by
			\begin{equation}\label{defSj}
				\mathcal{S}_{j}(u_1,u_2):=\mathcal{E}(u_1,u_2)+\omega_j \mathcal{M}_b(u_1,u_2).
			\end{equation}
			Note that by the Sobolev embedding, these functionals are well-defined.

			\begin{remark}\label{observa1}
				Differentiating the operator $\mathcal{S}_{j}$ in the sense of Gateaux, we obtain for every $\bu= (u_1,u_2) \in H $ that
				\begin{equation}
					\mathcal{S}_{j}'(\bu)=\begin{pmatrix}
						-\partial_{xx} u_{1}+u_{1}+\omega_j u_2-|u_1|^{2p}u_{1}\\
						u_2+\omega_j u_1
					\end{pmatrix}
				\end{equation}
				Note that $\ru_{j}=(R_{j}^{(1)},R_{j}^{(2)})$   
				satisfies
				\begin{equation}\label{pcrit}
					\begin{cases}
						-\partial_{xx} R_{j}^{(1)}+R_{j}^{(1)}+\omega_j R_{j}^{(2)}-|R_{j}^{(1)}|^{2p}R_{j}^{(1)}=0,\\
						R_{j}^{(2)}+\omega R_{j}^{(1)}=0,
					\end{cases}
				\end{equation}
				so that, for fixed $t$, we see that $\ru_{j}(\cdot,t)$ is a critical point of $\mathcal{S}_{j}$.		
							\end{remark}
       
	Let us define the linear action
			$\mathcal{H}_{j}$, for all $t \in \R$ and $\mathbf{w} \in H$, by
			\begin{equation}\label{defH0}
				\mathcal{H}_{j}(\mathbf{w},t):=\langle \mathcal{S}_{j}''(\Phi_{\omega_j}(t))\mathbf{w},\mathbf{w} \rangle,
			\end{equation}
where
				\begin{equation}
					\mathcal{S}_{j}''(\Phi_{\omega_j})\bw =\begin{pmatrix}
						-\partial_{xx} w_{1}+w_1+\omega_j w_2-(2p+1)|\Phi_{\omega_j}|^{2p}w_1\\
						w_2+\omega_j w_1
					\end{pmatrix}.
				\end{equation}
		Finally we close this section by presenting the following coercivity result from  \cite[Proposition 3.4]{bing}, linked to the operator $\mathcal{H}_j.$
			\begin{proposition}\label{coercivity}
				Let $j \in \N$. Consider $|\omega_j|<1$ and $0<p<2$. If $\mathbf{\eta}=(\eta_1, \eta_2) \in  H $ satisfies
$	 
				\left\langle\mathbf{\eta}, \partial_x \Phi_{\omega_j}\right\rangle=\left\langle\mathbf{\eta}, \mathbf{\Gamma}_{\omega_j}\right\rangle=0,
$ 
				where $\mathbf{\Gamma}_{\omega_j}=\left(\Phi_{\omega_j}^{(1)}, 0\right)$, then
$ 
				\left\langle S_j^{\prime \prime}\left(\mathbf{\Phi}_{\omega_j}\right) \mathbf{\eta}, \mathbf{\eta}\right\rangle \geq C\|\mathbf{\eta}\|_{ H }^2 .
$	 
			\end{proposition}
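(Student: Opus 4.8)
The plan is to reduce the vectorial quadratic form to a scalar Schrödinger–type form by completing the square in the second component. Writing $\eta=(\eta_1,\eta_2)$ and expanding the expression for $\mathcal{S}_j''(\mathbf{\Phi}_{\omega_j})$, the cross terms $2\omega_j\eta_1\eta_2$ combine with $\eta_2^2$ and $-\omega_j^2\eta_1^2$ to give
\[
\langle \mathcal{S}_j''(\mathbf{\Phi}_{\omega_j})\eta,\eta\rangle
=\int_{\R}\left(|\partial_x\eta_1|^2+(1-\omega_j^2)\,\eta_1^2-(2p+1)\big|\Phi_{\omega_j}^{(1)}\big|^{2p}\eta_1^2\right)dx+\int_{\R}\left(\eta_2+\omega_j\eta_1\right)^2 dx .
\]
The last integral is manifestly nonnegative, so the form is bounded below by $\langle L_+\eta_1,\eta_1\rangle$, where $L_+:=-\partial_{xx}+(1-\omega_j^2)-(2p+1)\big|\Phi_{\omega_j}^{(1)}\big|^{2p}$ is precisely the operator obtained by linearizing the scalar equation \eqref{ellipticeq} about its ground state.

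Next I would invoke the classical spectral theory of $L_+$. Since $\Phi_{\omega_j}^{(1)}$ is the positive, even ground state of \eqref{ellipticeq}, Sturm–Liouville theory gives that $L_+$ has exactly one simple negative eigenvalue, a simple zero eigenvalue spanned by $\partial_x\Phi_{\omega_j}^{(1)}$ (from translation invariance), and essential spectrum $[1-\omega_j^2,\infty)$. The condition $\langle\eta,\partial_x\mathbf{\Phi}_{\omega_j}\rangle=0$ is designed to remove the kernel, and $\langle\eta,\mathbf{\Gamma}_{\omega_j}\rangle=0$, i.e. $\langle\eta_1,\Phi_{\omega_j}^{(1)}\rangle=0$, is designed to control the single negative direction. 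The conceptual crux is the subcriticality hypothesis $0<p<2$: a scaling computation shows $\tfrac{d}{dc}\|\Phi_c\|_{L^2}^2>0$ with $c=1-\omega_j^2$ exactly when $p<2$, which by the Vakhitov–Kolokolov / Weinstein criterion is equivalent to $\langle L_+ v,v\rangle\ge 0$ for all $v\perp\Phi_{\omega_j}^{(1)}$. Adding the orthogonality to the kernel then upgrades this to strict scalar coercivity,
\[
\langle L_+ v,v\rangle\ge c_0\|v\|_{H^1}^2\quad\text{whenever}\quad \langle v,\Phi_{\omega_j}^{(1)}\rangle=\langle v,\partial_x\Phi_{\omega_j}^{(1)}\rangle=0 ,
\]
for some $c_0>0$.

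The remaining step is to pass from the scalar estimate back to the full $H$-norm, and this is where I expect the main technical obstacle: the two vector orthogonality conditions do not decouple into scalar conditions on $\eta_1$ alone. Using $\Phi_{\omega_j}^{(2)}=-\omega_j\Phi_{\omega_j}^{(1)}$, the translation condition becomes $\langle\eta_1-\omega_j\eta_2,\partial_x\Phi_{\omega_j}^{(1)}\rangle=0$, which mixes $\eta_1$ and $\eta_2$. To handle this I would set $z:=\eta_2+\omega_j\eta_1$ (the term appearing in the nonnegative integral) and decompose $\eta_1=a\,\partial_x\Phi_{\omega_j}^{(1)}+v$ with $v\perp\partial_x\Phi_{\omega_j}^{(1)}$. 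The condition $\langle\eta_1,\Phi_{\omega_j}^{(1)}\rangle=0$ together with the parity identity $\langle\partial_x\Phi_{\omega_j}^{(1)},\Phi_{\omega_j}^{(1)}\rangle=0$ forces $\langle v,\Phi_{\omega_j}^{(1)}\rangle=0$, so $v$ satisfies both scalar orthogonalities; since $L_+\partial_x\Phi_{\omega_j}^{(1)}=0$ we get $\langle L_+\eta_1,\eta_1\rangle=\langle L_+ v,v\rangle\ge c_0\|v\|_{H^1}^2$. Rewriting the mixed condition as $(1+\omega_j^2)\langle\eta_1,\partial_x\Phi_{\omega_j}^{(1)}\rangle=\omega_j\langle z,\partial_x\Phi_{\omega_j}^{(1)}\rangle$ yields $|a|\lesssim\|z\|_{L^2}$, so $a^2$ is absorbed by the nonnegative term $\|z\|_{L^2}^2$. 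Reconstructing $\|\eta_1\|_{H^1}^2$ from $a$ and $v$, and $\|\eta_2\|_{L^2}^2$ from $z$ and $\eta_1$, gives $\|\eta\|_H^2\lesssim \|v\|_{H^1}^2+\|z\|_{L^2}^2\lesssim\langle \mathcal{S}_j''(\mathbf{\Phi}_{\omega_j})\eta,\eta\rangle$, which is the claimed coercivity. An alternative to this explicit bookkeeping, often cleaner, is a compactness–contradiction argument: supposing the estimate fails along a normalized sequence, one extracts a weak limit, uses the scalar coercivity and local compactness to promote it to strong convergence, and derives a contradiction with the orthogonality constraints.
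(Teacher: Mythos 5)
Your argument is correct, but there is nothing in the paper to compare it against line by line: the authors do not prove Proposition \ref{coercivity} at all — they quote it from \cite[Proposition 3.4]{bing} and use it as a black box. Your proof is therefore a genuinely self-contained alternative, and it also differs in spirit from the one place where the paper \emph{does} prove coercivity internally, namely the supercritical machinery of Lemmas \ref{kernel-lem}, \ref{eigenuniq}, \ref{precoer} and Proposition \ref{coercivity1}. There the operator $\mathcal{L}_j=\mathcal{S}_j''(\mathbf{\Phi}_{\omega_j})$ is treated as a genuine matrix operator: one computes its kernel, exhibits the unique negative eigenvalue $\lambda_0$ with eigenvector $\bm{\Gamma}_0$ by solving the $2\times 2$ system componentwise, identifies the essential spectrum via Weyl's theorem, and splits $H$ along the discrete spectral subspaces. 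You instead complete the square, $\langle \mathcal{S}_j''(\mathbf{\Phi}_{\omega_j})\eta,\eta\rangle=\langle L_+\eta_1,\eta_1\rangle+\|\eta_2+\omega_j\eta_1\|_{L^2}^2$, which collapses the entire two-component spectral problem onto the classical scalar operator $L_+$ and makes the role of the hypothesis $p<2$ transparent through the Vakhitov--Kolokolov sign $\partial_c\|\Phi_c\|_{L^2}^2>0$, $c=1-\omega_j^2$, consistent with the paper's own identity $\|\Phi_\omega\|_{L^2}^2=(1-\omega^2)^{\frac1p-\frac12}\|\Phi\|_{L^2}^2$. The only genuinely delicate point — that $\langle\eta,\partial_x\mathbf{\Phi}_{\omega_j}\rangle=0$ reads $\langle\eta_1-\omega_j\eta_2,\partial_x\Phi_{\omega_j}^{(1)}\rangle=0$ and hence does \emph{not} decouple into a condition on $\eta_1$ alone — you resolve correctly: with $z=\eta_2+\omega_j\eta_1$ one gets $(1+\omega_j^2)\langle\eta_1,\partial_x\Phi_{\omega_j}^{(1)}\rangle=\omega_j\langle z,\partial_x\Phi_{\omega_j}^{(1)}\rangle$, so the kernel component of $\eta_1$ is bounded by $\|z\|_{L^2}$ and absorbed by the nonnegative square, while the remainder $v$ satisfies both scalar orthogonalities and inherits $\langle L_+v,v\rangle\ge c_0\|v\|_{H^1}^2$; reassembling $\|\eta\|_H$ from $v$ and $z$ then closes the estimate. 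What your route buys is elementarity and self-containedness (scalar Sturm--Liouville theory plus Weinstein's lemma, no matrix spectral analysis); what the matrix-spectral route of \cite{bing} buys is uniformity with the supercritical case $p>2$, where completing the square no longer suffices because the scalar VK condition fails and one must instead pass to the Pego--Weinstein eigenfunctions, exactly as the paper does in Section \ref{supercritical}.
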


In the remainder of this paper, we intend to prove Theorem \ref{maintheorem} in two different contexts: in the subcritical case 
$p<2$ and subsequently in the supercritical case 
$p>2$.

In this regard, in Section \ref{subc-section}, we will prove Theorem \ref{maintheorem} in the subcritical case \( p < 2 \) as follows. The strategy to demonstrate this result depends on two main steps. First, we show that the approximate solutions satisfy the estimate \eqref{desprin} on \([T_0, T_n]\) with \( T_0 \) independent of \( n \). Then, we show that the sequence of initial data at \( T_0 \) is compact. Therefore, we can extract an initial datum that gives rise to a solution of the system \eqref{sistema1} satisfying the conclusion of Theorem \ref{maintheorem}. The strategy we will use to obtain these two results relies on applying a bootstrap argument, for which it is necessary to obtain some uniform estimates that will be complemented by the existence of a coercivity property. We will use modulation arguments that will allow us to control some directions obtained in the coercive property and subsequently localize certain expressions to control the different dispersions found in our arguments.

Subsequently, in Section \ref{supercritical}, our objective is the same as in the previous section, with the only difference being that we will work in the supercritical case \( p > 2 \). The strategy we will use in this section will be the same as previously applied, with the difference that under these conditions, it is not possible to control the directions generated in the coercive property in the same way. Therefore, we must resort to applying some spectral theory arguments, using the Pego-Weinstein arguments, to generate a set of eigenfunctions that allows us to obtain a new type of coercive property with better terms to be controlled. In summary, the idea in this section is based on obtaining a new coercive property and subsequently controlling the terms that form it through modulation and parameter localization arguments.

			\section{Existence of multisolitons in $L^2$-Subcritical case $p<2$}\label{subc-section}
			
			To show the existence of multisolitons in the subcritical case, we consider a sequence $T^n\to\infty$ and take $\mathbf{u}^{n}\in H$ as the solution of \eqref{sistema1} such that $\mathbf{u}^{n}(T^n)=\mathbf{R}(T^n)$. In order to prove the bootstrap estimate, we assume the existence of $T_0$ such that if $t_{0}\in [T_{0},T^{n}]$, then for all $t \in\left[t_{0}, T^{n}\right]$
			\begin{equation}\label{2hipoteseBoostrap}
				\left\|\mathbf{u}^{n}(t)-\mathbf{R}(t)\right\|_{ H  } \leq e^{-\omega_{\star}^{\frac32} t }.  
			\end{equation}
			Next, we define $\bar{\mathbf{R}}=\sum_{j=1}^{N}\left(\bar{R}_{j}^{(1)}, \bar{R}_{j}^{(2)}\right)$, where
	\[
			\bar{R}_{j}^{(1)}(x):=\Phi_{\omega_{ j}}^{(1)}(x-x_{j}) 
	\quad\text{and}\quad
			\bar{R}_{j}^{2}(x):=\Phi_{\omega_{ j}}^{(2)}(x-x_{j}),
	 \]
			and for $\alpha>0$, we define the neighborhood $\mathbf{B}(\alpha):=\{\mathbf{u}\in  H :\|\mathbf{u}-\bar{\mathbf{R}}\|_{ H }< \alpha\}.$
			%e
			% $$
			%{B}_j(\alpha):=\{{u}\in {H}^{1}(\R):\|{u}-\bar{{R}_j}\|_{{H} ^{1}(\R)}< \alpha\}.
			%$$
					Now we establish a modulation result. This type of result is widely used in various situations (see, for example, \cite{MartelMerleTsai}, \cite{Del}). However, for completeness, we will provide some details of the proofs with adaptations to our case.
			\begin{lemma}\label{lemamodulation0}
				There exist $\alpha_{1}, C > 0$ and $\mathcal{C}^{1}$-class functions
				\[
				\tilde{\omega}_{j}:\mathbf{B}(\alpha)\rightarrow(0,+\infty), \quad \tilde{x}_{j}:\mathbf{B}(\alpha) \rightarrow \R, \quad j=1 ,2, \ldots,N,
				\]
				such that if $\mathbf{u} \in \mathbf{B}(\alpha_{1})$
				then the function $
				\var=\mathbf{u}-\tilde{\mathbf{R}}
				$
				with 
				\[\tilde{\mathbf{R}}=\left(\sum_{j=1}^N \tilde{R}_{j}^{(1)},\sum_{j=1}^N \tilde{R}_{j}^{(2)}\right),\]
				satisfies the orthogonality conditions for all $j=1 ,2, \ldots,N$:
				\begin{equation}\label{orthogonality0}
					\left(\varepsilon_{m}, \tilde{R}_{j}^{(m)}\right)_{L^2(\R)}=\left(\varepsilon_{m}, \partial_{x} \tilde{R}_{j}^{(m)}\right)_{L^2(\R)}=0, \qquad m=1,2,
				\end{equation}
    where    the modulated waves $\tilde{R}_{j}^{(m)}$ are defined as
				\begin{equation}\label{Rtilde}
					\tilde{R}_{j}^{(m)}(x):=\Phi_{\tilde{\omega}_{j}}^{(m)}\left(x-\tilde{x}_{j}\right),\qquad m=1,2.
				\end{equation}
				Moreover, if $\mathbf{u} \in \mathbf{B}(\alpha)$ for all $0<\alpha<\alpha_{1}$, it follows that
				\begin{equation}\label{segundoboostrap0}
					\|\var\|_{H}+\sum_{j=1}^{N}\paar{|\tilde{\omega}_{j}-\omega_{j}|+|\tilde{x}_{j}-x_{j}|}\leq C\alpha.
				\end{equation}
			\end{lemma}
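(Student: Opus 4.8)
The plan is to prove this standard modulation lemma via the implicit function theorem applied to the map that sends the modulation parameters $(\tilde\omega_j,\tilde x_j)$ and the function $\bu$ to the vector of inner products appearing in the orthogonality conditions \eqref{orthogonality0}. The key observation is that at the ``center point'' $\bu=\bar{\mathbf R}$, the choice $\tilde\omega_j=\omega_j$, $\tilde x_j=x_j$ makes $\var=0$, so all the inner products vanish trivially; I then want to invert the system near this point and read off the smooth dependence and the Lipschitz-type bound \eqref{segundoboostrap0}.

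More precisely, I would define $\mathbf G:\mathbf B(\alpha)\times\prod_{j=1}^N\big((0,\infty)\times\R\big)\to\R^{4N}$ whose components are the four inner products
\[
\big(\varepsilon_m,\tilde R_j^{(m)}\big)_{L^2},\qquad \big(\varepsilon_m,\partial_x\tilde R_j^{(m)}\big)_{L^2},\qquad m=1,2,\ j=1,\dots,N,
\]
where $\var=\bu-\tilde{\mathbf R}$ and $\tilde R_j^{(m)}$ is given by \eqref{Rtilde}. First I would check that $\mathbf G$ is $\mathcal C^1$ in all its arguments; this uses the smoothness of $\omega\mapsto\Phi_\omega$ together with the regularity and exponential-decay estimates of Proposition \ref{decaimentoquadratico}, which guarantee the relevant $L^2$ pairings and their derivatives exist and depend continuously on the parameters. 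Next I would compute the Jacobian of $\mathbf G$ with respect to the $2N$ parameters $(\tilde\omega_j,\tilde x_j)$, evaluated at $\bu=\bar{\mathbf R}$, $\tilde\omega_j=\omega_j$, $\tilde x_j=x_j$ (where $\var=0$). Since $\var=0$ there, only the terms in which the parameter-derivative hits the $\tilde R_j^{(m)}$ factor (and not $\var$) survive, and these reduce to explicit pairings such as $\big(\partial_{\tilde x_j}\tilde R_j^{(m)},\partial_x\tilde R_j^{(m)}\big)$ and $\big(\partial_{\tilde\omega_j}\tilde R_j^{(m)},\tilde R_j^{(m)}\big)$. The off-diagonal blocks in $j\neq k$ are exponentially small because distinct solitons are centered at well-separated points $x_j$; the diagonal blocks are nondegenerate because $\partial_x\Phi$ is odd while $\Phi$ is even (so $(\Phi,\partial_x\Phi)=0$ but $(\partial_x\Phi,\partial_x\Phi)\neq0$ and $\partial_\omega\|\Phi_\omega\|^2\neq0$). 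Thus the Jacobian is a small perturbation of an invertible block-diagonal matrix and is itself invertible.

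Having established invertibility, the implicit function theorem yields, for $\bu$ in a sufficiently small ball $\mathbf B(\alpha_1)$, unique $\mathcal C^1$ functions $\tilde\omega_j(\bu),\tilde x_j(\bu)$ solving $\mathbf G=0$, which is exactly the orthogonality system \eqref{orthogonality0}. The quantitative estimate \eqref{segundoboostrap0} then follows from the $\mathcal C^1$ dependence: since the parameters agree with $(\omega_j,x_j)$ when $\bu=\bar{\mathbf R}$, a first-order Taylor estimate gives $|\tilde\omega_j-\omega_j|+|\tilde x_j-x_j|\lesssim\|\bu-\bar{\mathbf R}\|_H\le\alpha$, and then $\|\var\|_H\le\|\bu-\bar{\mathbf R}\|_H+\|\tilde{\mathbf R}-\bar{\mathbf R}\|_H\lesssim\alpha$ using the smoothness of $\omega\mapsto\Phi_\omega$ to bound the second term by $\sum_j(|\tilde\omega_j-\omega_j|+|\tilde x_j-x_j|)$.

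The main obstacle I anticipate is the careful verification that the Jacobian is invertible uniformly, which splits into two parts: the algebraic nondegeneracy of the single-soliton diagonal blocks (handled by the even/odd parity of $\Phi$ and $\partial_x\Phi$, plus the fact that $\partial_\omega\|\Phi_\omega\|_{L^2}^2$ does not vanish in the relevant parameter range), and the control of the coupling between different solitons. The latter requires showing the cross terms are genuinely negligible, which is where the exponential localization in Proposition \ref{decaimentoquadratico}(ii) and the separation of the centers $x_j$ enter decisively; one must make $\alpha_1$ small enough (equivalently, implicitly, the initial separations large enough) that the perturbation does not destroy invertibility. Everything else is a routine application of the implicit function theorem and first-order Taylor expansion.
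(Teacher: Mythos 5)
Your overall strategy---apply the implicit function theorem to the map of orthogonality inner products, obtain nondegeneracy of the diagonal blocks from the parity of $\Phi_{\omega}$ and $\partial_x\Phi_{\omega}$ together with $\partial_\omega\|\Phi_\omega\|_{L^2(\R)}^2\neq 0$, and read the bound \eqref{segundoboostrap0} off the $\mathcal{C}^1$ dependence---is the same as the paper's. But your specific setup contains a genuine gap: your map $\mathbf{G}$ takes values in $\R^{4N}$, while you have only $2N$ parameters $(\tilde{\omega}_j,\tilde{x}_j)$ to adjust. The conditions for $m=1$ and $m=2$ are independent constraints, since $\varepsilon_1$ and $\varepsilon_2$ involve the independent components $u_1$ and $u_2$; indeed, using $\tilde{R}_j^{(2)}=-\tilde{\omega}_j\tilde{R}_j^{(1)}$, the $m=2$ conditions amount to $\varepsilon_2\perp\tilde{R}_j^{(1)}$ and $\varepsilon_2\perp\partial_x\tilde{R}_j^{(1)}$, which in no way follow from the $m=1$ conditions. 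Hence the partial Jacobian of $\mathbf{G}$ in the parameters is a $4N\times 2N$ matrix, it cannot be ``invertible,'' and the implicit function theorem cannot be invoked to solve $\mathbf{G}=0$: your argument fails precisely at the step ``the Jacobian is a small perturbation of an invertible block-diagonal matrix and is itself invertible.'' The system \eqref{orthogonality0} is over-determined as written, and a correct write-up must either cut the number of conditions down to $2N$ or enlarge the parameter set. (For comparison, the paper's own proof confronts this only by fixing the index $m$ and solving a $2\times2$ single-soliton system for each $j$ separately, so it too establishes the orthogonality one component at a time rather than the full set of $4N$ conditions; the difficulty you inherited is real, but your global formulation makes the failure explicit rather than resolving it.)

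A second, smaller gap: you justify the exponential smallness of the off-diagonal ($j\neq k$) blocks by appealing to ``well-separated points $x_j$.'' Lemma \ref{lemamodulation0} is the static modulation lemma: the centers $x_j$ are arbitrary fixed reals and no separation is assumed---separation enters only in the dynamic Lemma \ref{lemamodulation}, through the distinct speeds $\omega_j$ at large times, after translating by $\omega_j t$. So in the generality of the present statement the cross terms between different solitons need not be small, and the block-diagonal dominance you rely on is not available. The paper sidesteps this entirely because its per-soliton reduction never couples different values of $j$, at the price of producing orthogonality against a single soliton, i.e.\ of $(u_m-\tilde{R}_j^{(m)},\cdot)_{L^2(\R)}$ rather than of the full remainder $(\varepsilon_m,\cdot)_{L^2(\R)}$.
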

			\begin{proof}
				We fix $m$, $j$, and $\alpha_{1} > 0$. Without loss of generality and for the sake of simplicity, we will remove the dependence on $m$ and $j$ in the solitons. The idea is to apply the implicit function theorem to the function $F:B(\alpha_{1})\times (0,\infty)\times \R \rightarrow \R^{2} $ defined by
\[
				F(u, \omega, y)\equiv  \left(\begin{array}{c}
					F_{1} \\
					F_{2} 
				\end{array}\right)=\left(\begin{array}{c}
					(u-R(\omega, y), R(\omega, y))_{L^2(\R)} \\
					\left(u-R(\omega, y), \partial_{x} R(\omega, y)\right)_{L^2(\R)}
				\end{array}\right),\]
				where $R(\omega,y):= \Phi_{\omega}(x-y)$ and  $\Phi_\omega$ is the ground state of \eqref{ellipticeq}.
								Note that, clearly, $F(q_0)=0$, with $q_{0}=(\bar R, \omega, y)$. Now, since $\Phi_{\omega}$ is an even, positive function, we deduce from the facts
\[
				\int_{\R} \Phi_\omega \partial_\omega \Phi_\omega \,dx=\partial_\omega\left\|\Phi_\omega\right\|_{L^2(\R)}^2 
				\]
				and 
\[
				\left\|\Phi_\omega\right\|_{L^2(\R)}^2=\left(1-\omega^2\right)^{\frac{1}{p}-\frac{1}{2}}\left\|\Phi\right\|_{L^2(\R)}^2 
				\]
    that
\[
				\begin{aligned}
					&\partial_{\omega}F_{1}= -\int_{\R} \partial_{\omega} \Phi_{\omega}(x-y)  \Phi_{\omega}(x-y) dx=-2\int_{0}^{\infty} \partial_{\omega} \Phi_{\omega}(x)  \Phi_{\omega}(x) dx\neq 0,
				\end{aligned}
				\]
		 				Hence, we get
			\[
				\partial_\omega\left\|\Phi_\omega\right\|_{L^2(\R)}^2=-\left(\frac{2}{p}-1\right) \frac{\omega}{1-\omega^2}\left\|\Phi_{\omega}\right\|_{L^2(\R)}^2\neq 0.
				\]
				Also, there holds that
		\[
				\begin{aligned}
					\partial_{y}F_{2}&=-\left(\partial_{y} R(\omega, y), \partial_{x} R(\omega, y)\right)_{L^2(\R)}v=-\left(-\partial_{x} R(\omega, y), \partial_{x} R(\omega, y)\right)_{L^2(\R)}=\left\|\partial_{x} \Phi_{\omega}\right\|_{L^{2}(\R)}^{2}>0.
				\end{aligned}
				\]
				Furthermore, through similar calculations, it becomes apparent that $\partial_{y}F_{1}=0$, leading to $\det(DF(q_{0}))\neq 0$. Consequently, for sufficiently small $\alpha_1>0$, there exist unique parameters $( \tilde{\omega}_{j}, \tilde{x}_{j})$ mapping from $B_{j}(\alpha_{1})$ to $(0,\infty)\times \R $, such that the functions in \eqref{Rtilde} satisfy the orthogonality conditions \eqref{orthogonality0}.

				For the second part,  we take $0 < \alpha < \alpha_{1}$ and $\mathbf{u}\in \mathbf{B}(\alpha)$. According to the Mean Value Theorem,
			\[
				\begin{aligned}
					\left|\tilde{\omega}_{j}-\omega_{j}\right|&=\left|\tilde{\omega}_{j}(u_{j})-\tilde{\omega}_{j}\left(\bar{R}_{j}^{(m)}\right)\right|  \leq C\left\|u_{j}-\bar{R}_{j}^{(m)}\right\|_{H^{1}(\R)} \leq C \alpha.
				\end{aligned}
				\]
				Similarly, we can obtain estimates for $\tilde{x}_{j}$. 
    
    Finally, we derive
				\[\|\varepsilon_{j}\|_{H^{1}(\R)}\leq \|u_{j}-\bar{R}_{j}^{(m)}\|_{H^{1}(\R)}+\|\tilde{R}_{j}^{(m)}-\bar{R}_{j}^{(m)}\|_{H^{1}(\R)}\leq \alpha+\|\tilde{R}_{j}^{(m)}-\bar{R}_{j}^{(m)}\|_{H^{1}(\R)}.\]
		As
\[\begin{aligned}
					&\|\tilde{R}_{j}^{(m)}-\bar{R}_{j}^{(m)}\|_{H^{1}(\R)}v\leq \left\|\Phi_{\tilde{\omega}_{j}}^{(m)}\left(x-\tilde{x}_{j}\right)-\Phi_{\omega_{j}}^{(m)}\left(x-x_{j}\right)\right\|_{H^{1}(\R)}\\
					&\leq \left\|\Phi_{\tilde{\omega}_{j}}^{(m)}\left(x-\tilde{x}_{j}\right)-\Phi_{\omega_{j}}^{(m)}\left(x-\tilde{x}_{j}\right)\right\|_{H^{1}(\R)}+\left\|\Phi_{{\omega}_{j}}^m\left(x-\tilde{x}_{j}\right)-\Phi_{\omega_{j}}^{(m)}\left(x-x_{j}\right)\right\|_{H^{1}(\R)}\\
					&\leq C\left|\tilde{x}_{j}-x_{j}\right|+C\left|\tilde{\omega}_{j}-\omega_{j}\right|v\leq C\alpha,
				\end{aligned}
	\]
				we have that  estimate \eqref{segundoboostrap0} follows immediately.
			\end{proof}
			
			We fix $n \in \mathbb{N}$ and, to simplify notation, we denote $\mathbf{u}^{n}$ by $\mathbf{u}$, where $\mathbf{u}^{n}$ is as above.
			\begin{lemma}\label{lemamodulation}
				There exists $C > 0$ such that if $T_{0}$ is sufficiently large, then there exist $\mathcal{C}^{1}$-class functions
	\[
				\tilde{\omega}_{j}:\left[t_{0}, T^{n}\right] \rightarrow(-1,1), \quad \tilde{x}_{j}:\left[t_{0}, T^{n}\right] \rightarrow \R, \quad j=1,2, \ldots, N,
\]
				such that  
$		 
				\var=\mathbf{u}(t)-\tilde{\mathbf{R}}(t) $ with $ t \in [t_{0}, T^{n}],
$
    satisfies, for $j=1,2, \ldots, N$ and for all $t \in [t_{0}, T^{n}]$, the orthogonality conditions
				\begin{equation}\label{orthogonality}
					\left(\varepsilon_{m}(t), \tilde{R}_{j}^m(t)\right)_{L^2(\R)}=\left(\varepsilon_{m}(t), \partial_{x} \tilde{R}_{j}^m(t)\right)_{L^2(\R)}=0, \qquad m=1,2,
				\end{equation}
    where $ \tilde{\mathbf{R}}=\sum_{j=1}^N\left( \tilde{R}_{j}^{(1)}, \tilde{R}_{j}^{(2)}\right)$
     and the the modulated waves are defined as
				\[\tilde{R}_{j}^{(m)}(x,t):= \Phi_{\tilde{\omega}_{j}(t)}^{m}\left(x-\omega_{j
				}t-\tilde{x}_{j}(t)\right), \qquad m=1,2.
				\]
			 Furthermore, there hold  for any $t \in \left[t_{0}, T^{n}\right]$ that
				\begin{equation}\label{segundoboostrap}
					\|\var\|_{H}+\sum_{j=1}^{N}\paar{|\tilde{\omega}_{j}(t)-\omega_{j}|+|\tilde{x}_{j}(t)-x_{j}|}\leq Ce^{-\omega_{\star}^{\frac32}t}.
				\end{equation}
				and
				\begin{equation}\label{modulated}
					\begin{gathered}
						\sum_{j=1}^{N}\left(\left|\partial_{t} \tilde{\omega}_{j}(t)\right|+\left|\partial_{t } \tilde{x}_{j}(t)\right|\right) 
						\leq C \left(\|\mathbf{\var}(t)\|_{H} +e^{-3\omega_{\star}^{\frac32}t}\right).
					\end{gathered}
				\end{equation}
			\end{lemma}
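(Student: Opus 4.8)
The plan is to promote the static, pointwise-in-time modulation furnished by Lemma \ref{lemamodulation0} to a $\mathcal{C}^1$-in-time family of parameters, and then to read off the dynamical bound \eqref{modulated} by differentiating the orthogonality conditions \eqref{orthogonality} along the flow. First I would freeze $t \in [t_0, T^n]$. Translating $\mathbf{u}(t)$ into the frame moving with the $j$-th soliton, the bootstrap hypothesis \eqref{2hipoteseBoostrap} gives $\|\mathbf{u}(t) - \mathbf{R}(t)\|_H \le e^{-\omega_\star^{3/2} t} \le e^{-\omega_\star^{3/2} T_0}$, so choosing $T_0$ large makes this distance smaller than the radius of validity of Lemma \ref{lemamodulation0}. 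Applying that lemma at each $t$ produces unique parameters $\tilde\omega_j(t), \tilde x_j(t)$ for which $\var(t) = \mathbf{u}(t) - \tilde{\mathbf{R}}(t)$ satisfies \eqref{orthogonality}, and its quantitative estimate \eqref{segundoboostrap0}, applied with $\alpha = e^{-\omega_\star^{3/2} t}$, yields \eqref{segundoboostrap}. Regularity in time follows from the implicit function theorem: the defining relations are $L^2$-pairings of $\mathbf{u}(t)$ against the smooth, exponentially localized profiles $\tilde R_j^{(m)}$ and their $x$-derivatives, the relevant Jacobian was shown to be nonsingular in Lemma \ref{lemamodulation0}, and $t \mapsto \mathbf{u}(t)$ is $\mathcal{C}^1$ with values in $H^s(\R) \times H^{s-1}(\R)$ by Theorem \ref{LWP2}; hence $t \mapsto (\tilde\omega_j(t), \tilde x_j(t))$ is $\mathcal{C}^1$.

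The core is \eqref{modulated}. Writing $y_j = x - \omega_j t - \tilde x_j(t)$, so that $\partial_t \tilde R_j^{(m)} = \partial_t\tilde\omega_j\,\partial_\omega \Phi_{\tilde\omega_j}^{(m)}(y_j) - (\omega_j + \partial_t\tilde x_j)\,\partial_x \tilde R_j^{(m)}$, and replacing $\partial_t\mathbf{u}$ by the right-hand side of \eqref{sistemaeq} together with $\tilde R_j^{(2)} = -\tilde\omega_j \tilde R_j^{(1)}$, I would differentiate each relation in \eqref{orthogonality} in time. This produces a linear system
\begin{equation*}
A(t)\,\bigl(\partial_t\tilde\omega_1, \partial_t\tilde x_1, \ldots, \partial_t\tilde\omega_N, \partial_t\tilde x_N\bigr)^{\top} = \mathbf{b}(t),
\end{equation*}
whose diagonal blocks are governed by the non-degenerate scalar quantities $\partial_\omega\|\Phi_{\tilde\omega_j}\|_{L^2(\R)}^2 \neq 0$ and $\|\partial_x \Phi_{\tilde\omega_j}\|_{L^2(\R)}^2 > 0$ already isolated in Lemma \ref{lemamodulation0}. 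Parity of the ground state (even profile, odd $x$-derivative) makes the remaining entries within each block vanish, so $A(t)$ is a small perturbation of a fixed invertible block-diagonal matrix; I would use the separation of the centers $\omega_j t + \tilde x_j(t)$, which lie a distance $\gtrsim |\omega_j - \omega_k| t \gtrsim \omega_\star t$ apart, together with the exponential decay of $\Phi_\omega$ and its derivatives (Proposition \ref{decaimentoquadratico}) and the bound \eqref{segundoboostrap}, to conclude $\|A(t)^{-1}\| \le C$ uniformly once $T_0$ is large.

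The source $\mathbf{b}(t)$ splits into three parts: terms linear in $\var$ coming from pairings such as $(\varepsilon_m, \partial_{xx}\tilde R_j^{(1)})$, which are $O(\|\var\|_H)$; a transport residual $(\tilde\omega_j - \omega_j)\,\partial_x \tilde R_j^{(1)}$ reflecting that $\tilde R_j$ is prescribed to travel at the target speed $\omega_j$ rather than at its own modulated speed $\tilde\omega_j$; and the genuinely nonlinear coupling $\varphi\bigl(\sum_j \tilde R_j^{(1)}\bigr) - \sum_j \varphi\bigl(\tilde R_j^{(1)}\bigr)$, which is supported where two distinct solitons overlap. For the first orthogonality relation the transport residual pairs against $\tilde R_j^{(1)}$ and drops out by parity (since $(\partial_x\tilde R_j^{(1)}, \tilde R_j^{(1)})_{L^2(\R)} = 0$), so $\partial_t\tilde\omega_j$ is controlled purely by $\|\var\|_H$ and interaction; the residual enters only the equation fixing $\partial_t\tilde x_j$, where it must be reabsorbed using \eqref{segundoboostrap} and the relation between the parameter deviations and $\var$. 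Inverting $A(t)$ then gives
\begin{equation*}
\sum_{j=1}^N \bigl(|\partial_t\tilde\omega_j(t)| + |\partial_t\tilde x_j(t)|\bigr) \le C\bigl(\|\var(t)\|_H + e^{-3\omega_\star^{3/2} t}\bigr),
\end{equation*}
which is \eqref{modulated}.

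The main obstacle is the control of the last two contributions at precisely the stated rate. The nonlinear coupling is the delicate one: its overlap integral must be estimated by combining the exponential localization of Proposition \ref{decaimentoquadratico} with the lower bound $\gtrsim \omega_\star t$ on the inter-soliton distance encoded in the definition of $\omega_\star$, which is what produces the exponentially small factor $e^{-3\omega_\star^{3/2} t}$ rather than the single-soliton rate $e^{-\omega_\star^{3/2} t}$. Equally delicate is reconciling the transport residual with the $\|\var\|_H$-term on the right of \eqref{modulated}: the prescribed speed $\omega_j$ forces $\partial_t\tilde x_j$ to balance $(\tilde\omega_j-\omega_j)\|\partial_x\Phi_{\tilde\omega_j}\|_{L^2(\R)}^2$, so one must track carefully how the speed deviation $\tilde\omega_j - \omega_j$ is slaved to $\var$ and the interaction error. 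Both difficulties ultimately rest on the same two ingredients, the exponential decay of the ground states and the separation of their trajectories, so the technical heart of the lemma is a clean quantitative overlap estimate together with the verification that $A(t)$ stays uniformly invertible under the perturbations measured by \eqref{segundoboostrap}.
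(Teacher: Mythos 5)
Your proposal follows the paper's proof essentially step by step: Lemma \ref{lemamodulation0} is applied pointwise in time in the moving frame of each soliton (with $\alpha(t)=e^{-\omega_{\star}^{3/2}t}$ from the bootstrap hypothesis) to produce the parameters, the orthogonality conditions and \eqref{segundoboostrap}, and \eqref{modulated} is obtained exactly as in the paper by combining the evolution equation for $\var$ with the differentiated orthogonality relations, yielding a nearly diagonal linear system in $(\partial_{t}\tilde{\omega}_{j},\partial_{t}\tilde{x}_{j})$ whose invertibility rests on the non-degeneracy of $\partial_{\omega}\|\Phi_{\omega}\|_{L^{2}(\R)}^{2}$ and $\|\partial_{x}\Phi_{\omega}\|_{L^{2}(\R)}^{2}$, parity, and the smallness coming from \eqref{segundoboostrap}, with source terms bounded by $\|\var\|_{H}$ plus exponentially small overlap integrals. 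If anything you are more careful than the paper, which in computing $\partial_{x}\tilde{R}_{2}$ silently replaces $\tilde{\omega}_{j}$ by $\omega_{j}$ and thereby drops the transport residual $(\tilde{\omega}_{j}-\omega_{j})\partial_{x}\Phi_{\tilde{\omega}_{j}}$ that you explicitly identify.
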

			\begin{proof}
				Let us fix $m$, and take $t \in [t_{0}, T^{n}]$. From \eqref{2hipoteseBoostrap}, we have
				\[
				\left\| u_{j}\left( t\right)- \sum_{j=1}^{N}\Phi_{\omega_{j}}^{(m)}\left(x-\omega_{j
				}t-x_{j}\right)\right\|_{H^{1}(\R)}\leq e^{-\omega_{\star}^{\frac32}t},
				\]
				which is equivalent to
				\[
				\left\|  u_{j}\left( x+\omega_{j }t\right)-\sum_{j=1}^{N}\Phi_{\omega_{j}}^{(m)}\left(x-x_{j}\right)\right\|_{H^{1}(\R)}\leq e^{-\omega_{\star}^{\frac32}t}.
				\]
				This means that the function
		\[
  \eta_{j}(t)=u_{j}\left( \cdot+\omega_{j }t\right),\]
				belongs to the ball $B_{j}(\alpha(t))$, where $\alpha(t)=e^{-\omega_{\star}^{\frac32}t}$. Since $\alpha(t)\to 0$ as $t\to\infty$ and the map $t \mapsto \mathbf{u}(t)$ is continuous in $H$, if $T_0$ is sufficiently large, we can apply the process of Lemma \ref{lemamodulation0} for all $j \in \{1,2,\ldots,N\}$ to obtain continuous functions
				$(\tilde{\omega}_{j}, \tilde{x}_{j}): [t_{0},T^{n}] \to (-1,1)  \times \R$
				given by $\tilde{\omega}_{j}(t)=\tilde{\omega}_{j}(\eta_{j}(t))$ and $ \tilde{x}_{j}(t)=\tilde{x}_{j}(\eta_{j}(t))$ 
				such that
	\[
				\left(\eta_{j}(t)- \Phi_{\tilde{\omega}_{j}}^{(m)}(t)\left(x-\tilde{x}_{j}(t)\right),  \Phi_{\tilde{\omega}_{j}}^{(m)}\left(x-\tilde{x}_{j}(t)\right)\right)_{L^2(\R)}=0,
	\]
	\[
				\left(\eta_{j}(t)- \Phi_{\tilde{\omega}_{j}}^{(m)}(t)\left(x-\tilde{x}_{j}(t)\right),\partial_x  \Phi_{\tilde{\omega}_{j}}^{(m)}(t)\left(x-\tilde{x}_{j}(t)\right)\right)_{L^2(\R)}=0,
\]
				which in turn are equivalent to the orthogonality conditions in \eqref{orthogonality}. 
				Indeed, since  
\[
				\begin{aligned}
					& 0=\left(\eta_{j}(t)- \Phi_{\tilde{\omega}_{j}}^{(m)}(t)\left(x-\tilde{x}_{j}(t)\right),  \Phi_{\tilde{\omega}_{j}}^{(m)}(t)\left(x-\tilde{x}_{j}(t)\right)\right)_{2} \\
					& =\left( u_{j}(t)- \Phi_{\tilde{\omega}_{j}}^{(m)}\left(x-\omega_{j }t-\tilde{x}_{j}(t)\right),  \Phi_{\tilde{\omega}_{j}}^{(m)}\left(x-\omega_{j }t-\tilde{x}_{j}(t)\right)\right) \\
					&=\left(u_{j}(t)-\tilde{R}_{j}^{(m)}(t), \tilde{R}_{j}^{(m)}(t)\right).
				\end{aligned}
\]
				Proceeding analogously, we obtain the remaining orthogonality conditions. Estimate \eqref{segundoboostrap} follows exactly as in Lemma \ref{lemamodulation0}.
								Moreover, 
\[
				\begin{aligned}
					\left|\tilde{\omega}_{j}(t)-\omega_{j}\right|&=\left|\tilde{\omega}_{j}(\eta_{j}(t))-\tilde{\omega}_{j}\left(\bar{R}_{j}^{(m)}\right)\right| \lesssim\left\|\eta_{j}(t)-\bar{R}_{j}^{(m)}\right\|_{H^{1}(\R)}  =  \left\|u_{j}(t)-R_{j}^{m}(t)\right\|_{H^{1}(\R)}\lesssim e^{-\omega_{\star}^{\frac32}t}.
				\end{aligned}
	\]
				Similarly, we obtain estimates for $\tilde{x}_{j}(t)$.
				
				Finally, note that
\[\left\|\varepsilon_{1}(t)\right\|_{H^{1}(\R)}\leq \left\|u_{1}(t)-\sum_{j=1}^{N}R_{j}^{(1)}(t)\right\|_{H^{1}(\R)}+\left\|\sum_{j=1}^{N}\tilde{R}_{j}^{(1)}(t)-\sum_{j=1}^{N}R_{j}^{(1)}(t)\right\|_{H^{1}(\R)}.\]
								Since
 \begin{equation}\label{parametros}
	 \begin{split}
 \|\tilde{R}_{j}^{(1)}(t)-R_{j}^{(1)}(t)\|_{H^{1}(\R)} &=\left\| \Phi_{\tilde{\omega}_{j}}^{(1)}\left(x-\omega_{j }t-\tilde{x}_{j}(t)\right)- \Phi_{\omega_{j}}^{(1)}\left(x-\omega_{j }t-x_{j}\right)\right\|_{H^{1}(\R)}\\
						&\lesssim\left|\tilde{x}_{j}(t)-x_{j}\right|+ \left|\tilde{\omega}_{j}(t)-\omega_{j}\right| \lesssim e^{-\omega_{\star}^{\frac32}t},
					\end{split}
				\end{equation}
				we conclude that $\|\varepsilon_{1}(t)\|_{H^{1}(\R)}\lesssim e^{-\omega_{\star}^{\frac32}t}$. Similarly, we obtain this estimate for $\varepsilon_2$, from which we obtain \eqref{segundoboostrap}.
				
				%Podemos estabelecer que esses parâmetros são funções de classe $\mathcal{C}^1$, para obter mais detalhes, veja o Apêndice \ref{apen}.
				
				To conclude the proof of the lemma, it is necessary to show the estimate \eqref{modulated}. The following evolution equation is satisfied,
				\begin{equation}\label{ecuacionLN}
					\begin{split}
						&\partial_{t} \varepsilon_{1}- \partial_{x}\varepsilon_{2} 
						=  -\partial_{t} \tilde{R}_{1}+\partial_{x } \tilde{R}_{2}\\
						&\partial_{t} \varepsilon_{2}- \partial_{x}\left(\varepsilon_{1} -\partial_{xx}\varepsilon_1-\varphi(\var_{1},\tilde{R}_1)\right)
						=  -\partial_{t} \tilde{R}_{2}+\partial_{x } \left(\tilde{R}_{1}-\partial_{xx}\tilde{R}_1\right)
					\end{split}
				\end{equation}
				where
				\begin{equation*}
					\begin{aligned}
						\varphi(\var_{1},\tilde{R}_1)&=|\tilde{R}_{1}|^{2p}\varepsilon_{1} +2p(|\tilde{R}_{1}|^2+\theta(2\mathcal{R} e(\tilde{R}_{1}\bar{\varepsilon}_{1})+|\varepsilon_{1}|^2))^{p-1}\, \mathcal{R} e(\tilde{R} _{1}\bar{\varepsilon}_{1})\varepsilon_{1}\\
						&\, \, \, +p(|\tilde{R}_{1}|^2+\theta(2\mathcal{R} e(\tilde{R}_{1}\bar{\varepsilon}_{1} )+|\varepsilon_{1}|^2))^{p-1}|\varepsilon_{1}|^2\varepsilon_{1}\\
						&\,\, \,+2p(|\tilde{R}_{1}|^2+\theta(2\mathcal{R} e(\tilde{R}_{1}\bar{\varepsilon}_{1} )+|\varepsilon_{1}|^2))^{p-1}\, \mathcal{R} e(\tilde{R}_{1}\bar{\varepsilon}_{1})\tilde{R}_ {1}\\
						&\, \, \,+p(|\tilde{R}_{1}|^2+\theta(2\mathcal{R} e(\tilde{R}_{1}\bar{\varepsilon}_{1} )+|\varepsilon_{1}|^2))^{p-1}|\varepsilon_{1}|^2\tilde{R}_{1}+\left|\tilde{R}_{1}\right|^{2p_1} \tilde{R}_{1}.
					\end{aligned}
				\end{equation*}
				Also, since (we will omit summation of $j=1$ to $N$)
				\begin{equation}\label{equamod}
					\begin{aligned}
						&-\partial_{t} \tilde{R}_{1}+\partial_{x } \tilde{R}_{2}=-\partial_{t} \tilde{\omega}_{j} \partial_{\omega} \Phi_{\tilde{\omega}_{j}}^{(1)} -\left(-\omega_j-\partial_{t} \tilde{x}_{j}\right) \partial_{x} \Phi_{\tilde{\omega}_{j}}^{(1)}-\omega_j\partial_{x } \Phi_{\tilde{\omega}_{j}}^{(1)},\\
						&-\partial_{t} \tilde{R}_{2}+\partial_{x } \left(\tilde{R}_{1}-\partial_{xx}\tilde{R}_1\right)=-\partial_{t} \tilde{\omega}_{j} \partial_{\omega} \Phi_{\tilde{\omega}_{j}}^2 -\left(-\omega_j-\partial_{t} \tilde{x}_{j}\right) \partial_{x} \Phi_{\tilde{\omega}_{j}}^2+\partial_{x } \tilde{R}_{1}-\partial_{x xx} \tilde{R}_{1},
					\end{aligned}   
				\end{equation}
				we obtain from \eqref{ecuacionLN} that
				\begin{equation}\label{ecuacionLN2}
					\begin{aligned}
						&\partial_{t} \varepsilon_{1}- \partial_{x}\varepsilon_{2}=-\partial_{t} \tilde{\omega}_{j} \partial_{\omega} \Phi_{\tilde{\omega}_{j}}^{(1)} +\partial_{t} \tilde{x}_{j} \partial_{x} \Phi_{\tilde{\omega}_{j}}^{(1)},\\
						&\partial_{t} \varepsilon_{2}- \partial_{x}\left(\varepsilon_{1} -\partial_{xx}\varepsilon_1-\varphi(\var_{1},\tilde{R}_1)\right)=-\partial_{t} \tilde{\omega}_{j} \partial_{\omega} \Phi_{\tilde{\omega}_{j}}^2+\partial_{t} \tilde{x}_{j}\partial_{x} \Phi_{\tilde{\omega}_{j}}^2-\partial_{x xx} \tilde{R}_{1}.
					\end{aligned}   
				\end{equation}
				Now, taking the inner product with $\tilde{R}_{1}$ and $\tilde{R}_{2}$, respectively in \eqref{ecuacionLN2}, we obtain
				\begin{equation}\label{equamod2}
					\begin{split}
						( \partial_{t} \varepsilon_{1},\tilde{R}_{1})_{L^2(\mathbb{R})}&-( \partial_{x}\varepsilon_{2},\tilde{R}_{1})_{L^2(\mathbb{R})}  =-\partial_{t} \tilde{\omega}_{j}( \partial_{\omega} \Phi_{\tilde{\omega}_{j}}^{(1)},\tilde{R}_1)_{L^2(\mathbb{R})} +\partial_{t} \tilde{x}_{j} (\partial_{x} \Phi_{\tilde{\omega}_{j}}^{(1)},\tilde{R}_1)_{L^2(\mathbb{R})}).
					\end{split}
				\end{equation}
				and
				\begin{equation}\label{2equamod2}
					\begin{split}
						( \partial_{t} \varepsilon_{2}&,\tilde{R}_{2})_{L^2(\mathbb{R})}-( \partial_{x}\varepsilon_{1},\tilde{R}_{2})_{L^2(\mathbb{R})}  +(\partial_{xxx}\varepsilon_1,\tilde{R}_{2})_{L^2(\mathbb{R})} +(\partial_{x}\varphi(\var_{1},\tilde{R}_1),\tilde{R}_{2})_{L^2(\mathbb{R})} \\&=-\partial_{t} \tilde{\omega}_{j} (\partial_{\omega} \Phi_{\tilde{\omega}_{j}}^2,\tilde{R}_{2})_{L^2(\mathbb{R})}  +\partial_{t} \tilde{x}_{j} (\partial_{x} \Phi_{\tilde{\omega}_{j}}^2,\tilde{R}_{2})_{L^2(\mathbb{R})} -(\partial_{x xx} \tilde{R}_{1},\tilde{R}_{2})_{L^2(\mathbb{R})} ).
					\end{split}
				\end{equation}
				On the other hand, taking the inner product with $\varepsilon_1$ and $\varepsilon_2$ in \eqref{equamod} and considering that $(\varepsilon_{1},\tilde{R}_{1})_{L^2(\mathbb{R})}=0$, we have  
				\begin{equation}\label{eq1}
					\begin{aligned}
						-\left(\varepsilon_{1},  \partial_{t} \tilde{R}_{1}\right)_{L^2(\mathbb{R})}&+\left(\varepsilon_{1},\partial_{x} \tilde {R}_{2}\right)_{L^2(\mathbb{R})} =-\partial_{t} \tilde{\omega}_{j}\left(\varepsilon_{1},  \partial_{\omega} \Phi_{\tilde{\omega}_{j}}^{(1)}\right)_{L^2(\mathbb{R})}  +\partial_{t} \tilde{x}_{j}\left(\varepsilon_{1},  \partial_{x}\Phi_{\tilde{\omega}_{j}}^{(1)}\right)_ {L^2(\mathbb{R})}.
					\end{aligned}
				\end{equation}
				and
				\begin{equation}\label{eq2}
					\begin{aligned}
						-\left(\varepsilon_{2},  \partial_{t} \tilde{R}_{2}\right)_{L^2(\mathbb{R})}&+\left(\varepsilon_{2},\partial_{x} \tilde {R}_{1}\right)_{L^2(\mathbb{R})}\\ &=-\partial_{t} \tilde{\omega}_{j}\left(\varepsilon_{2},  \partial_{\omega} \Phi_{\tilde{\omega}_{j}}^2\right)_{L^2(\mathbb{R})}  +\partial_{t} \tilde{x}_{j}\left(\varepsilon_{1},  \partial_{x}\Phi_{\tilde{\omega}_{j}}^2\right)_ {L^2(\mathbb{R})}-(\varepsilon_2,\partial_{x xx}\tilde{R}_1)_ {L^2(\mathbb{R})}.
					\end{aligned}
				\end{equation}
				Since \[\left(\partial_{t}\varepsilon_{1}, \tilde{R}_{1}\right)_{L^2(\mathbb{R})}=-\left(\varepsilon_{ 1}, \partial_{t} \tilde{R}_{1}\right)_{L^2(\mathbb{R})},\qquad\left(\partial_{t}\varepsilon_{2}, \tilde{R}_{2}\right)_{L^2(\mathbb{R})}=-\left(\varepsilon_{ 2}, \partial_{t} \tilde{R}_{2}\right)_{L^2(\mathbb{R})},\]
    %(Simply take the derivative with respect to $t$ of the identity $(\varepsilon_{1},\tilde{R}_{1})_{L^2(\mathbb{R})}=0$) 
    then by combining \eqref{equamod2}-\eqref{eq2}
				we obtain 
				\begin{equation*}
					\begin{aligned}
						&-\partial_{t} \tilde{\omega}_{j} (\partial_{\omega} \Phi_{\tilde{\omega}_{j}}^{(1)},\tilde{R}_1)_{L^2(\mathbb{R})} +\partial_{t} \tilde{x}_{j} (\partial_{x} \Phi_{\tilde{\omega}_{j}}^{(1)},\tilde{R}_1)_{L^2(\mathbb{R})}) -\partial_{t} \tilde{\omega}_{j} (\partial_{\omega} \Phi_{\tilde{\omega}_{j}}^2,\tilde{R}_{2})_{L^2(\mathbb{R})} \\
						&\quad +\partial_{t} \tilde{x}_{j} (\partial_{x} \Phi_{\tilde{\omega}_{j}}^2,\tilde{R}_{2})_{L^2(\mathbb{R})}  +(\partial_{x xx} \tilde{R}_{1},\tilde{R}_{2})_{L^2(\mathbb{R})} )\\
						&\quad -(\partial_{xxx}\varepsilon_1,\tilde{R}_{2})_{L^2(\mathbb{R})} -(\partial_{x}\varphi(\var_{1},\tilde{R}_1),\tilde{R}_{2})_{L^2(\mathbb{R})}\\
						&=-\partial_{t} \tilde{\omega}_{j}\left(\varepsilon_{1},  \partial_{\omega} \Phi_{\tilde{\omega}_{j}}^{(1)}\right)_{L^2(\mathbb{R})} -\partial_{t} \tilde{\omega}_{j}\left(\varepsilon_{2},  \partial_{\omega} \Phi_{\tilde{\omega}_{j}}^2\right)_{L^2(\mathbb{R})}+\partial_{t} \tilde{x}_{j}\left(\varepsilon_{1},  \partial_{x}\Phi_{\tilde{\omega}_{j}}^2\right)_ {L^2(\mathbb{R})} -(\varepsilon_2,\partial_{x xx}\tilde{R}_1)_ {L^2(\mathbb{R})}\\&\quad+\partial_{t} \tilde{x}_{j}\left(\varepsilon_{1},  \partial_{x}\Phi_{\tilde{\omega}_{j}}^{(1)}\right)_ {L^2(\mathbb{R})}.
					\end{aligned}  
				\end{equation*}
				Therefore, since $\Phi_{\omega}$ is even and $\partial_{x}\Phi_{\omega}$ is old, then
				\begin{equation*}
					\begin{aligned}
						&-\partial_{t} \tilde{\omega}_{j} \left((\partial_{\omega} \Phi_{\tilde{\omega}_{j}}^{(1)},\tilde{R}_1)_{L^2(\mathbb{R})} +(\partial_{\omega} \Phi_{\tilde{\omega}_{j}}^2,\tilde{R}_{2})_{L^2(\mathbb{R})} -\left(\varepsilon_{1},  \partial_{\omega} \Phi_{\tilde{\omega}_{j}}^{(1)}\right)_{L^2(\mathbb{R})} -\left(\varepsilon_{2},  \partial_{\omega} \Phi_{\tilde{\omega}_{j}}^2\right)_{L^2(\mathbb{R})}\right)\\
						&\quad +\partial_{t} \tilde{x}_{j} \left(\left(\varepsilon_{1},  \partial_{x}\Phi_{\tilde{\omega}_{j}}^2\right)_ {L^2(\mathbb{R})}+\left(\varepsilon_{1},  \partial_{x}\Phi_{\tilde{\omega}_{j}}^{(1)}\right)_ {L^2(\mathbb{R})}\right)\\
						&=-(\partial_{x xx} \tilde{R}_{1},\tilde{R}_{2})_{L^2(\mathbb{R})} )  +(\partial_{xxx}\varepsilon_1,\tilde{R}_{2})_{L^2(\mathbb{R})} +(\partial_{x}\varphi(\var_{1},\tilde{R}_1),\tilde{R}_{2})_{L^2(\mathbb{R})}+(\varepsilon_2,\partial_{x xx}\tilde{R}_1)_ {L^2(\mathbb{R})}.
					\end{aligned}  
				\end{equation*}
    
				This last equation can be written as
\[
				(M_{11}(t)+a_{11}(t)) \partial_{t} \tilde{\omega}_{j}+a_{12}(t)\partial_{t} \tilde{x}_{j}=b_{1}(t),
\]
				where
\[
a_{11}(t):=\left(\left(\varepsilon_{1},  \partial_{\omega} \tilde{R}_1\right)_{L^2(\mathbb{R})} +\left(\varepsilon_{2},  \partial_{\omega} \tilde{R}_2\right)_{L^2(\mathbb{R})}\right),
\] 
\[
a_{12}(t):=\left(\left(\varepsilon_{1},  \partial_{x}\tilde{R}_2\right)_ {L^2(\mathbb{R})}+\left(\varepsilon_{1},  \partial_{x}\tilde{R}_1\right)_ {L^2(\mathbb{R})}\right) \, \,(=0),
\]
\[
M_{11}(t)=(\partial_{\omega} \tilde{R}_1,\tilde{R}_1)_{L^2(\mathbb{R})} +(\partial_{\omega} \tilde{R}_2,\tilde{R}_{2})_{L^2(\mathbb{R})}
\]
and
\[
\begin{aligned} b_1(t)&= (\partial_{xxx}\varepsilon_1,\tilde{R}_{2})_{L^2(\mathbb{R})} +(\partial_{x}\varphi(\var_{1},\tilde{R}_1),\tilde{R}_{2})_{L^2(\mathbb{R})}+(\varepsilon_2,\partial_{x xx}\tilde{R}_1)_ {L^2(\mathbb{R})}.
				\end{aligned}
\] 
It is clear that $\left|a_{11}(t)\right |+\left|a_{12}(t)\right| \leq C\|\mathbf{\var}(t)\|_{H}+e^{-3 \omega_{\star}^{\frac32}t}$ for all $t \in\left[t_{0}, T^{n}\right]$. To estimate the coefficient $b_{1}(t)$, we observe that all terms can be bounded by
				$\|\varepsilon_{1}(t)\|_{H^{1}(\R)}$, except for the term  $(\left|\tilde{R}_{1}\right|^{2p_1} \tilde{R}_{1},\tilde{R}_{2})_{L^{2}(\mathbb{R})}$, which appears in the last term of the definition of $\varphi$. We can treat this term as follows: since $\tilde{R}_j$ is bounded, we can assert that
\[
				(\left|\tilde{R}_{1}\right|^{2p_1} \tilde{R}_{1},\tilde{R}_{2})_{L^{2}(\mathbb{R})}\leq C\int_{\mathbb{R}} \left|\tilde{R}_{1}\right|\left|\tilde{R}_{2}\right|\, d x \leq C\int_{\mathbb{R}} e^{-\frac{3}{8} \sqrt{\tilde{\omega}_{j}}\left|x-\omega_{j }t-\tilde{x}_{j}\right|} e^{-\frac{3}{8} \sqrt{\tilde{\omega}_{k}}\left|x-\omega_{k}t-\tilde{x}_{k}\right|}  \,dx
\]
				From \eqref{segundoboostrap}, we can assume
				$\tilde{\omega}_{j}(t)\geq \frac{1}{2}\omega_{j}$ (for $T_0$ 
				sufficiently large). Then 
				\[ 
				\begin{aligned}
					(\left|\tilde{R}_{1}\right|^{2p_1} \tilde{R}_{1},\tilde{R}_{2})_{L^{2}(\mathbb{R})}
					&\lesssim\int_{\mathbb{R}} e^{-3\sqrt{1-\omega_{\star}^2}\left\{\left|x-\omega_{j }t-\tilde{x}_{j}\right|+\left|x-\omega_{k}t-\tilde{x}_{k}\right|\right\} } e^{-3 \sqrt{1-{\omega}_{\star}^2}\left\{\left|\omega_{j }t-\omega_{k}t\right|\right\} }  \,dx\\
					&\lesssim e^{-3\omega_{\star}^{\frac32}t}\int_{\mathbb{R}} e^{-3\sqrt{1-{\omega}_{\star}^2}\left\{\left|x-\omega_{j }t-\tilde{x}_{j}\right|+\left|x-\omega_{k}t-\tilde{x}_{k}\right|\right\} } \,dx \lesssim e^{-3 \omega_{\star}^{\frac32}t}.
				\end{aligned}
				\]
				Therefore,
				\[\left|b_{1}(t)\right|\lesssim \|\var(t)\|_{H}+ e^{-3 \omega_{\star}^{\frac32}t} .\]
				Repeating the arguments above, but now taking the inner product in \eqref{ecuacionLN2} with
				$ \partial_{x} \tilde{R}_{1} $, $ \partial_{x} \tilde{R}_{2} $ and in \eqref{equamod} by $ \partial_{x} \var_1 $, $ \partial_{x} \varepsilon_{2} $, we derive
				
	\[
				a_{21}(t)\partial_{t} \tilde{\omega}_{j}+a_{22}(M_{22}(t)+a_{22}(t)) \partial_{t} \tilde{x}_{j}=b_{2}(t),
				\]
				where
		\[a_{21}(t):=\left(\left(\partial_x \varepsilon_{1},  \partial_{\omega} \tilde{R}_1\right)_{L^2(\mathbb{R})} +\left(\partial_x\varepsilon_{2},  \partial_{\omega} \tilde{R}_2\right)_{L^2(\mathbb{R})}\right),\]
		\[ a_{22}(t):=-\left(\left(\partial_x\varepsilon_{1},  \partial_{x}\tilde{R}_2\right)_ {L^2(\mathbb{R})}+\left(\partial_x\varepsilon_{1},  \partial_{x}\tilde{R}_1\right)_ {L^2(\mathbb{R})}\right),\quad M_{22}(t)= \left\|\partial_x\tilde{R}_2\right\|_{L^2(\R)}^2+\left\|\partial_x\tilde{R}_1\right\|_{L^2(\R)}^2\]
 and
\[
\begin{aligned} b_2(t)&=(\partial_{xxx}\tilde{R}_{1},\partial_{x}\tilde{R}_{2})_{L^2(\mathbb{R})} - (\partial_{x}\varepsilon_2,\partial_{x}\tilde{R}_{1})_{L^2(\mathbb{R})} +(\partial_{xxx}\varepsilon_1,\partial_{x}\tilde{R}_{2})_{L^2(\mathbb{R})} \\
					&\quad+(\partial_{x}\varphi(\var_{1},\partial_{x}\tilde{R}_1),\tilde{R}_{2})_{L^2(\mathbb{R})}-(\partial_{x}\varepsilon_2,\partial_{x xx}\tilde{R}_1)_ {L^2(\mathbb{R})},
				\end{aligned}
\] 
				where 
				$\left|a_{21}(t)\right |+\left|a_{22}(t)\right|+\left|b_{2}(t)\right| \leq C\|\mathbf{\var}(t)\|_{H}+e^{-3 \omega_{\star}^{\frac32}t}$ for all  $t \in\left[t_{0}, T^{n}\right]$.
				Therefore, we obtain a system of equations that can be written as
				\begin{equation}\label{matiguality}
					(M(t)+A(t)) X(t)=B(t),   
				\end{equation}
				where $X(t)=\left(\begin{array}{c} \partial_{t} \tilde{\omega}_{j}(t) \\ \omega_j-\partial_{ t} \tilde{x}_{j}(t)\end{array}\right)$
				and
	\[
				M(t)=\left(\begin{array}{ccc}
					(\partial_{\omega} \tilde{R}_1,\tilde{R}_1)_{L^2(\mathbb{R})} +(\partial_{\omega} \tilde{R}_2,\tilde{R}_{2})_{L^2(\mathbb{R})}  & 0 \\
					0 & \left\|\partial_x\tilde{R}_2\right\|_{L^2(\R)}^2+\left\|\partial_x\tilde{R}_1\right\|_{L^2(\R)}^2
				\end{array}\right).
\]
				As, for every $ t \in \left [t_{0}, T^{n} \right]$,  $M(t)$
				is an invertible matrix, then from \eqref{matiguality} we obtain
				\begin{equation*}
					\begin{aligned}
						\|X(t)\|\leq \|M^{-1}(t)B(t)\|+\|M^{-1}(t)A(t)X(t)\|,
					\end{aligned}
				\end{equation*}
				where $M^{-1}(t)$ represents the inverse of the matrix $M(t)$. Now, since $\tilde{R}_{1}$ and $\tilde{R}_{2}$ are bounded, then
				\begin{equation}\label{matdesi}
					\begin{aligned}
						\|X(t)\|\leq C\|B(t)\|+C\|A(t)X(t)\|.
					\end{aligned}
				\end{equation}
				Using the inequality \eqref{segundoboostrap}, it follows that for $T_{0}$ sufficiently large, $\|A(t)\|\leq \frac{1}{2C}$. Thus, from  \eqref{matdesi}
				\begin{equation*}
					\begin{aligned}
						\frac{1}{2}\|X(t)\|\leq C\|B(t)\|.
					\end{aligned}
				\end{equation*}
				Finally, since for all $t\in[t_{0},T^{n}]$, we have $\|B(t)\|\leq C\left(\|\var(t)\|_{ H }+e^{-3\omega_{\star}^{\frac32}t}\right)$, and then
				\begin{equation*}
					\begin{aligned}
						\|X(t)\|\leq C\left(\|\var(t)\|_{ H }+e^{-3\omega_{\star}^{\frac32}t}\right).
					\end{aligned}
				\end{equation*}
				Therefore, we obtain the desired result.
			\end{proof}
			\begin{remark}
				Using the fact that $\mathbf{u}^{n}(T^{n})=\mathbf{R}(T^{n})$ and the uniqueness of the decomposition at time $t=T^{n}$, we necessarily have
\[
\var(T^{n})=0, \, \tilde{\mathbf{R}}(T^{n})=\mathbf{R}(T^{n}), \, \tilde{\omega}_{j}(T^{n})=\omega_{j}, \, \tilde{x}_{j}(T^{n})=x_{j}, ,\,\, \text{with \, $j=1,2,\ldots,N$}.\]
			\end{remark}
			
\subsection{Control Estimates for Solitons}\label{localiz}
			
			In this subsection, we will obtain some control estimates for the solitary waves that compose $\mathbf{u}$. For this purpose, we will use an argument that involves localizing certain quantities.  Therefore, we can assume, without loss of generality, that the propagation speeds of the solitons satisfy $\omega_k \neq \omega_
			m$ for all $k \neq m$. In fact, we will assume that  $ \omega_{1}<\omega_{2}<\cdots<\omega_{N}.$ 
			Let $\psi: \R \rightarrow \R$ be a $C^{\infty}$ cutoff function such that $\psi(s)=0$ for $s<-1$, $\psi(s) \in [0,1]$ if $s \in [-1,1]$, and $\psi(s)=1$ for $s>1$. As in \cite{cote}, we define
$ m_{j}:= (\omega_{j-1}+\omega_{j})/2, $
			Now, we introduce the following cutoff functions, for all $(x,t)\in \R \times\R$, 
	\[
			\begin{array}{ll}
				\psi_{1}(x,t):=1, & \psi_{j}(x,t):=\psi\left(\frac{1}{\sqrt{t}}\left(x -m_{j} t\right)\right), \, \,\text { for} \,\,j=2, \ldots, N.
			\end{array}
			\]
			Next, we define
	$ 
			\phi_{j}=\psi_{j}-\psi_{j+1} $  for   $j=1, \ldots, N-1, $ and $ \phi_{N}=\psi_{N }.
$ 
			By the definition of $\phi_j$, we have ${\rm supp}(\phi_{1})\subset (-\infty,\sqrt{t}+m_{2}t]$, ${\rm supp}(\phi_{N})\subset [-\sqrt{t}+m_{N}t,\infty)$ and ${\rm supp}(\phi_{j})\subset [-\sqrt{t}+m_jt,\sqrt{t}+m_jt]$ for all $j=2,\ldots,N-1$.
			
			Finally, for $j=1,2,\ldots,N$, we define
   the localized  momentum and energy
			\begin{equation}\label{massaloca}
				\mathcal{M}_j(\bu)=\frac{1}{2}\int_{\R}u_1 u_2\phi_{j}\,dx, 
			\end{equation}
			\begin{equation}\label{energialoca}
				\mathcal{E}_j(\bu)=\frac{1}{2}\int_{\R}\left(|u_1|^2+|u_2|^2+|\partial_{x}u_1|^2-2\Psi(u_1)\right)\phi_{j}\,dx.  
			\end{equation}
			And we denote by $\mathcal{S}_{\text{loc}}^{j}$ the localized action defined, for all $\bw \in  H $, by
			\begin{equation}\label{definicaoSj}
				\mathcal{S}_{\operatorname{loc}}^{j}(\bw,t):=\mathcal{E}_j(\bw,t)+\omega_j\mathcal{M}_j (\bw,t).
			\end{equation}
			%and by $\mathcal{H}_{\text{loc}}^{j}$ the localized linearized operator, defined for all $\bw \in  H $, by
			%\begin{equation}\label{jformaquadraticaH}
			%\begin{aligned}
			%\mathcal{H}_{\operatorname{loc}}^{j}(\bw,t) 
			%&= \|\partial_{x} w_{1}\phi_{j}^{1/2}\|_{L^2(\R)}^2+ \|w_{1}\phi_{j}^{1/2}\|_{L^2(\R)}^2+\|w_{2}\phi_{j}^{1/2}\|_{L^2(\R)}^2 \,dx+2\int_{\R}w_1 w_2 \phi_{j}\,dx \\
			%&\quad-(2p+1)\int_{\R}|\Phi_{\omega_j}^1|^{2p+1}\phi_{j}\,dx.
			%\end{aligned}
			%\end{equation}
	We also define an action-type functional for multi-solitons by
			\begin{equation}\label{Sdefin}
				\mathcal{S} (\bw,t):=\sum_{j=1}^{N} \mathcal{S}_{\operatorname{loc}}^{j}(\bw,t).    
			\end{equation}
		The result we are about to establish will allow us to control the decay of the sum of solitons, enabling the functional $\mathcal{S}$  to depend on the individual solitons rather than just their sum.
			\begin{lemma}\label{solitons}
				Let $m, n\in \{1,2\}$. There exists $C>0$ such that for all $t$ sufficiently large and for all $j\neq k\in {1, \ldots, N}$,
		\[
				\begin{gathered}
					\int_{\R}\left(\left|R_{k}^{(m)}(t)\right|+\left|\partial_{x} R_{k}^{(m)}(t)\right|\right) \phi_{j}(x,t) \,d x\lesssim e^{-4 \omega_{\star}^{\frac32}t}, \\
					\int_{\R}\left(\left|R_{k}^{(m)}(t)\right|+\left|\partial_{x} R_{k}^{(m)}(t)\right|\right)\left(1-\phi_{k}( x,t)\right)\,dx\lesssim e^{-4\omega_{\star}^{\frac32}  t},\\
					\int_{\R}\left(|R_{k}^{(m)}(t)|+|\partial_{x} R_{k}^{(m)}(t)|\right)\left(|R_{j}^{(n)}(t)|+|\partial_{x} R_{j}^{(n)}(t)|\right)\,dx\lesssim  e^{-4\omega_{\star}^{\frac32}  t}.
				\end{gathered}
	\]
			\end{lemma}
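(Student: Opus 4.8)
The plan is to reduce all three bounds to two ingredients: the exponential localization of each soliton about its own center $x=\omega_k t+x_k$, and the fact that, for large $t$, the relevant cutoff and the competing solitons live at spatial distance $\gtrsim\omega_\star t$ from that center. First I would record the decay input. Writing $\Phi_{\omega_k}(y)=(1-\omega_k^2)^{1/2p}\Phi(\sqrt{1-\omega_k^2}\,y)$ and invoking Proposition \ref{decaimentoquadratico}(ii), together with $R_k^{(2)}=-\omega_k R_k^{(1)}$, gives for $m=1,2$ a uniform bound
\[
|R_k^{(m)}(x,t)|+|\partial_x R_k^{(m)}(x,t)|\lesssim e^{-\gamma_k|x-\omega_k t-x_k|},\qquad \gamma_k:=\tfrac12\sqrt{1-\omega_k^2}.
\]
By the definition of $\omega_\star$ one has $1-\omega_k^2\geq 256\,\omega_\star$, hence $\gamma_k\geq 8\sqrt{\omega_\star}$; likewise $|\omega_j-\omega_k|\geq 256\,\omega_\star$ and $\omega_k-m_k=(\omega_k-\omega_{k-1})/2\geq 128\,\omega_\star$, $m_{k+1}-\omega_k\geq 128\,\omega_\star$. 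These are the only places the constant $\tfrac1{256}$ is used.

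Next I would make the separation precise and dispatch the first two integrals together. Since $m_k<\omega_k<m_{k+1}$ (with the obvious modification at $k=1,N$, where one midpoint is absent), the center $\omega_k t+x_k$ lies, for $t$ large, well inside the plateau $\{\phi_k=1\}$, at distance $\geq(\omega_k-m_k)t-\sqrt t-|x_k|\geq 64\,\omega_\star t$ from $\partial\{\phi_k=1\}$. Concretely, for $j\neq k$ the support of $\phi_j$ lies entirely on one side of the center at distance $\geq 64\,\omega_\star t$ (using the monotonicity of the midpoints, $m_{j+1}\leq m_k$ when $j<k$ and $m_j\geq m_{k+1}$ when $j>k$, and absorbing the $\sqrt t$-wide transition into the $O(t)$ gap); the same holds for ${\rm supp}(1-\phi_k)$, which vanishes on the plateau. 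In both cases the integrand is supported in $\{|x-\omega_k t-x_k|\geq 64\,\omega_\star t\}$, so
\[
\int_{|y|\geq 64\omega_\star t}e^{-\gamma_k|y|}\,dy\lesssim \gamma_k^{-1}e^{-64\omega_\star\gamma_k t}\lesssim e^{-4\omega_\star^{3/2}t},
\]
the last step because $64\,\omega_\star\gamma_k\geq 512\,\omega_\star^{3/2}\gg 4\,\omega_\star^{3/2}$, so the exponential gain swallows both the prefactor $\gamma_k^{-1}$ and the surplus rate once $t$ is large.

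For the third, soliton–soliton, integral I would use the elementary inequality that for $a,b>0$ and centers at distance $D$, $\int_\R e^{-a|x-c_1|-b|x-c_2|}\,dx\lesssim e^{-\tfrac12\min(a,b)D}$; this follows from $a|x-c_1|+b|x-c_2|\geq \tfrac12\min(a,b)D+\tfrac12\bigl(a|x-c_1|+b|x-c_2|\bigr)$, the first term from $|x-c_1|+|x-c_2|\geq D$ and the second leaving an integrable exponential. Taking $a=\gamma_k$, $b=\gamma_j$ and $D=|(\omega_k-\omega_j)t+(x_k-x_j)|\geq 256\,\omega_\star t-|x_k-x_j|\geq 128\,\omega_\star t$ for $t$ large yields $\lesssim e^{-\frac12\cdot 8\sqrt{\omega_\star}\cdot 128\omega_\star t}=e^{-512\omega_\star^{3/2}t}\lesssim e^{-4\omega_\star^{3/2}t}$. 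The only delicate point throughout is the bookkeeping of constants: one must verify that (decay rate)$\times$(separation) always exceeds $4\,\omega_\star^{3/2}t$, which is exactly what the factor $\tfrac1{256}$ in $\omega_\star$ secures, while the $\sqrt t$-wide transition regions are harmless because they are lower order than the $O(t)$ spatial gaps. I expect this constant-tracking, rather than any analytic difficulty, to be the main thing to get right.
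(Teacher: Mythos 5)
Your proof is correct, and it rests on the same two ingredients as the paper's: exponential decay of the soliton profiles (Proposition \ref{decaimentoquadratico}) and the spatial separation guaranteed by the factor $\tfrac{1}{256}$ in the definition of $\omega_{\star}$. The execution, however, is genuinely more streamlined. The paper proves the first estimate by a case-by-case analysis ($k=j+1$, $k>j+1$, $j>k$, plus the boundary cases $j=1,N$), each case carried out by explicit changes of variables inside the integrals and by absorbing factors like $2\sqrt{t}\,e^{\frac23\sqrt{1-\tilde\omega_k^2}\sqrt t}$ into a small loss of the exponential rate; it then obtains the second estimate from the identity $1-\phi_k=\sum_{j\neq k}\phi_j$ and asserts the third ``in the same way.'' You replace all of this with one unified observation -- for $j\neq k$ the support of $\phi_j$ (and of $1-\phi_k$) lies at distance $\geq 64\,\omega_\star t$ from the center $\omega_k t+x_k$, since the $\sqrt t$-wide transition regions are lower order than the $O(\omega_\star t)$ gaps -- together with the standard two-center inequality $\int_\R e^{-a|x-c_1|-b|x-c_2|}\,dx\lesssim \min(a,b)^{-1}e^{-\frac12\min(a,b)D}$ for the soliton--soliton term. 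This buys uniformity (no case splitting, boundary indices handled automatically) and transparent constant bookkeeping ($512\,\omega_\star^{3/2}$ versus the required $4\,\omega_\star^{3/2}$), at no loss of content. One further point in your favor: you work directly with $R_k^{(m)}$, which is what the statement asserts, whereas the paper's proof silently switches to the modulated waves $\tilde R_k^{(m)}$ and uses the modulation bound \eqref{segundoboostrap}; your version avoids that mismatch.
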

			\begin{proof}
				Let us fix $m=1$, the case $m=2$ follows similarly. Indeed, using Proposition \ref{decaimentoquadratico}, we obtain
				\begin{equation}\label{1interaccion}
					\begin{aligned}
						\int_{\R}\left|\tilde{R}_{k}^{(1)}(t)\right| \phi_{j}(x,t) \,d x &\leq \int_{\R}e^{-\frac{2}{3} \sqrt{1-\tilde{\omega}_{k}^2}| x-\omega_k t-x_{k} |} \phi_{j}(x,t) \,d x \\
						& \leq C \int_{\R} e^{-\frac{2}{3} \sqrt{1-\tilde{\omega}_{k}^2}\left|x-\omega_k t\right|} \phi_{j}(x,t) \,d x.
					\end{aligned}
				\end{equation}
				Suppose $k>j$, with $j\in {2, \ldots, N-1}$.
				If $k=j+1$, using the support properties of $\phi_j$ and \eqref{1interaccion}, we have
\[
				\begin{aligned}
					\int_{\R}\left|\tilde{R}_{k}^{(1)}(t)\right| \phi_{j}(x,t) \,d x &\lesssim \int_{\R}e^{-\frac{2}{3} \sqrt{1-\tilde{\omega}_{k}^2}| x-\omega_{j+1} t |} \phi_{j}(x,t) \,d x \lesssim \int_{-\sqrt{t}+m_{j}t}^{\sqrt{t}+m_{j+1}t}e^{-\frac{2}{3} \sqrt{1-\tilde{\omega}_{k}^2}| x-\omega_{j+1} t |} \, d x\\
					&\lesssim \int_{-\sqrt{t}+m_{j+1}t}^{\sqrt{t}+m_{j+1}t}e^{-\frac{2}{3} \sqrt{1-\tilde{\omega}_{k}^2}| x-\omega_{j+1} t |} \, d x+  \int_{-\infty}^{-\sqrt{t}+m_{j+1}t}e^{-\frac{2}{3} \sqrt{1-\tilde{\omega}_{k}^2}| x-\omega_{j+1} t |} \, d x\\
					&\lesssim \int_{-\sqrt{t}+m_{j+1}t}^{\sqrt{t}+m_{j+1}t}e^{-\frac{2}{3} \sqrt{1-\tilde{\omega}_{k}^2}| x-m_{j+1}t+m_{j+1}t-\omega_{j+1} t |} \, d x+  \int_{-\infty}^{-\sqrt{t}-\frac{\omega_{j+1}-\omega_{j}}{2}t}e^{-\frac{2}{3} \sqrt{1-\tilde{\omega}_{k}^2}| x |} \, d x\\
					&\lesssim\int_{-\sqrt{t}+m_{j+1}t}^{\sqrt{t}+m_{j+1}t}e^{-\frac{2}{3} \sqrt{1-\tilde{\omega}_{k}^2}\left| x-m_{j+1}t+\frac{\omega_{j}-\omega_{j+1}}{2}t \right|} \, d x+  \int_{-\infty}^{-\sqrt{t}-\frac{\omega_{j+1}-\omega_{j}}{2}t}e^{\frac{2}{3} \sqrt{1-\tilde{\omega}_{k}^2} x } \, d x.
				\end{aligned}
\]
				Therefore, for $k=j+1$, we get
				\begin{equation}\label{estimative0}
					\begin{aligned}					\int_{\R}\left|\tilde{R}_{k}^{(1)}(t)\right| \phi_{j}(x,t) \,d x&\lesssim e^{-\frac{1}{3}\sqrt{1-\tilde{\omega}_{k}^2}\omega_{\star}t}\int_{-\sqrt{t}+m_{j+1}t}^{\sqrt{t}+m_{j+1}t}e^{\frac{2}{3} \sqrt{1-\tilde{\omega}_{k}^2}\left| x-m_{j+1}t \right|} \, d x+  e^{-\frac{2}{3} \sqrt{1-\tilde{\omega}_{k}^2} \left(\sqrt{t}+\frac{\omega_{\star}}{2}t\right)}\\
						&\lesssim e^{-\frac{1}{3}\sqrt{1-\tilde{\omega}_{k}^2}\omega_{\star}t}\int_{-\sqrt{t}}^{\sqrt{t}}e^{\frac{2}{3} \sqrt{1-\tilde{\omega}_{k}^2}\left| x \right|} \, d x+  e^{-\frac{2}{3} \sqrt{1-\tilde{\omega}_{k}^2} \left(\sqrt{t}+\frac{\omega_{\star}}{2}t\right)}\\
						&\lesssim e^{-\frac{1}{3}\sqrt{1-\tilde{\omega}_{k}^2}\omega_{\star}t}\left(2\sqrt{t}\right)e^{\frac{2}{3} \sqrt{1-\tilde{\omega}_{k}^2}\sqrt{t}} +  e^{-\frac{2}{3} \sqrt{1-\tilde{\omega}_{k}^2} \left(\sqrt{t}+\frac{\omega_{\star}}{2}t\right)}\\
						&\lesssim e^{-\frac{1}{3}\sqrt{1-\tilde{\omega}_{k}^2}\omega_{\star}t}\left(\left(2\sqrt{t}\right)e^{\frac{2}{3} \sqrt{1-\tilde{\omega}_{k}^2}\sqrt{t}}+e^{-\frac{2}{3} \sqrt{1-\tilde{\omega}_{k}^2} \sqrt{t}}\right).
					\end{aligned}
				\end{equation}
				Since, for $t$ sufficiently large, 
				\[
				\left(2\sqrt{t}\right)e^{\frac{2}{3} \sqrt{1-\tilde{\omega}_{k}^2}\sqrt{t}}+e^{-\frac{2}{3} \sqrt{1-\tilde{\omega}_{k}^2} \sqrt{t}}\leq e^{\frac{1}{12}\sqrt{1-\tilde{\omega}_{k}^2}\omega_{\star}t},
				\]
				then, from \eqref{estimative0}, we have 
				\begin{equation}\label{estimative1}
					\int_{\R}\left|\tilde{R}_{k}^{(1)}(t)\right| \phi_{j}(x,t) \,d x
					\leq e^{-\frac{1}{4}\sqrt{1-\tilde{\omega}_{k}^2}\omega_{\star}t}.
				\end{equation}
				Now, if $k>j+1$, using the support properties of $\phi_{j}$ and \eqref{1interaccion}, we obtain
	\[
				\begin{aligned}					\int_{\R} \left|\tilde{R}_{k}^{(1)}(t)\right| \phi_{j}(x,t) \,d x &\lesssim \int_{\R}e^{-\frac{2}{3} \sqrt{1-\tilde{\omega}_{k}^2}| x-\omega_k t |} \phi_{j}(x,t) \,d x\\
					&= \int_{\R} e^{-\frac{1}{3} \sqrt{1-\tilde{\omega}_{k}^2}| x-\omega_{k} t |}e^{-\frac{1}{3} \sqrt{1-\tilde{\omega}_{k}^2}| x-\omega_k t |} \phi_{j}(x,t) \,d x\\
					&=  \int_{\R}e^{-\frac{1}{3} \sqrt{1-\tilde{\omega}_{k}^2}\left|x-m_{j} t+m_{j} t-\omega_k t \right|} e^{-\frac{1}{3} \sqrt{1-\tilde{\omega}_{k}^2}| x-\omega_k t |} \phi_{j}(x,t) \,d x\\
					&\lesssim \int_{\R}e^{-\frac{1}{3} \sqrt{1-\tilde{\omega}_{k}^2}\left|x-m_{j} t+\frac{\omega_{j-1}+\omega_{j}}{2} t-\omega_k t\right|} e^{-\frac{1}{3} \sqrt{1-\tilde{\omega}_{k}^2}| x-\omega_k t |} \phi_{j}(x,t) \,d x
					\\
					& =  \int_{\R}e^{-\frac{1}{3} \sqrt{1-\tilde{\omega}_{k}^2}\left|x-m_{j} t+\frac{\omega_{j-1}-\omega_k}{2} t+\frac{\omega_{j}-\omega_k}{2} \right|} e^{-\frac{1}{3} \sqrt{1-\tilde{\omega}_{k}^2}| x-\omega_k t |} \phi_{j}(x,t) \,d x\\
					&\lesssim e^{-\frac{1}{3} \sqrt{1-\tilde{\omega}_{k}^2}\frac{| \omega_{j}-\omega_k  |t}{2}}\int_{-\sqrt{t}+m_{j}t}^{\sqrt{t}+m_{j+1}t}e^{\frac{1}{3} \sqrt{1-\tilde{\omega}_{k}^2}\left|x-\frac{\omega_{j}+\omega_k}{2} t \right|} e^{-\frac{1}{3} \sqrt{1-\tilde{\omega}_{k}^2}| x-\omega_k t |} \,d x.
				\end{aligned}
\]
				Therefore, for $k>j+1$, 
\[
				\begin{aligned}
					\int_{\R}&\left|\tilde{R}_{k}^{(1)}(t)\right| \phi_{j}(x,t) \,d x\\
					&\lesssim e^{-\frac{1}{6} \sqrt{1-\tilde{\omega}_{k}^2}\omega_{\star}t}\int_{-\sqrt{t}+\frac{\omega_{j-1}-\omega_k}{2}t}^{\sqrt{t}+\frac{\omega_{j+1}-\omega_k}{2}t}e^{\frac{1}{3} \sqrt{1-\tilde{\omega}_{k}^2}\left|x \right|} e^{-\frac{1}{3} \sqrt{1-\tilde{\omega}_{k}^2}\left| x+\frac{\omega_{j}-\omega_k}{2}t \right|} \,d x\\
					&\lesssim e^{-\frac{1}{6} \sqrt{1-\tilde{\omega}_{k}^2}\omega_{\star}t}\left(\int_{-\sqrt{t}+\frac{\omega_{j-1}-\omega_k}{2}t}^{\sqrt{t}+\frac{\omega_{j+1}-\omega_k}{2}t}e^{\frac{1}{3} \sqrt{1-\tilde{\omega}_{k}^2}\left|x \right|}\,dx\right)^{1/2}\left(\int_{\R} e^{-\frac{1}{3} \sqrt{1-\tilde{\omega}_{k}^2}\left| x+\frac{\omega_{j}-\omega_k}{2}t \right|} \,d x\right)^{1/2}.
				\end{aligned}
	\]
				As $k>j+1$, we have $\omega_k>\omega_{j+1}$, and therefore
				\begin{equation}\label{estimative2}
	 \begin{aligned}						\int_{\R}&\left|\tilde{R}_{k}^{(1)}(t)\right| \phi_{j}(x,t) \,d x \\
						&\lesssim e^{-\frac{1}{3} \sqrt{1-\tilde{\omega}_{k}^2}\omega_{\star}t}\left(\int_{-\sqrt{t}+\frac{\omega_{j-1}-\omega_k}{2}t}^{\sqrt{t}+\frac{\omega_{j+1}-\omega_k}{2}t}e^{-\frac{1}{3} \sqrt{1-\tilde{\omega}_{k}^2}x }\,dx\right)^{1/2}\left(\int_{\R} e^{-\frac{1}{3} \sqrt{1-\tilde{\omega}_{k}^2}\left| x+\frac{\omega_{j}-\omega_k}{2}t \right|} \,d x\right)^{1/2}\\
						&\lesssim e^{-\frac{1}{3} \sqrt{1-\tilde{\omega}_{k}^2} \omega_{\star} t}.  
					\end{aligned}
				\end{equation}
				Thus, from \eqref{estimative1} and \eqref{estimative2}, we have that, for $k>j$,
				\begin{equation*}
					\begin{aligned}
						\int_{\R}\left|\tilde{R}_{k}^{(1)}(t)\right| \phi_{j}(x,t) \,d x \leq C e^{-\frac{1}{4} \sqrt{1-\tilde{\omega}_{k}^2} \omega_{\star} t} \leq Ce^{-4 \omega_{\star}^{\frac32} t},
					\end{aligned}    
				\end{equation*}
				whenever $t$ is sufficiently large.
				Then, from \eqref{segundoboostrap}, for $T_0$ large enough, $|\tilde{\omega}_k|^2\leq |\omega_k|^2,$
				hence
				for $k>j$,
				\begin{equation}\label{2interaccion}
					\begin{aligned}
						\int_{\R}\left|\tilde{R}_{k}^{(1)}(t)\right| \phi_{j}(x,t) \,d x \leq C e^{-\frac{1}{4} \sqrt{1-\tilde{\omega}_{k}^2} \omega_{\star} t} \leq Ce^{-4 \sqrt{\omega_{\star}} \omega_{\star} t}.
					\end{aligned}    
				\end{equation}
				Now, if $j>k$, using the support properties of $\phi_{j}$ and \eqref{1interaccion}, we have
\[
				\begin{aligned}
					\int_{\R}\left|\tilde{R}_{k}^{(1)}(t)\right| \phi_{j}(x,t) \,d x 
					\lesssim \int_{\R} e^{-\frac{2}{3} \sqrt{1-\tilde{\omega}_{k}^2}\left|x-\omega_k t\right|} \phi_{j}(x,t) \,d x = \int_{-\sqrt{t}+m_{j} t}^{\sqrt{t}+m_{j+1} t} e^{-\frac{2}{3} \sqrt{1-\tilde{\omega}_{k}^2}\left|x-\omega_kt\right|} \,d x.
				\end{aligned}\]
				Since $j>k$, $\omega_{j}>\omega_k$, so from the above estimate, it follows that
				\begin{equation}\label{cassoj>k}
					\begin{aligned}
						&\int_{\R}\left|\tilde{R}_{k}^{(1)}(t)\right| \phi_{j}(x,t) \,d x\\&\lesssim \int_{-\sqrt{t}+m_{k+1}t}^{\infty} e^{-\frac{2}{3} \sqrt{1-\tilde{\omega}_{k}^2} | x-\omega_kt|} \,d x \\
						& \lesssim \int_{-\sqrt{t}+m_{k+1} t}^{\sqrt{t}+m_{k+1} t} e^{-\frac{2}{3} \sqrt{1-\tilde{\omega}_{k}^2}\left|x-\omega_k t\right|} \,d x+  \int_{\sqrt{t}+m_{k+1}t}^{\infty} e^{-\frac{2}{3} \sqrt{1-\tilde{\omega}_{k}^2} |x-\omega_kt|} \,d x\\
						& =  \int_{-\sqrt{t}+m_{k+1} t}^{\sqrt{t}+m_{k+1} t} e^{-\frac{3}{2} \sqrt{1-\tilde{\omega}_{k}^2}\left|x-m_{k+1}t+m_{k+1}t-\omega_k t\right|} \,d x+  \int_{\sqrt{t}+\frac{v_{k+1}-\omega_k}{2}t}^{\infty} e^{-\frac{2}{3} \sqrt{1-\tilde{\omega}_{k}^2} |x|} \,d x\\
						&\lesssim e^{-\frac{3}{2} \sqrt{1-\tilde{\omega}_{k}^2} \frac{|v_{k+1}-\omega_k|}{2}t}\int_{-\sqrt{t}+m_{k+1} t}^{\sqrt{t}+m_{k+1} t}e^{\frac{2}{3} \sqrt{1-\tilde{\omega}_{k}^2} |x-m_{k+1}t|}dx+ e^{-\frac{2}{3} \sqrt{1-\tilde{\omega}_{k}^2} \sqrt{t}}e^{-\frac{1}{3} \sqrt{1-\tilde{\omega}_{k}^2} \omega_{\star}t}\\
						&\lesssim e^{-\frac{1}{3} \sqrt{1-\tilde{\omega}_{k}^2} \omega_{\star}t}(2\sqrt{t})e^{\frac{2}{3} \sqrt{1-\tilde{\omega}_{k}^2} \sqrt{t}}+ e^{-\frac{2}{3} \sqrt{1-\tilde{\omega}_{k}^2} \sqrt{t}}e^{-\frac{1}{3} \sqrt{1-\tilde{\omega}_{k}^2} \omega_{\star}t}\\
						&\lesssim e^{-\frac{1}{3} \sqrt{1-\tilde{\omega}_{k}^2} \omega_{\star}t}\left((2\sqrt{t})e^{\frac{2}{3} \sqrt{1-\tilde{\omega}_{k}^2} \sqrt{t}}+e^{-\frac{2}{3} \sqrt{1-\tilde{\omega}_{k}^2} \sqrt{t}}\right).
					\end{aligned}    
				\end{equation}
				Since for $t$ sufficiently large %such that
	\[(2\sqrt{t})e^{\frac{2}{3} \sqrt{1-\tilde{\omega}_{k}^2} \sqrt{t}}+e^{-\frac{2}{3} \sqrt{1-\tilde{\omega}_{k}^2} \sqrt{t}} \leq e^{\frac{1}{12} \sqrt{1-\tilde{\omega}_{k}^2} \omega_{\star}t},
		\]
				we have that estimate \eqref{cassoj>k} becomes
				\begin{equation}\label{3interaccion}
					\begin{aligned}
						\int_{\R}\left|\tilde{R}_{k}^{(1)}(t)\right| \phi_{j}(x,t) \,d x\leq e^{-4 \sqrt{\omega_{\star}} \omega_{\star}t}.
					\end{aligned}    
				\end{equation}
				Therefore, for $j\neq k$ with $j\in {2, \ldots, N-1}$, from \eqref{2interaccion} and \eqref{3interaccion}
		\[\int_{\R}\left|\tilde{R}_{k}^{(1)}(t)\right| \phi_{j}(x,t) \,d x\lesssim e^{-4 \sqrt{\omega_{\star}} \omega_{\star}t}.\]
				The estimate for the term $\left|\partial_{x} \tilde{R}_{k}^{(1)}\right| \phi_{j}$ is obtained in a similar manner. Furthermore, the cases $j=1$ and $j=N$ can be obtained using a similar argument.
				
				We conclude that for sufficiently large $t$,
	\[\int_{\R}\left(\left|\tilde{R}_{k}^{(1)}(t)\right|+\left|\partial_{x} \tilde{R}_{k}^{(1)}(t)\right|\right) \phi_{j}(x,t) \,d x\lesssim e^{-4 \omega_{\star}^{\frac32}t}, \, \, \text{$j\neq k$}.\]
				The second estimate follows immediately, since for $k \in {1,\ldots,N}$, we have $1-\phi_{k}=\sum_{j\neq k}\phi_{j}$.
				To obtain the last estimate of the present lemma, we can proceed in the same way.
			\end{proof}
			\subsection{Bootstrap Argument for Soliton Summation}
			
		Using our previous findings, we will establish a bootstrap result related to soliton summation. To do this, let us consider a sequence  $T^n\to\infty$ and take $\mathbf{u}^{n}\in H$ as the solution of \eqref{sistema1} such that $\mathbf{u}^{n}(T^n)=\mathbf{R}(T^n)$.
			
			%De fato, a partir da definição do dado final $\bu^{n}(T^{n})u$ e a continuidade do $\bu^{n}$ no tempo, segue que a desigualdade  \eqref{desiesti} se mantém em um intervalo $[t^{\dagger}, T^{n}]$ for $t^{\dagger}$ suficientemente  próximo a $T^{n}$. Então a seguinte proposição  mostra que podemos realmente melhorar a estimativa \ref{desiesti}, deixando espaço suficiente for estender o intervalo no qual a estimativa original se verifica.
			\begin{proposition}\label{Bootstrap3}
				There exist $T_{0}\in \R$ depending only on $\omega_{j}$ and $n_{0} \in\mathbb{N}$ such that, if  the approximate solution $\mathbf{u}^n$ of \eqref{sistema1}, defined on $[T_{0},T^{n}]$, there holds the estimate
				\begin{equation*}
					\|\bu^{n}(t)- \ru(t)\|_{H}\leq e^{-\omega_{\star}^{\frac32}t},\quad\forall n \geq n_{0},\,  t \in[t_{0},T^{n}],
				\end{equation*}
      with $t_{0}\in [T_{0},T^{n}]$,
				then for all $t\in[t_{0},T^{n}]$, 
				\begin{equation*}
					\|\bu^{n}(t)- \ru(t)\|_{H}\leq \frac{1}{2}e^{-\omega_{\star}^{\frac32}t}.
				\end{equation*}
			\end{proposition}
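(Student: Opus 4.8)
The plan is to run a Lyapunov–functional argument built on the localized action $\mathcal{S}(\cdot,t)=\sum_{j}\mathcal{S}_{\operatorname{loc}}^{j}(\cdot,t)$ of \eqref{Sdefin}, exploiting that $\sum_{j}\phi_{j}\equiv 1$ (since $\phi_{j}=\psi_{j}-\psi_{j+1}$ telescopes to $\psi_{1}=1$), so that $\mathcal{S}(\mathbf{w},t)=\mathcal{E}(\mathbf{w})+\sum_{j}\omega_{j}\mathcal{M}_{j}(\mathbf{w},t)$ and only the localized momenta carry explicit time dependence. First I would invoke the modulation Lemma \ref{lemamodulation} to write $\mathbf{u}^{n}=\tilde{\mathbf{R}}+\boldsymbol{\varepsilon}$ with the orthogonality conditions \eqref{orthogonality}, and use \eqref{segundoboostrap}–\eqref{parametros} to reduce the target to a bound on $\|\boldsymbol{\varepsilon}(t)\|_{H}$: one has $\|\mathbf{u}^{n}-\mathbf{R}\|_{H}\lesssim\|\boldsymbol{\varepsilon}\|_{H}+\sum_{j}(|\tilde{\omega}_{j}-\omega_{j}|+|\tilde{x}_{j}-x_{j}|)$, and integrating the modulation equation \eqref{modulated} backward from $T^{n}$ (where $\boldsymbol{\varepsilon}(T^{n})=0$, $\tilde{\omega}_{j}(T^{n})=\omega_{j}$, $\tilde{x}_{j}(T^{n})=x_{j}$) controls each parameter gap by $\int_{t}^{T^{n}}\|\boldsymbol{\varepsilon}\|_{H}$ plus exponentially small terms. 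Thus it suffices to prove $\|\boldsymbol{\varepsilon}(t)\|_{H}^{2}\lesssim t^{-1/2}e^{-2\omega_{\star}^{3/2}t}$, because the resulting polynomial factor $t^{-1/4}$ is precisely what upgrades the constant from $1$ to $\tfrac12$ once $T_{0}$ is chosen large.

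The coercive lower bound is the first pillar. Taylor-expanding $\mathcal{S}(\tilde{\mathbf{R}}+\boldsymbol{\varepsilon},t)$ about $\tilde{\mathbf{R}}$, the linear term is $\langle\mathcal{S}'(\tilde{\mathbf{R}},t),\boldsymbol{\varepsilon}\rangle$; on $\operatorname{supp}\phi_{j}$ one has $\tilde{\mathbf{R}}\approx\tilde{\mathbf{R}}_{j}$, which (as in Remark \ref{observa1}) is a critical point of the action at its own parameter $\tilde{\omega}_{j}$, so the linear contribution is $O\big(\sum_{j}|\tilde{\omega}_{j}-\omega_{j}|\,\|\boldsymbol{\varepsilon}\|_{H}\big)$ up to interaction terms bounded through Lemma \ref{solitons}. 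The quadratic term reproduces $\sum_{j}\langle\mathcal{S}_{\tilde{\omega}_{j}}''(\Phi_{\tilde{\omega}_{j}})\boldsymbol{\varepsilon},\boldsymbol{\varepsilon}\rangle$ up to localization commutators, and Proposition \ref{coercivity}—applicable because \eqref{segundoboostrap} keeps each $\tilde{\omega}_{j}$ close to $\omega_{j}$ with $|\tilde{\omega}_{j}|<1$—yields, once the modulated orthogonality \eqref{orthogonality} is matched with the directions $\partial_{x}\Phi_{\tilde{\omega}_{j}}$ and $\Gamma_{\tilde{\omega}_{j}}$ it requires, the estimate $\mathcal{S}(\mathbf{u}^{n}(t),t)-\mathcal{S}(\tilde{\mathbf{R}}(t),t)\geq c\|\boldsymbol{\varepsilon}(t)\|_{H}^{2}-Ce^{-3\omega_{\star}^{3/2}t}$. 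The cubic remainder $O(\|\boldsymbol{\varepsilon}\|_{H}^{3})$ and the linear residual are absorbed since $\|\boldsymbol{\varepsilon}\|_{H}\lesssim e^{-\omega_{\star}^{3/2}t}$ is small.

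The second pillar is an almost-conservation estimate. Because $\mathcal{E}$ is conserved and $\mathcal{M}$ is the unlocalized momentum, differentiating $\mathcal{S}(\mathbf{u}^{n}(t),t)$ along \eqref{sistemaeq} and integrating by parts leaves only terms carrying a factor $\partial_{x}\phi_{j}$ or $\partial_{t}\phi_{j}$, both supported in the transition zones $|x-m_{j}t|\lesssim\sqrt{t}$ and of pointwise size $O(t^{-1/2})$. There the soliton part $\tilde{\mathbf{R}}$ is exponentially small by Lemma \ref{solitons}, so splitting $\mathbf{u}^{n}=\tilde{\mathbf{R}}+\boldsymbol{\varepsilon}$ gives $\big|\tfrac{d}{dt}\mathcal{S}(\mathbf{u}^{n}(t),t)\big|\lesssim t^{-1/2}\|\boldsymbol{\varepsilon}(t)\|_{H}^{2}+e^{-4\omega_{\star}^{3/2}t}$, while an analogous computation together with \eqref{modulated} and the criticality of the profiles controls $\tfrac{d}{dt}\mathcal{S}(\tilde{\mathbf{R}}(t),t)$ by $Ce^{-2\omega_{\star}^{3/2}t}$. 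Integrating from $t$ to $T^{n}$ and using $\mathbf{u}^{n}(T^{n})=\mathbf{R}(T^{n})=\tilde{\mathbf{R}}(T^{n})$ bounds $\mathcal{S}(\mathbf{u}^{n}(t),t)-\mathcal{S}(\tilde{\mathbf{R}}(t),t)$ by $\int_{t}^{\infty}s^{-1/2}\|\boldsymbol{\varepsilon}(s)\|_{H}^{2}\,ds+Ce^{-2\omega_{\star}^{3/2}t}$. Combining this with the coercive lower bound and feeding in the bootstrap hypothesis $\|\boldsymbol{\varepsilon}(s)\|_{H}\lesssim e^{-\omega_{\star}^{3/2}s}$ produces $\|\boldsymbol{\varepsilon}(t)\|_{H}^{2}\lesssim t^{-1/2}e^{-2\omega_{\star}^{3/2}t}$, which, via the reduction of the first paragraph and for $T_{0}$ large, closes the bootstrap.

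I expect the main obstacle to be the coercivity step. Proposition \ref{coercivity} is stated for a single, unmodulated soliton with orthogonality against $\partial_{x}\Phi_{\omega_{j}}$ and $\Gamma_{\omega_{j}}$, whereas here one must patch together $N$ localized copies at the modulated parameters $\tilde{\omega}_{j}$, reconcile them with the orthogonality \eqref{orthogonality} (which is posed against $\tilde{R}_{j}^{(m)}$ and $\partial_{x}\tilde{R}_{j}^{(m)}$), and absorb the commutators generated when the cutoffs $\phi_{j}$ are moved across $\mathcal{S}''$. Ensuring that the single-bump coercivity constant survives for the full sum—uniformly in $t$ and $n$—is the delicate point; by contrast, the time-derivative bookkeeping is routine once Lemma \ref{solitons} furnishes the exponential smallness in the transition zones.
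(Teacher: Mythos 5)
Your overall architecture (modulation, localized action $\mathcal{S}$, almost conservation, coercivity, then integration of the modulation equations for the parameters) is the same as the paper's, and your reduction of the conclusion to the improved estimate $\|\boldsymbol{\varepsilon}(t)\|_{H}^{2}\lesssim t^{-1/2}e^{-2\omega_{\star}^{3/2}t}$ is exactly the paper's \eqref{epsilonestimative}. However, there is a genuine gap in how you propose to obtain that improved estimate: you never control the frequency gaps $|\tilde{\omega}_{j}(t)-\omega_{j}|$ at the required order, and without that the bootstrap does not close. In your scheme the only available bound on these gaps comes from integrating \eqref{modulated} under the bootstrap hypothesis, which gives $|\tilde{\omega}_{j}(t)-\omega_{j}|\lesssim e^{-\omega_{\star}^{3/2}t}$, hence $|\tilde{\omega}_{j}-\omega_{j}|^{2}\lesssim e^{-2\omega_{\star}^{3/2}t}$. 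This quantity enters your upper bound for $\mathcal{S}(\mathbf{u}^{n}(t),t)-\mathcal{S}(\tilde{\mathbf{R}}(t),t)$ — it is precisely your term ``$Ce^{-2\omega_{\star}^{3/2}t}$'' controlling the drift of $\mathcal{S}(\tilde{\mathbf{R}}(t),t)$, and it also lurks in your linear term $\mathcal{O}\bigl(\sum_{j}|\tilde{\omega}_{j}-\omega_{j}|\,\|\boldsymbol{\varepsilon}\|_{H}\bigr)$. Since $e^{-2\omega_{\star}^{3/2}t}$ is exactly the size of the quantity $\|\boldsymbol{\varepsilon}\|_{H}^{2}$ you are trying to improve, feeding your stated bounds through coercivity yields only $\|\boldsymbol{\varepsilon}(t)\|_{H}^{2}\lesssim e^{-2\omega_{\star}^{3/2}t}$, with no factor $t^{-1/2}$; the conclusion announced at the end of your third paragraph is a non sequitur, and the constant cannot be upgraded to $\tfrac12$. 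The circularity is structural: improving the frequency bound needs the improved $\boldsymbol{\varepsilon}$ bound, and vice versa.

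The paper breaks this circle with a step your proposal omits entirely (estimates \eqref{pp1}--\eqref{omega}): combining the almost-conservation of the localized momentum $\mathcal{M}_{j}$ (Lemma \ref{conservationquase}) with the orthogonality conditions, the relation $\tilde{R}_{j}^{(2)}=-\tilde{\omega}_{j}\tilde{R}_{j}^{(1)}$, and the non-degeneracy $\partial_{\omega}\|\Phi_{\omega}\|_{L^{2}}^{2}=-\bigl(\tfrac{2}{p}-1\bigr)\tfrac{\omega}{1-\omega^{2}}\|\Phi_{\omega}\|_{L^{2}}^{2}\neq 0$ (this is exactly where the subcritical assumption $p<2$ is used), one obtains $|\tilde{\omega}_{j}(t)-\omega_{j}|\leq C\|\boldsymbol{\varepsilon}(t)\|_{H}^{2}+\tfrac{C}{\sqrt{t}}e^{-2\omega_{\star}^{3/2}t}$, i.e.\ a bound \emph{quadratic} in $\boldsymbol{\varepsilon}$ and carrying the $t^{-1/2}$ gain, obtained at fixed $t$ with no time integration. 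With this, the term $\sum_{j}\mathcal{O}(|\tilde{\omega}_{j}-\omega_{j}|^{2})$ in Lemma \ref{taylorS} is negligible, the coercivity chain closes with the $t^{-1/2}$ gain, and only afterwards are the positions $\tilde{x}_{j}$ recovered by integrating \eqref{modulated}, as you describe — that part of your plan matches the paper's \eqref{estimativeCal}. Incidentally, the obstacle you flagged (patching the single-soliton coercivity across the cutoffs) is handled routinely in Lemma \ref{taylorS}; also, the paper's orthogonality conditions \eqref{orthogonality} kill the linear term exactly up to $\mathcal{O}(e^{-3\omega_{\star}^{3/2}t})$, precisely because $\tilde{R}_{j}^{(2)}$ is proportional to $\tilde{R}_{j}^{(1)}$, so no residual $\mathcal{O}(|\tilde{\omega}_{j}-\omega_{j}|\,\|\boldsymbol{\varepsilon}\|_{H})$ term needs to be tolerated there.
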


To demonstrate the bootstrap argument, our main objective is to obtain uniform estimates in terms of mass and energy. Suppose $T_{0}>0$ is sufficiently large and choose $n \in \mathbb{N}$ such that $T^{n}>T_{0}$. For convenience, in this section, we will again remove the dependence on $n$ in the approximate sequence and denote it simply by $\mathbf{u}$. For all $t\in [T_{0}, T^{n}]$, we define
			\begin{equation}\label{defv}
				\vu(t)=\bu(t)-\ru(t),
			\end{equation}
			and assume that for $t \in [t_{0},T^{n}]$ (with $t_{0} \in [T_{0},T^{n}]$),
			\begin{equation}\label{hipotesisbootstrap}
				\|\vu(t)\|_{H}\leq e^{-\omega_{\star}^{\frac32}t}.   
			\end{equation}
We need the following preliminary result.
			\begin{lemma}\label{taylorS}
				There exists $T_{0}$ such that if $t_{0} >T_{0}$, then for every $t \in [t_{0},T^{n}]$,
				\begin{equation*}
					\begin{aligned}
						\mathcal{S}( \bu(t))=\sum_{j=1}^N\left\{\mathcal{E}(\ru)+\frac{\omega_j}{2}\int_{\R} R_{j}^{(1)} R_{j}^{(2)} \,dx+ \mathcal{O}\left(|\tilde{\omega}_{j}(t)-\omega_{j}|^{2}\right)\right\}+\mathcal{H}(\var)+\mathcal{O}\left(e^{-3\omega_{\star}^{\frac32}t}\right),      
					\end{aligned}
				\end{equation*}
				where  \[\begin{aligned}
					\mathcal{H}(\var)&=\frac{1}{2}\int_{\R}|\partial_{x}\varepsilon_1|^2 \,dx+\int_{\R}\left|\varepsilon_1\right|^{2} \,dx+\int_{\R}\left|\varepsilon_2\right|^{2} \,dx\\
					&\quad+\sum_{j=1}^N\left(-\frac{1}{2}\int_{\R}\left|\tilde{R}_{j}^{(1)}\right|^{2p}|\varepsilon_{1}|^2\,dx- p\int_{\R}\left|\tilde{R}_{j}^{(1)}\right|^{2p-2}\left(\tilde{R}_{j}^{(1)}\varepsilon_{1}\right)^2 \,dx+\frac{\tilde{\omega}_j}{2}\int_{\R} \varepsilon_1 \varepsilon_2 \phi_{j}\,dx\right).   
				\end{aligned}
			\]
				Moreover, there exists a constant $C>0$ such that, for any  $t_0 \in\left[T_0, T^n\right]$ and for all $t\in[t_0,T^n]$,
				\begin{equation}\label{coe0}
					\mathcal{H}(\var) \geq  C\left\|\var\right\|_{H}^2.
				\end{equation}
			\end{lemma}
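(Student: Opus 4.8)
The plan is to substitute the modulated decomposition $\mathbf{u}=\tilde{\mathbf{R}}+\var$ from Lemma \ref{lemamodulation} into $\mathcal{S}=\sum_{j=1}^{N}\mathcal{S}_{\mathrm{loc}}^{j}$ and Taylor expand each localized action to second order in $\var$, controlling every remainder either by the interaction estimates of Lemma \ref{solitons} or by the bootstrap bound \eqref{hipotesisbootstrap}; the coercivity \eqref{coe0} is then obtained by localizing $\var$ and invoking Proposition \ref{coercivity}. First I would note that, since $\tilde{\mathbf{R}}_{j}$ is concentrated where $\phi_{j}\approx 1$ while the remaining solitons are exponentially small there, Lemma \ref{solitons} lets me replace each cutoff $\phi_{j}$ by $1$ on the support of $\tilde{\mathbf{R}}_{j}$ and discard all cross terms between distinct solitons $j\neq k$, at total cost $\mathcal{O}(e^{-4\omega_{\star}^{3/2}t})$. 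Consequently $\mathcal{S}(\mathbf{u})$ reduces, modulo $\mathcal{O}(e^{-3\omega_{\star}^{3/2}t})$, to a sum over $j$ of the non-localized actions $\mathcal{S}_{\omega_{j}}$ evaluated on $\tilde{\mathbf{R}}_{j}+\var$.

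For the zeroth-order term I would observe that the map $\nu\mapsto\mathcal{S}_{\omega_{j}}(\mathbf{\Phi}_{\nu})$ has vanishing derivative at $\nu=\omega_{j}$, because $\mathbf{\Phi}_{\omega_{j}}$ is a critical point of $\mathcal{S}_{\omega_{j}}$ and $\partial_{\nu}\mathbf{\Phi}_{\nu}$ is an admissible direction; a Taylor expansion in $\tilde{\omega}_{j}-\omega_{j}$ then produces exactly $\mathcal{E}(\mathbf{R}_{j})+\tfrac{\omega_{j}}{2}\int_{\R}R_{j}^{(1)}R_{j}^{(2)}\,dx+\mathcal{O}(|\tilde{\omega}_{j}-\omega_{j}|^{2})$, using $\omega_{j}\mathcal{M}(\mathbf{R}_{j})=\tfrac{\omega_{j}}{2}\int R_{j}^{(1)}R_{j}^{(2)}\,dx$. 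The first-order term $\langle(\mathcal{S}_{\mathrm{loc}}^{j})'(\tilde{\mathbf{R}}),\var\rangle$ is the crucial one: using the elliptic system \eqref{sistema2} satisfied by $\tilde{\mathbf{R}}_{j}$ at speed $\tilde{\omega}_{j}$ together with $\tilde{R}_{j}^{(2)}=-\tilde{\omega}_{j}\tilde{R}_{j}^{(1)}$, it collapses to a combination of the inner products $(\varepsilon_{m},\tilde{R}_{j}^{(m)})$ and $(\varepsilon_{m},\partial_{x}\tilde{R}_{j}^{(m)})$, all of which vanish by the orthogonality conditions \eqref{orthogonality}. The only survivors are the localization commutators and the $(\omega_{j}-\tilde{\omega}_{j})$-correction, both $\mathcal{O}(\|\var\|_{H}\,|\omega_{j}-\tilde{\omega}_{j}|)=\mathcal{O}(e^{-3\omega_{\star}^{3/2}t})$ by \eqref{segundoboostrap}.

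The second-order term yields $\mathcal{H}(\var)$: the quadratic variation of $\tfrac12\int(|u_1|^2+|u_2|^2+|\partial_{x}u_1|^2)\phi_{j}$ (summed with $\sum_j\phi_j\equiv 1$) gives the kinetic and potential terms, while the variation of $-\tfrac{1}{2(p+1)}\int|u_1|^{2p+2}$ around the real profile $\tilde{R}_{j}^{(1)}$ splits, upon differentiating the modulus and the phase, into the two contributions $-\tfrac12\int|\tilde{R}_{j}^{(1)}|^{2p}|\varepsilon_1|^2\,dx$ and $-p\int|\tilde{R}_{j}^{(1)}|^{2p-2}(\tilde{R}_{j}^{(1)}\varepsilon_1)^2\,dx$; the momentum coefficient is naturally $\tilde{\omega}_{j}/2$, the replacement $\omega_{j}\to\tilde{\omega}_{j}$ costing only $\mathcal{O}(|\omega_{j}-\tilde{\omega}_{j}|\,\|\var\|_{H}^{2})=\mathcal{O}(e^{-3\omega_{\star}^{3/2}t})$. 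The cubic and higher remainder is bounded by $\|\var\|_{H}^{3}\lesssim e^{-3\omega_{\star}^{3/2}t}$ through Sobolev embedding and \eqref{hipotesisbootstrap}, which closes the expansion.

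It remains to prove \eqref{coe0}, which I expect to be the main obstacle. Proposition \ref{coercivity} furnishes a positive lower bound for $\langle\mathcal{S}_{j}''(\mathbf{\Phi}_{\omega_{j}})\eta,\eta\rangle$ for a \emph{single} soliton under two orthogonality constraints, so the idea is to split $\var=\sum_{j}\phi_{j}\var$ and apply it piecewise near each soliton center. Writing $\mathcal{H}(\var)=\sum_{j}\langle\mathcal{S}_{\tilde{\omega}_{j}}''(\tilde{\mathbf{R}}_{j})(\sqrt{\phi_{j}}\,\var),\sqrt{\phi_{j}}\,\var\rangle$ up to admissible errors, I would control the commutators generated by the cutoffs---harmless because $|\partial_{x}\phi_{j}|\lesssim t^{-1/2}$ and is supported where the profiles are exponentially small---and absorb the frequency mismatch by continuity of the coercivity constant together with $|\tilde{\omega}_{j}-\omega_{j}|\lesssim e^{-\omega_{\star}^{3/2}t}$. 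The relations \eqref{orthogonality} supply precisely the directions $\partial_{x}\Phi_{\omega_{j}}$ and $\mathbf{\Gamma}_{\omega_{j}}=(\Phi_{\omega_{j}}^{(1)},0)$ excluded in Proposition \ref{coercivity}, so each localized form is bounded below by $C\|\sqrt{\phi_{j}}\,\var\|_{H}^{2}$; summing and choosing $T_{0}$ large enough to absorb the commutator and interaction errors gives $\mathcal{H}(\var)\geq C\|\var\|_{H}^{2}$. The delicate point is ensuring that the localization neither destroys the orthogonality conditions nor introduces errors exceeding the spectral gap, which is exactly where the slow ($t^{-1/2}$) cutoffs and the speed separation encoded in $\omega_{\star}$ are essential.
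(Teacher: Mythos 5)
Your proposal is correct and follows essentially the same route as the paper's proof: you expand $\mathcal{S}=\sum_{j}\mathcal{S}_{\operatorname{loc}}^{j}$ around $\tilde{\ru}+\var$, annihilate the linear terms via the elliptic system \eqref{sistema2} and the orthogonality conditions \eqref{orthogonality}, control cross-soliton and cutoff terms by Lemma \ref{solitons} together with the bootstrap bound, treat the zeroth-order term by criticality of the action with a Taylor expansion in $\tilde{\omega}_{j}-\omega_{j}$, and prove \eqref{coe0} by localizing Proposition \ref{coercivity} near each soliton center while absorbing commutator and speed-mismatch errors --- which is exactly the localization argument the paper invokes by referring to Proposition \ref{coercivity2}. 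No gaps.
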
 
			\begin{proof}
				The idea is to expand the terms in the expression of  $\mathcal{S}$ using $u_{1}=\varepsilon_{1}+\tilde{R}_{1}$ and $u_{2}=\varepsilon_{2}+\tilde{R}_{2}$. Indeed, note  from \eqref{Sdefin} that
	$ 
				\mathcal{S} (\bu):=\sum_{j=1}^{N} \mathcal{S}_{\operatorname{loc}}^{j}(\bu),
$ 
				with \begin{equation*}
					\mathcal{S}_{\operatorname{loc}}^{j}(\bu):=\mathcal{E}_j(\bu)+ \omega_j \mathcal{M}_j (\bu).
				\end{equation*}
				Hence,  we will estimate each term of the above expression separately. First, we observe that
				\begin{equation}\label{enesoma}
					\begin{aligned}
						 \mathcal{E}_j\left(\var+\tilde{\ru}\right)   
						& =\frac{1}{2} \int_{\R}\left|\partial_{x}(\tilde{R}_{1}+\varepsilon_1)\right|^{2} \phi_{j}\,dx +\frac{1}{2}  \int_{\R}\left|\tilde{R}_{2}+\varepsilon_2\right|^{2} \phi_{j}\,dx +\frac{1}{2}\int_{\R}\left|\tilde{R}_{1}+\varepsilon_1\right|^{2} \phi_{j}\,dx \\
						&\quad-\frac{1}{2p+2}\int_{\R}\left|  \tilde{R}_{1}+\varepsilon_1\right|^{2p+2}\phi_{j}\,dx.
					\end{aligned}
				\end{equation}
				Let us expand the terms of the above integrals as
				\begin{equation}\label{id1}
					\begin{aligned}
						\int_{\R}\left|\partial_{x}(\tilde{R}_{1}+\varepsilon_1)\right|^{2} \phi_{j}\,dx &=\int_{\R}|\partial_{x}\tilde{R}_1|^2 \phi_{j}\,dx+\int_{\R}|\partial_{x}\varepsilon_1|^2 \phi_{j}\,dx-2\int_{\R}\partial_{xx}\tilde{R}_1 \varepsilon_1 \phi_{j}\,dx\\
						&=\int_{\R}\left|\sum_{k=1}^{N}\partial_{x}\tilde{R}_{k}^{(1)}\right|^2 \phi_{j}\,dx+\int_{\R}|\partial_{x}\varepsilon_1|^2 \phi_{j}\,dx-2\int_{\R}\sum_{k=1}^{N}\partial_{xx}\tilde{R}_{k}^{(1)}\,\varepsilon_1 \phi_{j}\,dx.
					\end{aligned}
				\end{equation}
				Now, since 
		\[
				\left|\sum_{k=1}^{N} \partial_{x} \tilde{R}_{k}^{(1)}\right|^{2}=\left|\partial_{x} \tilde{R}_{j}^{(1)}\right|^{2}+\sum_{\substack{k=1 \\ k\neq j}}^{N}\left|\partial_{x} \tilde{R}_{k}^{(1)}\right|^{2}+\sum_{\substack{k, m=1 \\ k\neq m}}^{N} \partial_{x}\tilde{R}_{k}^{(1)} \partial_{x} \tilde{R}_{m}^{(1)}.
		\]
				Using the fact that the solitons are bounded (see Proposition \ref{decaimentoquadratico}) and Lemma \ref{solitons}, we obtain
				\begin{equation*}\label{somadeorden4}
					\begin{aligned}
						\sum_{\substack{k=1 \\ s \neq j}}^{N}\int_{\R} \left|\partial_{x}\tilde{R}_{k}^{(1)}\right|^{2}\phi_{j}\,dx+\sum_{\substack{k, m=1 \\ k \neq m}}^{N} \int_{\R} \partial_{x} \tilde{R}_{k}^{(1)} \partial_{x} \tilde{R}_{m}^{(1)}\phi_{j}\,dx 
						 &\lesssim\sum_{\substack{k=1 \\ k \neq j}}^{N}e^{-4\omega_{\star}^{\frac32}t}+ \sum_{\substack{k, m=1 \\ k \neq m}}^{N}e^{-4\omega_{\star}^{\frac32}t} \lesssim e^{-4\omega_{\star}^{\frac32}t}.
					\end{aligned}
				\end{equation*}
				Taking $T_{0}$ sufficiently large, we deduce 
				\begin{equation}\label{resulsoma}
 \begin{aligned}
 	\sum_{\substack{k=1 \\ k \neq j}}^{N}\int_{\R} \left|\partial_{x} \tilde{R}_{k}^{(1)}\right|^{2}\phi_{j}\,dx+\sum_{\substack{k, m=1 \\ k \neq m}}^{N} \int_{\R} \partial_{x} \tilde{R}_{k}^{(1)} \partial_{x} \tilde{R}_{m}^{(1)}\phi_{j}\,dx\leq C e^{-3\omega_{\star}^{\frac32}t},
					\end{aligned}
				\end{equation}
				where $C=C(R_{1}^{(1)},\ldots,R_{N}^{(1)},R_{1}^{(2)},\ldots,R_{N}^{(2)},N)$.
	 Moreover, note that   
  \[\begin{aligned}					\int_{\R}\sum_{k=1}^{N}\partial_{xx}\tilde{R}_{k}^{(1)}\,\varepsilon_1 \phi_{j}\,dx&=\int_{\R}\partial_{xx}\tilde{R}_{j}^{(1)}\,\varepsilon_1 \phi_{j}\,dx+\int_{\R}\sum_{\substack{k=1 \\ k \neq j}}^{N}\partial_{xx}\tilde{R}_{k}^{(1)}\,\varepsilon_1 \phi_{j}\,dx,
				\end{aligned}\] 
				then,  we have from   \eqref{segundoboostrap} and    Lemma \ref{solitons} that
\[\begin{aligned}
					\int_{\R}\sum_{\substack{k=1 \\ k \neq j}}^{N}\partial_{xx}\tilde{R}_{k}^{(1)}\,\varepsilon_1 \phi_{j}\,dx&=-\int_{\R}\sum_{\substack{k=1 \\ k \neq j}}^{N}\partial_{x}\tilde{R}_{k}^{(1)}\,\partial_{x}\varepsilon_1 \phi_{j}\,dx \leq C\left(\int_{\R}\sum_{\substack{k=1 \\ k \neq j}}^{N}\partial_{x}\tilde{R}_{k}^{(1)} \phi_{j}\,dx\right)^{1/2}    
					\leq Ce^{-3\omega_{\star}^{\frac32}t}.
				\end{aligned} \]
				Hence, \begin{equation}\label{id2}\int_{\R}\sum_{k=1}^{N}\partial_{xx}\tilde{R}_{k}^{(1)}\,\varepsilon_1 \phi_{j}\,dx=\int_{\R}\partial_{xx}\tilde{R}_{j}^{(1)}\,\varepsilon_1 \phi_{j}\,dx+O(e^{-3\omega_{\star}^{\frac32}t}).\end{equation}
				On the other hand, note that 
    \[\begin{aligned}
					\int_{\R}\left|\tilde{R}_{2}+\varepsilon_2\right|^{2} \phi_{j}\,dx &=\int_{\R}\left|\tilde{R}_{2}\right|^{2} \phi_{j}\,dx+\int_{\R} \left|\varepsilon_{2}\right|^{2} \phi_{j}\,dx+\int_{\R}\tilde{R}_{2}\varepsilon_{2}\phi_{j}\,dx,
				\end{aligned}\]
				So proceeding similarly to the previous estimates, it follows that 
\[\begin{aligned}
					\int_{\R}\left|\tilde{R}_{2}\right|^{2} \phi_{j}\,dx&=\int_{\R}\left|\sum_{k=1}^N\tilde{R}_{k}^2\right|^{2} \phi_{j}\,dx=\int_{\R}\left|\tilde{R}_{j}^{(2)}\right|^{2} \phi_{j}\,dx+O(e^{-3\omega_{\star}^{\frac32}t}) \\
					&=\int_{\R}\left|\tilde{R}_{j}^{(2)}\right|^{2} \,dx-\sum_{\substack{k=1 \\ k \neq j}}^{N}\int_{\R}\left|\tilde{R}_{k}^2\right|^{2} \phi_{j}\,dx+O(e^{-3\omega_{\star}^{\frac32}t})\\
					&=\int_{\R}\left|\tilde{R}_{j}^{(2)}\right|^{2} \,dx+O(e^{-3\omega_{\star}^{\frac32}t});
				\end{aligned}\]
				and since $(\varepsilon_{2},\tilde{R}_{k}^{2})_{2}=0$, we obtain that \begin{equation}\label{orto1}
					\begin{aligned}
						\int_{\R}\tilde{R}_{2}\varepsilon_{2}\phi_{j}\,dx&=\int_{\R}\tilde{R}_{j}^{(2)}\varepsilon_{2}\phi_{j}\,dx  +\sum_{\substack{k=1 \\ k \neq j}}^{N}\int_{\R}\tilde{R}_{k}^2\varepsilon_{2}\phi_{j}\,dx\\
						&=\int_{\R}\tilde{R}_{j}^{(2)}\varepsilon_{2}\,dx-\sum_{\substack{k=1 \\ k \neq j}}^{N}\int_{\R}\tilde{R}_{j}^{(2)}\varepsilon_{2}\phi_{k}\,dx+O(e^{-3\omega_{\star}^{\frac32}t}) =O(e^{-3\omega_{\star}^{\frac32}t}).
					\end{aligned}    
				\end{equation}
				Therefore, \begin{equation}\label{id3}
					\int_{\R}\left|\tilde{R}_{2}+\varepsilon_2\right|^{2} \phi_{j}\,dx=\int_{\R}\left|\tilde{R}_{j}^{(2)}\right|^{2} \,dx+\int_{\R}\left|\varepsilon_2\right|^{2} \phi_{j}\,dx+O(e^{-3\omega_{\star}^{\frac32}t}).
				\end{equation}
				So, following the same argument, we arrive at
				\begin{equation}\label{id4}
					\int_{\R}\left|\tilde{R}_{1}+\varepsilon_1\right|^{2} \phi_{j}\,dx=\int_{\R}\left|\tilde{R}_{j}^{(1)}\right|^{2} \,dx+\int_{\R}\left|\varepsilon_1\right|^{2} \phi_{j}\,dx+O(e^{-3\omega_{\star}^{\frac32}t}).  
				\end{equation}
				We now proceed to expand the last term of the expression above using the Taylor expansion. Considering 
				$\mathcal{G}(s)=s^{p+1}$, we have
				\begin{equation}\label{expantionG}
\begin{aligned}						\mathcal{G}\left(\left|\varepsilon_{1}+\tilde{R}_{1}\right|^{2}\right)&=\mathcal{G}\left(\left|\tilde{R}_{1}\right|^{2}\right)+\mathcal{G}^{\prime}\left(\left|\tilde{R}_{1}\right|^{2}\right)\left(\left|\varepsilon_{1}\right|^{2}+2\mathcal{R}e\left(\tilde{R}_{1}\varepsilon_{1}\right)\right)\\
						&\quad +\frac{1}{2}\mathcal{G}^{\prime \prime}\left(\left|\tilde{R}_{1}\right|^{2}\right)\left(\left|\varepsilon_{1}\right|^{2}+2\mathcal{R}e\left(\tilde{R}_{1}\varepsilon_{1}\right)\right)^{2}+\mathcal{O}\left( \left\| 2\mathcal{R}e \left(\tilde{R}_{1}\varepsilon_{1}\right)+|\varepsilon_{1}|^{2}\right\|_{H^{1}(\R)}^{3} \right)\\
						&=\mathcal{G}\left(\left|\tilde{R}_{1}\right|^{2}\right)+\mathcal{G}^{\prime}\left(\left|\tilde{R}_{1}\right|^{2}\right)\left(\left|\varepsilon_{1}\right|^{2}+2\mathcal{R}e\left(\tilde{R}_{1}\varepsilon_{1}\right)\right)\\
						&\quad+\frac{1}{2}\mathcal{G}^{\prime \prime}\left(\left|\tilde{R}_{1}\right|^{2}\right)\left|\varepsilon_{1}\right|^{4}+\frac{1}{2}\mathcal{G}^{\prime \prime}\left(\left|\tilde{R}_{1}\right|^{2}\right)\left(2\mathcal{R}e\left(\tilde{R}_{1}\varepsilon_{1}\right)\right)^2\\
						&\quad+\frac{1}{2}\mathcal{G}^{\prime \prime}\left(\left|\tilde{R}_{1}\right|^{2}\right)\mathcal{R}e\left(\tilde{R}_{1}\varepsilon_{1}\right)\left|\varepsilon_{1}\right|^{2}+\mathcal{O}\left( \left\| 2\mathcal{R}e \left(\tilde{R}_{1}\varepsilon_{1}\right)+|\varepsilon_{1}|^{2}\right\|_{H^{1}(\R)}^{3} \right).
					\end{aligned}
				\end{equation}
				Using again \eqref{segundoboostrap} and Lemma \ref{solitons}, we deduce
				
				\begin{equation*}
					\begin{aligned}
						\int_{\R}\mathcal{G}^{\prime \prime }\left(\left|\tilde{R}_{1}\right|^{2}\right)\left|\varepsilon_{1}\right|^{4}\phi_{j}\,dx&=p(p+1)\int_{\R}\left|\tilde{R}_{1}\right|^{2p-2}\left|\varepsilon_{1}\right|^{4}\phi_{j}\,dx \leq C\left\|\varepsilon_{1}\right\|_{H^{1}(\R)}^{4}  \leq Ce^{-4\omega_{\star}^{\frac32}t}.
					\end{aligned}
				\end{equation*}
				Furthermore, since $\tilde{R}_{1}$ is bounded, it follows that 
				\begin{equation*}
					\begin{aligned}
						\int_{\R}\mathcal{G}^{\prime \prime}\left(\left|\tilde{R}_{1}\right|^{2}\right)\left(2\mathcal{R}e\left(\tilde{R}_{1}\varepsilon_{1}\right)\left|\varepsilon_{1}\right|^{2}\right)\phi_{j}\,dx&=2p(p+1)\int_{\R}\left|\tilde{R}_{1}\right|^{2p-2}\mathcal{R}e\left(\tilde{R}_{1}\varepsilon_{1}\right)\left|\varepsilon_{1}\right|^{2}\phi_{j}\,dx\\
						& \leq C\int_{\R}\left|\varepsilon_{1}\right|^{3}\,dx \leq C\left\|\varepsilon_{1}\right\|_{H^{1}(\R)}^{3}  \leq Ce^{-3\omega_{\star}^{\frac32}t},
					\end{aligned}
				\end{equation*}
				and
				\begin{equation*}
					\begin{aligned}
						\left\| 2\mathcal{R}e \left(\tilde{R}_{1}\varepsilon_{1}\right)+|\varepsilon_{1}|^{2}\right\|_{H^{1}(\R)}^{3}&\leq C\left\|\varepsilon_{1}\right\|_{H^{1}(\R)}^{3}+C\left\|\varepsilon_{1}\right\|_{H^{1}(\R)}^{6}\\
						&\leq Ce^{-3\omega_{\star}^{\frac32}t}+ Ce^{-6\omega_{\star}^{\frac32}t}\\
						&\leq Ce^{-3\omega_{\star}^{\frac32}t}.
					\end{aligned}
				\end{equation*}
				Then, \begin{equation}\label{id5}
					\begin{aligned}-\frac{1}{2p+2}\int_{\R}\left|\tilde{R}_1+\varepsilon_1\right|^{2p+2} \phi_{j}\,dx &=-\frac{1}{2}\int_{\R}\left|\tilde{R}_{j}^{(1)}\right|^{2p}|\varepsilon_{1}|^2 \phi_{j}\,dx-\frac{1}{2}\int_{\R}2\left|\tilde{R}_{j}^{(1)}\right|^{2p}\tilde{R}_{j}^{(1)} \varepsilon_1\phi_{j}\,dx\\
						&\quad -\frac{1}{2 p+2} \int_{\R}\left|\tilde{R}_{j}^{(1)}\right|^{2 p+2}\phi_{j}\,dx-p\int_{\R}\left|\tilde{R}_{j}^{(1)}\right|^{2p-2}\left(\tilde{R}_{j}^{(1)}\varepsilon_{1}\right)^2 \phi_{j}\,dx\\
						&\quad + O\left(e^{-3\omega_{\star}^{\frac32}t}\right).\end{aligned}
				\end{equation}
				Therefore, combining \eqref{enesoma}-\eqref{id5}, we obtain that
\begin{equation*}	\begin{aligned}						\mathcal{E}_j\left(\tilde{\ru}+\var\right)
						& =\frac{1}{2}\int_{\R}\left|\partial_{x} \tilde{R}_{j}^{(1)}\right|^{2} \phi_{j}\,dx+\frac{1}{2}\int_{\R}\left| R_{j}^{2}\right|^{2} \phi_{j} \,dx+\frac{1}{2}\int_{\R}\left| \tilde{R}_{j}^{(1)}\right|^{2} \phi_{j} \,dx -\frac{1}{2p+2}   \int_{\R} \left| \tilde{R}_{j}^{(1)}\right|^{2p+2} \phi_{j} \,dx \\
						&\quad +\frac{1}{2}\int_{\R}|\partial_{x}\varepsilon_1|^2 \phi_{j}\,dx-\frac{1}{2}\int_{\R}2\partial_{xx}\tilde{R}_{j}^{(1)}\,\varepsilon_1 \phi_{j}\,dx-\frac{1}{2}\int_{\R}2\left|\tilde{R}_{j}^{(1)}\right|^{2p}\tilde{R}_{j}^{(1)} \varepsilon_1 \phi_{j}\,dx\\&\quad-\frac{1}{2}\int_{\R}\left|\tilde{R}_{j}^{(1)}\right|^{2p}|\varepsilon_{1}|^2 \phi_{j}\,dx -p\int_{\R}\left|\tilde{R}_{j}^{(1)}\right|^{2p-2}\left(\tilde{R}_{j}^{(1)}\varepsilon_{1}\right)^2 \phi_{j}\,dx +O\left(e^{-3 \omega_{\star}^{\frac32} t}\right).
					\end{aligned}  
\end{equation*}
				Notice that any term involving $\tilde{R}_{j}^{(1)}$, $\tilde{R}_{j}^{(2)}$ and $\phi_j$ can be refined leveraging the fact that $\sum_{k=1}^N \phi_k=1$. For instance, using \eqref{solitons}
	\[\begin{aligned}
					\frac{1}{2}\int_{\R}\left|\partial_{x} \tilde{R}_{j}^{(1)}\right|^{2} \phi_{j}\,dx&=\frac{1}{2}\int_{\R}\left|\partial_{x} \tilde{R}_{j}^{(1)}\right|^{2} \,dx-\frac{1}{2}\sum_{\substack{k=1 \\ k \neq j}}^{N}\int_{\R}\left|\partial_{x} \tilde{R}_{j}^{(1)}\right|^{2} \phi_{k}\,dx\\
					&=\frac{1}{2}\int_{\R}\left|\partial_{x} \tilde{R}_{j}^{(1)}\right|^{2} \,dx+O\left(e^{-3 \omega_{\star}^{\frac32} t}\right).
				\end{aligned}\]
				So  that, 
    \begin{equation*}\begin{aligned}						\mathcal{E}_j\left(\tilde{\ru}+\var\right)
						& =\frac{1}{2}\int_{\R}\left|\partial_{x} \tilde{R}_{j}^{(1)}\right|^{2} \,dx+\frac{1}{2}\int_{\R}\left| R_{j}^{2}\right|^{2}  \,dx+\frac{1}{2}\int_{\R}\left| \tilde{R}_{j}^{(1)}\right|^{2} \,dx -\frac{1}{2p+2}   \int_{\R} \left| \tilde{R}_{j}^{(1)}\right|^{2p+2} \,dx \\
						&\quad +\frac{1}{2}\int_{\R}|\partial_{x}\varepsilon_1|^2 \phi_{j}\,dx-\frac{1}{2}\int_{\R}2\partial_{xx}\tilde{R}_{j}^{(1)}\,\varepsilon_1 \,dx-\frac{1}{2}\int_{\R}2\left|\tilde{R}_{j}^{(1)}\right|^{2p}\tilde{R}_{j}^{(1)} \varepsilon_1 \,dx\\&\quad-\frac{1}{2}\int_{\R}\left|\tilde{R}_{j}^{(1)}\right|^{2p}|\varepsilon_{1}|^2\,dx -p\int_{\R}\left|\tilde{R}_{j}^{(1)}\right|^{2p-2}\left(\tilde{R}_{j}^{(1)}\varepsilon_{1}\right)^2 \,dx+\int_{\R}\left|\varepsilon_1\right|^{2} \phi_{j}\,dx\\
						&\quad+\int_{\R}\left|\varepsilon_2\right|^{2} \phi_{j}\,dx+O\left(e^{-3 \omega_{\star}^{\frac32} t}\right).
					\end{aligned}  
				\end{equation*}
 But, since $(\varepsilon_1,\tilde{R}_{j}^{(1)})_{2}=0$,\[\begin{aligned}
					-\frac{1}{2}&\int_{\R}2\partial_{xx}\tilde{R}_{j}^{(1)}\,\varepsilon_1 \,dx-\frac{1}{2}\int_{\R}2\left|\tilde{R}_{j}^{(1)}\right|^{2p}\tilde{R}_{j}^{(1)} \varepsilon_1 \,dx\\&=-\frac{1}{2}\int_{\R}2\partial_{xx}\tilde{R}_{j}^{(1)}\,\varepsilon_1 \,dx-\frac{1}{2}\int_{\R}2\left|\tilde{R}_{j}^{(1)}\right|^{2p}\tilde{R}_{j}^{(1)} \varepsilon_1 \,dx+(1-\tilde{\omega}_j^2)\int_{\R}\tilde{R}_{j}^{(1)} \varepsilon_1\,dx\\
					&=\int_{\R}\left(-\partial_{xx}\tilde{R}_{j}^{(1)}+(1-\tilde{\omega}_j^2)\tilde{R}_{j}^{(1)}-\left|\tilde{R}_{j}^{(1)}\right|^{2p}\tilde{R}_{j}^{(1)}\right)\varepsilon_1\,dx =0,
				\end{aligned}\]
				then \begin{equation}\label{ene}
					\begin{aligned}
						\mathcal{E}_j\left(\tilde{\ru}+\var\right)
						& =\frac{1}{2}\int_{\R}\left|\partial_{x} \tilde{R}_{j}^{(1)}\right|^{2} \,dx+\frac{1}{2}\int_{\R}\left| R_{j}^{2}\right|^{2}  \,dx+\frac{1}{2}\int_{\R}\left| \tilde{R}_{j}^{(1)}\right|^{2} \,dx -\frac{1}{2p+2}   \int_{\R} \left| \tilde{R}_{j}^{(1)}\right|^{2p+2} \,dx \\
						&\quad +\frac{1}{2}\int_{\R}|\partial_{x}\varepsilon_1|^2 \phi_{j}\,dx-\frac{1}{2}\int_{\R}\left|\tilde{R}_{j}^{(1)}\right|^{2p}|\varepsilon_{1}|^2\,dx -p\int_{\R}\left|\tilde{R}_{j}^{(1)}\right|^{2p-2}\left(\tilde{R}_{j}^{(1)}\varepsilon_{1}\right)^2 \,dx\\
						&\quad+\int_{\R}\left|\varepsilon_1\right|^{2} \phi_{j}\,dx+\int_{\R}\left|\varepsilon_2\right|^{2} \phi_{j}\,dx+O\left(e^{-3 \omega_{\star}^{\frac32} t}\right).
					\end{aligned}  
				\end{equation}
				Now, we analyze
\[
				\begin{aligned}
					\omega_j\mathcal{M}_j(\tilde{\ru}+\var) 
					&=\frac{\omega_j}{2}\int_{\R} \left(\tilde{R}_1+\varepsilon_1\right)\left(\tilde{R}_2+\varepsilon_2\right)\phi_{j}\,dx\\
					&=\frac{\omega_j}{2}\int_{\R} \tilde{R}_1 \varepsilon_2 \phi_{j}\,dx+\frac{\omega_j}{2}\int_{\R} \tilde{R}_1 \tilde{R}_2 \phi_{j}\,dx +\frac{\omega_j}{2}\int_{\R} \varepsilon
					_1 \tilde{R}_2 \phi_{j}\,dx+\frac{\omega_j}{2}\int_{\R} \varepsilon_1 \varepsilon_2 \phi_{j}\,dx.
				\end{aligned}
\]
				Applying a similar process to \eqref{orto1}, we have \[\omega_j\int_{\R} \tilde{R}_1 \varepsilon_2 \phi_{j}\,dx+\omega_j\int_{\R} \varepsilon
				_1 \tilde{R}_2 \phi_{j}\,dx=O\left(e^{-3 \omega_{\star}^{\frac32} t}\right).\]
				Also, note that
	\[\begin{aligned}
					\omega_j\int_{\R} \tilde{R}_1 \tilde{R}_2 \phi_{j}\,dx&=\omega_j\int_{\R} \tilde{R}_{j}^{(1)} \tilde{R}_{j}^{(2)} \phi_{j}\,dx+O\left(e^{-3 \omega_{\star}^{\frac32} t}\right)\\
					&=\omega_j\int_{\R} \tilde{R}_{j}^{(1)} \tilde{R}_{j}^{(2)} \,dx-\omega_j\sum_{\substack{k=1 \\ k \neq j}}^{N}\int_{\R} \tilde{R}_{j}^{(1)} \tilde{R}_{j}^{(2)} \phi_{k} \,dx+O\left(e^{-3 \omega_{\star}^{\frac32} t}\right)\\
					&=\omega_j\int_{\R} \tilde{R}_{j}^{(1)} \tilde{R}_{j}^{(2)} \,dx+O\left(e^{-3 \omega_{\star}^{\frac32} t}\right)
				\end{aligned}\]
				and 
\[\begin{aligned}
					\omega_j\int_{\R} \varepsilon_1 \varepsilon_2 \phi_{j}\,dx&=\tilde{\omega}_j\int_{\R} \varepsilon_1 \varepsilon_2 \phi_{j}\,dx+(\tilde{\omega}_j-\omega_j)\int_{\R} \varepsilon_1 \varepsilon_2 \phi_{j}\,dx\\
					&\leq \tilde{\omega}_j\int_{\R} \varepsilon_1 \varepsilon_2 \phi_{j}\,dx+|\tilde{\omega}_j-\omega_j|^2 +\left(\int_{\R} |\varepsilon_1|^2\,dx\right) +\left(\int_{\R} |\varepsilon_2|^2\,dx\right) \\
					&\leq \tilde{\omega}_j\int_{\R} \varepsilon_1 \varepsilon_2 \phi_{j}\,dx+O\left(|\tilde{\omega}_j-\omega_j|^2\right)+Ce^{-3 \omega_{\star}^{\frac32} t}.
				\end{aligned}\]
				Therefore, \begin{equation}\label{mas}
					\omega_j\mathcal{M}_j(\tilde{\ru}+\var)=\frac{\omega_j}{2}\int_{\R} \tilde{R}_{j}^{(1)} \tilde{R}_{j}^{(2)} \,dx+\frac{\tilde{\omega}_j}{2}\int_{\R} \varepsilon_1 \varepsilon_2 \phi_{j}\,dx+O\left(|\tilde{\omega}_j-\omega_j|^2\right)+O\left(e^{-3 \omega_{\star}^{\frac32} t}\right).
				\end{equation}
				So,  we obtain from \eqref{ene} and \eqref{mas} that    
\[ \begin{aligned}
					\mathcal{S}_{\operatorname{loc}}^{j}(\bu)&=\mathcal{E}(\tilde{\ru}_j)+\frac{\omega_j}{2}\int_{\R} \tilde{R}_{j}^{(1)} \tilde{R}_{j}^{(2)} \,dx+\frac{1}{2}\int_{\R}|\partial_{x}\varepsilon_1|^2 \phi_{j}\,dx \\
					&\quad-\frac{1}{2}\int_{\R}\left|\tilde{R}_{j}^{(1)}\right|^{2p}|\varepsilon_{1}|^2\,dx +\int_{\R}\left|\varepsilon_1\right|^{2} \phi_{j}\,dx-p\int_{\R}\left|\tilde{R}_{j}^{(1)}\right|^{2p-2}\left(\tilde{R}_{j}^{(1)}\varepsilon_{1}\right)^2 \,dx+\frac{\tilde{\omega}_j}{2}\int_{\R} \varepsilon_1 \varepsilon_2 \phi_{j}\,dx\\&\quad+\int_{\R}\left|\varepsilon_2\right|^{2} \phi_{j}\,dx+\mathcal{O}\left(|\tilde{\omega}_{j}(t)-\omega_{j}|\right)+O\left(e^{-3 \omega_{\star}^{\frac32} t}\right).  
				\end{aligned}\]
				Then, we derive
    \begin{equation}\label{ExpS}
					\begin{aligned}
						\mathcal{S}( \bu(t))=\sum_{j=1}^N\left\{\mathcal{E}(\tilde{\ru}_j)+\frac{\omega_j}{2}\int_{\R} \tilde{R}_{j}^{(1)} \tilde{R}_{j}^{(2)} \,dx+ \mathcal{O}\left(|\tilde{\omega}_{j}(t)-\omega_{j}|^{2}\right)\right\}+\mathcal{H}(\var)+\mathcal{O}\left(e^{-3\omega_{\star}^{\frac32}t}\right),      
					\end{aligned}
				\end{equation}
				where  \[\begin{aligned}
					\mathcal{H}(\var)&=\frac{1}{2}\int_{\R}|\partial_{x}\varepsilon_1|^2 \,dx+\int_{\R}\left|\varepsilon_1\right|^{2} \,dx+\int_{\R}\left|\varepsilon_2\right|^{2} \,dx\\
					&\quad+\sum_{j=1}^N\left(-\frac{1}{2}\int_{\R}\left|\tilde{R}_{j}^{(1)}\right|^{2p}|\varepsilon_{1}|^2\,dx- p\int_{\R}\left|\tilde{R}_{j}^{(1)}\right|^{2p-2}\left(\tilde{R}_{j}^{(1)}\varepsilon_{1}\right)^2 \,dx+\frac{\tilde{\omega}_j}{2}\int_{\R} \varepsilon_1 \varepsilon_2 \phi_{j}\,dx\right).   
				\end{aligned}
			\]
				To conclude the result, we define the operator
	\[\mathcal{J}(z_1,z_2)=\mathcal{E}(z_1,z_2)+\omega_{j}\int_{\R}z_1 z_2\,dx.\]
				We know that, by performing a similar calculation to Remark \ref{observa1}, $\ru_j$ is a critical point of $\mathcal{J}$. Thus,by applying the Taylor expansion formula, 
				\begin{equation*}
					\begin{aligned}
						\mathcal{J}(\tilde{\ru}_j)=\mathcal{J}(\ru_j)+\frac{1}{2}\mathcal{J}^{\prime \prime}(\ru_j)(\tilde{\omega}_{1}-\omega_{1})^{2}+|\tilde{\omega}_{1}-\omega_{1}|^{2}o\left(|\tilde{\omega}_{1}-\omega_{1}|\right),
					\end{aligned}
				\end{equation*}
				From which we deduce that the expression \eqref{ExpS} can be written as
				\begin{equation*}
					\begin{aligned}
						\mathcal{S}( \bu(t))=\sum_{j=1}^N\left\{\mathcal{E}(\ru_j)+\frac{\omega_j}{2}\int_{\R} R_{j}^{(1)} R_{j}^{(2)} \,dx+ \mathcal{O}\left(|\tilde{\omega}_{j}(t)-\omega_{j}|^{2}\right)\right\}+\mathcal{H}(\var) +\mathcal{O}\left(e^{-3\omega_{\star}^{\frac32}t}\right).
					\end{aligned}
				\end{equation*}

    For the estimate \eqref{coe0}, it is sufficient to use the Proposition \ref{coercivity} in a similar manner as in Section \ref{supercritical} in Proposition  \ref{coercivity2}.
				Thus, we have completed the desired proof.
			\end{proof} 
			
			Before giving the proof of Proposition \ref{Bootstrap3}, we need to establish the following result, which provides an almost conservation law.
			\begin{lemma}\label{conservationquase}
				There exists $C > 0$  such that if $T_{0}$ is sufficiently large, then for all $t \in \left[t_{0}, T^{n}\right]$,
		\[
				\left|\mathcal{M}_j(\mathbf{u}(t))-\mathcal{M}_j(\mathbf{u}(T^{n}))\right| \leq \frac{C}{\sqrt{t}}e^{-2\omega_{\star}^{\frac32} t}, \qquad  j=1,2,\ldots,N .
			\]
			\end{lemma}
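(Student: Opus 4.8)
The plan is to differentiate the localized momentum in time, exploit the local conservation-law structure of the global momentum to reduce $\frac{d}{dt}\mathcal{M}_j(\bu(t))$ to terms in which a derivative always falls on the cutoff $\phi_j$, and then integrate the resulting pointwise-in-time bound. Since $\mathcal{M}(\bu)=\frac12\int_\R u_1u_2\,dx$ is conserved along \eqref{sistemaeq}, the density $u_1u_2$ obeys a local conservation law $\partial_t(u_1u_2)=\partial_x\mathcal{F}$. Using $\partial_t u_1=\partial_x u_2$ and $\partial_t u_2=\partial_x(u_1-\partial_{xx}u_1-\varphi(u_1))$ and integrating by parts, one finds the flux
\[
\mathcal{F}=\tfrac12 u_1^2+\tfrac12 u_2^2+\tfrac12(\partial_x u_1)^2-u_1\partial_{xx}u_1-u_1\varphi(u_1)+\tfrac{1}{2p+2}|u_1|^{2p+2}.
\]
Because $\phi_j=\phi_j(x,t)$ depends on time, this yields the key identity
\[
\frac{d}{dt}\mathcal{M}_j(\bu(t))=\frac12\int_\R\partial_t(u_1u_2)\,\phi_j\,dx+\frac12\int_\R u_1u_2\,\partial_t\phi_j\,dx=-\frac12\int_\R\mathcal{F}\,\partial_x\phi_j\,dx+\frac12\int_\R u_1u_2\,\partial_t\phi_j\,dx,
\]
so that every surviving term now carries a factor $\partial_x\phi_j$ or $\partial_t\phi_j$; the bulk contributions cancel through the conservation structure.

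The only delicate term in $\mathcal{F}$ is $u_1\partial_{xx}u_1$, which I would integrate by parts once more, trading the second derivative for $(\partial_x u_1)^2\,\partial_x\phi_j$ and $u_1^2\,\partial_x^3\phi_j$; this keeps every resulting expression inside the energy space $H=H^1(\R)\times L^2(\R)$, requiring only $\partial_x u_1\in L^2$, $u_2\in L^2$, and $u_1\in L^\infty$ via $H^1(\R)\hookrightarrow L^\infty(\R)$. Writing $\psi_j(x,t)=\psi((x-m_jt)/\sqrt t)$, one has $|\partial_x^k\phi_j|\lesssim t^{-k/2}$ for $k\ge1$ and $|\partial_t\phi_j|\lesssim t^{-1/2}$, all supported in the transition strips $\{|x-m_jt|\lesssim\sqrt t\}$ (and analogously around $m_{j+1}$). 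By the choice $m_j=(\omega_{j-1}+\omega_j)/2$ and the definition of $\omega_\star$, on these strips every soliton center $x=\omega_kt+x_k$ lies at distance $\gtrsim\omega_\star t$, so by Proposition \ref{decaimentoquadratico} and the interaction bounds of Lemma \ref{solitons} all contributions involving $\ru=(R_1,R_2)$ are $\lesssim e^{-4\omega_\star^{3/2}t}$.

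Combining the decomposition $\bu=\ru+\vu$ (with $\vu=\bu-\ru$ as in \eqref{defv}), the embedding $H^1(\R)\hookrightarrow L^\infty(\R)$ to absorb $|u_1|^{2p+2}\lesssim|u_1|^2$, and the bootstrap hypothesis \eqref{hipotesisbootstrap}, each density integrated over a transition strip is controlled by $\|\vu(t)\|_H^2+e^{-4\omega_\star^{3/2}t}\lesssim e^{-2\omega_\star^{3/2}t}$. Together with the factor $t^{-1/2}$ from the cutoff derivatives, this gives
\[
\left|\frac{d}{dt}\mathcal{M}_j(\bu(t))\right|\lesssim\frac{1}{\sqrt t}\,e^{-2\omega_\star^{3/2}t},\qquad t\in[t_0,T^n].
\]
Integrating this differential bound from $t$ to $T^n$ then yields
\[
\left|\mathcal{M}_j(\bu(t))-\mathcal{M}_j(\bu(T^n))\right|\le\int_t^{T^n}\frac{C}{\sqrt s}\,e^{-2\omega_\star^{3/2}s}\,ds\le\frac{C}{\sqrt t}\int_t^{\infty}e^{-2\omega_\star^{3/2}s}\,ds\lesssim\frac{C}{\sqrt t}\,e^{-2\omega_\star^{3/2}t},
\]
which is precisely the claimed estimate.

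The main obstacle I anticipate is twofold. First, the integrations by parts must be organized so that the bulk terms (those with $\phi_j$ undifferentiated) cancel via the conservation-law structure \emph{and} no second-order derivative of $u_1$ survives; this is exactly what confines the argument to $H^1\times L^2$. Second, one must verify that the extra gain $t^{-1/2}$ pairs with the correct exponential rate, which amounts to matching the $\sqrt t$-width of the transition strips against the exponential smallness of both the soliton tails (Lemma \ref{solitons}) and $\vu$ (the bootstrap bound), and checking that $e^{-2\omega_\star^{3/2}t}$ dominates $e^{-4\omega_\star^{3/2}t}$ for small $\omega_\star$. A secondary technical point is justifying the time differentiation and the repeated integrations by parts at the regularity of the approximate solutions, which is legitimate since the final data $\ru(T^n)$ is smooth and the well-posedness of Theorem \ref{LWP2} propagates this regularity backward in time.
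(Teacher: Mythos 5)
Your proposal is correct and follows essentially the same route as the paper: differentiate the localized momentum, use the equation and integration by parts so that every surviving term carries a derivative of the cutoff (gaining the factor $t^{-1/2}$ and confining the integrals to the transition strips), bound the densities there by the exponential soliton tails plus the bootstrap hypothesis, and integrate the resulting differential inequality from $t$ to $T^n$. The only cosmetic differences are that you organize the computation through an explicit flux $\mathcal{F}$ (including the clean extra integration by parts for $u_1\partial_{xx}u_1$) and absorb $|u_1|^{2p+2}$ via $H^1\hookrightarrow L^\infty$ where the paper invokes Gagliardo--Nirenberg; both are equivalent in substance.
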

			\begin{proof}
				We fix $j\in \{2, \ldots, N\}$. Note that
				\begin{equation}\label{varia1}
					\begin{aligned}
						&\frac{1}{2} \frac{\partial}{\partial t}\int_{\R}u_1 u_2 \psi_{j} \,dx =\frac{1}{2} \int_{\R}\partial_{t}u_1\,u_{2} \psi_{j} \,dx+\frac{1}{2}\int_{\R}u_1\,\partial_{t}u_{2} \psi_{j} \,dx+\frac{1}{2} \int_{\R}u_1 u_2 \partial_{t}\psi_{j}  \,dx.
					\end{aligned}
				\end{equation}
				Since $(u_{1},u_{2})$ is a solution of \eqref{sistema2}, we have
				\begin{equation}\label{var1}
					\begin{aligned}
						\frac{1}{2} &\int_{\R}\partial_{t}u_1\,u_{2} \psi_{j} \,dx+\frac{1}{2}\int_{\R}u_1\,\partial_{t}u_{2} \psi_{j} \,dx\\
						&=\frac{1}{2}\int_{\R}\partial_{x}u_2u_2 \psi_{j}  \,dx+ \frac{1}{2}\int_{\R}\left(\partial_{x}u_1-\partial_{xxx}u_1-(2p+1)|u_1|^{2p}\partial_{x}u_1\right) u_1 \psi_{j}  \,dx.
					\end{aligned}
				\end{equation}
				Moreover, 
				\begin{equation}\label{var2}
					\begin{aligned}
						\frac{1}{2} \int_{\R}u_{1}\,u_{2} \partial_{t} \psi_{j}  \,dx=-\frac{1}{2\sqrt{t}} \int_{\R}u_1u_2\left(\frac{x+m_{j} t}{2 t}\right) \psi_{j}^{\prime} \,dx.
					\end{aligned}  
				\end{equation}
				Thus, combining \eqref{varia1}, \eqref{var1}, and \eqref{var2}, we obtain
		\[
				\begin{aligned}
					\frac{1}{2} \frac{\partial}{\partial t}\int_{\R}u_1 u_2 \psi_{j} \,dx &=\frac{1}{2}\int_{\R}\partial_{x}u_2u_2 \psi_{j}  \,dx+ \frac{1}{2}\int_{\R}\left(\partial_{x}u_1-\partial_{xxx}u_1-(2p+1)|u_1|^{2p}\partial_{x}u_1\right) u_1 \psi_{j}  \,dx\\
					& \quad -\frac{1}{2\sqrt{t}} \int_{\R}u_1u_2\left(\frac{x+m_{j} t}{2 t}\right) \psi_{j}^{\prime} \,dx.
				\end{aligned}
\]
		This means that
				\begin{equation}\label{var3}
					\begin{aligned}
						\frac{1}{2} \frac{\partial}{\partial t}\int_{\R}u_1 u_2 \psi_{j} \,dx &=\frac{1}{2}\int_{\R}\partial_{x}u_2u_2 \psi_{j}  \,dx+\frac{1}{2}\int_{\R}\partial_{x}u_1 u_{1} \psi_{j}  \,dx-\frac{1}{2\sqrt{t}}\int_{\R}|\partial_{x}u_1 |^2 \psi_{j}^{\prime}  \,dx\\
						&\quad-\frac{1}{4}\int_{\R}\partial_{x}u_1 u_{1} \psi_{j}  \,dx-\frac{1}{2\sqrt{t}} \int_{\R}u_1 u_2\left(\frac{x+m_{j} t}{2 t}\right) \psi_{j}^{\prime} \,d x\\
						& \quad -\frac{2p+1}{2}\int_{\R}|u_1|^{2p}\partial_{x}u_1 u_1  \psi_{j}\,dx.
					\end{aligned}    
				\end{equation}
				Note that, \[\begin{aligned}
					\frac{1}{2}\int_{\R}\partial_{x}u_2 u_2 \psi_{j}  \,dx&=-\frac{1}{2}\int_{\R}u_2 \partial_{x}u_2 \psi_{j}  \,dx-\frac{1}{2\sqrt{t}}\int_{\R}|u_2|^2 \psi_{j}^{\prime}\,dx,
				\end{aligned}\]
				so we have \[ \frac{1}{2}\int_{\R}\partial_{x}u_2 u_2 \psi_{j}  \,dx=-\frac{1}{4\sqrt{t}}\int_{\R}|u_2|^2 \psi_{j}^{\prime}\,dx\]
				Similarly, \[ \frac{1}{2}\int_{\R}\partial_{x}u_1 u_1 \psi_{j}  \,dx=-\frac{1}{4\sqrt{t}}\int_{\R}|u_1|^2 \psi_{j}^{\prime}\,dx.\]
		Moreover, by using the fact
\[\begin{aligned}
					-\frac{2p+1}{2}\int_{\R}|u_1|^{2p}\partial_{x}u_1 u_1  \psi_{j}\,dx&=\frac{2p+1}{2}\int_{\R}\partial_{x}(|u_1|^{2p}u_1) u_1  \psi_{j}\,dx+\frac{2p+1}{2\sqrt{t}}\int_{\R}|u_1|^{2p+2} \psi_{j}^{\prime}\,dx\\
					&=\frac{(2p+1)^2}{2}\int_{\R}|u_1|^{2p}\partial_{x}u_1 u_1  \psi_{j}\,dx+\frac{2p+1}{2\sqrt{t}}\int_{\R}|u_1|^{2p+2} \psi_{j}^{\prime}\,dx,
				\end{aligned}\]
we deduce
\[-\frac{2p+1}{2}\int_{\R}|u_1|^{2p}\partial_{x}u_1 u_1  \psi_{j}\,dx=\frac{2p+1}{4p\sqrt{t}}\int_{\R}|u_1|^{2p+2} \psi_{j}^{\prime}\,dx\]
				So, combining these last estimates with \eqref{var3}, we obtain
				\begin{equation}\label{quase1}
					\begin{aligned}
						\frac{1}{2} \frac{\partial}{\partial t}\int_{\R}u_1 u_2 \psi_{j} \,dx
						&=-\frac{1}{4\sqrt{t}}\int_{\R}|u_2|^2 \psi_{j}^{\prime}\,dx-\frac{1}{8\sqrt{t}}\int_{\R}|u_1|^2 \psi_{j}^{\prime}\,dx-\frac{1}{2\sqrt{t}}\int_{\R}|\partial_{x}u_1 |^2 \psi_{j}^{\prime}  \,dx\\
						&\quad-\frac{1}{2\sqrt{t}} \int_{\R}u_1 u_2\left(\frac{x+m_{j} t}{2 t}\right) \psi_{j}^{\prime} \,d x+\frac{2p+1}{4p\sqrt{t}}\int_{\R}|u_1|^{2p+2} \psi_{j}^{\prime}\,dx.
					\end{aligned}    
				\end{equation}
				Therefore, 
				\begin{equation}\label{estimativamassaloc}
					\begin{aligned}
						\frac{1}{2} \frac{\partial}{\partial t}\int_{\R}u_1 u_2 \psi_{j} \,dx
						\lesssim
						\frac{1}{\sqrt{t}} \int_{\R}\left(\left|\partial_{x} u_{1}\right|^{2}+\left|u_{1}\right|^{2}\right) \psi_{j}^{\prime} \,dx +\frac{1}{\sqrt{t}} \int_{\R}\left|u_{2}\right|^{2}\psi_{j}^{\prime} \,dx +\frac{2p+1}{4p\sqrt{t}}\int_{\R}|u_1|^{2p+2} \psi_{j}^{\prime}\,dx.
					\end{aligned}
				\end{equation}
				Define $\Omega_{j}=\left[m_{j} t-\sqrt{t}, m_{j} t+\sqrt{t}\right]$. From \eqref{estimativamassaloc} and the support properties of $\psi_{j}^{\prime}$, we arrive at
				\begin{equation}\label{estmassa}
					\begin{aligned}
						\left|\frac{1}{2} \frac{\partial}{\partial t}\int_{\R}u_1 u_2 \psi_{j} \,dx\right|
						&\lesssim \frac{1}{\sqrt{t}} \int_{\Omega_{j}}\left(\left|\partial_{x} u_{1}\right|^{2}+\left|u_{1}\right|^{2}\right)  \,dx +\frac{1}{\sqrt{t}} \int_{\Omega_{j}}\left|u_{2}\right|^{2}\,dx +\frac{2p+1}{4p\sqrt{t}}\int_{\Omega_{j}}|u_1|^{2p+2} \,dx.  
					\end{aligned}
				\end{equation}
				Now, we observe by using the Gagliardo–Nirenberg inequality  that
				\begin{equation*}
					\begin{aligned}
						\int_{\Omega_{j}}\left|u_{1}\right|^{2p+2}\,dx&\lesssim    \left( \int_{\Omega_{j}}\left|\partial_{x}  u_{1}\right|^{2}+\left|u_{1}\right|^{2}\,dx\right)^{p} + \left( \int_{\Omega_{j}}\left|u_{1}\right|^{2}\,dx\right)^{p+2}.
					\end{aligned}
				\end{equation*}
			If we substitute $u_{j}=\varepsilon_{j}+\tilde{R}_{j}$ into \eqref{estmassa}, we have
				\begin{equation*}
					\begin{aligned}
						\left|\frac{1}{2} \frac{\partial}{\partial t}\int_{\R}u_1 u_2 \psi_{j} \,dx\right|   
						&\leq \frac{C}{\sqrt{t}} \int_{\Omega_{j}}\left(\left|\partial_{x} \tilde{R}_{1}\right|^{2}+\left|\tilde{R}_{1}\right|^{2}\right)  \,dx +\frac{C}{\sqrt{t}} \int_{\Omega_{j}}\left|\tilde{R}_{2}\right|^{2}\,dx\\
						& \quad +\frac{C}{\sqrt{t}}\left(\int_{\Omega_{j}}\left(\left|\partial_{x} \tilde{R}_{1}\right|^{2}+\left|\tilde{R}_{1}\right|^{2}\right)  \,dx\right)^{p}+\frac{C}{\sqrt{t}}\left(\int_{\Omega_{j}}\left|\tilde{R}_{1}\right|^{2}  \,dx\right)^{p+2}\\&\quad+\frac{C}{\sqrt{t}}\|\bu-\tilde{\ru}\|_{H}^{2}+\frac{C}{\sqrt{t}}\|u_1-\tilde{R}_1\|_{H^1(\R)}^{2p}+\frac{C}{\sqrt{t}}\|u_1-\tilde{R}_1\|_{H^1(\R)}^{2p+2}.
					\end{aligned}
				\end{equation*}
				Hence, \begin{equation}\label{termomas1}
					\begin{aligned}
						\left|\frac{1}{2} \frac{\partial}{\partial t}\int_{\R}u_1 u_2 \psi_{j} \,dx\right|   
						&\leq \frac{C}{\sqrt{t}} \int_{\Omega_{j}}\left(\left|\partial_{x} \tilde{R}_{1}\right|^{2}+\left|\tilde{R}_{1}\right|^{2}\right)  \,dx +\frac{C}{\sqrt{t}} \int_{\Omega_{j}}\left|\tilde{R}_{2}\right|^{2}\,dx\\
						& \quad +\frac{C}{\sqrt{t}}\left(\int_{\Omega_{j}}\left(\left|\partial_{x} \tilde{R}_{1}\right|^{2}+\left|\tilde{R}_{1}\right|^{2}\right)  \,dx\right)^{p}+\frac{C}{\sqrt{t}}\left(\int_{\Omega_{j}}\left|\tilde{R}_{1}\right|^{2}  \,dx\right)^{p+2}\\&\quad+\frac{C}{\sqrt{t}}e^{-2\omega_{\star}^{\frac32}t}+\frac{C}{\sqrt{t}}e^{-2p\omega_{\star}^{\frac32}t}+\frac{C}{\sqrt{t}}e^{-(2p+2)\omega_{\star}^{\frac32}t}.
					\end{aligned}
				\end{equation}
				Now, we estimate the terms involving $\tilde{R}_{1}$.    The case of $\tilde{R}_{2}$, we proceed similarly.
    
    Note that, using the decay of solitons (see Proposition \ref{decaimentoquadratico}),
				\begin{equation}\label{Rnosupor}
					\begin{split}					\int_{\Omega_{j}}\left|\tilde{R}_{1}\right|^{2}  \,dx &\leq C\sum_{k=1}^{N}\int_{\Omega_{j}}\left|\tilde{R}_{k}^{(1)}\right|  \,dx \leq  \sum_{k=1}^{N}\int_{\Omega_{j}}e^{-\frac{2}{3}\sqrt{1-\tilde{\omega}_{k}^2}|x-\omega_{k}t-x_{k}|}\,dx \\& \leq  \sum_{k=1}^{N}\int_{\Omega_{j}}e^{-\frac{2}{3}\sqrt{1-\tilde{\omega}_{k}^2}|x-\omega_{k}t-x_{k}|}\,dx.
					\end{split}
				\end{equation}
	Our aim is to estimate $\int_{\Omega_{j}}e^{-\frac{2}{3}\sqrt{1-\tilde{\omega}_{k}^2}|x-\omega_{k}t-x_{k}|},dx$. To do so, we will use the same arguments used in Lemma \ref{solitons}, so some facts will be written without much justification. In fact, if $j<k$, for $T_{0}$ sufficiently large,
				\begin{equation}\label{integralexpo}
					\begin{aligned}
						\int_{\Omega_{j}}e^{-\frac{2}{3}\sqrt{1-\tilde{\omega}_{k}^2}|x-\omega_{k}t-x_{k}|}\,dx&\leq C\int_{-\sqrt{t}+m_{j}t}^{\sqrt{t}+m_{j}t} e^{-\frac{2}{3}\sqrt{1-\tilde{\omega}_{k}^2}|x-\omega_{k}t|}\,dx \\
						&=C\int_{-\sqrt{t}+m_{j}t}^{\sqrt{t}+m_{j}t} e^{-\frac{2}{3}\sqrt{1-\tilde{\omega}_{k}^2}|x-m_{j}t+m_{j}t-\omega_{k}t|}\,dx \\
						&\leq C\int_{-\sqrt{t}+m_{j}t}^{\sqrt{t}+m_{j}t} e^{-\frac{2}{3}\sqrt{1-\tilde{\omega}_{k}^2}\left|x-m_{j}t+\frac{\omega_{j-1}-\omega_{k}}{2}t+\frac{\omega_{j}-\omega_{k}}{2}t\right|}\,dx\\
						&\leq Ce^{-\frac{1}{3}\sqrt{1-\tilde{\omega}_{k}^2}\omega_{\star}t}\int_{-\sqrt{t}+m_{j}t}^{\sqrt{t}+m_{j}t} e^{\frac{2}{3}\sqrt{1-\tilde{\omega}_{k}^2}\left|x-m_{j}t+\frac{\omega_{j}-\omega_{k}}{2}t\right|}\,dx\\
						&\leq Ce^{-\frac{1}{3}\sqrt{1-\tilde{\omega}_{k}^2}\omega_{\star}t}\int_{-\sqrt{t}+\frac{\omega_{j}-\omega_{k}}{2}t}^{\sqrt{t}+\frac{\omega_{j}-\omega_{k}}{2}t} e^{\frac{2}{3}\sqrt{1-\tilde{\omega}_{k}^2}|x|}\,dx\\
						&\leq Ce^{-\frac{1}{3}\sqrt{1-\tilde{\omega}_{k}^2}\omega_{\star}t}\int_{\R} e^{-\frac{2}{3}\sqrt{1-\tilde{\omega}_{k}^2}x}\,dx\\
						&\leq Ce^{-4\omega_{\star}^{\frac32}t}.
					\end{aligned}
				\end{equation}
				Now, if $j>k$, proceeding as in \eqref{cassoj>k}, we have for $T_{0}$ sufficiently large,
				\begin{equation}\label{integralexpo2}
					\begin{aligned}
						\int_{\Omega_{j}}e^{-\frac{2}{3}\sqrt{1-\tilde{\omega}_{k}^2}|x-\omega_{k}t-x_{k}|}\,dx&\leq C\int_{-\sqrt{t}+m_{j}t}^{\sqrt{t}+m_{j}t} e^{-\frac{2}{3}\sqrt{1-\tilde{\omega}_{k}^2}|x-\omega_{k}t|}\,dx\\
						&\leq C\int_{-\sqrt{t}+m_{k+1}t}^{\infty} e^{-\frac{2}{3}\sqrt{1-\tilde{\omega}_{k}^2}|x-\omega_{k}t|}\,dx \\
						&\leq  C \int_{-\sqrt{t}+m_{k+1} t}^{\sqrt{t}+m_{k+1} t} e^{-\frac{2}{3} \sqrt{1-\tilde{\omega}_{k}^2}\left|x-\omega_{k} t\right|} \,d x+C \int_{\sqrt{t}+m_{k+1}t}^{\infty} e^{-\frac{2}{3} \sqrt{1-\tilde{\omega}_{k}^2} |x-\omega_{k}t|} \,d x\\
						&\leq Ce^{-\frac{1}{3} \sqrt{1-\tilde{\omega}_{k}^2} \omega_{\star}t}(2\sqrt{t})e^{\frac{2}{3} \sqrt{1-\tilde{\omega}_{k}^2} \sqrt{t}}+Ce^{-\frac{2}{3} \sqrt{1-\tilde{\omega}_{k}^2} \sqrt{t}}e^{-\frac{1}{3} \sqrt{1-\tilde{\omega}_{k}^2} \omega_{\star}t}\\
						&\leq Ce^{-\frac{1}{3} \sqrt{1-\tilde{\omega}_{k}^2} \omega_{\star}t}\left((2\sqrt{t})e^{\frac{2}{3} \sqrt{1-\tilde{\omega}_{k}^2} \sqrt{t}}+e^{-\frac{2}{3} \sqrt{1-\tilde{\omega}_{k}^2} \sqrt{t}}\right)\\
						&\leq Ce^{-4\omega_{\star}^{\frac32}t}.
					\end{aligned}
				\end{equation}
				Lastly, if $j=k$, for $T_{0}$ sufficiently large, we have
				\begin{equation}\label{integralexpo3}
					\begin{aligned}
						\int_{\Omega_{k}}e^{-\frac{2}{3}\sqrt{1-\tilde{\omega}_{k}^2}|x-\omega_{k}t-x_{k}|}\,dx&\leq C\int_{-\sqrt{t}+m_{k}t}^{\sqrt{t}+m_{k}t} e^{-\frac{2}{3}\sqrt{1-\tilde{\omega}_{k}^2}|x-\omega_{k}t|}\,dx \\
						&=C\int_{-\sqrt{t}+m_{k}t}^{\sqrt{t}+m_{k}t} e^{-\frac{2}{3}\sqrt{1-\tilde{\omega}_{k}^2}|x-m_{k}t+m_{k}t-\omega_{k}t|}\,dx \\
						&\leq C\int_{-\sqrt{t}+m_{k}t}^{\sqrt{t}+m_{k}t} e^{-\frac{2}{3}\sqrt{1-\tilde{\omega}_{k}^2}\left|x-m_{k}t+\frac{\omega_{k-1}-\omega_{k}}{2}t\right|}\,dx\\
						&\leq Ce^{-\frac{1}{3}\sqrt{1-\tilde{\omega}_{k}^2}\omega_{\star}t}\int_{-\sqrt{t}+m_{k}t}^{\sqrt{t}+m_{k}t} e^{\frac{2}{3}\sqrt{1-\tilde{\omega}_{k}^2}\left|x-m_{k}t\right|}\,dx\\
						&\leq Ce^{-\frac{1}{3}\sqrt{1-\tilde{\omega}_{k}^2}\omega_{\star}t}\int_{-\sqrt{t}}^{\sqrt{t}} e^{\frac{2}{3}\sqrt{1-\tilde{\omega}_{k}^2}|x|}\,dx\\
						&\leq Ce^{-\frac{1}{3}\sqrt{1-\tilde{\omega}_{k}^2}\omega_{\star}t}\left(2\sqrt{t}e^{\frac{2}{3}\sqrt{1-\tilde{\omega}_{k}^2}\sqrt{t}}\right)\\
						&\leq Ce^{-4\omega_{\star}^{\frac32}t}.
					\end{aligned}
				\end{equation}
				Combining \eqref{Rnosupor}-\eqref{integralexpo3}, we conclude that
				\begin{equation}\label{R1}
					\begin{aligned}
						\int_{\Omega_{j}}\left|\tilde{R}_{1}\right|^{2}  \,dx \leq Ce^{-4\omega_{\star}^{\frac32}t}.
					\end{aligned}
				\end{equation}
				Similarly, we obtain
				\begin{equation}\label{R3}
					\begin{aligned}
						\int_{\Omega_{j}}\left|\partial_{x} \tilde{R}_{1}\right|^{2}  \,dx \leq Ce^{-4\omega_{\star}^{\frac32}t}&\leq  Ce^{-2\omega_{\star}^{\frac32}t}. 
					\end{aligned}
				\end{equation}
				Also, proceeding as in the estimates above, we arrive at  
				\begin{equation}\label{R5}
					\begin{aligned}
						\int_{\Omega_{j}}|R_{2}|^{2}\,dx \leq  Ce^{-2\omega_{\star}^{\frac32}t}.
					\end{aligned}
				\end{equation}
				Therefore, using  \eqref{segundoboostrap} and estimates \eqref{R1}-\eqref{R5} in \eqref{termomas1}, we conclude that
				\begin{equation}\label{termomas2}
					\begin{aligned}
						& \left|\frac{1}{2} \frac{\partial}{\partial t}\int_{\R}u_1 u_2 \psi_{j} \,dx\right|  \leq \frac{C}{\sqrt{t}}e^{-2\omega_{\star}^{\frac32}t}.
					\end{aligned}
				\end{equation}
				Note that equations  \eqref{termomas2}  holds for $j=1$, since in this case we can use the conservation of  momentum through the flow of system \eqref{sistema1}.
				
				Consequently, for $j\in \{1,2,\ldots,N\}$ and $T_{0}$ sufficiently large,
\[\left| \frac{\partial}{\partial t}\int_{\R}\mathcal{M}_j(\bu)\right|\leq \frac{C}{\sqrt {t}} e^{-2\omega_{\star}^{\frac32}t},\]
				which shows the desired result.
			\end{proof} 
			
			\begin{proof}[Proof of   Proposition  \ref{Bootstrap3}]
				Let us fix $t \in \left[t_{0}, T^{n}\right]$. Note that by using the estimate \eqref{parametros}, we have
				\begin{equation}\label{estimateuniforme1}
					\begin{aligned}
						\left\|\bu^{n}(t)-\ru(t)\right\|_{H} &\leq\left\|\bu^{n}(t)-\tilde{\ru}(t)\right\|_{H}+\left\|\tilde{\ru}(t)-\ru(t)\right\|_{H} \\
						&\leq \left\|\var(t)\right\|_{H}+C\sum_{j=1}^N|\tilde{\omega}_{j}(t)-\omega_{j}|+C\sum_{j=1}^N|\tilde{x}_{j}(t)-x_{j}|.
					\end{aligned}
				\end{equation}
				Now, using the fact that $\left(\varepsilon_{1}(t),\tilde{R}_{1}(t)\right)_{2}=0$, we have
				\begin{equation}\label{pp1}
					\begin{aligned}
						\int_{\R}u_{1}u_2 \phi_j d x 
						& =\int_{\R} \tilde{R}_{j}^{(1)} \tilde{R}_{j}^{(2)} \,dx+\int_{\R} \varepsilon_1 \varepsilon_2 \phi_{j}\,dx+O\left(e^{-3 \omega_{\star}^{\frac32} t}\right).
					\end{aligned}
				\end{equation}
				From Lemma \ref{conservationquase}, we know that 
				\begin{equation}\label{pp2}
					\begin{aligned}
						& \left|\int_{\R} u_{1}(t)u_{2}(t) \phi_j\,d x-\int_{\R} u_{1}(T^n)u_{2}(T^n) \phi_j\,d x \right|\leq \frac{C}{\sqrt{t}} e^{-2 \omega_{\star}^{\frac32} t}.
					\end{aligned}
				\end{equation}
				Substituting \eqref{pp1} into \eqref{pp2} and using the fact that $\varepsilon_{1}\left(T^{n}\right)=0$ and $\tilde{R}_{j}^{(1)}\left(T^{n}\right)=R_{j}(T^n)$, we arrive at
				\begin{equation}\label{pp3}
					\begin{aligned}
						& \left|\int_{\R} \tilde{R}_{j}^{(1)} \tilde{R}_{j}^{(2)} \,dx-\int_{\R} R_{j}^{(1)} R_{j}^{(2)} \,dx\right|  \leq\left\|\varepsilon_{1}(t)\right\|_{H^{1}(\R)}^{2}+\left\|\varepsilon_{2}(t)\right\|_{L^2(\R)}^{2}+\frac{C}{\sqrt{t}} e^{-2 \omega_{\star}^{\frac32}t}.
					\end{aligned}
				\end{equation}
				By Taylor expansion, since $\tilde{R}_{j}^{(2)}=-\tilde{\omega}_j\tilde{R}_{j}^{(1)}$, then
				\begin{equation}\label{pp4}
					\begin{aligned}
						\int_{\R} \tilde{R}_{j}^{(1)} \tilde{R}_{j}^{(2)} \,dx&=-\tilde{\omega}_{j}\int_{\R} \Phi_{\tilde{\omega}_{j}}^2 \,dx \\
						&=-\tilde{\omega}_{j}\left.\int_{\R}\Phi_{\omega_{j}}^{2}\, d x-\tilde{\omega}_{j}\left.\left(\partial_{\omega} \int_{\R}\Phi_{\omega}^{2}\right)\right|_{\omega=\omega_{j}}\right)\left( \tilde{\omega}_{j}(t)- \omega_{j}\right) \\
						&\quad+\left(\tilde{\omega}_{j}(t)-\omega_{j}\right) o\left(\left|\tilde{\omega}_{j}(t)-\omega_{j}\right|^{2}\right)  \\
						&=-\omega_{j}\int_{\R}\Phi_{\omega_{j}}^{2}\, d x+(\tilde{\omega}_j-\omega_{j})\left.\int_{\R}\Phi_{\omega_{j}}^{2}\, d x-\tilde{\omega}_{j}\left.\left(\partial_{\omega} \int_{\R}\Phi_{\omega}^{2}\right)\right|_{\omega=\omega_{j}}\right)\left( \tilde{\omega}_{j}(t)- \omega_{j}\right) \\
						&\quad+\left(\tilde{\omega}_{j}(t)-\omega_{j}\right) o\left(\left|\tilde{\omega}_{j}(t)-\omega_{j}\right|^{2}\right)
					\end{aligned}
				\end{equation}
				
				Once $\left|\tilde{\omega}_{j}(t)-\omega_{j}\right|^{2}$ is sufficiently small (for $T_0$ large enough), then \eqref{pp4} implies, 
\[
				\left|\int_{\R} \tilde{R}_{j}^{(1)} \tilde{R}_{j}^{(2)} \,dx-\int_{\R} R_{j}^{(1)} R_{j}^{(2)} \,dx\right| \geq C\left|\tilde{\omega}_{j}(t)-\omega_{j}\right|,
\]
				which, combined with \eqref{pp3}, leads to
				\begin{equation}\label{omega}
					\left|\tilde{\omega}_{j}(t)-\omega_{j}\right| \leq C\left\|\var(t)\right\|_{H}^{2}+\frac{C}{\sqrt{t}} e^{-2 \omega_{\star}^{\frac32}t}.   
				\end{equation}
				Next, we observe that from the definition of $\mathcal{E}_b$ (see \eqref{energia}) and of $\mathcal{S}$, we deduce,
\[
				\begin{aligned}
					\left|\mathcal{S}(\mathbf{u}(t))-\mathcal{S}(\mathbf{u}(T^{n}))\right|\leq\left| \mathcal{E}_b(\mathbf{u}(t))-\mathcal{E}_b(\mathbf{u}(T^{n}))\right|+C\left|  \sum_{j=1}^{N}\left(\mathcal{M}_j(\mathbf{u}(t))-\mathcal{M}_j(\mathbf{u}(T^{n}))\right)\right|.
				\end{aligned}
\]
				The conservation of energy and Lemma \ref{conservationquase} imply
\[
				\begin{aligned}
					\left|\mathcal{S}(\mathbf{u}(t))-\mathcal{S}(\mathbf{u}(T^{n}))\right|\leq \frac{C}{\sqrt{t}} e^{-2 \omega_{\star}^{\frac32}t}.
				\end{aligned}
\]
				It follows from the above estimate that
				\begin{equation}\label{SuTn}
					\begin{aligned}
						\mathcal{S}(\mathbf{u}(t))= \mathcal{S}(\mathbf{u}(T^{n}))+ \mathcal{O}\left( \frac{1}{\sqrt{t}}e^{-2 \omega_{\star}^{\frac32}t}\right).
					\end{aligned}
				\end{equation}
				Therefore, from \eqref{SuTn} and Lemma \ref{taylorS},
\[
				\begin{aligned}
					&\left\|\mathbf{\var}(t)\right\|_{H}^{2}\leq C\mathcal{H}(\var) \\
					&=\mathcal{S}(\mathbf{u}(t))-\sum_{j=1}^N\left\{\mathcal{E}(\ru_{j})+ \frac{\omega_{j}}{2}\int_{\R}R_{j}^{(1)} R_{j}^{(2)}\,dx+ \mathcal{O}\left(|\tilde{\omega}_{j}(t)-\omega_{j}|^{2}\right)\right\}-\mathcal{O}\left(\frac{1}{\sqrt{t}}e^{-2\omega_{\star}^{\frac32}t}\right)\\
					&\leq  \mathcal{S}(\mathbf{u}(T^{n})) -\sum_{j=1}^N\left\{\mathcal{E}(\ru_{j})+  \frac{\omega_{j}}{2}\int_{\R}R_{j}^{(1)} R_{j}^{(2)}\,dx+ \mathcal{O}\left(|\tilde{\omega}_{j}(t)-\omega_{j}|^{2}\right)\right\} +\frac{C}{\sqrt{t}}e^{-2\omega_{\star}^{\frac32}t}.
				\end{aligned}
\]
				Since $\tilde{\omega}_j(T^n)=\omega_j$, $\var(T^n)=0$ and from  Lemma \ref{taylorS} we have
\[
				\mathcal{S}(\mathbf{u}(T^{n})) =\sum_{j=1}^N\left\{\mathcal{E}(\ru_j)+  \frac{\omega_{j}}{2}\int_{\R}R_{j}^{(1)} R_{j}^{(2)}\,dx\right\} +\mathcal{O}\left(\frac{1}{\sqrt{t}}e^{-2\omega_{\star}^{\frac32}t}\right),
\]
				and from this, combined with the last inequality and \eqref{omega}, we obtain
				\begin{equation}\label{epsilonestimative}
					\begin{aligned}
						\left\|\mathbf{\var}(t)\right\|_{H}^{2}& \leq \sum_{j=1}^N\mathcal{O}\left(|\tilde{\omega}_{j}(t)-\omega_{j}|^{2}\right)+\frac{C}{\sqrt{t}}e^{-2\omega_{\star}^{\frac32}t} \leq Ce^{-4\omega_{\star}^{\frac32}t}+\frac{C}{\sqrt{t}}e^{-2\omega_{\star}^{\frac32}t} \leq \frac{C}{\sqrt{t}}e^{-2\omega_{\star}^{\frac32}t},
					\end{aligned}
				\end{equation}
				for $T_0$ sufficiently large. This last estimate plays a crucial role in proving the proposition. In fact, from \eqref{modulated}, \eqref{omega}, and \eqref{epsilonestimative}, we have for $T_{0}$ sufficiently large,
\[
				\begin{aligned}
					\left|\partial_{t } \tilde{x}_{j}(t)\right|&\leq C\left\|\var(t)\right\|_{H}+Ce^{-3 \omega_{\star}^{\frac32}t} \leq \frac{C}{\sqrt{t}} e^{-2 \omega_{\star}^{\frac32}t}+C\left\|\var(t)\right\|_{H} \leq \frac{C}{\sqrt{t}} e^{-2 \omega_{\star}^{\frac32}t}+\frac{C}{\sqrt{t}} e^{- \omega_{\star}^{\frac32}t} \leq \frac{C}{\sqrt{t}} e^{- \omega_{\star}^{\frac32}t}.
				\end{aligned}
\]
				Since $\tilde{x}_j(T^n) = x_j$, according to the fundamental theorem of calculus and the above estimate, 
				\begin{equation}\label{estimativeCal}
					\begin{aligned}
						\left|\tilde{x}_{j}(t)-x_{j}\right|&=\left|\int_{t}^{T^n}\partial_{s}\tilde{x}_{j}(s)ds   \right| \leq  \int_{t}^{T^n}\left|\partial_{s}\tilde{x}_{1}(s)   \right|ds\\
						& \leq \frac{C}{\sqrt{t}} \int_{t}^{T^n}e^{- \omega_{\star}^{\frac32}s}\,ds   \leq \frac{C}{\sqrt{t}}\left(e^{-\omega_{\star}^{\frac32}t}-e^{-\omega_{\star}^{\frac32}T_{n}}\right)\\
						&\leq \frac{C}{\sqrt{t}} e^{- \omega_{\star}^{\frac32}t}.
					\end{aligned}  
				\end{equation}
				Therefore, using \eqref{omega}, \eqref{epsilonestimative}, and \eqref{estimativeCal} in the estimate \eqref{estimateuniforme1}, it follows that
\[
				\begin{aligned}
					\left\|\bu^{n}(t)-\ru(t)\right\|_{H} &\leq \frac{C}{\sqrt{t}}e^{-\omega_{\star}^{\frac32}t},
				\end{aligned}
\]
				for all $t \in \left[t_{0}, T^{n}\right]$.
				Therefore, for $T_{0}$ sufficiently large,
				\begin{equation*}
					\begin{aligned}
						\left\|\mathbf{u}^{n}(t) -\mathbf{R}(t) \right \|_{H}\leq  \frac{C}{\sqrt{t}}e^{- \sqrt{\omega_{\star} }\omega_{\star} t} , \, \, \text{for all $t \in\left[t_{0}, T^{n}\right]$.}
					\end{aligned}
				\end{equation*}
				Taking $t \geq (2C)^2$, it follows that for all $t \in [t_0, T^n]$
\[
				\begin{aligned}
					\left\|\mathbf{u}^{n}(t)-\mathbf{R}(t)\right \|_{H} \leq   \frac{1}{2} e^{- \omega_{\star}^{\frac32}t}, 
				\end{aligned}
\]
				which shows the desired result.
			\end{proof}
			\subsection{Proof of the main theorem}
			As we have already established, to prove the main theorem, we  need initially prove Propositions \ref{UniformEstimates} and \ref{Compactness}.
  
			\begin{proof}[Proof of Proposition \ref{UniformEstimates}]
				Let $T_{0}$, $n_{0}$  be given by  Proposition \ref{Bootstrap3}. Set $n \in \N$ with $n \geq n_{0}$. As we will see, the argument below shows that as long as the approximate solution $\mathbf{u}^{n}$ exist it satisfies \eqref{desiesti}, which in turn implies that it does not blow up in finite time. Consequently,  we may assume that it is defined in $[T_{0},T^{n}]$ and \[\mathbf{u}^{n} \in \mathbf{C}([T_{0},T^{n}],H^1(\R)) .\]
				Furthermore, since  $\mathbf{R}$ is continuous with respect to time, it follows that for every $t$ sufficiently close to $T^{n} $ (from the left) 
				\begin{equation*}
					\|\mathbf{u}^{n}(t)-\mathbf{R}(t)\|_{ H}\\ \leq \|\mathbf{u}^{n}(t)-\mathbf{u}^{n}(T^{n})\|_{H} + \|\mathbf{R}(T^{n})-\mathbf{R}(t)\|_{H} \leq e^{-\sqrt{\omega_{\star}}v_{\star}t}.
				\end{equation*}
				Now, let us consider 
				\[ t_{\sharp}:= \inf\left\{t_{\dagger}\in [T_{0},T^{n}]: \, \mbox{\eqref{desiesti} is satisfied for
					all $t \in [t_{\dagger},T^{n}]$}\right\}.\]
				Clearly $t_{\sharp} < T^{n}$. So, to prove the proposition we  just need to show that $t_{\sharp}=T_{0}$.
				Assume by contradiction that $t_{\sharp} >T_{0}$. Then, by Proposition \ref{Bootstrap3}, for all $t \in [t_{\sharp},T^{n}]$, we have  \begin{equation*}
					\|\mathbf{u}^{n}(t)-\mathbf{R}(t)\|_{H}\leq \frac{1}{2}e^{-\sqrt{\omega_{\star}}v_{\star}t}.
				\end{equation*}
				Consequently,
				\begin{equation*}
					\|\mathbf{u}^{n}(t_{\sharp}))-\mathbf{R}(t_{\sharp})\|_{H}\leq \frac{1}{2}e^{-\sqrt{\omega_{\star}}v_{ \star}t_{\sharp}}.
				\end{equation*}
				By the above argument, the continuity of $\mathbf{u}^{n}$ implies that for $t$ close enough to $t_{\sharp}$ (from the left),
				\begin{equation*}
					\|\mathbf{u}^{n}(t)-\mathbf{R}(t)\|_{H}\leq e^{-\sqrt{\omega_{\star}}v_{\star}t},
				\end{equation*}
				which contradicts the minimality of $t_{\sharp}$. Thus, one should have $t_{\sharp}=T_{0}$ and the proof is completed.
			\end{proof}
			\begin{lemma}[$\mathbf{L}^{2}$-Compactness]\label{L2-Compactness}
				Let $\delta >0$. There exists $r_{\delta}>0$ such that for every sufficiently large $n$,
				\[ \int\limits_{|x|> r_{\delta}} \left(|\partial_x u_1^n(T_{0})|^2+|u_1^n(T_{0})|^2+|u_2^n(T_{0})|^2\right)\,dx \leq \delta. \]
			\end{lemma}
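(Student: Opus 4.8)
The plan is to establish tightness of the sequence $\{\mathbf{u}^n(T_0)\}$ in $H=H^1(\R)\times L^2(\R)$, which combined with the uniform bound $\|\mathbf{u}^n(T_0)\|_H\le C$ coming from Proposition \ref{UniformEstimates} upgrades weak convergence to strong convergence in $H^s(\R)\times H^{s-1}(\R)$ for $s<1$, thereby yielding Proposition \ref{Compactness}. First I would split $\mathbf{u}^n(T_0)=\mathbf{R}(T_0)+\var(T_0)$. Since at the fixed time $T_0$ each soliton $\mathbf{R}_j(T_0)$ is centered at the bounded position $\omega_jT_0+x_j$ and decays exponentially by Proposition \ref{decaimentoquadratico}, the contribution of $\mathbf{R}(T_0)$ to the exterior integral tends to $0$ as $r\to\infty$, uniformly in $n$, hence can be made $\le\delta/2$ by choosing $r$ large. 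The whole difficulty lies in the residual: the bound $\|\var(T_0)\|_H\le e^{-\omega_\star^{3/2}T_0}$ from \eqref{segundoboostrap} is uniform in $n$ but does \emph{not} decay in $r$, so tightness of $\var(T_0)$ must be extracted dynamically, exploiting the terminal condition $\mathbf{u}^n(T^n)=\mathbf{R}(T^n)$.

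For that I would run a monotonicity argument on a \emph{localized momentum}, which is the quantity with a favorable flux for this system. Introduce two cutoffs $\zeta^{\pm}(x,t)$ that at $t=T_0$ select the regions $\pm x>r$ and whose transition fronts travel outward at constant speeds $\lambda^{+}\in(\omega_N,1)$ and $\lambda^{-}\in(-1,\omega_1)$; since $|\omega_j|<1$ such speeds exist, and because the fronts outrun (respectively out-trail) every soliton, all the $\mathbf{R}_j$ remain in the interior region $\{\zeta^{\pm}=0\}$ for every $t\in[T_0,T^n]$. Differentiating $\int_\R u_1u_2\,\zeta^{\pm}\,dx$ along the flow of \eqref{sistema2} and integrating by parts exactly as in Lemma \ref{conservationquase}, but now retaining the moving-front term $\partial_t\zeta^{\pm}=-\lambda^{\pm}\partial_x\zeta^{\pm}$, produces
\[
\frac{d}{dt}\int_\R u_1u_2\,\zeta^{\pm}\,dx=-2\int_\R Q[\mathbf{u}]\,\partial_x\zeta^{\pm}\,dx+\text{l.o.t.},\qquad Q[\mathbf{u}]=\tfrac14u_1^2+\tfrac{\lambda^{\pm}}{2}u_1u_2+\tfrac14u_2^2+\tfrac34|\partial_xu_1|^2,
\]
where the lower-order terms arise from $\partial_x^{3}\zeta^{\pm}$ and from the nonlinearity. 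The crucial observation is that for $|\lambda^{\pm}|<1$ the quadratic form $Q$ is positive definite and comparable to the full energy density $|u_1|^2+|u_2|^2+|\partial_xu_1|^2$; note that, unlike the localized energy, the momentum flux produces no $\partial_xu_2$ term, which is precisely what makes Boussinesq's bidirectional dispersion tractable here.

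I would then integrate this identity over $[T_0,T^n]$. On the support of $\partial_x\zeta^{\pm}$ the solitons are absent, so by Lemma \ref{solitons} and \eqref{segundoboostrap} the nonlinear and $\partial_x^{3}\zeta^{\pm}$ contributions are $O(e^{-3\omega_\star^{3/2}t})$ and are absorbed by the coercive term $Q$ once $T_0$ is large. At the terminal time $\mathbf{u}^n(T^n)=\mathbf{R}(T^n)$ is concentrated strictly inside the interior, so both $\int u_1u_2\,\zeta^{\pm}\,dx$ and the flux are exponentially small there, with a gain that improves as $r\to\infty$ since the fronts sit at distance $\gtrsim r$ from every soliton. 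Positivity of $Q$ then converts the monotonicity into an upper bound for $\int_{\pm x>r}(|u_1|^2+|u_2|^2+|\partial_xu_1|^2)(T_0)$ that is uniform in $n$ and tends to $0$ as $r\to\infty$. Summing the two contributions and adding the soliton estimate gives the claim with $r_\delta$ chosen large enough.

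The step I expect to be the main obstacle is making this momentum monotonicity rigorous for the system: one must verify the sign of $Q$ against the bidirectional dispersion (whose unbounded linear group velocities obstruct any analogous localized-\emph{energy} argument), control the genuinely nonlocal error terms produced by the $\partial_{xxx}u_1$ flux after integration by parts, and, most delicately, extract from the space--time flux identity a pointwise-in-$T_0$ tightness bound that is simultaneously uniform in $n$ and carries the correct decay in $r$. This last point is where the terminal condition at $T^n$ and the quantitative exponential separation of the fronts from the solitons have to be used with care.
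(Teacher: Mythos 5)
Your flux computation is right: with a front $\zeta$ moving at speed $\lambda$, $|\lambda|<1$, one gets
\[
\frac{d}{dt}\int_\R u_1u_2\,\zeta\,dx=-\int_\R\Bigl(\tfrac12 u_1^2+\lambda\,u_1u_2+\tfrac12 u_2^2+\tfrac32|\partial_xu_1|^2\Bigr)\partial_x\zeta\,dx+\frac12\int_\R u_1^2\,\partial_{xxx}\zeta\,dx+\text{nonlinear terms},
\]
the quadratic form is positive definite precisely when $|\lambda|<1$, and this flux indeed contains no $\partial_xu_2$ or $\partial_{xx}u_1$. The genuine gap is the step ``positivity of $Q$ then converts the monotonicity into an upper bound for $\int_{\pm x>r}(|u_1|^2+|u_2|^2+|\partial_xu_1|^2)(T_0)$.'' That conversion is not available, because the density you integrate, $u_1u_2$, is signless. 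Integrating your identity over $[T_0,T^n]$ gives
\[
\int_{T_0}^{T^n}\!\!\int_\R 2\,Q\,\partial_x\zeta^{+}\,dx\,dt=\int_\R u_1u_2\,\zeta^{+}(T_0)\,dx-\int_\R u_1u_2\,\zeta^{+}(T^n)\,dx+\mathrm{errors},
\]
which bounds the coercive space--time flux \emph{by} the exterior momentum at $T_0$ (itself dominated by the very quantity you are trying to estimate); read in the other direction it only produces a lower bound for the signless quantity $\int u_1u_2\,\zeta^{+}(T_0)\,dx$, and the left front gives the mirror-image statements. No combination of these inequalities yields an upper bound on a coercive quantity at the earliest time $T_0$. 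The obstruction is structural, not technical: this system has exactly two localizable conserved quantities --- the momentum, whose flux is coercive but whose density is not, and the energy, whose density is coercive but whose flux contains $u_2\partial_{xx}u_1$ and so cannot be controlled at the $H^1(\R)\times L^2(\R)$ regularity (as you note yourself). Your scheme needs a single quantity enjoying both properties, and there is none; flagging this as ``the main obstacle'' does not fill the hole, since the mechanism you propose cannot deliver the statement.

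The paper's proof avoids dynamical monotonicity altogether. It chooses an intermediate time $T_\delta$, \emph{independent of $n$}, so large that the bootstrap estimate \eqref{desiesti} already gives $\|\mathbf{u}^n(T_\delta)-\mathbf{R}(T_\delta)\|_{H}^2\leq \delta/16$; combined with the exponential decay of the solitons this yields exterior smallness at time $T_\delta$ beyond some radius $\rho_\delta$ with no further work. It then multiplies $\mathbf{u}^n(T_\delta)$ by a cutoff vanishing on $\{|x|<\rho_\delta\}$, solves backward from this small final datum (continuous dependence on the data), and identifies the resulting small solution with $\mathbf{u}^n$ on the exterior region $\{|x|>2\rho_\delta+(T_\delta-t)\}$ by uniqueness on light cones, obtaining the claim at $T_0$ with $r_\delta=2\rho_\delta+(T_\delta-T_0)$. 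Incidentally, the reservation you raise about unbounded group velocities applies to that light-cone step of the paper as well; but the feature worth retaining from the paper's argument is that $T_\delta$ can be taken independent of $n$, so any transfer of exterior smallness back to $T_0$ only has to cross a time interval of \emph{fixed} length, rather than the interval $[T_0,T^n]$ that your moving fronts must traverse.
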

			\begin{proof}
				Set  $n$ large enough such that estimate \eqref{desiesti} holds. We may assume there is  $T_{\delta}\in [T_{0},T^{n}]$ such that $e^{-2\sqrt{\omega_{\star}}v_{\star}T_{\delta}} \leq \frac{\delta}{16}$ (otherwise we may take a large $n$). It is clear that
				\begin{equation}\label{1.}
					\|(u_1^{n}(T_{\delta}),u_2^{n}(T_{\delta}))-(R_1(T_{\delta}),R_2(T_{\delta}))\|_{H^1(\R)\times L^2(\R)}^2\leq e^{-2\sqrt{\omega_{\star}}v_{\star}T_{\delta}} \leq \frac{\delta}{16}.
				\end{equation}
				Also, since $R_{1}$ and $R_{2}$ decay exponentially we may guarantee the existence of $\rho_{\delta}>0$ such that
				\begin{equation}\label{33.}
					\int\limits_{|x|> \rho_{\delta}}  \left(|R_1(T_{\delta})|^2+|R_2(T_{\delta})|^2\right)\,dx \leq \frac{\delta}{16}.
				\end{equation}
				Estimates \eqref{1.} and \eqref{33.} then imply
				\begin{equation}\label{44.}
					\int\limits_{|x|> \rho_{\delta}}  \left(|\partial_x u_1^n(T_{\delta})|^2+|u_1^n(T_{\delta})|^2+|u_2^n(T_{\delta})|^2\right)\,dx \leq \frac{\delta}{4}.
				\end{equation}
				Our goal now is to show  that inequality \eqref{44.}  holds with $T_{0}$ instead of $T_{\delta}$, with a possible change of the parameter $\rho_\delta$.

    On the other hand, let $\tau_{\rho_{\delta}
}$ be a cutoff function such that

\[
\tau_{\rho_{\delta}}(x)=\left\{\begin{array}{ll}
1 & \text { for }|x|>2 \rho_{\delta} \\
0 & \text { for }|x|<\rho_{\delta}
\end{array} \quad \text { and } \quad\left|\tau_{\rho_{\delta}}^{\prime}\right|<\frac{C}.{\rho_{\delta}}\right.
\]
Hence, from \eqref{44.}, \[\left\|(u_1^n\left(T_{\delta}\right),u_2^n\left(T_{\delta}\right)) \tau_{\rho_{\delta}}\right\|_{H^{1}(\R) \times L^{2}(\R)}^{2} \leq \frac{\delta}{2}.\]
Let us denote by $U_{n}^{T_{\delta},\rho_{\delta}}=(u_1^n\left(T_{\delta}\right),u_2^n\left(T_{\delta}\right)) \tau_{\rho_{\delta}}$ and $U_{n}^{\rho_{\delta}}$ its corresponding final data, then
 by the wellposedness theory of $H^{1}(\R) \times L^{2}(\R)$, due to the continuous dependence on the final data, we have 
\[
\left\|U_{n}^{\rho_{\delta}}(t)\right\|_{H^{1}(\R) \times L^{2}(\R)}^{2} \leq C \delta.
\]

Therefore,  by uniqueness on light cones, $U_{n}^{\rho_{\delta}}$ and $(u_1^n,u_2^n)$ coincide on $\left\{(t, x) \in \mathbb{R} \times \mathbb{R}^{n}:|x|>2 \rho_{\delta}+\left(T_{\delta}-t\right)\right\}$ for $t \in\left(-\infty, T_{\delta}\right)$, this implies that

\begin{equation*}
\int_{|x|>2 \rho_{\delta}+\left(T_{\delta}-T_{0}\right)}\left|\partial_x u_{1}^n\left(T_{0}\right)\right|^{2}+\left|u_{1}^n\left(T_{0}\right)\right|^{2}+\left| u_{2}^n\left(T_{0}\right)\right|^{2}\, d x \leq\left\|U_{n}^{\rho_{\delta}}(t)\right\|_{H^{1}(\R) \times L^{2}(\R)}^{2} \leq C \delta.
\end{equation*}

We could choose $r_{\delta}:=\max \left\{r_{\delta}, 2 \rho_{\delta}+\left(T_{\delta}-T_{0}\right)\right\}$ to deduce that
\begin{equation*}
\int_{|x|>r_{\delta}}\left|\partial_x u_{1}^n\left(T_{0}\right)\right|^{2}+\left|u_{1}^n\left(T_{0}\right)\right|^{2}+\left| u_{2}^n\left(T_{0}\right)\right|^{2}\, d x \leq\left\|U_{n}^{\rho_{\delta}}(t)\right\|_{H^{1}(\R) \times L^{2}(\R)}^{2} \leq  \frac{\delta}{2}.
\end{equation*}
			\end{proof}
			Now, we are ready to prove   Proposition \ref{Compactness}.
			\begin{proof}[Proof of Proposition \ref{Compactness}]
				In view of \eqref{desiesti} we have that $\mathbf{u}^n(T_{0})$ is bounded in $H$. Hence, up to a subsequence, there is $\mathbf{u}^{0} \in H$ such that
			$					\mathbf{u}^n(T_{0}) \rightharpoonup \mathbf{u}^{0}$ in $H.	$		 
				The idea now is to show that this convergence is, up to a subsequence, strongly  in $H^s(\R)\times H^{s-1}(\R)$ for $0<s<1$.

%    In fact, because the embedding $H \stackrel{}{\hookrightarrow} L_{\rm loc}^2(\R)\times L_{\rm loc}^2(\R)$ is compact, we deduce that
		%		\begin{equation*}\label{convergenceL}
		%			\mathbf{u}^n(T_{0}) \rightarrow \mathbf{u}^{0} \quad \mbox{in} \quad L_{\rm loc}^2(\R)\times L_{\rm loc}^2(\R).
		%		\end{equation*}
		%		This last convergence combined with Lemma \ref{L2-Compactness} clearly implies that
		%		\begin{equation*}\label{2convergenceL}
		%			\mathbf{u}^n(T_{0}) \rightarrow \mathbf{u}^{0} \quad \mbox{in} \quad L^2(\R)\times L^2(\R).
		%		\end{equation*}
 Let us $\delta>0$. From Lemma \ref{L2-Compactness} we infer that for $n$ large enough, there holds\begin{equation}\label{est.u0}
     \left\|\bu^{n}(T_0)\right\|_{H^1(|x|>r_{\delta})\times L^2(|x|>r_{\delta})}^2+\left\|\bu^{0}\right\|_{H^1(|x|>r_{\delta})\times L^2(|x|>r_{\delta})}^2 \leq \frac{\delta}{2}.
 \end{equation}
Define $\chi_{\delta}: \mathbb{R}^{n} \rightarrow[0,1]$ a cutoff function such that \[\chi_{\delta}(x)=\left\{\begin{array}{ll}
1 & \text { for }|x|< r_{\delta} \\
0 & \text { for }|x|>2r_{\delta}
\end{array}\right.\]  and $\left| \chi_{\delta}^{\prime}\right| \leq 1$. Then, 

\[
\left\|\bu^{n}(T_0)-\bu^{0}\right\|_{H^{s}(\R) \times H^{s-1}(\R)} \leq\left\|\left(\bu^{n}(T_0)-\bu^{0}\right) \chi_{\delta}\right\|_{H^{s}(\R) \times H^{s-1}(\R)}+\left\|\left(\bu^{n}(T_0)-\bu^{0}\right)\left(1-\chi_{\delta}\right)\right\|_{H^{s}(\R) \times H^{s-1}(\R)}
\]

Since  $H^{\sigma}(\Omega) \hookrightarrow H^{\sigma-1}(\Omega)$ is compact with $\Omega=\{x:|x|<r_{\delta}\}$  bounded, for $n$ large enough and possibly up to a subsequence, we obtain

\[
\left\|\left(\bu^{n}(T_0)-\bu^{0}\right) \chi_{\delta}\right\|_{H^{s}(\R) \times H^{s-1}(\R)} \leq \frac{\delta}{2}.
\]
Moreover, using \eqref{est.u0}, it follows that 

\[
\left\|\left(\bu^{n}(T_0)-\bu^{0}\right)\left(1-\chi_{\delta}\right)\right\|_{H^{s}(\R) \times H^{s-1}(\R)} \leq\left\|\left(\bu^{n}(T_0)-\bu^{0}\right)\left(1-\chi_{\delta}\right)\right\|_{H^{1}(\R) \times L^{2}(\R)} \leq \frac{\delta}{2}.
\]
Therefore, we get the desired result.   
			\end{proof}
    %\tcyan{ proof of Theorem \ref{maintheorem} needs to be modified, probably like Klein-Gordon equation, for example paper of Cote-Munoz (https://arxiv.org/pdf/1210.7953), Section 4}
			Finally, we will prove the main theorem.
			\begin{proof}[Proof of Theorem \ref{maintheorem}]
				Proposition \ref{UniformEstimates} guarantees the existence of $T_{0}\in \R$ and $n_{ 0} \in\N$ such that, for every $n \geq n_{0}$ and each $t \in [T_{0},T^{n}]$,
				\begin{equation}\label{des}
					\|\mathbf{u}^{n}(t)-\mathbf{R}(t)\|_{H }\leq e^{-\sqrt{\omega_{\star}}v_{\star}t}.
				\end{equation}
				Let $\mathbf{u}^{0}$ be as obtained in Proposition \ref{Compactness}. From the local well-posedness of \eqref{sistemaeq} in $H^s(\R)\times H^{s-1}(\R)$, for $\frac{1}{2}<s\leq 1$, there is a solution $\mathbf{u}$ of  \eqref{sistemaeq}  with initial condition $\mathbf{u}(T_{0})= \mathbf{u}^{0}$ defined in an interval $[T_{0},T^{\star})$, where $T^{\star}$ is the maximum time of existence. We will show that $T^{\star}= +\infty$ and that $\mathbf{u}$ satisfies estimate \eqref{desprin}. In fact, suppose $T^{\star}<\infty$. Then, using  Proposition \ref{Compactness} we have
	\[\mathbf{u}^{n}(T_{0}) \rightarrow \mathbf{u}^{0} \quad \mbox{in} \quad H^s (\R)\times H^{s-1}(\R), \quad 0\leq  s<1. \]
				By the continuous dependence on the initial data, for $t \in [T_{0}, T^{\star})$,
				\[ \mathbf{u}^n(t) \rightarrow \mathbf{u}(t) \quad \mbox{in} \quad H^s (\R)\times H^{s-1}(\R), \quad 0< s<1. \]
				Now, by  \eqref{des} we get (taking $n$ large enough such that $T^{n}>T^{\star}$) that $\mathbf{u}^n(t)$ is bounded in $H$ for all $t\in [T_{0},T^{\star })$, giving that, up to a subsequence, it converges weakly in $H$. Therefore, for all $ t \in [T_{0}, T^{\star})$,
				\[ \mathbf{u}^n(t) \rightharpoonup \mathbf{u}(t) \quad \mbox{in} \quad H. \]
				So,  from \eqref{des},
				\begin{equation*}
					\|\mathbf{u}(t)-\mathbf{R}(t)\|_{H} \leq 
					\liminf_{n \rightarrow +\infty} \norm{\mathbf{u}^{n}(t)-\mathbf{R}(t)}_{H}
					\leq e^{-\sqrt{\omega_{\star}}v_{\star}t}.
				\end{equation*}
				Hence, $\mathbf{u}$ is bounded in $H$  for all $t\in [T_{0},T^{\star })$. But, this contradicts the blow-up alternative  in $H$ given by Theorem \ref{LWP2}. This means that $T^{\star}=+\infty.$ Clearly, the above argument also shows that \eqref{desprin} holds for all $t\in[T_0,\infty)$. The proof of the main theorem is thus completed.
			\end{proof}
			\section{ Supercritical case $p>2$} \label{supercritical}
			In this section, we aim to prove Theorem \ref{maintheorem} under the assumption of being in supercritical condition $p>2$. To achieve this, we will need to slightly modify the definition of $\omega_{\star}$, in fact, we will need to add new components derived from the new coercivity property.
			
			In fact, in the previous results, the condition of $p<2$ was notable, specifically when obtaining the coercivity property. In fact, in the subcritical case, we obtained  for $j$ fixed,
\[
			\left\langle S_j^{\prime \prime}\left(\mathbf{\Phi}_{\omega_j}\right) \mathbf{\eta}, \mathbf{\eta}\right\rangle \geq C\|\mathbf{\eta}\|_{ H }^2 .
\]
if  \begin{equation}\label{cond0}
				\left\langle\mathbf{\eta}, \partial_x \Phi_{\omega_j}\right\rangle=\left\langle\mathbf{\eta}, \mathbf{\Gamma}_{\omega_j}\right\rangle=0,
			\end{equation}
			where $\mathbf{\Gamma}_{\omega_j}=\left(\Phi_{\omega_j}^{(1)}, 0\right)$  obtained through modulation arguments. On the contrary, in the supercritical $L^2(\R)$ scenario, it is not possible to achieve uniform estimates in a similar manner due to the failure of the aforementioned property. Just like in the previous case, we can control directions around $\partial_x \ru$, while controlling the other direction through the scaling parameter, as in the subcritical case, is not feasible. In this case, we aim to establish a variant of the coercivity property that allows us to obtain new uniform estimates.
			
			In this regard, we recall the definition of the operator $S_j^{\prime \prime}$. In fact, \[\mathcal{S}_{j}''(\bm{\Phi}_{\omega_j})=\begin{pmatrix}
				L& \omega_j\\
				\omega_j & I 
			\end{pmatrix},\]
			where, $Lw =-\partial_{xx} w+w-(2p+1)|\Phi_{\omega_j}|^{2p}w,$ or equivalently 
			\begin{equation*}
				\mathcal{S}_{j}''(\bm{\Phi}_{\omega_j})\bw =\begin{pmatrix}
					-\partial_{xx} w_{1}+w_1+\omega_j w_2-(2p+1)|\Phi_{\omega_j}|^{2p}w_1\\
					w_2+\omega_j w_1
				\end{pmatrix},
			\end{equation*}
			from which, for  $\mathcal{L}_j :=\mathcal{S}_{j}''(\Phi_{\omega_j})$, 
	\[\begin{aligned}
				 \langle \mathcal{L}_j(\bw), \bw \rangle  =\int_{\R}|\partial_x w_1|^2\,dx+\int_{\R}| w_1|^2\,dx+2\omega_j\int_{\R} w_2 w_1\,dx+\int_{\R}| w_2|^2\,dx-(2p+1)\int_{\R}|\Phi_{\omega_j}|^{2p}w_1^2\,dx.
			\end{aligned}\]
			Now, \eqref{sistema2} is equivalent to the  Hamiltonian system $\bu_t=J\mathcal{E}'(\bu)$, where
			\[
  J=  \begin{bmatrix}
				0&\partial_x\\
				\partial_x&0
			\end{bmatrix},
			\]
			and  the solitary waves satisfy
			\[
			\bu_t=JS'(\bu)=J(\mathcal{E}(\bu)+\omega_j \mathcal{M}(\bu))',\qquad J=  \begin{bmatrix}
				0&\partial_\xi\\
				\partial_\xi&0
			\end{bmatrix},\quad \text{with} \,\,\xi=x-\omega_j t.
			\]
			We consider the evolution of small
			perturbations of the solitary wave, writing  
$		 
			\bu=\bm{\Phi}_{\omega}+\vu,
$
			then
			\begin{equation}
				\lam\vu=JS_j''(\vu)
			\end{equation}
			that is
			\[
			\begin{cases}
				\omega_j\partial_\xi u+\partial_\xi v=\lambda u,\\
				\partial_\xi Lu +\omega_j\partial_\xi v=\lambda v,
			\end{cases}
			\]
			that gives
			\[
			\partial_\xi ^2 (L -\omega_j^2)u -\lambda^2u+2c\lambda \partial_\xi u=0.
			\]
			Therefore, by virtue of the study conducted in \cite{WEIN}, we know that there exist two eigenfunctions $\mathbf{Y}_j^{\pm}=(Y_j^{\pm,1},Y_j^{\pm,2})$ of $\partial_x \mathcal{L}_j$, with $\mathbf{Y}_j^{-}(x)=\mathbf{Y}_j^{+}(-x)$, such that,
$ J \mathcal{L}_j (\mathbf{Y}_j^{\pm})=\pm \lambda_{0}^j\mathbf{Y}_j^{\pm}.$			Now, if we consider $\mathbf{Z}_j^{\pm}=\mathcal{L}_j(\mathbf{Y}_j^{\pm})$, then we obtain $  \mathcal{L}_j (J\mathbf{Z}_j^{\pm})=\pm \lambda_{0}^j \mathbf{Z}_j^{\pm}.$ 
			It is worth noting that, from \cite{WEIN}, the functions $\mathbf{Z}_j^{\pm}$ satisfy $\|\mathbf{Z}_j^{\pm}\|_{L^2(\R)}^2=1$ and decay to zero at infinity. Moreover, for $\epsilon>0$, \[|\mathbf{Z}_j^{\pm}|+|\partial_x \mathbf{Z}_j^{\pm}|\leq e^{-\epsilon |x|}.\]
   Next, we will establish a result that is a consequence of the theory established by Pego-Weinstein in \cite{WEIN}.
   \begin{lemma}
      The following properties hold:

(i) $\bm{Z}_j^{ \pm}$ are  eigenfunctions of $\mathcal{L}_{j} J.$

(ii)  For all  $\eta_{0}>0$, $x \in \mathbb{R}$ and  $m=1,2$,
\[
\left|Y_{j}^{ \pm,m}(x)\right|+\left|\partial_{x} Y_{j}^{ \pm,m}(x)\right|+\left|Z_{j}^{ \pm,m}(x)\right|+\left|\partial_{x} Z_{j}^{ \pm,m}(x)\right| \leq C e^{-\eta_{0} \sqrt{c}|x|}.
\]

(iii) 
$\left(\bm{Y}_{j}^{\pm}, \bm{Z}_{j}^{\pm}\right)_{L^{2}(\R)\times L^{2}(\R)}=0$ and $\left(\bm{Z}_{j}^{\pm}, \partial_x\bm{\Phi}_{\omega_j}\right)_{L^{2}(\R)\times L^{2}(\R)}=0.$

(iv)   \[\left(\bm{Y}_{j}^{+}, \bm{Z}_{j}^{-}\right)_{L^{2}(\R)\times L^{2}(\R)}=\left(\bm{Y}_{j}^{-}, \bm{Z}_{j}^{+}\right)_{L^{2}(\R)\times L^{2}(\R)}=1.\]
   \end{lemma}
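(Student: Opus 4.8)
The plan is to deduce all four items from two structural facts that are already in place: the operator $\mathcal{L}_j:=\mathcal{S}_j''(\mathbf{\Phi}_{\omega_j})$ is self-adjoint on $L^2(\R)\times L^2(\R)$, while $J$ is skew-adjoint because $\partial_x^\ast=-\partial_x$. Consequently the generator $\mathcal{A}:=J\mathcal{L}_j$ has formal adjoint $\mathcal{A}^\ast=\mathcal{L}_j^\ast J^\ast=-\mathcal{L}_j J$. I take from \cite{WEIN} the existence of a simple, real eigenvalue $\lambda_0^j>0$ (the supercriticality $p>2$ is what produces it), the eigenfunctions $\mathbf{Y}_j^\pm$ with $\mathcal{A}\mathbf{Y}_j^\pm=\pm\lambda_0^j\mathbf{Y}_j^\pm$, and the parity relation $\mathbf{Y}_j^-(x)=\mathbf{Y}_j^+(-x)$.

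For (i) I would simply compute, with $\mathbf{Z}_j^\pm=\mathcal{L}_j\mathbf{Y}_j^\pm$, that $\mathcal{L}_j J\,\mathbf{Z}_j^\pm=\mathcal{L}_j J\mathcal{L}_j\mathbf{Y}_j^\pm=\mathcal{L}_j(\mathcal{A}\mathbf{Y}_j^\pm)=\pm\lambda_0^j\mathbf{Z}_j^\pm$, so $\mathbf{Z}_j^\pm$ are eigenfunctions of $\mathcal{L}_j J$. Rewriting this as $\mathcal{A}^\ast\mathbf{Z}_j^\pm=-\mathcal{L}_j J\mathbf{Z}_j^\pm=\mp\lambda_0^j\mathbf{Z}_j^\pm$ identifies the $\mathbf{Z}_j^\pm$ as the adjoint eigenfunctions, which is the form best suited to the orthogonality computations.

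The first identity in (iii) is then the standard biorthogonality of eigenfunctions of $\mathcal{A}$ and $\mathcal{A}^\ast$ attached to non-matching eigenvalues: pairing $\mathcal{A}\mathbf{Y}_j^+=\lambda_0^j\mathbf{Y}_j^+$ against $\mathcal{A}^\ast\mathbf{Z}_j^+=-\lambda_0^j\mathbf{Z}_j^+$ gives $2\lambda_0^j(\mathbf{Y}_j^+,\mathbf{Z}_j^+)=0$, hence $(\mathbf{Y}_j^+,\mathbf{Z}_j^+)=0$, and symmetrically for the minus sign. The second identity uses translation invariance: since $\mathbf{\Phi}_{\omega_j}$ is a critical point of $\mathcal{S}_j$ (Remark \ref{observa1}), differentiating $\mathcal{S}_j'(\mathbf{\Phi}_{\omega_j})=0$ in $x$ yields $\mathcal{L}_j\partial_x\mathbf{\Phi}_{\omega_j}=0$, so self-adjointness forces $(\mathbf{Z}_j^\pm,\partial_x\mathbf{\Phi}_{\omega_j})=(\mathbf{Y}_j^\pm,\mathcal{L}_j\partial_x\mathbf{\Phi}_{\omega_j})=0$. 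For (iv) the equality of the two pairings is immediate from self-adjointness together with symmetry of the real inner product, $(\mathbf{Y}_j^+,\mathbf{Z}_j^-)=(\mathcal{L}_j\mathbf{Y}_j^+,\mathbf{Y}_j^-)=(\mathbf{Z}_j^+,\mathbf{Y}_j^-)=(\mathbf{Y}_j^-,\mathbf{Z}_j^+)$; the substantive point is that this common value is nonzero, which is exactly the simplicity of $\pm\lambda_0^j$ established in \cite{WEIN}, and I would fix the normalization of $\mathbf{Y}_j^\pm$ so that it equals $1$ (consistently with $\|\mathbf{Z}_j^\pm\|_{L^2}=1$, using that $\mathcal{L}_j$ commutes with $x\mapsto-x$ as $\Phi_{\omega_j}$ is even).

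Item (ii) is the analytic step, and I would obtain it from the asymptotic theory of \cite{WEIN}: away from the origin the potential $(2p+1)|\Phi_{\omega_j}|^{2p}$ decays exponentially by Proposition \ref{decaimentoquadratico}, so the fourth-order spectral equation for $u$ derived above degenerates at $\pm\infty$ into a constant-coefficient one whose characteristic roots lie off the imaginary axis precisely because $\lambda_0^j$ is real and nonzero; selecting the decaying branch at each end gives the exponential bound for $\mathbf{Y}_j^\pm$, and the same bound for $\mathbf{Z}_j^\pm=\mathcal{L}_j\mathbf{Y}_j^\pm$ then follows because $\mathcal{L}_j$ only differentiates twice and multiplies by exponentially-decaying (or constant) coefficients. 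The main obstacle is not the algebra, which is forced by the adjoint structure, but the two inputs borrowed from \cite{WEIN}: the simplicity and reality of $\lambda_0^j$ (which is what guarantees $(\mathbf{Y}_j^+,\mathbf{Z}_j^-)\neq0$ and hence the normalization in (iv)) and the exponential localization in (ii); establishing these from scratch would require reproducing the Pego--Weinstein spectral analysis for the Boussinesq linearization, so I would cite them rather than reprove them.
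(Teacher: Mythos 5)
Your proposal is correct and follows essentially the same route as the paper: the paper likewise proves (iii) by pairing the eigenvalue relations $J\mathcal{L}_j\mathbf{Y}_j^{\pm}=\pm\lambda_0^j\mathbf{Y}_j^{\pm}$ and $\mathcal{L}_jJ\mathbf{Z}_j^{\pm}=\pm\lambda_0^j\mathbf{Z}_j^{\pm}$ against the self-adjointness of $\mathcal{L}_j$ and skew-adjointness of $J$, and it defers (i), (ii) and the remaining identities to the Pego--Weinstein theory in \cite{WEIN} and ``similar arguments.'' Your write-up merely fills in those deferred details (the kernel argument for $(\bm{Z}_j^{\pm},\partial_x\bm{\Phi}_{\omega_j})=0$, the symmetry and normalization in (iv), and the ODE-asymptotics sketch for (ii)), all of which are consistent with the paper's intended argument.
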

   \begin{proof}
       Items $(i)$ and $(ii)$ follow from the strategy used in \cite{WEIN}. On the other hand, let us consider $(iii)$.

In fact, note that
\[\begin{aligned}
    \left(\bm{Y}_{j}^{+}, \bm{Z}_{j}^{+}\right)_{L^{2}(\R)\times L^{2}(\R)} &= \frac{1}{\lambda_{0}^j}\left(\bm{Y}_{j}^{+}, \mathcal{L}_j J\bm{Z}_{j}^{+}\right)_{L^{2}(\R)\times L^{2}(\R)} \\&= \frac{1}{\lambda_{0}^j}\left( \mathcal{L}_j\bm{Y}_{j}^{+}, J\bm{Z}_{j}^{+}\right)_{L^{2}(\R)\times L^{2}(\R)} \\
    &=-\frac{1}{\lambda_{0}^j}\left(J \mathcal{L}_j\bm{Y}_{j}^{+}, \bm{Z}_{j}^{+}\right)_{L^{2}(\R)\times L^{2}(\R)}\\
    & = -\frac{1}{\lambda_{0}^j}\lambda_{0}^j\left(\bm{Y}_{j}^{+}, \bm{Z}_{j}^{+}\right)_{L^{2}(\R)\times L^{2}(\R)}.
\end{aligned}\]
Hence, $\left(\bm{Y}_{j}^{+}, \bm{Z}_{j}^{+}\right)_{L^{2}(\R)\times L^{2}(\R)} =0.$ Then, applying a similar argument, it is not difficult to demonstrate the remaining results.
       \end{proof}
			Now, we are going to establish the Coercivity result that we will apply in this scenario.

Before establishing the coercivity result, we should derive some properties related to $\mathcal{L}_{j}$. They are standard results and follow as a consequence of the  result \cite{mwein}.

\begin{lemma}\label{kernel-lem}
    The kernel of $\mathcal{L}_{j}$ satisfies that

\[
\operatorname{Ker}\left(\mathcal{L}_{j}\right)=\left\{C \partial_{x} \bf{\Phi}_{\omega_{j}}: C \in \mathbb{R}\right\}
\]
\end{lemma}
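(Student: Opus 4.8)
The plan is to reduce the vector kernel equation $\mathcal{L}_j \bw = 0$ to a scalar spectral problem for the ground state $\Phi_{\omega_j}$ and then invoke the known nondegeneracy of the scalar linearized operator. Writing $\bw=(w_1,w_2)$ and using the explicit form of $\mathcal{S}_j''(\mathbf{\Phi}_{\omega_j})$, the equation $\mathcal{L}_j \bw = 0$ reads
\[
-\partial_{xx} w_1 + w_1 + \omega_j w_2 - (2p+1)|\Phi_{\omega_j}|^{2p} w_1 = 0, \qquad w_2 + \omega_j w_1 = 0.
\]
The second equation forces $w_2 = -\omega_j w_1$; substituting this into the first eliminates $w_2$ and produces the scalar equation
\[
L_+ w_1 := -\partial_{xx} w_1 + (1-\omega_j^2) w_1 - (2p+1)|\Phi_{\omega_j}|^{2p} w_1 = 0.
\]
Thus the map $\bw \mapsto w_1$ identifies $\operatorname{Ker}(\mathcal{L}_j)$ with $\operatorname{Ker}(L_+)$, and it suffices to determine the latter.

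Next I would recall that $\Phi_{\omega_j}^{(1)}=\Phi_{\omega_j}$ solves the scalar elliptic equation \eqref{ellipticeq}, namely $-\partial_{xx}\Phi_{\omega_j} + (1-\omega_j^2)\Phi_{\omega_j} - |\Phi_{\omega_j}|^{2p}\Phi_{\omega_j} = 0$. Differentiating this identity in $x$ yields $L_+(\partial_x \Phi_{\omega_j}) = 0$, so $\partial_x \Phi_{\omega_j} \in \operatorname{Ker}(L_+)$; this is the easy inclusion. The content to be extracted from \cite{mwein} is that this exhausts the kernel, i.e. $\operatorname{Ker}(L_+) = \operatorname{span}\{\partial_x \Phi_{\omega_j}\}$. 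The underlying reason is Sturm--Liouville oscillation theory: since $\Phi_{\omega_j}$ is even, positive and exponentially decaying (Proposition \ref{decaimentoquadratico}), its derivative $\partial_x \Phi_{\omega_j}$ vanishes exactly once, hence it is the second eigenfunction of $L_+$ and corresponds to a simple eigenvalue, which is therefore equal to zero; any linearly independent kernel element would contradict simplicity and the ordering of eigenvalues.

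Finally, translating back, any $\bw \in \operatorname{Ker}(\mathcal{L}_j)$ satisfies $w_1 = C\,\partial_x \Phi_{\omega_j}^{(1)}$ for some $C\in\R$, and then $w_2 = -\omega_j w_1 = C\,\partial_x\bigl(-\omega_j \Phi_{\omega_j}^{(1)}\bigr) = C\,\partial_x \Phi_{\omega_j}^{(2)}$, using $\Phi_{\omega_j}^{(2)} = -\omega_j \Phi_{\omega_j}^{(1)}$. Hence $\bw = C\,\partial_x \mathbf{\Phi}_{\omega_j}$, which is the claimed identity. The one genuinely nontrivial ingredient is the scalar nondegeneracy $\operatorname{Ker}(L_+) = \operatorname{span}\{\partial_x \Phi_{\omega_j}\}$; everything else is an algebraic reduction driven by the constraint $w_2 = -\omega_j w_1$. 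I expect the nondegeneracy to be the crux, and since it is precisely what is imported from \cite{mwein}, the proof can be kept short by citing that result.
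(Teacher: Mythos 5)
Your proposal is correct and follows essentially the same route as the paper: both reduce the vector kernel equation to the scalar problem $-\partial_{xx} w_1 + (1-\omega_j^2) w_1 - (2p+1)|\Phi_{\omega_j}|^{2p} w_1 = 0$ by eliminating $w_2 = -\omega_j w_1$, invoke the scalar nondegeneracy imported from \cite{mwein}, and translate back using $\Phi_{\omega_j}^{(2)} = -\omega_j \Phi_{\omega_j}^{(1)}$. Your added sketch of the Sturm--Liouville reason behind the nondegeneracy is a small bonus the paper omits, but it does not change the structure of the argument.
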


\begin{proof}
  Let  $\bf{f} \in\left\{C \partial_{x} \bf{\Phi}_{\omega_{j}}: C \in\right.$ $\mathbb{R}\}$,  then from \eqref{ellipticeq},
\begin{equation*}
\mathcal{L}_{j} \bf{f}=\mathcal{L}_{j}\left(C \partial_{x} \bf{\Phi}_{\omega_{j}}\right)=C\binom{\partial_{x}\left(-\partial_{x x} \Phi_{\omega_j}+\left(1-\omega^{2}\right) \Phi_{\omega_j}-\Phi_{\omega_j}^{2p+1}\right)}{-\omega_j \partial_{x}\Phi_{\omega_j}+\omega_j \partial_{x}\Phi_{\omega_j}}=\binom{0}{0}
\end{equation*}

It follows that   $\bf{f}$ is in the kernel of $\mathcal{L}_{j}$. This is, 
\[
\left\{C \partial_{x} \bf{\Phi}_{\omega_{j}}: C \in \mathbb{R}\right\}
 \subset \operatorname{Ker}\left(\mathcal{L}_{j}\right). \]
 Now, let $\bf{f}=(f_1,f_2) \in \operatorname{Ker}\left(\mathcal{L}_{j}\right)$, so  we have
\begin{equation}\label{kernel1}
 \left\{\begin{aligned}
-\partial_{x x} f_1+ f_1-(2p+1) \Phi_{\omega_j}^{2p} f_1+\omega_jf_2 & =0  \\
f_2+\omega_j f_1 & =0,
\end{aligned}\right.   
\end{equation}
Or equivalently
\[-\partial_{x x} f_1+ (1-\omega_j^2)f_1-(2p+1) \Phi_{\omega_j}^{2p} f_1 =0.\]
Then, the only solution for \eqref{kernel1} are
\[
\left\{\begin{array}{l}
f_1=C \partial_{x} \Phi_{\omega_j}, \\
f_2=-C \omega_j \partial_{x} \Phi_{\omega_j}, \quad C \in \mathbb{R}
\end{array}\right.
\]
This implies that $\bf{f} \in\left\{C \partial_{x} \bf{\Phi}_{\omega_{j}}: C \in \mathbb{R}\right\}$, and we have
\[
\operatorname{Ker}\left(\mathcal{L}_{j}\right) \subset\left\{C \partial_{x} \bf{\Phi}_{\omega_{j}}: C \in \mathbb{R}\right\}.
\]
And the proof is complete.   
\end{proof}
Next, we will prove that there exists a unique negative eigenvalue for $\mathcal{L}_{j}$.

\begin{lemma}\label{eigenuniq}
  $\mathcal{L}_{j}$  has  only one negative eigenvalue. 
\end{lemma}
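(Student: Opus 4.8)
The plan is to eliminate the second component of $\mathcal{L}_j$ by completing the square, reducing the spectral count to that of a scalar Schr\"odinger operator, and then to invoke the classical Sturm--Liouville description of the operator linearized about a ground state.

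\textbf{Step 1 (reduction by congruence).} First I would exploit that the lower-right block of $\mathcal{L}_j$ is the identity, which is positive definite, so $w_2$ can be eliminated. Completing the square in $w_2$ gives, for every $\bw=(w_1,w_2)\in H$,
\[
\langle \mathcal{L}_j \bw,\bw\rangle=\int_{\R}\left(|\partial_x w_1|^2+(1-\omega_j^2)|w_1|^2-(2p+1)|\Phi_{\omega_j}|^{2p}w_1^2\right)dx+\int_{\R}\left|w_2+\omega_j w_1\right|^2 dx.
\]
Writing $\tilde{\mathcal{L}}_j w_1:=-\partial_{xx}w_1+(1-\omega_j^2)w_1-(2p+1)|\Phi_{\omega_j}|^{2p}w_1$, this reads $\langle\mathcal{L}_j\bw,\bw\rangle=\langle \tilde{\mathcal{L}}_j w_1,w_1\rangle+\|w_2+\omega_j w_1\|_{L^2(\R)}^2$. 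Since $T(w_1,w_2):=(w_1,w_2+\omega_j w_1)$ is a topological isomorphism of $H$, this is a congruence of quadratic forms.

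\textbf{Step 2 (finiteness and inertia).} I would then verify that $\mathcal{L}_j$ is bounded below with \emph{positive} essential spectrum: freezing the coefficients at infinity, where $\Phi_{\omega_j}\to0$, yields the constant-coefficient symbol whose eigenvalues have determinant $\xi^2+1-\omega_j^2\geq 1-\omega_j^2>0$ and positive trace for $|\omega_j|<1$; hence the essential spectrum lies in $(0,\infty)$ and the Morse index is finite, with each negative eigenvalue of finite multiplicity (the same holds for $\tilde{\mathcal{L}}_j$, whose essential spectrum is $[1-\omega_j^2,\infty)$). Using the variational characterization that the number $n(A)$ of negative eigenvalues equals $\sup\{\dim V:\langle Av,v\rangle<0\ \forall v\in V\setminus\{0\}\}$, together with the isomorphism $T$ and the Step 1 identity, I would conclude $n(\mathcal{L}_j)=n(\tilde{\mathcal{L}}_j)$, because the term $\|w_2+\omega_j w_1\|_{L^2}^2$ is positive semidefinite and decoupled and so contributes no negative directions.

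\textbf{Step 3 (the scalar count).} Finally I would compute $n(\tilde{\mathcal{L}}_j)=1$. By the scaling $\Phi_{\omega_j}(x)=(1-\omega_j^2)^{1/2p}\Phi(\sqrt{1-\omega_j^2}\,x)$ one checks that $\tilde{\mathcal{L}}_j$ equals $(1-\omega_j^2)\,L_+$ in the rescaled variable, where $L_+:=-\partial_{xx}+1-(2p+1)\Phi^{2p}$, so the two operators have the same number of negative eigenvalues. The ground state $\Phi>0$ never vanishes, and by the argument of Lemma \ref{kernel-lem} applied to the scalar equation \eqref{ellipticeq} the kernel of $L_+$ is spanned by $\partial_x\Phi$, which has exactly one zero. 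By Sturm--Liouville oscillation theory $\partial_x\Phi$ is therefore the \emph{second} eigenfunction with eigenvalue $0$, so $L_+$ has exactly one eigenvalue below $0$; this is the classical fact established in \cite{mwein}. Combining with Step 2 yields that $\mathcal{L}_j$ has exactly one negative eigenvalue.

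\textbf{Main obstacle.} The delicate point is the inertia argument of Step 2: transporting the count of negative directions across the (unbounded) congruence in the energy space $H$ rather than in $L^2$. This is precisely why the positivity of the essential spectrum matters—it guarantees the negative index is finite and correctly computed by the min-max over negative subspaces, so the boundedly invertible change of variables $T$ genuinely preserves it.
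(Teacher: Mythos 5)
Your proposal is correct, but it reaches the conclusion by a genuinely different mechanism than the paper. Both arguments ultimately rest on the same classical scalar fact from \cite{mwein} — that $\tilde{\mathcal{L}}_j=-\partial_{xx}+(1-\omega_j^2)-(2p+1)\Phi_{\omega_j}^{2p}$ has exactly one negative eigenvalue — but you transport this fact to $\mathcal{L}_j$ through a congruence of quadratic forms (completing the square in $w_2$, then invoking inertia/Morse-index invariance under the isomorphism $T(w_1,w_2)=(w_1,w_2+\omega_j w_1)$, justified by the positivity of the essential spectrum). The paper instead works at the level of the eigenvalue equation: it first shows existence by testing, $\langle \mathcal{L}_j\bm{\Phi}_{\omega_j},\bm{\Phi}_{\omega_j}\rangle=-2p\int_{\R}\Phi_{\omega_j}^{2p+2}\,dx<0$, and then, for any negative eigenvalue $\lambda_0$ with eigenvector $(\xi_0,\eta_0)$, eliminates the second component via $\eta_0=-\tfrac{\omega_j}{1-\lambda_0}\xi_0$ to find that $\xi_0$ solves the scalar problem with eigenvalue $\lambda_0\bigl(1+\tfrac{\omega_j^2}{1-\lambda_0}\bigr)<0$; uniqueness of the scalar negative eigenvalue $\lambda^-$ then forces a quadratic relation in $\lambda_0$ whose roots have product $\lambda^-<0$, so exactly one root is negative, and this pins down $\lambda_0$ and $\bm{\Gamma}_0$ explicitly. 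The trade-off: your inertia argument is cleaner, counts existence and uniqueness in one stroke, and generalizes to any block operator with positive-definite lower-right block (a Schur-complement/Haynsworth-type argument); the paper's computation is more hands-on but produces the explicit eigenpair $\bigl(\lambda_0,\bm{\Gamma}_0\bigr)$, which is not a luxury here — $\bm{\Gamma}_0$ is used afterwards as an orthogonality direction in Lemma \ref{precoer}. One small caveat in your Step 2: the claim that the decoupled nonnegative term ``contributes no negative directions'' should be backed by noting that the projection $\bw\mapsto w_1$ is injective on any subspace where the form is negative definite (if $w_1=0$ the form reduces to $\|w_2\|_{L^2}^2\geq 0$), and conversely any negative subspace for $\tilde{\mathcal{L}}_j$ lifts via $w_1\mapsto(w_1,-\omega_j w_1)$; with that observation your inertia identity $n(\mathcal{L}_j)=n(\tilde{\mathcal{L}}_j)$ is fully rigorous.
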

\begin{proof}
Initially, we should recall that the operator $-\partial_{xx} + (1 - \omega_j^2) - (2p + 1) \Phi_{\omega_j}^{2p}$ has only one negative eigenvalue, see for instance \cite{mwein}. In this case, let us denote such eigenvalue by $\lambda^{-}$. Then there exists a unique associated eigenvector $\psi_0\in H^{1}(\mathbb{R})$ such that
\begin{equation}\label{eigennegat}
-\partial_{x x} \psi_0+\left(1-\omega_j^{2}\right) \psi_0-(2p+1) \Phi_{\omega_j}^{2p} \psi_0=\lambda^{-} \psi_0.
\end{equation}
Therefore, for  $\bf{\Phi}_{\omega_{j}}=(\Phi_{\omega_j},-\omega_j \Phi_{\omega_j})$, we have 
\[
\begin{split}
  \left\langle \mathcal{L}_{j} \bf{\Phi}_{\omega_{j}}, \bf{\Phi}_{\omega_{j}}\right\rangle  
&  =\int_{\mathbb{R}}\left(-\partial_{x x} \Phi_{\omega_j} \Phi_{\omega_j}+(1-\omega_j^2)\Phi_{\omega_j}\Phi_{\omega_j}-(2p+1) \Phi_{\omega_j}^{2p+2}\right) \,dx   =-2p\int_{\R}\Phi_{\omega_j}^{2p+2}\,dx<0.
\end{split}
\]
This implies that $\mathcal{L}_{j}$ possesses at least one negative eigenvalue, denoted by $\lambda_{0}$.  Assume its associated eigenvector $\bm{\Gamma}_0=\left(\xi_{0}, \eta_{0}\right)$, that is,
$
\mathcal{L}_{j} \bm{\Gamma}_0=\lambda \bm{\Gamma}_0.
$
Using the expression of $\mathcal{L}_{j}$  again, the last equality yields
\[
\left.\begin{aligned}
-\partial_{x x} \xi_{0}+\xi_{0}-(2p+1) \Phi_{\omega_j}^{2p} \xi_{0}+\omega_j \eta_{0} & =\lambda_0 \xi_{0} \\
\eta_{0}+\omega_j \xi_{0} & =\lambda_0 \eta_{0}
\end{aligned}\right\}
\]
Hence, from the second equality, we obtain $\eta_{0}=-\frac{\omega_j}{1-\lambda_0} \xi_{0}$. Therefore, it follows from the first equality that 

\begin{equation*}
    -\partial_{x x} \xi_{0}+\left(1-\omega_j^{2}\right) \xi_{0}-(2p+1) \Phi_{\omega_j}^{2p} \xi_{0}=\lambda_0\left(\frac{\omega_j^{2}}{1-\lambda_0}+1\right) \xi_{0}.
\end{equation*}
Therefore,  comparing this last equation with\eqref{eigennegat} it follows that $\left(\lambda_0, \bm{\Gamma}_0\right)$ is exactly the pair satisfying
\begin{equation*}
\lambda_0=\frac{1}{2}\left(\lambda^{-}+\omega_j^{2}+1-\sqrt{(\lambda^{-})^{2}+2\left(\omega_j^{2}-1\right) \lambda^{-}+\left(\omega_j^{2}+1\right)^{2}}\right)
\end{equation*}
and $\bf{\Gamma}_0=\binom{\psi_0}{\frac{\omega_j \psi_0}{\lambda_0-1}} $.
\end{proof}
We denote $\bm{\Gamma}_0$ as the unique eigenfunction associated with the only negative eigenvalue $\lambda_0$ of $\mathcal{L}_j$. Therefore, we proceed with the following result.
			\begin{lemma}\label{precoer}
				There exits $C>0$ such that \[\begin{aligned}
					\langle \mathcal{L}_j (\bw),\bw\rangle &\geq C\left(\|\bw\|_{H}^2-\left(\bw,\bm{\Gamma_0}\right)_{L^2(\R)}^2-\left(\bw,\partial_x \Phi\right)_{L^2(\R)}^2\right),
				\end{aligned}\]
			 for all $\bm{w}\in H$.
			\end{lemma}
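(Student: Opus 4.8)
The plan is to prove the coercivity estimate for $\mathcal{L}_j = \mathcal{S}_j''(\mathbf{\Phi}_{\omega_j})$ by a standard spectral decomposition argument, exploiting the three structural facts we have just established: by Lemma \ref{kernel-lem} the kernel of $\mathcal{L}_j$ is exactly $\mathrm{span}\{\partial_x\mathbf{\Phi}_{\omega_j}\}$, by Lemma \ref{eigenuniq} the operator $\mathcal{L}_j$ has a single negative eigenvalue $\lambda_0<0$ with (normalized) eigenfunction $\mathbf{\Gamma}_0$, and the remainder of the spectrum is positive and bounded below away from zero (this last point is the essential spectrum fact, which I would record from the self-adjointness of $\mathcal{L}_j$ on $H$ together with the exponential decay of the potential $|\Phi_{\omega_j}|^{2p}$, so that the essential spectrum coincides with that of the constant-coefficient operator and lies in $[\min(1,1-\omega_j^2),\infty)$, bounded away from $0$). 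Thus $\mathcal{L}_j$ is a self-adjoint operator on $H$ with spectrum consisting of one simple negative eigenvalue $\lambda_0$, a simple zero eigenvalue, and the rest strictly positive.

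\textbf{The orthogonal decomposition.}

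First I would fix $\mathbf{w}\in H$ and decompose it along the two special directions that obstruct coercivity. Concretely, write
\[
\mathbf{w} = a\,\mathbf{\Gamma}_0 + b\,\partial_x\mathbf{\Phi}_{\omega_j} + \mathbf{w}_\perp,
\]
where $a,b\in\mathbb{R}$ are chosen so that $\mathbf{w}_\perp$ is $L^2$-orthogonal to both $\mathbf{\Gamma}_0$ and $\partial_x\mathbf{\Phi}_{\omega_j}$ (after orthonormalizing these two vectors, which are independent since one lies in the kernel and the other in the negative eigenspace). On the orthogonal complement $\mathbf{w}_\perp$, which avoids both the negative and the null directions, the spectral theorem gives the positivity bound
\[
\langle \mathcal{L}_j \mathbf{w}_\perp, \mathbf{w}_\perp\rangle \ge \mu\,\|\mathbf{w}_\perp\|_{L^2}^2
\]
for some $\mu>0$ equal to the bottom of the positive spectrum. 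The standard step here is then to upgrade this $L^2$-coercivity on $\mathbf{w}_\perp$ to the full $H$-norm: using the explicit form of $\mathcal{L}_j$, one has $\langle\mathcal{L}_j\mathbf{w}_\perp,\mathbf{w}_\perp\rangle \ge \|\partial_x w_{\perp,1}\|_{L^2}^2 - C\|\mathbf{w}_\perp\|_{L^2}^2 + (\text{positive terms})$, and interpolating between the $L^2$-gap and the gradient term recovers $\langle\mathcal{L}_j\mathbf{w}_\perp,\mathbf{w}_\perp\rangle\ge \tilde C\|\mathbf{w}_\perp\|_H^2$.

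\textbf{Recombining and absorbing the bad directions.}

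Expanding $\langle\mathcal{L}_j\mathbf{w},\mathbf{w}\rangle$ using the decomposition and the orthogonality relations, the cross terms between $\mathbf{w}_\perp$ and the eigen-directions vanish because $\mathbf{\Gamma}_0$ and $\partial_x\mathbf{\Phi}_{\omega_j}$ are eigenvectors of the self-adjoint $\mathcal{L}_j$, leaving
\[
\langle\mathcal{L}_j\mathbf{w},\mathbf{w}\rangle = \lambda_0\, a^2 + \langle\mathcal{L}_j\mathbf{w}_\perp,\mathbf{w}_\perp\rangle \ge \lambda_0\, a^2 + \tilde C\|\mathbf{w}_\perp\|_H^2.
\]
Since $\|\mathbf{w}\|_H^2 \lesssim a^2 + b^2 + \|\mathbf{w}_\perp\|_H^2$, and since $a = (\mathbf{w},\mathbf{\Gamma}_0)_{L^2}$ and $b = (\mathbf{w},\partial_x\mathbf{\Phi}_{\omega_j})_{L^2}/\|\partial_x\mathbf{\Phi}_{\omega_j}\|_{L^2}^2$ are controlled precisely by the two inner products appearing on the right-hand side of the claimed inequality, I would add a large multiple of $(\mathbf{w},\mathbf{\Gamma}_0)_{L^2}^2 + (\mathbf{w},\partial_x\Phi)_{L^2}^2$ to both sides. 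The negative contribution $\lambda_0 a^2$ is then absorbed by a constant times $(\mathbf{w},\mathbf{\Gamma}_0)_{L^2}^2$, and the missing $b^2\,\|\partial_x\mathbf{\Phi}_{\omega_j}\|_H^2$ contribution to $\|\mathbf{w}\|_H^2$ is supplied by a constant times $(\mathbf{w},\partial_x\Phi)_{L^2}^2$, yielding
\[
\langle\mathcal{L}_j\mathbf{w},\mathbf{w}\rangle \ge C\Big(\|\mathbf{w}\|_H^2 - (\mathbf{w},\mathbf{\Gamma}_0)_{L^2}^2 - (\mathbf{w},\partial_x\Phi)_{L^2}^2\Big).
\]

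\textbf{The main obstacle.}

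The step I expect to require the most care is establishing the positive spectral gap $\mu>0$ on $\mathbf{w}_\perp$ rigorously, rather than merely $\langle\mathcal{L}_j\mathbf{w}_\perp,\mathbf{w}_\perp\rangle>0$. This is where the supercritical nature of the problem bites: the scalar operator $-\partial_{xx}+(1-\omega_j^2)-(2p+1)\Phi_{\omega_j}^{2p}$ and its coupling with the second equation must be analyzed carefully, since the naive modulation used in the subcritical case fails here. The cleanest route is to invoke the spectral structure of the scalar Schrödinger operator from \cite{mwein} (one negative eigenvalue, simple zero eigenvalue spanned by $\partial_x\Phi_{\omega_j}$, positive continuous spectrum) and transfer it to the $2\times2$ system $\mathcal{L}_j$ via the elimination $w_2 = -\omega_j w_1$ that diagonalizes the quadratic form, reducing the system coercivity to the already-known scalar coercivity modulo the two explicit obstruction directions. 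Closing the gap uniformly — so that $C$ in the statement does not degenerate — then follows from the compactness of the negative and null eigenspaces and the strict positivity of the essential spectrum.
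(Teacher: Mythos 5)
Your proposal is correct and follows essentially the same route as the paper: both rest on Lemma \ref{kernel-lem} (kernel spanned by $\partial_x \bm{\Phi}_{\omega_j}$), Lemma \ref{eigenuniq} (a single simple negative eigenvalue with eigenfunction $\bm{\Gamma}_0$), positivity of the essential spectrum obtained by viewing the potential term as a compact perturbation of a coercive operator (Weyl's theorem), and then the spectral decomposition $\bw = a\bm{\Gamma}_0 + b\,\partial_x\bm{\Phi} + \bw_\perp$ with coercivity on $\bw_\perp$ and absorption of the two bad directions. The only difference is presentational: the paper proves the estimate first under the orthogonality conditions and dismisses the general case as ``standard arguments,'' whereas you carry out that absorption step (and the upgrade from the $L^2$ spectral gap to the full $H$-norm) explicitly, which is exactly what those standard arguments amount to.
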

			\begin{proof}
				
				Initially, let  us assume that $\left(\bw,\bm{\Gamma_0}\right)_{L^2(\R)}=\left(\bw,\partial_x \Phi\right)_{L^2(\R)}=0.$
				
				Notice that 
\[
				S_j^{\prime \prime}\left(\bm{\Phi}_{\omega_j}\right)=\left(\begin{array}{cc}
					-\partial_{x x}+I -(2p+1)|\Phi_{\omega_j}|^{2p}& \omega_j \\
					\omega_j & I
				\end{array}\right),
\]
				where
\[H_j=\left(\begin{array}{cc}
					-\partial_{x x}+I & \omega_j \\
					\omega_j & I
				\end{array}\right) \text { and } L_j=\left(\begin{array}{cc}
					-(2p+1)|\Phi_{\omega_j}|^{2p} & 0 \\
					0 & 0
				\end{array}\right).
\]
								Hence, $L_j$ is a compact perturbation of the selfadjoint operator $H_j$.
								Now, we have that for any ${\bm f}=(f_1,f_2) \in H$,
\[
				\begin{aligned}
					\langle L {\bm f}, {\bm f}\rangle 
					& =\left\|\partial_x f_1\right\|_{L^2(\R)}^2+\|f_1\|_{L^2(\R)}^2+2 \omega_j\langle f_1, f_2\rangle+\|f_2\|_{L^2(\R)}^2   =\|{\bm f}\|_{ H }^2+2 \omega_j\langle f_1, f_2\rangle .
				\end{aligned}
\]
				For the term $2 \omega_j\langle f_1, f_2\rangle$, applying Hölder's and Young's inequalities, we have
\[
				|2 \omega_j\langle f_1, f_2\rangle| \leq|\omega|\|{\bm f}\|_{ H }^2 .
				\]
								Then, it follows that
\[
				\langle L {\bm f}, {\bm f}\rangle \geq(1-|\omega_j|)\|{\bm f}\|_{ H }^2 .
				\]
 Since $|\omega_j|<1$, we get
				that there exists $\delta>0$ such that the essential spectrum of $H_j$ is $[\delta,+\infty)$. By Weyl's theorem, $S_j^{\prime \prime}\left(\bm{\Phi}_{\omega_j}\right)$ and $H_j$ share the same essential spectrum. So we obtain the essential spectrum of $S_j^{\prime \prime}\left(\bm{\Phi}_{\omega_j} \right)$. Recall that we have obtained the only one negative eigenvalue $\lambda_0$ of $S_j^{\prime \prime}\left(\bm{\Phi}_{\omega_j}\right)$ in  Lemma 3.2 in \cite{bing}   and the kernel of $S_j^{\prime \prime}\left(\bm{\Phi}_{\omega_j}\right)$ in  Lemma 3.1 in \cite{bing}. So the discrete spectrum of $S_j^{\prime \prime}\left(\bm{\Phi}_{\omega_j}\right)$ is $\lambda_0, 0$, and the essential spectrum is $[\delta,+\infty)$.
				So, using the Spectral Theorem, and Lemmas \ref{kernel-lem} and \ref{eigenuniq}, we have that for all $\bw=(w_1,w_2) \in H$,
	\[\bw=a\bm{\Gamma_0}+b\partial_x \Phi+p.\]
 Hence, since $ \left(\bw,\bm{\Gamma_0}\right)_{L^2(\R)}=\left(\bw,\partial_x \Phi\right)_{L^2(\R)}=0,$
				it follows that $\bw = p$. Therefore, 
	\[\langle \mathcal{L}_j (\bw),\bw \rangle \geq C\|\bw\|_{H}^2.\]
				Then, to conclude the proof, it suffices to take $W=span\{\bm{\Gamma_0},\partial_x \bm{\Phi}\}$, Subsequently, considering $L^2(\mathbb{R}) \times L^2(\mathbb{R}) = W \oplus W^{\perp}$, and finally, by standard arguments, the proof follows.
			\end{proof}
			\begin{proposition}\label{coercivity1}
				There exists $C>0$ such that
\[\begin{aligned}
					\langle \mathcal{L}_j (\bw),\bw\rangle &\geq C\|\bw\|_{H}^2-\left(\bw,\bm{Z}_j^{+}\right)_{L^2(\R)}^2-\left(\bw,\bm{Z}_j^{-}\right)_{L^2(\R)}^2-\left(\bw,\partial_x \Phi\right)_{L^2(\R)}^2.
				\end{aligned}\]
			\end{proposition}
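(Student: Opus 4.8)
The plan is to transfer the coercivity of Lemma \ref{precoer}, which isolates the single negative direction $\bm{\Gamma}_0$, onto the pair of directions $\bm{Z}_j^{\pm}$ furnished by the Pego--Weinstein analysis. The decisive observation is that, since $\mathcal{L}_j$ is self-adjoint and $\bm{Z}_j^{\pm}=\mathcal{L}_j\bm{Y}_j^{\pm}$, the $L^2$-orthogonality conditions appearing in the statement can be rewritten through the quadratic form itself: for every $\bw\in H$ one has $(\bw,\bm{Z}_j^{\pm})_{L^2}=(\mathcal{L}_j\bw,\bm{Y}_j^{\pm})_{L^2}=:B(\bw,\bm{Y}_j^{\pm})$, where $B$ is the symmetric bilinear form associated with $Q(\bw):=\langle \mathcal{L}_j\bw,\bw\rangle$. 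Consequently the subspace $V:=\{\bw\in H:(\bw,\bm{Z}_j^{+})_{L^2}=(\bw,\bm{Z}_j^{-})_{L^2}=(\bw,\partial_x\bm{\Phi}_{\omega_j})_{L^2}=0\}$ is exactly the set of vectors that are $Q$-orthogonal to $W:=\mathrm{span}\{\bm{Y}_j^{+},\bm{Y}_j^{-}\}$ and $L^2$-orthogonal to the kernel direction.

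First I would record the matrix of $Q$ on $W$. Using the relations already established for the pairs $\bm{Y}_j^{\pm},\bm{Z}_j^{\pm}$, namely $(\bm{Y}_j^{\pm},\bm{Z}_j^{\pm})_{L^2}=0$ and $(\bm{Y}_j^{+},\bm{Z}_j^{-})_{L^2}=(\bm{Y}_j^{-},\bm{Z}_j^{+})_{L^2}=1$, we get $Q(\bm{Y}_j^{\pm})=(\bm{Z}_j^{\pm},\bm{Y}_j^{\pm})_{L^2}=0$ and $B(\bm{Y}_j^{+},\bm{Y}_j^{-})=(\bm{Z}_j^{+},\bm{Y}_j^{-})_{L^2}=1$, so $Q|_W$ has matrix $\left(\begin{smallmatrix}0&1\\1&0\end{smallmatrix}\right)$. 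In particular $Q|_W$ is nondegenerate with signature $(1,1)$, and $\bm{Y}_j^{+},\bm{Y}_j^{-}$ are linearly independent (they are eigenfunctions of $J\mathcal{L}_j$ for the distinct eigenvalues $\pm\lambda_0^j$). Nondegeneracy lets every $\bw\in H$ split uniquely as $\bw=\bw_W+\bw_Z$ with $\bw_W\in W$ and $\bw_Z\in Z:=W^{\perp_Q}=\{\bw:B(\bw,\bm{Y}_j^{\pm})=0\}=\{\bw:(\bw,\bm{Z}_j^{\pm})_{L^2}=0\}$, and then $Q(\bw)=Q(\bw_W)+Q(\bw_Z)$ because the cross term vanishes by $Q$-orthogonality.

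The heart of the argument is a signature count, and this is the step I expect to be the main obstacle. By Lemma \ref{eigenuniq} the form $Q$ has exactly one negative direction, by Lemma \ref{kernel-lem} its kernel is $\mathrm{span}\{\partial_x\bm{\Phi}_{\omega_j}\}$, and its essential spectrum lies in $[\delta,+\infty)$ with $\delta>0$ (as shown in the proof of Lemma \ref{precoer}); hence the maximal $Q$-negative subspace of $H$ is one-dimensional. Since $W$ already contains a $Q$-negative direction $\bm{e}_-$ and is $Q$-orthogonal to $Z$, no nonzero vector $\bm{z}\in Z$ can satisfy $Q(\bm{z})<0$: otherwise $\mathrm{span}\{\bm{e}_-,\bm{z}\}$ would be two-dimensional with $Q(a\bm{e}_-+b\bm{z})=a^2Q(\bm{e}_-)+b^2Q(\bm{z})<0$ for $(a,b)\neq0$, contradicting negative index one. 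Thus $Q|_Z\ge0$, and the same orthogonality reasoning (any $\bm{z}\in Z$ with $Q(\bm{z},\cdot)|_Z\equiv0$ is $Q$-orthogonal to all of $H=W\oplus Z$, hence lies in $\ker\mathcal{L}_j$) gives $\ker(Q|_Z)=\mathrm{span}\{\partial_x\bm{\Phi}_{\omega_j}\}$, where $\partial_x\bm{\Phi}_{\omega_j}\in Z$ because $(\bm{Z}_j^{\pm},\partial_x\bm{\Phi}_{\omega_j})_{L^2}=0$. Therefore $Q>0$ on $V=Z\cap\{\partial_x\bm{\Phi}_{\omega_j}\}^{\perp}\setminus\{0\}$.

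To upgrade this strict positivity to coercivity $Q(\bw)\ge C\|\bw\|_H^2$ on $V$, I would argue by contradiction via weak compactness, writing $\mathcal{L}_j=H_j+L_j$ as in Lemma \ref{precoer}: the form $\langle H_j\cdot,\cdot\rangle\ge(1-|\omega_j|)\|\cdot\|_H^2$ is coercive and weakly lower semicontinuous, while $\langle L_j\bw,\bw\rangle=-(2p+1)\int_{\R}|\Phi_{\omega_j}|^{2p}w_1^2\,dx$ is weakly continuous by the exponential decay of $\Phi_{\omega_j}$ and the local compactness of $H^1\hookrightarrow L^2$. A normalized sequence $\bw_n\in V$ with $Q(\bw_n)\to0$ would then have a weak limit $\bw_\infty\in V$ with $Q(\bw_\infty)\le0$; strict positivity forces $\bw_\infty=0$, whence $\langle L_j\bw_n,\bw_n\rangle\to0$ and $Q(\bw_n)\ge(1-|\omega_j|)+o(1)$, a contradiction. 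Finally, for arbitrary $\bw\in H$ I would decompose $\bw$ into its $V$-component and its projection onto the finite-dimensional $\mathrm{span}\{\bm{Z}_j^{+},\bm{Z}_j^{-},\partial_x\bm{\Phi}_{\omega_j}\}$, and combine coercivity on $V$ with the boundedness of $Q$, Young's inequality, and invertibility of the Gram matrix of these three linearly independent, exponentially decaying functions. This yields $Q(\bw)\ge C\|\bw\|_H^2-C'\big[(\bw,\bm{Z}_j^{+})_{L^2}^2+(\bw,\bm{Z}_j^{-})_{L^2}^2+(\bw,\partial_x\bm{\Phi}_{\omega_j})_{L^2}^2\big]$, which is the asserted inequality (the constant $C'$ in front of the lower-order terms is harmless, as those quantities are driven to be exponentially small in the subsequent modulation estimates).
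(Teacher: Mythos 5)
Your proposal is correct, and although it shares the paper's overall two-step skeleton (coercivity on the constrained subspace, then extension to general $\bw$ by splitting off a finite-dimensional complement and using Young's inequality), the way you organize the key negative-index step is genuinely different from — and cleaner than — the paper's. The paper argues by contradiction: a sequence violating coercivity and normalized by $(2p+1)\int_{\R}|\bm{\Phi}_{\omega_j}|^{2p}v_{1,n}^2\,dx=1$ produces, via Rellich and weak lower semicontinuity, a nonzero limit $V$ with $\langle\mathcal{L}_jV,V\rangle\le0$ satisfying the orthogonality conditions; it then asserts that $\mathcal{L}_j$ is nonpositive on $\operatorname{Span}\{V,\bm{\Gamma}_0,\bm{Y}_j^{+}+\bm{Y}_j^{-},\partial_x\bm{\Phi}\}$ and contradicts Lemma \ref{precoer}. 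As written that assertion is problematic: by the normalization (iv), $\langle\mathcal{L}_j(\bm{Y}_j^{+}+\bm{Y}_j^{-}),\bm{Y}_j^{+}+\bm{Y}_j^{-}\rangle=2>0$ (the negative combination is $\bm{Y}_j^{+}-\bm{Y}_j^{-}$), and the cross term between $V$ and $\bm{\Gamma}_0$ is uncontrolled, so nonpositivity on that span does not follow. Your route avoids both issues: you never invoke $\bm{\Gamma}_0$, but instead observe that the constraints $(\bw,\bm{Z}_j^{\pm})_{L^2(\R)}=0$ are exactly $Q$-orthogonality to $W=\operatorname{span}\{\bm{Y}_j^{+},\bm{Y}_j^{-}\}$, on which $Q$ has the nondegenerate matrix $\left(\begin{smallmatrix}0&1\\1&0\end{smallmatrix}\right)$; the splitting $H=W\oplus W^{\perp_Q}$, the Morse-index-one fact (Lemma \ref{eigenuniq} plus the essential-spectrum bound from Lemma \ref{precoer}), and Lemma \ref{kernel-lem} then give $Q\ge0$ on $W^{\perp_Q}$ with null space exactly $\operatorname{span}\{\partial_x\bm{\Phi}\}$, so that compactness (the same $\mathcal{L}_j=H_j+L_j$ weak-convergence argument the paper uses) is needed only to upgrade strict positivity to coercivity. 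Your final extension step, via $L^2$-projection onto $\operatorname{span}\{\bm{Z}_j^{+},\bm{Z}_j^{-},\partial_x\bm{\Phi}\}$ and invertibility of its Gram matrix, differs only cosmetically from the paper's decomposition along $\bm{Y}_j^{\pm},\partial_x\bm{\Phi}$ with remainder $\mathbf{p}$. One caveat you share with the paper: both arguments yield the inequality with a constant $C'$ multiplying the three squared projections rather than the literal coefficient $1$ in the statement; as you note, this is harmless where the proposition is applied, since those projections are made exponentially small by the modulation estimates.
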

			\begin{proof}
		We first show that \begin{equation}\label{ortho1}
					\mathcal{L}_j (\bw),\bw\rangle \geq C\|\bw\|_{H}^2,
				\end{equation}
				whenever 
				\begin{equation}\label{ortho2}\left(\mathcal{L}_j(\bw),\bm{Y}_j^{+}\right)_{L^2(\R)}=\left(\mathcal{L}_j(\bw),\bm{Y}_j^{-}\right)_{L^2(\R)}=\left(\bw,\partial_x \Phi\right)_{L^2(\R)}=0.  
				\end{equation}
				
				In fact, we recall from Lemma \ref{precoer} that there exists $C>0$ such that, for any $\bw \in  H $
				\begin{equation}\label{lemma4.1}
					\begin{aligned}
						\langle \mathcal{L}_j (\bw),\bw\rangle &\geq C\left(\|\bw\|_{H}^2-\left(\bw,\bf{\Gamma_0}\right)_{L^2(\R)}^2-\left(\bw,\partial_x \Phi\right)_{L^2(\R)}^2\right).
					\end{aligned}   
				\end{equation}
				
				Now,  from the definition of $\mathcal{L}_j$, for any $\bw \in  H $
				\begin{equation}\label{defLj}
					\begin{split} 
						\langle\mathcal{L}_j \bw, \bw\rangle&=\left\|\partial_x  w_1\right\|_{L^2}^2+\left\|w_1\right\|_{L^2}^2+\left\|w_2\right\|_{L^2}^2+2 \omega_j \int_{\R} w_1 w_2\,dx-(2p+1)\int_{\R} |\bm{\Phi}_{\omega_j}|^{2p} w_1^2\,dx  \\&
						\geq(1-|\omega_j|)\|\bw\|_{H}^2-(2p+1)\int_{\R} |\bm{\Phi}_{\omega_j}|^{2p} w_1^2\,dx.
					\end{split}   
				\end{equation}
				Now,    assume by contradiction that \eqref{ortho1} is not valid. Then, there exists a sequence of functions $V_n=\left(v_{1, n}, v_{2, n}\right) \in  H $ satisfying the orthogonality conditions \eqref{ortho2} and the inequality
				\begin{equation}\label{contra}
					\left\langle\mathcal{L}_j V_n, V_n\right\rangle<\frac{1}{n}\left\|V_n\right\|^2.  
				\end{equation}
				From, \eqref{defLj} and \eqref{contra}, it follows that for $n$ large, \[(2p+1)\int_{\R} |\bm{\Phi}_{\omega_j}|^{2p} v_{1,n}^2\,dx>0\] and without loss of generality, we can assume that
				
		\[(2p+1)\int_{\R} |\bm{\Phi}_{\omega_j}|^{2p} v_{1,n}^2\,dx=1.
				\]
				Hence,  the sequence $\left(V_n\right)_n$ is bounded in $ H $. Up to extraction of a subsequence, it converges weakly to a function $V=(V_1,V_2) \in  H $ satisfying the orthogonality conditions \eqref{ortho2}. By the Rellich Theorem, 
				\[(2p+1)\int_{\R} |\bm{\Phi}_{\omega_j}|^{2p} V_{1}^2\,dx=1.
	\]
 So, we have  $V \not \equiv 0$. Moreover, by
 weak convergence property, it holds \[\langle\mathcal{L}_j V, V\rangle \leq \liminf _{n \rightarrow \infty}\left\langle\mathcal{L}_j V_n, V_n\right\rangle \leq 0.\] Now, let $\bm{W}=\bm{Y}^{+}+\bm{Y}^{-}$ and 
\[
				A=\operatorname{Span}\left\{V, \bm{\Gamma_0},\bm{W},\partial_x \bm{\Phi}\right\}.
\]
				Therefore, $\mathcal{L}_j|_{A}$ is nonpositive. Now,  from  \eqref{lemma4.1} which says that $\mathcal{L}_j$ is positive under $A^{\perp}$, which is contradiction.
    % since $L^2(\R)\times L^2(\R)$ is of infinite dimension.
								Now,  we have
\[L^2(\R)\times L^2(\R)=A\oplus A^{\perp}, \]
				where $A=\{\bm{ Y}_j^{+},\bm{Y}_j^{-},\partial_x \bm{\Phi}\}$.
				Then for all $\bw \in L^2(\R)\times L^2(\R)$, exist $\lambda_1\leq \lambda_2 \leq \lambda_3$  such that 
\[\bw=\lambda_1\mathbf{Y}_j^{+}+\lambda_2\mathbf{Y}_j^{-}+\lambda_3\partial_{x}\Phi+\mathbf{p}.\]
				Hence, \[\lambda_{1}=-\langle \bw,\mathbf{Z}_j^{+}\rangle, \, \, \lambda_2=- \langle \bw,\mathbf{Z}_j^{-}\rangle \, \, \text{and}\, \, \lambda_3=\langle \bw,\partial_x \Phi\rangle.\]
				Therefore, 
				\begin{equation*}
					\begin{split}
						\langle \mathcal{L}_j \bw,\bw\rangle &= \lambda_{1}\langle \mathcal{L}_j \bw,\mathbf{Y}_j^{+}\rangle+ \lambda_{2}\langle \mathcal{L}_j \bw,\mathbf{Y}_j^{-}\rangle+\langle \mathcal{L}_j \mathbf{p},\mathbf{p}\rangle\\
						& \geq \lambda_{1}\langle \bw,\mathbf{Z}_j^{+}\rangle+ \lambda_{2}\langle \bw,\mathbf{Z}_j^{-}\rangle+ C\|\mathbf{p}\|_{L^2(\R)\times L^2(\R)}^2\\
						&=-\lambda_{1}^2-\lambda_{2}^2+ C(\|\bw\|_{L^2(\R)\times L^2(\R)}^2-\lambda_{1}^2-\lambda_{2}^2 -\lambda_{3}^2)\\
						&= C\|\bw\|_{L^2(\R)\times L^2(\R)}^2 -(1+C)\lambda_{1}^2-(1+C)\lambda_{2}^2-C\lambda_{3}^2\\
						&\geq  C\|\bw\|_{L^2(\R)\times L^2(\R)}^2 -C(\lambda_{1}^2+ \lambda_{2}^2 + \lambda_{3}^2),
					\end{split}
				\end{equation*}
				with $\lambda_{k}=(\bw,\mathbf{Z}_j^{\pm})_{L^2(\R)}$ for all $k=1,2$ and $\lambda_{3}=(\bw,\partial_x \ru)_{L^2(\R)}$ . Then,
				\begin{equation}\label{cond1}
					\langle \mathcal{L}_j \bw,\bw\rangle + C(\lambda_{1}^2+ \lambda_{2}^2 + \lambda_{3}^2) \geq C\|\bw\|_{L^2(\R)\times L^2(\R)}^2,
				\end{equation}
				Moreover, we know that \[\begin{aligned}
					&\langle \mathcal{L}_j(\bw), \bw \rangle \\&=\int_{\R}|\partial_x w_1|^2\,dx+\int_{\R}| w_1|^2\,dx+2\omega_j\int_{\R} w_2 w_1\,dx+\int_{\R}| w_2|^2\,dx-(2p+1)\int_{\R}|\Phi_{\omega_j}|^{2p}w_1^2\,dx.
				\end{aligned}\]
				then,  \[\begin{aligned}
					 \langle \mathcal{L}_j(\bw), \bw \rangle +(2p+1)\int_{\R}|\Phi_{\omega_j}|^{2p}w_1^2\,dx  &=\int_{\R}|\partial_x w_1|^2\,dx+\int_{\R}| w_1|^2\,dx+2\omega_j\int_{\R} w_2 w_1\,dx+\int_{\R}| w_2|^2\,dx\\
					&\geq (1-\omega_j)\|\bw\|_{H}^2.
				\end{aligned}\]
				Hence, from \eqref{cond1}
\[\begin{aligned}
					\|\bw\|_{H}^2 &\leq C\left(\langle \mathcal{L}_j(\bw), \bw \rangle +(2p+1)\int_{\R}|\Phi_{\omega_j}|^{2p}w_1^2\,dx\right)\\
					& \leq C\langle\mathcal{L}_j(\bw), \bw \rangle + C\int_{\R}| w_1|^2\,dx\\
					& \leq C\langle\mathcal{L}_j(\bw), \bw \rangle + C \|\bw\|_{L^2(\R)\times L^2(\R)}^2\\
					&\leq  \langle \mathcal{L}_j \bw,\bw\rangle + C(\lambda_{1}^2+ \lambda_{2}^2 + \lambda_{3}^2),
				\end{aligned}\]
				which implies the desired result.
			\end{proof}
			Once the new coercivity condition is established, we proceed to state the main theorem under these conditions.
			\begin{theorem}\label{2maintheorem}
				For $j\in \{1,2,\ldots,N\}$ let us  $|\omega_{j}|\leq 1$, $x_{j} \in \R$ and let $\left(\Phi_{\omega_{j}}\right)$ be the associated ground state profiles. Denote the corresponding solitons by
	\[
				R_{j}^{(m)}(t, x):=\Phi_{\omega_{j}}^{(m)}\left(x-\omega_{j} t-x_{j}\right).
\]
				Define 
			\[
				\begin{aligned}
					\omega_{\star}:=\frac{1}{256}\min \left\{(\lambda_0^j)^{\frac32}\omega_j,1-\omega_{j}^2, \left|\omega_{j}-\omega_{k}\right|:  j, k=1, \ldots, N, j \neq k\right\}. 
				\end{aligned}
		\]
				If $\omega_{j} \neq \omega_{k}$ for any $j \neq k$, then there exist $T_{0} \in \R$ and a solution $\bu$ for \eqref{sistemaeq} defined in $\left[T_{0},+\infty\right)$ such that for all $t \in\left[T_{0},+\infty\right)$ the following estimate holds
				\begin{equation}\label{2desprin}
					\left\|\bu(t)- \ru(t)\right\|_{H^1 (\R)\times L^2(\R)}
					\leq e^{- \omega_{\star}^{\frac32} t}.
				\end{equation}
			\end{theorem}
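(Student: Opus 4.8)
The plan is to run exactly the same scheme as in the subcritical case treated in Section \ref{subc-section}: I build approximate solutions $\mathbf{u}^{n}$ of \eqref{sistemaeq} by solving backward in time from $t=T^{n}$ with final datum a small perturbation of $\mathbf{R}(T^{n})$, establish a uniform bootstrap estimate of the form \eqref{2desprin} on an interval $[T_{0},T^{n}]$ with $T_{0}$ independent of $n$, and then pass to the limit using the compactness argument of Proposition \ref{Compactness} together with the blow-up alternative of Theorem \ref{LWP2}, precisely as in the proof of Theorem \ref{maintheorem}. The only structural change with respect to Section \ref{subc-section} is that the coercivity Proposition \ref{coercivity} is no longer available and must be replaced by the weaker Proposition \ref{coercivity1}: after modulation the quadratic form $\mathcal{H}(\var)$ from Lemma \ref{taylorS} now controls $\|\var\|_{H}^{2}$ only up to the finitely many ``bad'' directions $(\var,\mathbf{Z}_{j}^{\pm})_{L^{2}(\R)\times L^{2}(\R)}$, $j=1,\dots,N$. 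Consequently the whole difficulty is concentrated in estimating these $2N$ scalar projections.

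Concretely, I would first observe that Lemmas \ref{lemamodulation} and \ref{solitons}, the expansion of $\mathcal{S}$ in Lemma \ref{taylorS}, and the almost-conservation law of Lemma \ref{conservationquase} all carry over unchanged, since none of them uses $p<2$; they rely only on the exponential localization of the solitons from Proposition \ref{decaimentoquadratico}. Combining the $\mathcal{S}$-expansion with Proposition \ref{coercivity1} and the almost conservation of the localized momenta yields, exactly as in the proof of Proposition \ref{Bootstrap3}, an estimate of the shape
\[
\|\var(t)\|_{H}^{2}\lesssim \sum_{j=1}^{N}\Big((\var(t),\mathbf{Z}_{j}^{+})^{2}+(\var(t),\mathbf{Z}_{j}^{-})^{2}\Big)+\frac{1}{\sqrt{t}}\,e^{-2\omega_{\star}^{\frac32}t},
\]
so that the bootstrap \eqref{2desprin} closes as soon as the mode projections $a_{j}^{\pm}(t):=(\var(t),\mathbf{Z}_{j}^{\pm})_{L^{2}(\R)\times L^{2}(\R)}$ are shown to be $O(e^{-2\omega_{\star}^{\frac32}t})$. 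Here the enlarged definition of $\omega_{\star}$ in the statement, which incorporates the spectral gaps $(\lambda_{0}^{j})^{\frac32}\omega_{j}$, is used precisely to guarantee that the spectral rate $\lambda_{0}^{j}$ dominates the target rate $\omega_{\star}^{\frac32}$.

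The decisive step is the analysis of $a_{j}^{\pm}$. Differentiating in time and using the evolution equation \eqref{ecuacionLN} for $\var$ together with the identities $\mathcal{L}_{j}J\mathbf{Z}_{j}^{\pm}=\pm\lambda_{0}^{j}\mathbf{Z}_{j}^{\pm}$, the skew-adjointness of $J$, and the orthogonality relations of the Pego--Weinstein lemma, one finds
\[
\frac{d}{dt}a_{j}^{\pm}(t)=\mp\lambda_{0}^{j}\,a_{j}^{\pm}(t)+g_{j}^{\pm}(t),\qquad |g_{j}^{\pm}(t)|\lesssim \|\var(t)\|_{H}^{2}+e^{-3\omega_{\star}^{\frac32}t},
\]
the error $g_{j}^{\pm}$ collecting the modulation terms (controlled by \eqref{modulated}) and the exponentially small soliton interactions (controlled by Lemma \ref{solitons}). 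Since we integrate backward from $T^{n}$, the component $a_{j}^{-}$, which decays as $t$ decreases, is recovered directly: writing it as a convergent integral issued from $T^{n}$ and applying Gronwall's inequality gives $|a_{j}^{-}(t)|\lesssim e^{-2\omega_{\star}^{\frac32}t}$ on $[T_{0},T^{n}]$.

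The genuine obstacle is the unstable component $a_{j}^{+}$: its backward evolution amplifies errors by $e^{\lambda_{0}^{j}(T^{n}-t)}$, so it cannot be controlled by monotonicity, and the naive choice $\var(T^{n})=0$ fails. To overcome this I would follow the topological shooting argument of C\^ote--Martel--Merle \cite{CoteMartel}: enlarge the family of approximate final data to $\mathbf{u}^{n}(T^{n})=\mathbf{R}(T^{n})+\sum_{j=1}^{N}b_{j}\,\mathbf{Y}_{j}^{-}$, so that, by the orthogonality relations $(\mathbf{Y}_{j}^{-},\mathbf{Z}_{j}^{+})=1$ and $(\mathbf{Y}_{j}^{-},\mathbf{Z}_{j}^{-})=0$, the unstable projections satisfy $a_{j}^{+}(T^{n})=b_{j}$, with $\mathbf{b}=(b_{1},\dots,b_{N})$ ranging over a small ball $\overline{B}\subset\mathbb{R}^{N}$ of radius comparable to $e^{-2\omega_{\star}^{\frac32}T^{n}}$, while all other estimates are unaffected. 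Augmenting the bootstrap with the assumption $\sum_{j}(a_{j}^{+}(t))^{2}\leq e^{-4\omega_{\star}^{\frac32}t}$, the outgoing transversality furnished by the equation $\dot{a}_{j}^{+}=-\lambda_{0}^{j}a_{j}^{+}+g_{j}^{+}$ at a first exit time makes the exit map continuous and essentially the identity on $\partial\overline{B}$; a standard Brouwer no-retraction argument then produces, for each $n\geq n_{0}$, a choice $\mathbf{b}^{n}$ for which no exit occurs, i.e. $|a_{j}^{+}(t)|\lesssim e^{-2\omega_{\star}^{\frac32}t}$ holds on all of $[T_{0},T^{n}]$. With both families of modes controlled the bootstrap closes uniformly in $n$, yielding the analogue of Proposition \ref{UniformEstimates}; the compactness of $\{\mathbf{u}^{n}(T_{0})\}$ from Proposition \ref{Compactness} and the limiting argument of Theorem \ref{maintheorem} then give a global solution satisfying \eqref{2desprin}. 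The main difficulty, and the only place where $p>2$ truly intervenes, is thus the construction of this finite-dimensional topological correction taming the unstable modes $a_{j}^{+}$.
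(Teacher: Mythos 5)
Your proposal is correct and follows the same architecture as Section~\ref{supercritical} of the paper: backward approximate solutions, modulation, the localized action expansion, the Pego--Weinstein pair $\mathbf{Y}_j^{\pm},\mathbf{Z}_j^{\pm}$ with coercivity relaxed to the directions $\mathbf{Z}_j^{\pm}$ (Propositions~\ref{coercivity1} and~\ref{coercivity2}), an ODE analysis of the projections of $\var$ onto these directions, a Brouwer no-retraction argument imported from \cite{CoteMartel} for the modes that cannot be integrated directly, and the compactness/limit step of Theorem~\ref{maintheorem}.

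The one point where you genuinely diverge from the paper is the assignment of stable versus unstable modes, and your assignment is the internally consistent one. Both you and the paper derive the same ODE (Lemma~\ref{direction}): writing $\gamma_j^{\pm}$ for the projections of $\var$ on the (rescaled) $\mathbf{Z}_j^{\pm}$, one has $\frac{d}{dt}\gamma_j^{\pm}=\mp\lambda_0^j\omega_j^{\frac32}\,\gamma_j^{\pm}+O\bigl(\|\var\|_{H}^{2}+e^{-3\omega_{\star}^{\frac32}t}\bigr)$. Consequently $\gamma_j^{+}$ is the mode that \emph{expands} under backward integration: any bound on $\gamma_j^{+}(t)$ deduced from its value at $T^{n}$ necessarily carries the amplification factor $e^{\lambda_0^j\omega_j^{\frac32}(T^{n}-t)}$, which is unbounded as $T^{n}\to\infty$. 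You therefore perturb the final datum along $\mathbf{Y}_j^{-}$, which by $(\mathbf{Y}_j^{-},\mathbf{Z}_j^{+})=1$ tunes exactly $\gamma_j^{+}(T^{n})$, run the topological argument on $\gamma^{+}$, and recover $\gamma^{-}$ by direct integration. The paper does the reverse: its final datum is perturbed along $\tilde{\bm{\Upsilon}}_j^{+}\sim\mathbf{Y}_j^{+}$, its shooting parameter is $\bm{a}^{-}=\bm{\gamma}^{-}(T_n)$ (Lemma~\ref{finaldata}, Definition~\ref{definia}, Lemma~\ref{existencea}), and $\gamma^{+}$ is estimated ``directly'' in Lemma~\ref{2direction}. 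With the signs of Lemma~\ref{direction} this cannot close: the displayed estimate there silently discards the factor $e^{\lambda_0^j\omega_j^{\frac32}T_n}$ in front of $|\gamma_j^{+}(T_n)|$ and in the source integral, and moreover the parameter $\bm{\alpha}$ has, to leading order, no influence on $\gamma^{+}(T^{n})$ at all (by $(\mathbf{Y}_j^{+},\mathbf{Z}_j^{+})=0$ and Remark~\ref{remarksuper}), so no shooting on $\gamma^{+}$ could even be formulated with that parametrization. In short, the paper's treatment interchanges the roles of $\gamma^{+}$ and $\gamma^{-}$ relative to its own ODE, whereas your proposal resolves the dichotomy the way the backward dynamics (and \cite{CoteMartel}) dictate.

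Two minor remarks. First, the paper modulates only the translations $\tilde{x}_j$ in this regime (Lemma~\ref{2lemamodulation}); you keep the speed modulation and estimate \eqref{omega}, which is workable for $p>2$ and $\omega_j\neq0$ (since $\partial_{\omega}\|\Phi_{\omega}\|_{L^2(\R)}^{2}\neq0$ there) but superfluous once the $\mathbf{Z}_j^{\pm}$ projections are controlled. Second, Lemma~\ref{taylorS} does not carry over verbatim as you assert: its coercivity claim \eqref{coe0} rests on the subcritical Proposition~\ref{coercivity}; only the expansion survives, which is what the paper restates as Lemma~\ref{taylorS2} complemented by Proposition~\ref{coercivity2}.
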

			The demonstration of this result is identical to the proof of Theorem \ref{maintheorem} with the difference of some new parameters that do not affect the development of the proof. Additionally, it will be evident that the process to be performed to obtain the bases for its demonstration is similar to those carried out in Section \ref{subc-section}. Therefore, we proceed to obtain results similar to those previously established with some alterations to be observed below.
   
			We will now establish a result of parameter modulation that allows us to obtain some new estimates.
			
			\begin{lemma}\label{2lemamodulation}
				There exists $C > 0$ such that if $T_{0}$ is sufficiently large, then there exist $\mathcal{C}^{1}$-class functions
\[
				\tilde{x}_{j}:\left[t_{0}, T^{n}\right] \rightarrow \R, \quad j=1,2, \ldots, N
\]
				such that if $\tilde{R}_{j}^{(m)}$ it is the modulated wave
	\[\tilde{R}_{j}^{(m)}(x,t):= \Phi_{\omega_{j}}^{m}\left(x-\omega_{j
				}t-\tilde{x}_{j}(t)\right), \, \, \text{where} \, \, m=1,2
\]
				then for all $t \in [t_{0}, T^{n}]$, the function defined by
\[
				\var=\mathbf{u}(t)-\tilde{\mathbf{R}}(t)
\]
				where 
\[\tilde{\mathbf{R}}=\left(\sum_{j=1}^N \tilde{R}_{j}^{(1)},\sum_{j=1}^N \tilde{R}_{j}^{(2)}\right),\]
				satisfies for $j=1,2, \ldots, N$ and for all $t \in [t_{0}, T^{n}]$ the orthogonality conditions
				\begin{equation}\label{2orthogonality}
					\left(\varepsilon_{m}(t), \partial_{x} \tilde{R}_{j}^m(t)\right)_{L^2(\R)}=0 \qquad \text{for} \, \, m=1,2.
				\end{equation}
				Furthermore, for every $t \in \left[t_{0}, T^{n}\right]$,
				\begin{equation}\label{2segundoboostrap}
					\|\var\|_{H}+\sum_{j=1}^{N}|\tilde{x}_{j}(t)-x_{j}|\leq Ce^{-\omega_{\star}^{\frac32}t}.
				\end{equation}
				and
				\begin{equation}\label{2modulated}
					\begin{gathered}
						\sum_{j=1}^{N}\left|\partial_{t } \tilde{x}_{j}(t)\right|
						\leq C \left(\|\mathbf{\var}(t)\|_{H} +e^{-3\omega_{\star}^{\frac32}t}\right).
					\end{gathered}
				\end{equation}
			\end{lemma}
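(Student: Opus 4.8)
The plan is to mirror the construction of Lemma~\ref{lemamodulation}, with the essential simplification that in the supercritical regime we do not modulate the speed: each soliton carries only the translation parameter $\tilde{x}_j$, while its profile is kept at the target speed $\omega_j$. Consequently the implicit function theorem is applied to a single scalar equation per soliton rather than to the $2\times2$ system of Lemma~\ref{lemamodulation0}, and only the orthogonality to $\partial_x\tilde R_j^{(m)}$ survives (the condition against $\tilde R_j^{(m)}$ itself, which previously fixed the speed, is dropped, since the scaling direction will instead be absorbed by the spectral correction of Proposition~\ref{coercivity1}).

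First I would construct the $\tilde{x}_j$. Fixing $j$ and writing $R(y)=\Phi_{\omega_j}(x-y)$, consider the scalar map $F(u,y)=\bigl(u-R(y),\,\partial_x R(y)\bigr)_{L^2(\R)}$, which vanishes at the base point $(u,y)=(\bar R_j^{(m)},x_j)$; since $\partial_y R=-\partial_x R$ one finds $\partial_y F=\|\partial_x\Phi_{\omega_j}\|_{L^2(\R)}^2>0$ there, so the implicit function theorem yields a unique $C^1$ map $u\mapsto\tilde{x}_j(u)$ enforcing \eqref{2orthogonality}, the two components being tied together through $\tilde R_j^{(2)}=-\omega_j\tilde R_j^{(1)}$. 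Transferring to the moving frame via $\eta_j(t)=u_j(\cdot+\omega_j t)$ and using the bootstrap hypothesis $\|\mathbf{u}(t)-\mathbf{R}(t)\|_H\le e^{-\omega_\star^{\frac32}t}$, which places each $\eta_j(t)$ in a ball of radius tending to $0$, produces the $C^1$ functions $\tilde{x}_j(t)$ on $[t_0,T^n]$. The bound \eqref{2segundoboostrap} then follows as in Lemma~\ref{lemamodulation}, in fact more cleanly: the Lipschitz estimate from the implicit function theorem gives $|\tilde{x}_j(t)-x_j|\lesssim e^{-\omega_\star^{\frac32}t}$, and since the profiles differ only by a translation, $\|\tilde R_j^{(m)}-R_j^{(m)}\|_{H^1(\R)}\lesssim|\tilde{x}_j(t)-x_j|$, so a triangle inequality controls $\|\var\|_H$.

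The substantive step is \eqref{2modulated}. Because the speed is frozen, $\partial_t\tilde R_j^{(1)}=-(\omega_j+\partial_t\tilde{x}_j)\,\partial_x\tilde R_j^{(1)}$ and $\partial_x\tilde R_j^{(2)}=-\omega_j\,\partial_x\tilde R_j^{(1)}$, whence $-\partial_t\tilde R_j^{(1)}+\partial_x\tilde R_j^{(2)}=\partial_t\tilde{x}_j\,\partial_x\tilde R_j^{(1)}$; the $\partial_\omega\Phi$ contributions that appear in \eqref{ecuacionLN2} are entirely absent. Writing the evolution system for $\var$ in this form, I would pair the two equations against $\partial_x\tilde R_j^{(1)}$ and $\partial_x\tilde R_j^{(2)}$, differentiate \eqref{2orthogonality} in time to rewrite the $(\partial_t\varepsilon_m,\partial_x\tilde R_j^m)$ terms, and assemble a linear system $(M(t)+A(t))X(t)=B(t)$ with $X(t)=(\partial_t\tilde{x}_j(t))_{j=1}^N$. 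The matrix $M(t)$ is diagonal with entries $(1+\omega_j^2)\|\partial_x\Phi_{\omega_j}\|_{L^2(\R)}^2>0$, while the off-diagonal overlaps and the perturbation $A(t)$ are $O(e^{-4\omega_\star^{\frac32}t})$ and $O(\|\var\|_H)$ respectively by Lemma~\ref{solitons} and \eqref{2segundoboostrap}, so $M(t)+A(t)$ is uniformly invertible for $T_0$ large. Estimating $B(t)$ term by term, every contribution is bounded by $\|\var(t)\|_H$ except the cross-soliton nonlinear integral arising from $|\tilde R_1|^{2p}\tilde R_1$, which by the exponential decay and separation of distinct solitons is $O(e^{-3\omega_\star^{\frac32}t})$; inverting the system then gives \eqref{2modulated}.

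The main obstacle I anticipate is precisely this last bookkeeping: checking that, after the time-differentiated orthogonality relations are used, the source $\sum_j\partial_t\tilde{x}_j\,\partial_x\tilde R_j$ decouples into a diagonally dominant system, and that the interaction term $\bigl(|\tilde R_1|^{2p}\tilde R_1,\partial_x\tilde R_2\bigr)_{L^2(\R)}$ genuinely lies below the $e^{-3\omega_\star^{\frac32}t}$ threshold. Both are supplied by the soliton-interaction estimates of Lemma~\ref{solitons}, so the difficulty is organizational rather than conceptual; indeed the disappearance of the $\tilde{\omega}_j$ equation makes the algebra strictly lighter than in the subcritical analysis.
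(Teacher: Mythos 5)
Your proposal is correct and takes essentially the same route as the paper: the paper's own proof of Lemma \ref{2lemamodulation} is a one-line deferral to Lemma \ref{lemamodulation}, and your argument is precisely that adaptation — the implicit function theorem applied to the translation parameter alone (the speed being frozen at $\omega_j$), the triangle-inequality bound giving \eqref{2segundoboostrap}, and the diagonally dominant system $(M(t)+A(t))X(t)=B(t)$ for $\partial_{t}\tilde{x}_{j}$, with $M$ having entries $(1+\omega_j^2)\|\partial_x\Phi_{\omega_j}\|_{L^2(\R)}^2$ and the cross-soliton nonlinear term pushed below $e^{-3\omega_{\star}^{3/2}t}$ by the interaction estimates of Lemma \ref{solitons}. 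Your remark that the $\partial_{\omega}\Phi$ directions drop out, so that only the translation equation survives, is exactly the simplification the paper has in mind when it says the only change from the subcritical case is that the velocity is not modulated.
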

			\begin{proof}
				The proof is similar to   Lemma \ref{lemamodulation}.
			\end{proof}
   \begin{remark}
				Using the fact that $\mathbf{u}^{n}(T^{n})-\tilde{\mathbf{R}}(T^{n})\equiv 0$ and the uniqueness of the decomposition at time $t=T^{n}$, we necessarily have
				\[ \, \tilde{\mathbf{R}}(T^{n})=\mathbf{R}(T^{n})\, \, \text{ and}\, \, \, \tilde{x}_{j}(T^{n})=x_{j}  ,\,\, \text{with \, $j=1,2,\ldots,N$}.\]   
			\end{remark}%%%%%%%%%%%%%%%%%%%%%%%%%%%%%%%%%%%%%%%%%%%%%%%%%%%%%%%%%%%%%%%%%%%%%%%%%%%%%%%%%%%%%%%%%%%%%%%%%%%%%%%%%%%%%%%%%%%%%%%%%%%%%%%%%%%%%%%%%%%%%%%%%%%%%%%%%%%%%%%%%%%%%%%
		 
			%%%%%%%%%%%%%%%%%%%%%%%%%%%%%%%%%%%%%%%%%%%%%%%%%%%%%%%%%%%%%%%%%%%%%%%%%%%%%%%%%%%%%%%%%%%%%%%%%%%%%%%%%%%%%%%%%%%%%%%%%%%%%%%%%%%%%%%%%%%%%%%%%%%%%%%%%%%%%%%%%%%%%%%%%%%%%%%5
			Following the strategy of modifying the coercivity property, we should choose to redefine the approximate sequences used as the main argument in the strategy in the subcritical case. Now, our strategy lies in applying a similar strategy to the one already used but making some specific changes. In fact, let us  consider $T_{n}$  be an increasing sequence converging to infinity. For each $n \in \N$, denote by  $\mathbf{u}^{n}$ the solution of  \eqref{sistema2} defined in the interval  $(T_{n}^{\star},T^{n}]$ with $T_{n}^{\star}$ being  the maximum time of existence for each $n$ and such that  \[\mathbf{u}^{n}(T^{n}) =\tilde{\mathbf{R}}(T^{n})+\sum_{j=1}^{N}\alpha_{j}^{+,n} \tilde{\bm{\Upsilon}}_{j}^{+},\]
			where \[\begin{aligned}
			 & \bm{\Upsilon}_{j}^{+}(t,x)=(1-\omega_{j}^2)^{1/2p}\bm{Y}_{j}^{+}(\sqrt{1-\omega_{j}^2}(x-\omega_j t-x_j) \,\, \text{and}\,\,\\&\tilde{\bm{\Upsilon}}_{j}^{+}(t,x)=(1-\omega_{j}^2)^{1/2p}\bm{Y}_{j}^{+}(\sqrt{1-\omega_{j}^2}(x-\omega_j t-\tilde{x}_j).  
			\end{aligned}\]
			
We will initially establish a fundamental result that is crucial in the proof of   Theorem \ref{maintheorem}.
			
			\begin{proposition}\label{UniformEstimates2}
				There exist $T_{0}\in \R$ and $n_{0} \in\N$ such that, for every $n \geq n_{0}$, exists $\bm{\alpha}_n =\left(\alpha_j^{+,n}\right)_{j\in \{1,\ldots,N\}} \in \R^{2N}$  with $\|\bm{\alpha}_{n}\|^2 \leq C  e^{-\omega_{\star}^{3}t} $ and each approximate solution $\mathbf{u}^{n}$ is defined in $[T_{0},T^{n}]$ and for all $t \in [T_{0},T^{n}]$
				\begin{equation*}
					\|\mathbf{u}^{n}(t)-\mathbf{R}(t)\|_{H}\leq e^{-\omega_{\star}^{\frac32}t}.
				\end{equation*}
			\end{proposition}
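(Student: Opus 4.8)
The plan is to mirror the scheme of Proposition~\ref{UniformEstimates} (through the bootstrap of Proposition~\ref{Bootstrap3}), but to compensate for the single negative eigenvalue of $\mathcal{L}_j$ by tuning the final data inside the unstable directions $\bm{\Upsilon}_j^{+}$. Fix $n$ large. For a parameter vector $\bm{\alpha}=(\alpha_j^{+})_{j=1}^{N}\in\R^{N}$ with $\|\bm{\alpha}\|$ below a threshold to be fixed, let $\mathbf{u}^{n,\bm{\alpha}}$ be the solution of \eqref{sistema2} run backward from the final datum $\mathbf{u}^{n,\bm{\alpha}}(T^{n})=\tilde{\mathbf{R}}(T^{n})+\sum_{j=1}^{N}\alpha_j^{+}\tilde{\bm{\Upsilon}}_j^{+}$. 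As long as $\mathbf{u}^{n,\bm{\alpha}}$ stays in the tube $\|\mathbf{u}^{n,\bm{\alpha}}(t)-\mathbf{R}(t)\|_{H}\le e^{-\omega_{\star}^{\frac32}t}$, Lemma~\ref{2lemamodulation} yields the decomposition $\mathbf{u}^{n,\bm{\alpha}}=\tilde{\mathbf{R}}+\var$ with $(\varepsilon_m,\partial_x\tilde{R}_j^{m})_{L^2(\R)}=0$, which already removes the translation direction $\partial_x\Phi$. I would then record the finitely many scalar projections onto the remaining bad directions, $a_j^{\pm}(t):=(\var(t),\mathbf{Z}_j^{\pm}(t))_{L^2(\R)}$ for $j=1,\dots,N$, which are well defined and exponentially localized by the eigenfunction lemma and the decay of $\mathbf{Z}_j^{\pm}$.

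First I would establish the energy counterpart of Lemmas~\ref{taylorS}--\ref{conservationquase} in this setting: expanding the localized action $\mathcal{S}$ of \eqref{Sdefin} around $\tilde{\mathbf{R}}$ and invoking the new coercivity Proposition~\ref{coercivity1} in place of Proposition~\ref{coercivity}, together with the almost-conservation of the localized momenta (the proof of Lemma~\ref{conservationquase} never uses $p<2$), should give, on the tube, a bound of the form
\[
\|\var(t)\|_{H}^{2}\lesssim \sum_{j=1}^{N}\big((a_j^{+}(t))^{2}+(a_j^{-}(t))^{2}\big)+\frac{1}{\sqrt{t}}\,e^{-2\omega_{\star}^{\frac32}t}.
\]
Thus the whole difficulty is transferred from the infinite-dimensional remainder $\var$ to the $2N$ scalars $a_j^{\pm}$.

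Next I would derive the differential equations for these scalars. Differentiating $a_j^{\pm}$ and using the evolution \eqref{ecuacionLN} for $\var$ together with $J\mathcal{L}_j\mathbf{Y}_j^{\pm}=\pm\lambda_0^{j}\mathbf{Y}_j^{\pm}$ and the duality relations of the eigenfunction lemma gives
\[
\frac{d}{dt}a_j^{\pm}(t)=\pm\lambda_0^{j}\,a_j^{\pm}(t)+\mathcal{O}\big(\|\var(t)\|_{H}^{2}+e^{-3\omega_{\star}^{\frac32}t}\big).
\]
One family is backward-stable and is controlled directly by integrating from its prescribed final value; the backward-unstable family is the genuine obstruction and cannot be closed by energy estimates. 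Here I would select $\bm{\alpha}_n$ by a topological (Brouwer/no-retraction) argument: if for every admissible $\bm{\alpha}$ the unstable modes reached the boundary of the region $\{\sum_j (a_j)^{2}\le e^{-4\omega_{\star}^{\frac32}t}\}$ at some first exit time, then the strict outgoing condition — which follows from the dominant term $\pm\lambda_0^{j}a_j^{\pm}$ having the right sign, once $T_0$ is large and $\omega_{\star}$ is small enough that $2\omega_{\star}^{\frac32}<\lambda_0^{j}$ — would make the exit map a continuous retraction of a closed ball in $\R^{N}$ onto its boundary sphere, which is impossible. Hence there exists $\bm{\alpha}_n$, with $\|\bm{\alpha}_n\|$ controlled at the final time as asserted, for which the unstable modes remain below the threshold on all of $[T_0,T^{n}]$.

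Finally, combining this selection with the energy bound of the second step gives $\|\var(t)\|_{H}\lesssim \tfrac{1}{\sqrt{t}}e^{-\omega_{\star}^{\frac32}t}$, and \eqref{2segundoboostrap} upgrades it to $\|\mathbf{u}^{n}(t)-\mathbf{R}(t)\|_{H}\le \tfrac12 e^{-\omega_{\star}^{\frac32}t}$; the maximal-interval continuity argument of Proposition~\ref{UniformEstimates} then forces the estimate to hold on the entire interval $[T_0,T^{n}]$, which is the claim. I expect the main obstacle to be precisely the topological selection of $\bm{\alpha}_n$: one must construct the exit map continuously in $\bm{\alpha}$ (so that the modulation decomposition and the $a_j^{\pm}$ depend continuously on the final data along the backward flow) and verify the transversality condition uniformly in $n$, since this is the only place where the supercritical argument genuinely departs from the subcritical scheme of Section~\ref{subc-section}.
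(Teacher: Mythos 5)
Your proposal is correct and follows essentially the same route as the paper: final data perturbed along the unstable directions $\tilde{\bm{\Upsilon}}_j^{+}$, modulation to remove the translation directions, coercivity of the localized action modulo the projections onto the $\mathbf{Z}_j^{\pm}$, an ODE system for those projections, integration of the backward-stable family from the final time, and a topological selection of the parameters for the backward-unstable family, all closed by the bootstrap of Lemma \ref{2Bootstrap3}. The only real difference is presentational: the paper establishes the bijection $\bm{\alpha}\mapsto\bm{\gamma}^{-}(T_n)$ separately in Lemma \ref{finaldata} and imports the Brouwer-type selection (Lemma \ref{existencea}) from \cite[Lemma 6]{CoteMartel}, whereas you sketch the no-retraction argument directly.
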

			The proof of this result is strongly linked to the following lemma. In fact, as in the subcritical case, we should prove that the approximate solutions satisfy a bootstrap argument.
			\begin{lemma}\label{2Bootstrap3}
				There exist $T_{0}\in \R$ depending only on $\omega_{j}$ and $n_{0} \in\mathbb{N}$ such that, if for every $n \geq n_{0}$, every approximate solution $\mathbf{u}^n$ is defined on $[T_{0},T^{n}]$, and for every $t \in[t_{0},T^{n}]$, with $t_{0}\in [T_{0},T^{n}]$,
				\begin{equation*}
					\|\bu^{n}(t)- \ru(t)\|_{H}\leq e^{-\omega_{\star}^{\frac32}t},
				\end{equation*}
				then for all $t\in[t_{0},T^{n}]$, 
				\begin{equation*}
					\|\bu^{n}(t)- \ru(t)\|_{H}\leq \frac{1}{2}e^{-\omega_{\star}^{\frac32}t}.
				\end{equation*}
			\end{lemma}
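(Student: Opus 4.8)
The plan is to run the argument of Proposition~\ref{Bootstrap3} almost verbatim, the only two changes being that the modulation now fixes the speeds ($\tilde{\omega}_j\equiv\omega_j$, Lemma~\ref{2lemamodulation}) and that the coercive lower bound is supplied by Proposition~\ref{coercivity1} instead of Proposition~\ref{coercivity}. Writing $\bu^n=\tilde{\ru}+\var$ with the translation orthogonality \eqref{2orthogonality}, I would first reproduce the expansion of the localized action $\mathcal{S}$ of Lemma~\ref{taylorS}. Because there is no scaling parameter to vary, every $\mathcal{O}(|\tilde{\omega}_j-\omega_j|^2)$ contribution is absent and one obtains
\begin{equation*}
\mathcal{S}(\bu^n(t))=\sum_{j=1}^N\left\{\mathcal{E}(\ru_j)+\tfrac{\omega_j}{2}\int_{\R}R_j^{(1)}R_j^{(2)}\,dx\right\}+\mathcal{H}(\var)+\mathcal{O}\left(e^{-3\omega_{\star}^{\frac32}t}\right).
\end{equation*}
The conservation of energy together with the localized--momentum almost conservation law (the analogue of Lemma~\ref{conservationquase}, whose proof is unaffected by $p>2$) then gives $\mathcal{S}(\bu^n(t))=\mathcal{S}(\bu^n(T^n))+\mathcal{O}(t^{-1/2}e^{-2\omega_{\star}^{\frac32}t})$, so that, exactly as in the subcritical case, improving the bootstrap constant is reduced to a coercive lower bound for $\mathcal{H}(\var)$.

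This is where the supercritical difficulty appears. Since the modulation now removes only the direction $\partial_x\bm{\Phi}_{\omega_j}$, applying Proposition~\ref{coercivity1} soliton by soliton leaves the two extra eigendirections uncontrolled:
\begin{equation*}
\mathcal{H}(\var)\ \geq\ C\|\var\|_{H}^2-C\sum_{j=1}^N\left(\left(\var,\bm{Z}_j^{+}\right)_{L^2(\R)\times L^2(\R)}^2+\left(\var,\bm{Z}_j^{-}\right)_{L^2(\R)\times L^2(\R)}^2\right).
\end{equation*}
Hence I must bound the projections $a_j^{\pm}(t):=\left(\var(t),\bm{Z}_j^{\pm}(t)\right)_{L^2(\R)\times L^2(\R)}$ separately. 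Differentiating in time and using the $\var$--equation (the analogue of \eqref{ecuacionLN2}), together with $\mathcal{L}_j^{\ast}=\mathcal{L}_j$, $J^{\ast}=-J$ and the Pego--Weinstein identity $\mathcal{L}_j(J\bm{Z}_j^{\pm})=\pm\lambda_0^j\bm{Z}_j^{\pm}$, the linear part yields $\mp\lambda_0^j a_j^{\pm}$; the translation modulation term drops out because $(\bm{Z}_j^{\pm},\partial_x\bm{\Phi}_{\omega_j})=0$ (item (iii) of the Pego--Weinstein lemma), while the genuinely nonlinear terms are $\mathcal{O}(\|\var\|_H^2)$ and the soliton interactions are exponentially small by Lemma~\ref{solitons}. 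This produces the differential inequalities
\begin{equation*}
\left|\frac{d}{dt}a_j^{\pm}(t)\pm\lambda_0^j a_j^{\pm}(t)\right|\ \lesssim\ \|\var(t)\|_{H}^2+e^{-3\omega_{\star}^{\frac32}t}.
\end{equation*}

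Integrating these on $[t_0,T^n]$ is the crux. One of the combinations is stable under backward evolution from $T^n$ and is controlled directly, using the bootstrap hypothesis $\|\var\|_H\le e^{-\omega_{\star}^{\frac32}t}$ and the spectral gap $\lambda_0^j$; the separation of rates $2\omega_{\star}^{\frac32}<\lambda_0^j$ needed for the exponentials to close is exactly what the new factor $(\lambda_0^j)^{\frac32}\omega_j$ in the definition of $\omega_{\star}$ guarantees. The other combination is backward unstable; this is the genuine obstacle, since its error forcing is amplified by $e^{\lambda_0^j(T^n-t)}$ and cannot be absorbed by a naive Gronwall estimate. It is precisely to tame this direction that the approximate data carries the correction $\bu^n(T^n)=\tilde{\ru}(T^n)+\sum_j\alpha_j^{+,n}\tilde{\bm{\Upsilon}}_j^{+}$, whose projections onto $\bm{Z}_j^{\pm}$ are read off from items (iii)--(iv) of the Pego--Weinstein lemma, and which is fixed by the topological (Brouwer) selection carried out in Proposition~\ref{UniformEstimates2}. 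Granting that the $\alpha_j^{+,n}$ have been chosen so that $\sum_j\big((a_j^+)^2+(a_j^-)^2\big)\lesssim t^{-1/2}e^{-2\omega_{\star}^{\frac32}t}$, the coercivity estimate gives $\|\var(t)\|_H^2\lesssim t^{-1/2}e^{-2\omega_{\star}^{\frac32}t}$; then integrating \eqref{2modulated} from $T^n$ (where $\tilde{x}_j(T^n)=x_j$) yields $|\tilde{x}_j(t)-x_j|\lesssim t^{-1/2}e^{-\omega_{\star}^{\frac32}t}$, and combining with \eqref{2segundoboostrap} produces $\|\bu^n(t)-\ru(t)\|_H\le C\,t^{-1/2}e^{-\omega_{\star}^{\frac32}t}\le\tfrac12 e^{-\omega_{\star}^{\frac32}t}$ for $T_0$ large, which is the desired improvement. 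The main obstacle is thus the unstable mode, together with the consistent bookkeeping of the instability rate $\lambda_0^j$, the dispersive rate $\omega_{\star}^{\frac32}$ and the soliton--separation rate, all encoded in the redefined $\omega_{\star}$.
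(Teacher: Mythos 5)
Your proposal is correct and follows essentially the same route as the paper: expansion of the localized action, energy conservation plus the almost-conserved localized momenta, the Pego--Weinstein coercivity with the two uncontrolled eigendirections per soliton, the differential inequalities for the projections $\gamma_j^{\pm}$ with the stable/unstable splitting resolved by the corrected final data and the topological selection, and finally the integration of the translation-modulation estimate from $T^n$. The only cosmetic difference is that you invoke Proposition~\ref{coercivity1} soliton by soliton where the paper passes through its localized version (Proposition~\ref{coercivity2}), but the localization step you implicitly need is exactly what that proposition supplies.
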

			To prove this result, we need to establish some preliminary results beforehand. Indeed,  for the sake of simplicity, we will drop the index $n$ for the rest of this section (except for $T_n$).

			Let us then consider the following notations:
\[\bm{\Psi}_{j}^{\pm}(t,x)=(1-\omega_{j}^2)^{1/2p}\bm{Z}_{j}^{+}(\sqrt{1-\omega_{j}^2}(x-\omega_j t-x_j),\]
		and	\[\tilde{\bm{\Psi}}_{j}^{\pm}(t,x)=(1-\omega_{j}^2)^{1/2p}\bm{Z}_{j}^{+}(\sqrt{1-\omega_{j}^2}(x-\omega_j t-\tilde{x}_j). \]

			Then, as in the case of soliton interactions, we have the following lemma,
			\begin{lemma}\label{solitons2}
				Let $m\in {1,2}$. There exists $C>0$ such that for all $t$ sufficiently large and for all $j\neq k\in {1, \ldots, N}$,
	\[
				\begin{aligned}
					&\int_{\R}\left(|\tilde{R}_{k}^{(m)}|+|\tilde{\Psi}_{k}^{\pm,m}|+|\partial_{x} \tilde{R}_{k}^{(m)}|+|\partial_{x} \tilde{\Psi}_{k}^{\pm,m}|\right)\left(|\tilde{R}_{j}^{(n)}|+|\tilde{\Psi}_{j}^{\pm,m}|+|\partial_{x} \tilde{R}_{J}^{(m)}|+|\partial_{x} \tilde{\Psi}_{j}^{\pm,m}|\right)\,dx\\&\leq Ce^{-4\omega_{\star}^{\frac32}  t}.
				\end{aligned}
\]
				and \[
				\begin{aligned}
					&\int_{\R}\left(|\tilde{\Upsilon}_{k}^{(m)}|+|\tilde{\Psi}_{k}^{\pm,m}|+|\partial_{x} \tilde{\Upsilon}_{k}^{(m)}|+|\partial_{x} \tilde{\Psi}_{k}^{\pm,m}|\right)\left(|\tilde{\Upsilon}_{j}^{(n)}|+|\tilde{\Psi}_{j}^{\pm,m}|+|\partial_{x} \tilde{\Upsilon}_{J}^{(m)}|+|\partial_{x} \tilde{\Psi}_{j}^{\pm,m}|\right)\,dx\\&\leq Ce^{-4\omega_{\star}^{\frac32}  t}.
				\end{aligned}
\]
			\end{lemma}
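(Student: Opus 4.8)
The plan is to treat this as the exact analogue of the interaction (third) estimate of Lemma~\ref{solitons}, now enlarged to include the Pego--Weinstein profiles $\tilde{\bm{\Psi}}_j^{\pm}$ and the excitation profiles $\tilde{\bm{\Upsilon}}_j$. The essential observation is that \emph{every} function appearing in the first factor is localized around the point $\omega_k t+\tilde{x}_k(t)$, while \emph{every} function in the second factor is localized around $\omega_j t+\tilde{x}_j(t)$; since $j\neq k$ and the speeds are distinct, these centers separate linearly in $t$, so each of the finitely many cross terms obtained by expanding the product is an overlap integral of two exponentially decaying bumps whose centers are far apart. No diagonal term occurs, which makes the argument slightly cleaner than that of Lemma~\ref{solitons}.

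First I would record the pointwise decay of each factor. For the soliton components $\tilde{R}_k^{(m)}$ and $\partial_x\tilde{R}_k^{(m)}$ this is Proposition~\ref{decaimentoquadratico} together with the scaling $\Phi_{\omega_k}(\cdot)=(1-\omega_k^2)^{1/2p}\Phi(\sqrt{1-\omega_k^2}\,\cdot)$, giving a bound of the form $C\,e^{-a_k|x-\omega_k t-\tilde{x}_k|}$ with $a_k\gtrsim\sqrt{1-\omega_k^2}$. For $\tilde{\bm{\Psi}}_k^{\pm}$, $\tilde{\bm{\Upsilon}}_k$ and their $x$-derivatives, item~(ii) of the Pego--Weinstein lemma furnishes $|\bm{Y}_k^{\pm}|+|\bm{Z}_k^{\pm}|+|\partial_x\bm{Y}_k^{\pm}|+|\partial_x\bm{Z}_k^{\pm}|\le Ce^{-\eta_0\sqrt{c}\,|x|}$; after applying the rescaling built into the definitions of $\tilde{\bm{\Psi}}_k^{\pm}$ and $\tilde{\bm{\Upsilon}}_k$, this yields the same type of bound $C\,e^{-a_k|x-\omega_k t-\tilde{x}_k|}$, with a rate $a_k$ now controlled from below by a positive multiple of $\sqrt{c}$ (comparable to $\sqrt{\omega_k}$, and encoded through the quantity $(\lambda_0^k)^{3/2}\omega_k$).

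Next I would use \eqref{2segundoboostrap} to control the centers: since $|\tilde{x}_k(t)-x_k|+|\tilde{x}_j(t)-x_j|\le Ce^{-\omega_\star^{3/2}t}$ is bounded, for $t$ large the two centers obey
\[
\big|(\omega_k t+\tilde{x}_k)-(\omega_j t+\tilde{x}_j)\big|\ \ge\ |\omega_j-\omega_k|\,t-C\ \ge\ \tfrac12|\omega_j-\omega_k|\,t\ \ge\ 128\,\omega_\star\, t .
\]
Then each cross term is estimated by the elementary convolution inequality
\[
\int_{\R}e^{-a|x-p|}e^{-b|x-q|}\,dx\ \le\ C\,e^{-\min\{a,b\}|p-q|},
\]
proved exactly as the explicit splitting of the integral carried out in \eqref{estimative0}--\eqref{estimative2} of Lemma~\ref{solitons}. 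Applying it to each of the (at most sixteen) products and summing, the resulting exponent is at least a fixed positive multiple of $\sqrt{\omega_\star}\cdot\omega_\star\,t$; by the definition of $\omega_\star$ in Theorem~\ref{2maintheorem} --- where the new term $(\lambda_0^j)^{3/2}\omega_j$ has been inserted into the minimum precisely so that the Pego--Weinstein decay rates are accounted for --- this dominates $4\,\omega_\star^{3/2}t$, giving the claimed $Ce^{-4\omega_\star^{3/2}t}$. The second displayed inequality is handled identically, with $\tilde{R}$ replaced throughout by $\tilde{\bm{\Upsilon}}$.

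The main obstacle is not conceptual but bookkeeping: one must verify that after the rescalings the decay rates of $\tilde{\bm{\Psi}}_k^{\pm}$ and $\tilde{\bm{\Upsilon}}_k$ are genuinely bounded below by a quantity comparable to $\sqrt{\omega_\star}$, which is exactly what the extra factor $(\lambda_0^j)^{3/2}\omega_j$ in the definition of $\omega_\star$ guarantees. Once that is in hand, every one of the cross terms is reduced to the same computation already performed in Lemma~\ref{solitons}, and the distinctness of the speeds $\omega_j\neq\omega_k$ does the rest.
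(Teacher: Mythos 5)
Your proposal is correct and follows essentially the same route the paper intends, since the paper's own proof of this lemma consists of the single remark that it is ``similar to Lemma \ref{solitons}'': exponential decay of every profile (solitons via Proposition \ref{decaimentoquadratico}, the Pego--Weinstein functions $\tilde{\Psi}_k^{\pm}$, $\tilde{\Upsilon}_k$ via item (ii) of the eigenfunction lemma), linear-in-$t$ separation of the centers coming from \eqref{2segundoboostrap} together with $\omega_j\neq\omega_k$, and the overlap integral estimated by the same splitting as in \eqref{estimative0}--\eqref{estimative2}. The only caveat is that your clean convolution inequality $\int_{\R}e^{-a|x-p|}e^{-b|x-q|}\,dx\le Ce^{-\min\{a,b\}|p-q|}$ picks up a factor $\bigl(1+|p-q|\bigr)$ when $a=b$, but this polynomial correction is absorbed exactly as in Lemma \ref{solitons} (where factors like $2\sqrt{t}\,e^{\frac23\sqrt{1-\tilde\omega_k^2}\sqrt t}$ appear and are swallowed for $t$ large), since the exponent margin you obtain ($128\,\omega_\star^{3/2}t$ against the required $4\,\omega_\star^{3/2}t$) leaves ample room.
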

			\begin{proof}
				The proof of this result is similar to Lemma \ref{solitons}.
			\end{proof}
			Now, we consider $\var =\bu - \tilde{\ru}$, then we have from \eqref{sistema2} that
			\begin{equation}\label{equaepsi}
				\left\{\begin{array}{l}
					\partial_{t} \varepsilon_1=\partial_{x}\varepsilon_2 +\partial_x \tilde{R}_2 -\partial_t \tilde{R}_1\\
					\partial_{t} \varepsilon_2=\partial_{x}\left(\varepsilon_1+\tilde{R}_1-\partial_{xx}\tilde{R}_1 -\partial_{xx} \varepsilon_1 -f(\varepsilon_1+\tilde{R}_1)\right)-\partial_t \tilde{R}_2\\
					\var(T_n)=\sum_{j=1}^N \alpha_{j}^n \tilde{\bm{\Upsilon}}_j^{+}(T_n).
				\end{array}\right.  
			\end{equation}
			Moreover, for all $j \in \{1,\ldots,N\}$, we denote $\bm{\gamma}_{j}^{\pm}=(\gamma_{j}^{\pm,1},\gamma_{j}^{\pm,2})$ by
			
\[
			\gamma_{j}^{\pm,1}(t):=\int_{\R} \varepsilon_1(t) \cdot \tilde{\Psi}_{j}^{ \pm,2}(t)\,dx \quad \text{and} \, \, \gamma_{j}^{\pm,2}(t):=\int_{\R} \varepsilon_2(t) \cdot \tilde{\Psi}_{j}^{ \pm,1}(t)\,dx
\]
			and \[\bm{\gamma}^{\pm}(t)=\left(\bm{\gamma}_{j}^{\pm}(t)\right)_{j=1,2,\ldots,N}.\]

			\begin{lemma}\label{finaldata}
				For $n \geq n_{0}$ large enough, the following holds. For all $\bm{a}^{-} \in \mathbb{R}^{2N}$, there exists a unique $\bm{\alpha} \in \mathbb{R}^{2N}$ such that $\|\bm{\alpha}\| \leq 2\left\|{\bm a}^{-}\right\|$ and $\bm{\gamma}^{-}\left(T_{n}\right)={\bm a}^{-}$.  
			\end{lemma}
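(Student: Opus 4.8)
The plan is to exploit that, evaluated at the final time $t=T_{n}$, the vector $\bm{\gamma}^{-}(T_{n})$ depends \emph{linearly} on the free coefficients $\bm{\alpha}$, so that the equation $\bm{\gamma}^{-}(T_{n})=\bm{a}^{-}$ is simply a linear system whose matrix I would show to be a small perturbation of an invertible one. Indeed, the final condition in \eqref{equaepsi} gives $\var(T_{n})=\sum_{j=1}^{N}\alpha_{j}^{n}\tilde{\bm{\Upsilon}}_{j}^{+}(T_{n})$, and since each $\gamma_{k}^{-,m}$ is linear in $\var$, substitution yields a linear relation
\begin{equation*}
	\bm{\gamma}^{-}(T_{n})=M_{n}\,\bm{\alpha},
\end{equation*}
where the entries of $M_{n}$ are the componentwise pairings $\big\langle \tilde{\bm{\Upsilon}}_{j}^{+}(T_{n}),\tilde{\bm{\Psi}}_{k}^{-}(T_{n})\big\rangle$ that appear in the definition of $\bm{\gamma}^{-}$. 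The statement then reduces to proving that $M_{n}$ is invertible with $\|M_{n}^{-1}\|\le 2$ once $n$ is large, for then $\bm{\alpha}:=M_{n}^{-1}\bm{a}^{-}$ is the unique solution and obeys $\|\bm{\alpha}\|\le 2\|\bm{a}^{-}\|$.

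First I would isolate the diagonal entries $k=j$. Undoing the common rescaling and translation defining $\tilde{\bm{\Upsilon}}_{j}^{+}$ and $\tilde{\bm{\Psi}}_{j}^{-}$ through the change of variables $y=\sqrt{1-\omega_{j}^{2}}\,(x-\omega_{j}T_{n}-\tilde{x}_{j})$ reduces these entries to $n$-independent pairings of the fixed profiles $\bm{Y}_{j}^{+}$ and $\bm{Z}_{j}^{-}$, up to an explicit positive power of $1-\omega_{j}^{2}$. By the biorthogonality relations of the Pego–Weinstein lemma, namely $\big(\bm{Y}_{j}^{+},\bm{Z}_{j}^{+}\big)_{L^{2}\times L^{2}}=0$ and $\big(\bm{Y}_{j}^{+},\bm{Z}_{j}^{-}\big)_{L^{2}\times L^{2}}=1$, the relevant self-pairing is nondegenerate; normalising $\bm{Y}_{j}^{\pm},\bm{Z}_{j}^{\pm}$ accordingly makes the diagonal part $D$ of $M_{n}$ the identity, hence invertible uniformly in $n$.

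Next I would control the off-diagonal entries $k\neq j$. The functions $\tilde{\bm{\Upsilon}}_{j}^{+}(T_{n})$ and $\tilde{\bm{\Psi}}_{k}^{-}(T_{n})$ are exponentially localised about $x=\omega_{j}T_{n}+\tilde{x}_{j}$ and $x=\omega_{k}T_{n}+\tilde{x}_{k}$, respectively; since $\omega_{j}\neq\omega_{k}$ these centres are separated by a distance $\gtrsim|\omega_{j}-\omega_{k}|\,T_{n}\gtrsim \omega_{\star}T_{n}$, so the interaction estimate of Lemma \ref{solitons2} bounds each such entry by $C\,e^{-4\omega_{\star}^{3/2}T_{n}}$. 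Consequently $M_{n}=D+E_{n}$ with $D$ invertible and $\|E_{n}\|\to 0$ as $n\to\infty$.

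Finally, choosing $n_{0}$ so that $\|D^{-1}E_{n}\|\le\tfrac12$ for $n\ge n_{0}$, a Neumann series shows $M_{n}$ is invertible with $\|M_{n}^{-1}\|\le 2\|D^{-1}\|$; with the above normalisation $D=\mathrm{Id}$, this is exactly $\|M_{n}^{-1}\|\le 2$, and $\bm{\alpha}=M_{n}^{-1}\bm{a}^{-}$ is the unique vector satisfying $\bm{\gamma}^{-}(T_{n})=\bm{a}^{-}$ with $\|\bm{\alpha}\|\le 2\|\bm{a}^{-}\|$. I expect the diagonal computation to be the only delicate point: one has to check that the particular cross-pairing used to define $\bm{\gamma}^{-}$ is precisely the one rendered nondegenerate by the biorthogonality relations, and that the scaling powers of $1-\omega_{j}^{2}$ together with the normalisation of the eigenfunctions combine so that the perturbative constant is exactly $2$; the off-diagonal smallness is then an immediate application of Lemma \ref{solitons2}.
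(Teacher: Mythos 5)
Your proposal is correct and follows essentially the same route as the paper: both set up the linear system $\bm{\gamma}^{-}(T_{n})=M_{n}\bm{\alpha}$, use the Pego--Weinstein biorthogonality $\bigl(\bm{Y}_{j}^{+},\bm{Z}_{j}^{-}\bigr)_{L^{2}\times L^{2}}=1$ to identify the diagonal with the identity, invoke Lemma \ref{solitons2} to make the off-diagonal entries exponentially small in $T_{n}$, and conclude invertibility with $\|M_{n}^{-1}\|\le 2$ by a perturbation-of-the-identity (Neumann series) argument. Your extra care about the rescaling factors in the diagonal pairing is a point the paper glosses over, but it does not change the argument.
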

			\begin{proof}
				The following lemma establishes a one-to-one mapping between the choice of $b$ and the condition $\gamma^{-}(T_n)={\bm a}^{-}$, for some ${\bm a}^{-}$.
\[ 
				\begin{aligned}
					\Phi: \quad \mathbb{R}^{N} & \rightarrow \mathbb{R}^{2N} \\
					\bm{\alpha}=\left(\alpha_{l}^{+}\right)_{l \leq N} & \mapsto\left(\gamma_{k}^{-}\left(T_n\right)\right)_{k \leq N}
				\end{aligned}
				\]
				
				Its matrix in the canonical basis is
\[
				\operatorname{Mat} \Phi=\left(\begin{array}{cccc}
					1 & \int \tilde{\Upsilon}_{2}^{+} \tilde{\Psi}_{1}^{-}\left(T_n\right) & \cdots & \int \tilde{\Upsilon}_{N}^{+} \tilde{\Psi}_{1}^{-}\left(T_n\right) \\
					\int \tilde{\Upsilon}_{1}^{+} \tilde{\Psi}_{2}^{-}\left(T_n\right) & 1 & \cdots & \vdots \\
					\vdots & \vdots & \ddots & \vdots \\
					\int \tilde{\Upsilon}_{1}^{+} \tilde{\Psi}_{N}^{-}\left(T_n\right) & \cdots & \cdots & 1
				\end{array}\right)
\]
				
				But from Lemma \ref{solitons2}, we have, for $k \neq l$,
\[
				\left|\int \tilde{\Upsilon}_{l}^{ \pm} \tilde{\Psi}_{k}^{ \pm}\left(T_n\right)\right| \leq C_{0} e^{-3\omega_{\star}^{\frac32} T_n}
\]
 	with $C_{0}$ independent of $n$, and so by taking $n_{0}$ large enough, we have $\Phi=\operatorname{Id}+A_{n}$ where $\left\|A_{n}\right\| \leq \frac{1}{2}$. Thus $\Phi$ is invertible and $\left\|\Phi^{-1}\right\| \leq 2$. Finally, for a given ${\bm a}^{-} \in \mathbb{R}^{N}$, it is enough to define $\bm{\alpha}$ by ${\bm\alpha}=\Phi^{-1}\left({\bm a}^{-}\right)$ to conclude the proof
			\end{proof}
						\begin{remark}\label{remarksuper}
				The following estimates at $T_{n}$ hold:
				\begin{itemize}
					\item $\left|\gamma_{k}^{+}\left(T_n\right)\right| \leq C e^{-2 \omega_{\star}^{\frac32}T_n}\|\bm{\alpha}\|$ for all $k \in \{ 1, \ldots, N \}$,
					\item $\left\|\var\left(T_n\right)\right\|_{H} \leq C\|\bm{\alpha}\|$.
				\end{itemize}
				
			\end{remark}
			Let $T_{0}>0$ independent of $n$ to be determined later in the proof, $\|{\bm a}^{-}\|\leq  e^{-2 \omega_{\star} T_n}$ to be chosen, $\bm{\alpha}$ be given by Lemma \ref{finaldata} and $\bu$ be the corresponding solution of \eqref{equaepsi}. We now define the maximal time interval $\left[T\left({\bm a}^{-}\right), T_n\right]$ on which suitable exponential estimates hold.
			
			\begin{definition}\label{definia}
				Let $T\left({\bm a}^{-}\right)$ be the infimum of $T \geq T_{0}$ such that for all $t \in\left[T, T_n\right]$, both the $			\| \var(t)\|_{H} \leq e^{-\omega_{\star}^{\frac32} t} \quad \text { and } \quad \|{\bm\gamma}^{-}(t)\|_{H} \leq e^{-2\omega_{\star}^{\frac32} t}
	$ hold.   
			\end{definition}

			Observe that Proposition \ref{UniformEstimates2} is proved if for all $n$, we can find ${\bm a}^{-}$such that $T\left({\bm a}^{-}\right)=T_{0}$. The rest of the proof is devoted to prove the existence of such a value of ${\bm a}^{-}$.
			To continue, once the modulation result allowing us to estimate the direction $\langle \var,\partial_x \tilde{\ru}\rangle$ is established, we should estimate the other directions in the coercivity lemma.
			\begin{lemma}\label{direction}
				For all $t \in [T({\bm a}^{-}),T_n]$ and $j\in \{1,2,\ldots,N\}$. The following estimate holds.
	\[\left|\frac{d}{dt}\gamma_j^{\pm}(t)\pm \lambda_{0}^j \omega_j^{\frac32}\gamma_j^{\pm}(t)\right|\leq \|\var\|_{L^2(\R)\times L^2(\R)}^2+e^{-3\omega_{\star}^{\frac32}t}.\]
			\end{lemma}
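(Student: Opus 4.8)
The plan is to establish the asserted almost-ODE for the projections $\gamma_j^{\pm}$ by differentiating them in time, extracting the Pego--Weinstein eigenrelation as the leading term, and absorbing everything else into quadratic and exponentially small remainders. Here $\gamma_j^{\pm}(t)=\gamma_j^{\pm,1}(t)+\gamma_j^{\pm,2}(t)=\int_{\R}\varepsilon_1\,\tilde\Psi_j^{\pm,2}\,dx+\int_{\R}\varepsilon_2\,\tilde\Psi_j^{\pm,1}\,dx$ is the pairing of $\var$ against the modulated, rescaled direction $\tilde{\bm\Psi}_j^{\pm}=\tilde{\bm Z}_j^{\pm}$, read off in the component-swapped form dictated by the Hamiltonian structure $J$. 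Differentiating and using \eqref{equaepsi} for $\partial_t\varepsilon_1,\partial_t\varepsilon_2$ in the first two terms of
\[
\frac{d}{dt}\gamma_j^{\pm}=\int_{\R}\partial_t\varepsilon_1\,\tilde\Psi_j^{\pm,2}\,dx+\int_{\R}\partial_t\varepsilon_2\,\tilde\Psi_j^{\pm,1}\,dx+\int_{\R}\varepsilon_1\,\partial_t\tilde\Psi_j^{\pm,2}\,dx+\int_{\R}\varepsilon_2\,\partial_t\tilde\Psi_j^{\pm,1}\,dx,
\]
while in the last two I use that $\tilde{\bm\Psi}_j^{\pm}(t,x)$ is a rigid translate, at speed $\omega_j$ and modulated center $\tilde x_j(t)$, of the fixed rescaled profile $\bm Z_j^{\pm}$, so that $\partial_t\tilde{\bm\Psi}_j^{\pm}=-(\omega_j+\partial_t\tilde x_j)\,\partial_x\tilde{\bm\Psi}_j^{\pm}$.

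Next I would isolate the leading term. Collecting the contributions linear in $\var$ — namely $\partial_x\varepsilon_2$, the pair $\partial_x\varepsilon_1-\partial_{xxx}\varepsilon_1$, the linearization $(2p+1)|\tilde R_j^{(1)}|^{2p}\varepsilon_1$ of $f$, and the transport piece $-\omega_j\partial_x\tilde{\bm\Psi}_j^{\pm}$ — and integrating by parts to move every derivative onto the smooth, exponentially decaying profile $\tilde{\bm Z}_j^{\pm}$, these reassemble, in the frame moving with the $j$-th soliton, into a pairing of $\var$ against $\mathcal L_j J\,\tilde{\bm Z}_j^{\pm}$ (using the selfadjointness of $\mathcal L_j$ and the skew-adjointness of $J$). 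By the eigenrelation $\mathcal L_j(J\bm Z_j^{\pm})=\pm\lambda_0^j\bm Z_j^{\pm}$ and tracking the factor produced by the spatial scaling $\sqrt{1-\omega_j^2}(x-\omega_j t-\tilde x_j)$ in the definition of $\bm\Psi_j^{\pm}$, this equals $\mp\lambda_0^j\omega_j^{3/2}\gamma_j^{\pm}$, exactly the main term in the inequality.

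Finally I would bound the three families of remainders. The modulation terms — the single-soliton source $\partial_t\tilde x_j\,\partial_x\tilde{\bm R}_j$ and the piece $-\partial_t\tilde x_j\,\partial_x\tilde{\bm\Psi}_j^{\pm}$ — pair against $\tilde{\bm Z}_j^{\pm}$ and have vanishing leading contribution by the orthogonality $(\bm Z_j^{\pm},\partial_x\bm\Phi_{\omega_j})=0$ and by \eqref{2orthogonality}, leaving $\lesssim|\partial_t\tilde x_j|\,\|\var\|_{H}$, which by \eqref{2modulated} is $\lesssim(\|\var\|_H+e^{-3\omega_\star^{3/2}t})\|\var\|_H$. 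The genuinely nonlinear part of $f(\varepsilon_1+\tilde R_1)$, being at least quadratic in $\varepsilon_1$ and paired (after one integration by parts) against the bounded $\partial_x\tilde\Psi_j^{\pm}$, contributes $\lesssim\|\varepsilon_1\|_{L^2(\R)}^2\le\|\var\|_{L^2(\R)\times L^2(\R)}^2$. All cross-soliton interactions — in particular $|\tilde R_1|^{2p}-|\tilde R_j^{(1)}|^{2p}$ localized near the $j$-th soliton and the products of $\tilde R_k,\tilde\Psi_k^{\pm}$ with $k\neq j$ — are $\lesssim e^{-4\omega_\star^{3/2}t}$ by Lemma \ref{solitons2}. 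Summing and using the bootstrap bound $\|\var\|_H\le e^{-\omega_\star^{3/2}t}$ yields the claim.

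The main obstacle is the bookkeeping in the second step: one must verify that the linear terms combine precisely into the eigenrelation and that the rescaling and translation of $\bm Z_j^{\pm}$ into the moving frame produce exactly the coefficient $\lambda_0^j\omega_j^{3/2}$, while simultaneously checking that the modulation contributions coming from $\partial_t\tilde{\bm\Psi}_j^{\pm}$ and from the soliton source cancel to leading order through the Pego--Weinstein orthogonality, so that no term linear in $\var$ with an $O(1)$ coefficient survives outside the eigenrelation.
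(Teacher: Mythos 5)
Your proposal follows essentially the same route as the paper's own proof: differentiate $\gamma_j^{\pm}$, substitute the evolution system \eqref{equaepsi} and $\partial_t\tilde{\bm{\Psi}}_j^{\pm}=-(\omega_j+\partial_t\tilde x_j)\partial_x\tilde{\bm{\Psi}}_j^{\pm}$, reassemble the terms linear in $\var$ into the pairing $\langle \var,\mathcal{L}_j J\tilde{\bm{Z}}_j^{\pm}\rangle$ using selfadjointness of $\mathcal{L}_j$, invoke the Pego--Weinstein eigenrelation to produce the leading term $\mp\lambda_0^j\omega_j^{\frac32}\gamma_j^{\pm}$, and control the remainders by the Taylor expansion of $f$ (quadratic in $\varepsilon_1$), Lemma \ref{solitons2} for cross-soliton interactions, and \eqref{2modulated} for the modulation terms. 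If anything, your explicit appeal to the orthogonality $\left(\bm{Z}_j^{\pm},\partial_x\bm{\Phi}_{\omega_j}\right)_{L^2(\R)\times L^2(\R)}=0$ to cancel the $O(1)$ diagonal modulation term (which would otherwise leave an error merely linear in $\|\var\|_H$) is spelled out more carefully than in the paper, which absorbs those terms into $\|\var\|_{L^2(\R)\times L^2(\R)}^2+e^{-3\omega_{\star}^{\frac32}t}$ without comment.
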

			\begin{proof}
				Let $t \in [T({\bm a}^{-}),T_n]$ and fixed  $j\in \{1,2,\ldots,N\}$. Note that,
\[
 \begin{split}
 \frac{d}{dt}\gamma_{j}^{\pm,1}(t)+ \frac{d}{dt}\gamma_{j}^{\pm,2}(t)&=\frac{d}{dt}\int_{\R} \varepsilon_1(t)  \tilde{\Psi}_{j}^{ \pm,2}(t)\,dx+\frac{d}{dt}\int_{\R} \varepsilon_2(t)  \tilde{\Psi}_{j}^{ \pm,1}(t)\,dx\\
 &=\int_{\R} \partial_t \varepsilon_1(t) \cdot \tilde{\Psi}_{j}^{ \pm,2}(t)\,dx+\int_{\R} \varepsilon_1(t) \partial_t \tilde{\Psi}_{j}^{ \pm,2}(t)\,dx\\
 &\quad+\int_{\R} \partial_t \varepsilon_2(t) \cdot \tilde{\Psi}_{j}^{ \pm,1}(t)\,dx+\int_{\R} \varepsilon_2(t) \partial_t \tilde{\Psi}_{j}^{ \pm,1}(t)\,dx\\
 &=\int_{\R} \left(\partial_{x}\varepsilon_2 +\partial_x \tilde{R}_2 \right) \tilde{\Psi}_{j}^{ \pm,2}(t)\,dx-\int_{\R}\partial_t \tilde{R}_1 \tilde{\Psi}_{j}^{ \pm,2}(t)\,dx+\int_{\R} \varepsilon_1(t) \partial_t \tilde{\Psi}_{j}^{ \pm,2}(t)\,dx\\
 &\quad+\int_{\R} \partial_{x}\left(\varepsilon_1+\tilde{R}_1-\partial_{xx}\tilde{R}_1 -\partial_{xx} \varepsilon_1 -f(\varepsilon_1+\tilde{R}_1)\right)\tilde{\Psi}_{j}^{ \pm,1}(t)\,dx\\
 &\quad-\int_{\R}\partial_t \tilde{R}_2 \tilde{\Psi}_{j}^{ \pm,1}(t)\,dx+\int_{\R} \varepsilon_2(t) \partial_t \tilde{\Psi}_{j}^{ \pm,1}(t)\,dx.
 \end{split} 
\]
				Now, note that from Lemma \ref{solitons2}, 
    \[				f(\varepsilon+\tilde{R}_1) =f(\tilde{R}_1)+f^{\prime}(\tilde{R}_1)+O(\|\varepsilon_1\|_{L^2(\R)}^2)=|\tilde{R}_1|^{2p}\varepsilon_1+|\tilde{R}_1|^{2p}\tilde{R}_1+2p|\tilde{R}_1|^{2p}\varepsilon_1+O(\|\varepsilon\|_{L^2(\R)}^2)		.\]
   %\\					%&\equiv (2p+1)|\tilde{R}_j^{(1)}|^{2p}\varepsilon_1+|\tilde{R}_j^{(1)}|^{2p}\tilde{R}_j^{(1)}+O(\|\varepsilon\|_{L^2(\R)}^2)\\&\quad
					%+C\sum_{k\neq j}|\tilde{R}_k^{(1)}|^{2p}\varepsilon_1+C\sum_{k\neq j}|\tilde{R}_k^{(1)}|^{2p+1}
				Then, from \eqref{pcrit} and Lemma \ref{solitons2}, we have
				\begin{equation*}
					\begin{aligned}
						&\frac{d}{dt}\gamma_{j}^{\pm,1}(t)+ \frac{d}{dt}\gamma_{j}^{\pm,2}(t)
						\\ &=\int_{\R}\varepsilon_2 \partial_{x} \tilde{\Psi}_{j}^{ \pm,2}(t)\,dx+\int_{\R} \tilde{R}_2 \partial_x\tilde{\Psi}_{j}^{ \pm,2}(t)\,dx\\&\quad-\sum_{k=1}^{N}(\omega_k+\partial_t \tilde{x}_k)\int_{\R} \tilde{R}_1 \partial_x\tilde{\Psi}_{j}^{ \pm,2}(t)\,dx-(\omega_j+\partial_t \tilde{x}_j)\int_{\R} \varepsilon_1(t) \partial_x \tilde{\Psi}_{j}^{ \pm,2}(t)\,dx\\
						&\quad+\int_{\R} \partial_{xx} \varepsilon_1\partial_{x}\tilde{\Psi}_{j}^{ \pm,1}(t)\,dx-\int_{\R}  \varepsilon_1\partial_{x}\tilde{\Psi}_{j}^{ \pm,1}(t)\,dx+(2p+1)\int_{\R}|\tilde{R}_{j}^{(1)}|^{2p}\varepsilon_1 \partial_{x}\tilde{\Psi}_{j}^{ \pm,1}(t)\,dx\\
						&\quad-\int_{\R} \left(\tilde{R}_1-\partial_{xx}\tilde{R}_1 -\sum_{k=1}^N|\tilde{R}_{j}^{(1)}|^{2p+1})\right)\,dx-\sum_{k=1}^{N}(\omega_k+\partial_t \tilde{x}_k)\int_{\R} \tilde{R}_2 \partial_x\tilde{\Psi}_{j}^{ \pm,1}(t)\,dx\\
						&\quad-\int_{\R} |\tilde{R}_{1}|^{2p+}\varepsilon_1(t) \partial_x \tilde{\Psi}_{j}^{ \pm,1}(t)\,dx-(\omega_j+\partial_{t}\tilde{x}_j)\int_{\R} \varepsilon_2(t) \partial_x \tilde{\Psi}_{j}^{ \pm,1}(t)\,dx+O(\|\varepsilon_1\|_{L^2(\R)}^2)\\
						&=-\langle \mathcal{L}_j \var , \partial_x \tilde{\Psi}_{j}^{\pm}\rangle -\sum_{k=1}^{N}\partial_t \tilde{x}_k\int_{\R} \tilde{R}_2 \partial_x\tilde{\Psi}_{j}^{ \pm,1}(t)\,dx-\sum_{k=1}^{N}\partial_t \tilde{x}_k\int_{\R} \tilde{R}_1 \partial_x\tilde{\Psi}_{j}^{ \pm,2}(t)\,dx\\
						&\quad -\partial_t \tilde{x}_j\int_{\R} \varepsilon_1(t) \partial_x \tilde{\Psi}_{j}^{ \pm,2}(t)\,dx-\partial_t \tilde{x}_j\int_{\R} \varepsilon_2(t) \partial_x \tilde{\Psi}_{j}^{ \pm,1}(t)\,dx\\
						&=-\langle  \var , \mathcal{L}_j\partial_x \tilde{\Psi}_{j}^{\pm}\rangle +\|\var\|_{L^2(\R)\times L^2(\R)}^2+e^{-3\omega_{\star}^{\frac32}t}\\
						&=-\lambda_{0}^j \omega_j^{\frac32}(\pm\gamma_{j}^{\pm,2}(t)\pm \gamma_{j}^{\pm,2}(t))+\|\var\|_{L^2(\R)\times L^2(\R)}^2+e^{-3\omega_{\star}^{\frac32}t}.
					\end{aligned} 
				\end{equation*}
				Hence, the result follows.
			\end{proof}
			With this lemma already established, we begin the control of the remaining directions. The following lemma was proved in \cite[Lemma 6]{CoteMartel}.
			\begin{lemma} \label{existencea}
				For $T_0$ large enough, there exists ${\bm a}^{-}\in \R^{2N}$ such that 
				\[\|{\bm a}^{-}\|^2\leq e^{-3\omega_{\star}^{\frac32}t}\]
				and $T({\bm a}^{-})=T_0.$
			\end{lemma}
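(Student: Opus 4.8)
The plan is to argue by contradiction with a topological (Brouwer-type) shooting argument on the final-data parameter, exactly in the spirit of \cite[Lemma 6]{CoteMartel}. Through the near-identity correspondence of Lemma \ref{finaldata}, prescribing the final perturbation $\sum_j\alpha_j^{+,n}\tilde{\bm{\Upsilon}}_j^{+}$ is equivalent to prescribing ${\bm a}^-={\bm\gamma}^-(T_n)$ in a closed ball $\bar{B}\subset\R^{2N}$ of radius comparable to $e^{-\frac32\omega_\star^{\frac32}T_n}$. I would view ${\bm a}^-\mapsto$ (the corresponding solution of \eqref{equaepsi}) as a shooting map, and try to show that at least one ${\bm a}^-\in\bar B$ yields $T({\bm a}^-)=T_0$. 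Assume, for contradiction, that $T({\bm a}^-)>T_0$ for every ${\bm a}^-\in\bar B$.

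The first step is to identify which constraint in Definition \ref{definia} saturates at the exit time. On $[T({\bm a}^-),T_n]$ both bounds hold, so the new coercivity Proposition \ref{coercivity1}, combined with the almost-conservation of the localized action established in Lemma \ref{2Bootstrap3} (the supercritical analogue of Proposition \ref{Bootstrap3}), upgrades $\|\var(t)\|_{H}\leq e^{-\omega_\star^{\frac32}t}$ to the strict bound $\|\var(t)\|_{H}\leq\frac12 e^{-\omega_\star^{\frac32}t}$. Since the $\var$-estimate then carries strict slack, by continuity it cannot be the first to fail as $t$ decreases; hence whenever $T({\bm a}^-)>T_0$ one necessarily has $\|{\bm\gamma}^-(T({\bm a}^-))\|_H=e^{-2\omega_\star^{\frac32}T({\bm a}^-)}$.

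The second step is the transversality, i.e. that the rescaled unstable modes leave the saturation sphere with a definite sign. Set $N(t)=\|{\bm\gamma}^-(t)\|^2$. By Lemma \ref{direction} each component solves $\frac{d}{dt}\gamma_j^-=\lambda_0^j\omega_j^{\frac32}\gamma_j^-+O\!\left(\|\var\|_{H}^2+e^{-3\omega_\star^{\frac32}t}\right)$, where the positive linear coefficients $\lambda_0^j\omega_j^{\frac32}$ dominate $2\omega_\star^{\frac32}$ precisely by the modified definition of $\omega_\star$ adopted before Theorem \ref{2maintheorem}. On the sphere $N(t)=e^{-4\omega_\star^{\frac32}t}$ the quadratic and soliton-interaction errors are $O(e^{-4\omega_\star^{\frac32}t})$, so I would obtain $\frac{d}{dt}\bigl(N(t)e^{4\omega_\star^{\frac32}t}\bigr)>0$ at any saturation point. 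This yields both the continuity of $T({\bm a}^-)$ in ${\bm a}^-$ and that the exit map $\Theta:{\bm a}^-\mapsto e^{2\omega_\star^{\frac32}T({\bm a}^-)}{\bm\gamma}^-(T({\bm a}^-))$ is a continuous map of $\bar B$ into the unit sphere $S^{2N-1}$.

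The last step closes the contradiction. If ${\bm a}^-\in\partial\bar B$ then $T({\bm a}^-)=T_n$, and by Remark \ref{remarksuper} together with the normalization of ${\bm a}^-$ and the near-identity change of variables of Lemma \ref{finaldata}, the map $\Theta$ coincides with the identity on $\partial\bar B\cong S^{2N-1}$. Under the standing assumption $T({\bm a}^-)>T_0$ for all ${\bm a}^-\in\bar B$, the map $\Theta$ would then be a continuous retraction of the closed ball $\bar B\cong \bar B^{2N}$ onto its boundary $S^{2N-1}$, contradicting Brouwer's no-retraction theorem. Hence some ${\bm a}^-\in\bar B$ satisfies $T({\bm a}^-)=T_0$, as claimed. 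I expect the main obstacle to be the transversality estimate of the third paragraph: keeping the strict outgoing inequality against the quadratic term $\|\var\|_H^2$ and the interaction source $e^{-3\omega_\star^{\frac32}t}$ forces $\omega_\star^{\frac32}$ to be small relative to the spectral gap $\lambda_0^j\omega_j^{\frac32}$, which is exactly the reason $\omega_\star$ was redefined in this section.
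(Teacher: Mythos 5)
First, a point of comparison: the paper does not actually prove Lemma \ref{existencea} at all --- it disposes of it with the citation to \cite[Lemma 6]{CoteMartel}. Your proposal reconstructs precisely that cited argument (strict improvement of the $\var$-estimate so that any exit must occur through saturation of $\|{\bm\gamma}^{-}\|$, transversality of the crossing, continuity of the exit map, and Brouwer's no-retraction theorem), so in structure you are following the intended proof.

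There are, however, two genuine problems in your transversality step. (i) \emph{The sign of the crossing.} Taking the paper's Lemma \ref{direction} literally, you write $\frac{d}{dt}\gamma_j^{-}=+\lambda_0^j\omega_j^{\frac32}\gamma_j^{-}+O(\cdot)$ and conclude $\frac{d}{dt}\bigl(N(t)e^{4\omega_\star^{3/2}t}\bigr)>0$ at a saturation time $t^*=T({\bm a}^-)>T_0$. But if that derivative is positive and $N(t^*)e^{4\omega_\star^{3/2}t^*}=1$, then $\|{\bm\gamma}^{-}(t)\|>e^{-2\omega_\star^{3/2}t}$ for $t$ slightly \emph{larger} than $t^*$, i.e.\ the constraint fails inside $[T({\bm a}^-),T_n]$, contradicting Definition \ref{definia}; the exit structure (constraint holding on $[T({\bm a}^-),T_n]$ and failing just below) forces the crossing to be strictly \emph{decreasing}. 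That decreasing crossing is exactly what one gets when the shooting modes are the backward-unstable ones, $\frac{d}{dt}\gamma_j^{-}\approx-\lambda_0^j\omega_j^{\frac32}\gamma_j^{-}$; with the $+$ sign the Duhamel kernel $e^{\lambda(t-s)}\le 1$ for $s\ge t$ shows ${\bm\gamma}^{-}$ is never amplified by backward integration, so an exit through ${\bm\gamma}^{-}$ together with your outgoing inequality is self-contradictory and the whole shooting setup becomes vacuous. (This sign incoherence is really the paper's --- Lemma \ref{direction} versus the roles assigned to ${\bm\gamma}^{\pm}$ in Definition \ref{definia} and Lemma \ref{2direction} --- but your proof only closes if you adopt the consistent convention of \cite{CoteMartel}.) (ii) \emph{What beats the quadratic error.} You attribute the strict inequality on the saturation sphere to the redefinition of $\omega_\star$. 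That redefinition only absorbs the harmless term $4\omega_\star^{3/2}N$ coming from differentiating the threshold. The dangerous term is $2\|{\bm\gamma}^{-}\|\,\|\var\|_{H}^2\le 2C\,e^{-2\omega_\star^{3/2}t}\cdot e^{-2\omega_\star^{3/2}t}$, which under the bootstrap assumption alone is of exactly the same order $e^{-4\omega_\star^{3/2}t}$ as the linear term $2\lambda_{\min}N$, with an absolute constant $C$ that no choice of $\omega_\star$ makes small. The correct mechanism is to insert the strictly improved estimate produced by the coercivity/almost-conservation step, namely \eqref{2epsilonestimative}, $\|\var(t)\|_{H}^2\le C t^{-1/2}e^{-2\omega_\star^{3/2}t}$, so that the quadratic error is $o(1)\cdot e^{-4\omega_\star^{3/2}t}$ as $T_0\to\infty$, and then take $T_0$ large. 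A smaller repair: for your exit map $\Theta$ to restrict to the identity on $\partial\bar B$, the shooting ball must have radius equal to the threshold $e^{-2\omega_\star^{3/2}T_n}$; with your radius $\sim e^{-\frac32\omega_\star^{3/2}T_n}$, boundary points already violate the constraint at $t=T_n$ and $T({\bm a}^-)$ is not even well defined there.
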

			 
			\begin{lemma}\label{2direction}
				For all $t \in [T_0,T_n]$  the following estimate holds:
	\[\left\|\gamma^{\pm}(t)\right\|_{H}^2\leq e^{-3\omega_{\star}^{\frac32}t}.\]
			\end{lemma}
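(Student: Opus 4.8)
The plan is to integrate the scalar differential inequalities furnished by Lemma \ref{direction} and to play the large hyperbolic rate $\lambda_0^j\omega_j^{\frac32}$ off against the small exponent $\omega_{\star}^{\frac32}$. Since Lemma \ref{existencea} supplies a choice of $\bm{a}^{-}$ with $T(\bm{a}^{-})=T_0$, Definition \ref{definia} guarantees that on the \emph{whole} interval $[T_0,T_n]$ one has $\|\var(t)\|_{H}\le e^{-\omega_{\star}^{\frac32}t}$ and $\|\gamma^{-}(t)\|\le e^{-2\omega_{\star}^{\frac32}t}$. Consequently the right-hand side appearing in Lemma \ref{direction} satisfies
\[
\|\var(t)\|_{L^2(\R)\times L^2(\R)}^2+e^{-3\omega_{\star}^{\frac32}t}\lesssim e^{-2\omega_{\star}^{\frac32}t},\qquad t\in[T_0,T_n],
\]
so each component solves a linear ODE $\frac{d}{dt}\gamma_j^{\pm}\pm\lambda_0^j\omega_j^{\frac32}\gamma_j^{\pm}=g_j^{\pm}$ with forcing $|g_j^{\pm}(t)|\lesssim e^{-2\omega_{\star}^{\frac32}t}$. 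The cross terms between distinct solitons and between the $Y_k^{\pm}$ and $Z_j^{\pm}$ that enter $g_j^{\pm}$ are absorbed by the interaction estimates of Lemma \ref{solitons2}, exactly as in the subcritical Lemma \ref{solitons}; these I treat as routine.

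For the component $\gamma^{-}$ the refined bound is immediate: squaring the estimate of Definition \ref{definia} gives $\|\gamma^{-}(t)\|^2\le e^{-4\omega_{\star}^{\frac32}t}\le e^{-3\omega_{\star}^{\frac32}t}$. The same decay is produced directly by integrating $\dot\gamma_j^{-}-\lambda_0^j\omega_j^{\frac32}\gamma_j^{-}=g_j^{-}$ \emph{backward} from $T_n$, where $\gamma^{-}(T_n)=\bm{a}^{-}$ is prescribed and small, because the Duhamel kernel $e^{\lambda_0^j\omega_j^{\frac32}(t-s)}\le1$ for $s\ge t$. For the remaining component $\gamma^{+}$ the homogeneous flow is contracting forward in time, so I would use the forward Duhamel representation from $T_0$,
\[
\gamma_j^{+}(t)=e^{-\lambda_0^j\omega_j^{\frac32}(t-T_0)}\gamma_j^{+}(T_0)+\int_{T_0}^{t} e^{-\lambda_0^j\omega_j^{\frac32}(t-s)}g_j^{+}(s)\,ds.
\]
By the definition of $\omega_{\star}$ the rate $\lambda_0^j\omega_j^{\frac32}$ exceeds any fixed multiple of $\omega_{\star}^{\frac32}$, in particular it dominates $2\omega_{\star}^{\frac32}$; hence the forcing integral is $\lesssim(\lambda_0^j\omega_j^{\frac32}-2\omega_{\star}^{\frac32})^{-1}e^{-2\omega_{\star}^{\frac32}t}\lesssim e^{-2\omega_{\star}^{\frac32}t}$, and squaring this part already yields the claimed $e^{-3\omega_{\star}^{\frac32}t}$.

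The main obstacle is the homogeneous term $e^{-\lambda_0^j\omega_j^{\frac32}(t-T_0)}\gamma_j^{+}(T_0)$, that is, controlling the backward-unstable direction $\gamma^{+}$ near the left endpoint. A priori one only has $|\gamma_j^{+}(T_0)|\le\|\var(T_0)\|_{H}\lesssim e^{-\omega_{\star}^{\frac32}T_0}$, which is one factor of $\omega_{\star}^{\frac32}$ short of the assertion at $t=T_0$, and a naive continuity argument cannot improve it since forward Duhamel reproduces the saturating value exactly. This is precisely the direction that the topological selection of $\bm{a}^{-}$ in Lemma \ref{existencea} is designed to tame: it is the Côte--Martel--Merle mechanism in which the free final parameters, placed along the $Y_j^{+}$ modes, are tuned (via a Brouwer-type argument) so that the unstable component stays controlled down to $T_0$. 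I would therefore close the estimate by coupling the forward Duhamel bound above with the smallness of $\gamma^{+}(T_n)$ from Remark \ref{remarksuper} and the bound $\|\bm{a}^{-}\|^2\lesssim e^{-3\omega_{\star}^{\frac32}T_n}$ from Lemma \ref{existencea}, using that $\gamma^{+}(T_0)$ inherits the improved order through the exit characterization of $T(\bm{a}^{-})$. Thus the crux is the interplay between the hyperbolic rate separation and the topological choice of the final data, rather than any isolated computation.
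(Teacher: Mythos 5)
Your treatment of $\gamma^{-}$ coincides with the paper's: Lemma \ref{existencea} gives $T(\bm{a}^{-})=T_{0}$, so the bound $\|\gamma^{-}(t)\|\leq e^{-2\omega_{\star}^{\frac32}t}$ built into Definition \ref{definia} holds on all of $[T_{0},T_{n}]$, and squaring finishes that half. The gap is in $\gamma^{+}$. The paper does \emph{not} integrate its ODE forward from $T_{0}$: it integrates the differential inequality of Lemma \ref{direction} from $t$ up to $T_{n}$, where an endpoint datum is actually available, namely Remark \ref{remarksuper}, $|\gamma_{j}^{+}(T_{n})|\leq Ce^{-2\omega_{\star}^{\frac32}T_{n}}\|\bm{\alpha}\|$, combined with $\|\bm{\alpha}\|\leq 2\|\bm{a}^{-}\|$ (Lemma \ref{finaldata}) and the smallness of $\|\bm{a}^{-}\|$ (Lemma \ref{existencea}); together with the Duhamel term this yields $|\gamma_{j}^{+}(t)|\lesssim e^{-3\omega_{\star}^{\frac32}t}$ for \emph{every} $t\in[T_{0},T_{n}]$, in particular at $t=T_{0}$. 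Your route instead requires $|\gamma_{j}^{+}(T_{0})|\lesssim e^{-\frac32\omega_{\star}^{\frac32}T_{0}}$ as an input, which is precisely an instance of the conclusion being proved; as you yourself observe, the a priori bound $\|\var(T_{0})\|_{H}\leq e^{-\omega_{\star}^{\frac32}T_{0}}$ is too weak, so the argument is circular unless one reverts to integrating toward $T_{n}$.

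The mechanism you invoke to close this hole — that the topological selection of $\bm{a}^{-}$ tames the backward-unstable direction, so that ``$\gamma^{+}(T_{0})$ inherits the improved order through the exit characterization of $T(\bm{a}^{-})$'' — does not exist in this construction. Definition \ref{definia} constrains only $\|\var(t)\|_{H}$ and $\|\gamma^{-}(t)\|$; the exit time $T(\bm{a}^{-})$ carries no information whatsoever about $\gamma^{+}$. Nor is there any shooting parameter along $\gamma^{+}$ to tune: the final perturbation $\sum_{j}\alpha_{j}^{+,n}\tilde{\bm{\Upsilon}}_{j}^{+}$ is built from the $\bm{Y}_{j}^{+}$, which satisfy $\left(\bm{Y}_{j}^{+},\bm{Z}_{j}^{+}\right)_{L^{2}(\R)\times L^{2}(\R)}=0$, so $\gamma^{+}(T_{n})$ is quadratically small for \emph{every} admissible choice of $\bm{a}^{-}$ — that is exactly the content of Remark \ref{remarksuper}, not a consequence of tuning. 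In this construction the Brouwer-type selection of Lemma \ref{existencea} is spent entirely on $\gamma^{-}$ (which is why the $\gamma^{-}$ bound comes for free from Definition \ref{definia}), while $\gamma^{+}$ must be, and in the paper is, recovered by direct integration from the final time $T_{n}$. So your proposal has the roles of the two families reversed, and the crucial endpoint estimate for $\gamma^{+}$ at $T_{0}$ is left genuinely open.
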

			\begin{proof}
				From Lemma \ref{direction}, we have 
				\[\begin{aligned}
					\left|\frac{d}{dt}\left(e^{+\lambda_0^j \omega_j^{\frac32}t}\gamma_j^{+}(t)\right)\right|\leq  e^{\lambda_0^j \omega_j^{\frac32}t}\left(e^{-2\omega_{\star}^{\frac32}t}+e^{-3\omega_{\star}^{\frac32}t}\right).
				\end{aligned}\]
				So, applying the fundamental Theorem of calculus from $t$ to $T_n$,
				\[\begin{aligned}
					\left|e^{\lambda_0^j \omega_j^{\frac32}T_n}\gamma_j^{+}(T_n)-e^{\lambda_0^j \omega_j^{\frac32}t}\gamma_j^{+}(t)\right|\leq  \int_{t}^{T_n}e^{-\lambda_0^j \omega_j^{\frac32}t}\left(e^{-2\omega_{\star}^{\frac32}s}+e^{-3\omega_{\star}^{\frac32}s}\right)\,ds.
				\end{aligned}\]
Then \[\begin{aligned}
					\left|e^{\lambda_0^j \omega_j^{\frac32}t}\gamma_j^{+}(t)\right|&\leq  \left|e^{\lambda_0^j \omega_j^{\frac32}T_n}\gamma_j^{+}(T_n)\right|+\int_{t}^{T_n}e^{\lambda_0^j \omega_j^{\frac32}t}\left(e^{-2\omega_{\star}^{\frac32}s}+e^{-3\omega_{\star}^{\frac32}s}\right)\,ds\\
					&\lesssim  e^{-2\omega_{\star}^{\frac32}T_n}\|\mathfrak{b}\|+ e^{-3\omega_{\star}^{\frac32}t} \lesssim  e^{-2\omega_{\star}^{\frac32}T_n}\|{\bm a}^{-}\|+ e^{-3\omega_{\star}^{\frac32}t} \lesssim e^{-3\omega_{\star}^{\frac32}t}.
				\end{aligned}\]
				Hence, $|\gamma_j^{+}(t)|\lesssim  e^{-3\omega_{\star}^{\frac32}t}.$
				On the other hand, from Definition \ref{definia} and Lemma \ref{existencea}, the result for $\gamma^{-}$ follows. 
			\end{proof}
			
			Now, recalling the localization applied in the subcritical case \eqref{localiz}, we know that from Lemma \ref{solitons} and Lemma \ref{conservationquase}, we have for the localized energy and momentum, defined in \eqref{energialoca} and \eqref{massaloca},
			\begin{equation}\label{quasemomen}
				\left|\mathcal{M}_j(\mathbf{u}(t))-\mathcal{M}_j(\mathbf{u}(T^{n}))\right| \leq \frac{C}{\sqrt{t}}e^{-2\omega_{\star}^{\frac32} t}, \,\, \textrm{j=1,2,\ldots,N}.  
			\end{equation}
			And from the conservation of energy 
			\begin{equation}\label{quaseener}
				\left|\mathcal{E}_j(\mathbf{u}(t))-\mathcal{E}_j(\mathbf{u}(T^{n}))\right| \leq \frac{C}{\sqrt{t}}e^{-2\omega_{\star}^{\frac32} t}, \,\, \textrm{j=1,2,\ldots,N}.    
			\end{equation}
			Moreover, we can obtain the following result, following the same process as in the supercritical case (see Lemma \ref{taylorS}). Taking into account that the only existing change now is the fact that the velocity has not been modulated, therefore it follows for $\mathcal{S}_{\text{loc}}^{j}$ the localized action defined, for all $\bw \in  H $, by
			\begin{equation}\label{2definicaoSj}
				\mathcal{S}_{\operatorname{loc}}^{j}(\bw,t):=\mathcal{E}_j(\bw,t)+\omega_j\mathcal{M}_j (\bw,t)  
			\end{equation}
			and we define an action-type functional for multi-solitons by
			\begin{equation}\label{2Sdefin}
				\mathcal{S} (\bw,t):=\sum_{j=1}^{N} \mathcal{S}_{\operatorname{loc}}^{j}(\bw,t).    
			\end{equation}
			\begin{lemma}\label{taylorS2}
				There exists $T_{0}$ such that if $t_{0} >T_{0}$, then for every $t \in [t_{0},T^{n}]$,
				\begin{equation*}
					\begin{aligned}
						\mathcal{S}( \bu(t))=\sum_{j=1}^N\left\{\mathcal{E}(\ru_j)+\frac{\omega_j}{2}\int_{\R} R_{j}^{(1)} R_{j}^{(2)} \,dx\right\}+\langle \mathcal{H}\var,\var\rangle +\mathcal{O}\left(e^{-3\omega_{\star}^{\frac32}t}\right),      
					\end{aligned}
				\end{equation*}
				where  \[\begin{aligned}
					\langle \mathcal{H}\var,\var\rangle&=\frac{1}{2}\int_{\R}|\partial_{x}\varepsilon_1|^2 \,dx+\frac{1}{2}\int_{\R}\left|\varepsilon_1\right|^{2} \,dx+\frac{1}{2}\int_{\R}\left|\varepsilon_2\right|^{2} \,dx\\
					&\quad-(2p+1)\sum_{j=1}^N\left(\frac{1}{2}\int_{\R}\left|\tilde{R}_{j}^{(1)}\right|^{2p}|\varepsilon_{1}|^2\,dx+\frac{\omega_j}{2}\int_{\R} \varepsilon_1 \varepsilon_2 \phi_{j}\,dx\right).   
				\end{aligned}
			\]
			\end{lemma}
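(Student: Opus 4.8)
The plan is to imitate the proof of Lemma~\ref{taylorS} almost verbatim, exploiting the single structural simplification available here: by Lemma~\ref{2lemamodulation} only the translations $\tilde{x}_j$ are modulated, while the speeds $\omega_j$ are frozen. Concretely, I would insert $u_1=\varepsilon_1+\tilde R_1$ and $u_2=\varepsilon_2+\tilde R_2$ into $\mathcal{S}(\bu)=\sum_{j=1}^N\mathcal{S}_{\operatorname{loc}}^{j}(\bu)=\sum_{j=1}^N\bigl(\mathcal{E}_j(\bu)+\omega_j\mathcal{M}_j(\bu)\bigr)$ and expand each localized quadratic term together with the Taylor expansion of $\mathcal{G}(s)=s^{p+1}$ as in \eqref{expantionG}. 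Writing $\tilde R_m=\sum_k\tilde R_k^{(m)}$, every mixed product $\tilde R_k^{(m)}\tilde R_\ell^{(n)}$ with $k\neq\ell$ and every product of a profile centred away from $\operatorname{supp}\phi_j$ against $\phi_j$ is $\mathcal{O}(e^{-4\omega_\star^{3/2}t})$ by Lemma~\ref{solitons2}. This collapses the $j$-th localized action to the contribution of the single profile $\tilde{\ru}_j=(\tilde R_j^{(1)},\tilde R_j^{(2)})$ and, using $\sum_k\phi_k=1$, lets me replace $\phi_j$ by $1$ in all purely solitonic terms, in every case up to $\mathcal{O}(e^{-3\omega_\star^{3/2}t})$.

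The crucial difference from Lemma~\ref{taylorS}, and the reason the statement carries no $\mathcal{O}(|\tilde\omega_j-\omega_j|^2)$ remainder, lies in the zeroth- and first-order terms. Since $\tilde R_j^{(m)}(x,t)=\Phi_{\omega_j}^{m}(x-\omega_j t-\tilde x_j)$ is built from the \emph{true} speed $\omega_j$, the pair $\tilde{\ru}_j$ satisfies the elliptic system \eqref{pcrit} \emph{exactly}; that is, $\tilde{\ru}_j$ is an exact critical point of the non-localized action $\mathcal{S}_j=\mathcal{E}+\omega_j\mathcal{M}$. Hence the constant term equals $\mathcal{E}(\tilde{\ru}_j)+\tfrac{\omega_j}{2}\int_\R\tilde R_j^{(1)}\tilde R_j^{(2)}\,dx$, and by translation invariance of $\mathcal{E}$ and of $\int\tilde R_j^{(1)}\tilde R_j^{(2)}\,dx$ (the shift $\tilde x_j$ leaves $\omega_j$ untouched) this is \emph{exactly} $\mathcal{E}(\ru_j)+\tfrac{\omega_j}{2}\int_\R R_j^{(1)}R_j^{(2)}\,dx$, with no error. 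The first-order terms in $\varepsilon$ organize into $\langle\mathcal{S}_j'(\tilde{\ru}_j),\var\rangle$ localized by $\phi_j$; because \eqref{pcrit} forces the bulk integrand to vanish pointwise, no orthogonality against $\tilde R_j^{(m)}$ is needed — which is precisely what the supercritical decomposition cannot supply, as \eqref{2orthogonality} only furnishes $(\varepsilon_m,\partial_x\tilde R_j^{m})=0$. What survives is the commutator $-\int_\R\partial_x\tilde R_j^{(1)}\,\partial_x\phi_j\,\varepsilon_1\,dx$; since $\operatorname{supp}(\partial_x\phi_j)$ sits in the transition strips $|x-m_jt|\lesssim\sqrt t$, a distance $\gtrsim\omega_\star t$ from the centre $\omega_j t$ of $\tilde R_j$, Lemma~\ref{solitons2} bounds it by $\mathcal{O}(e^{-3\omega_\star^{3/2}t})$.

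For the second-order part I would gather the surviving quadratic contributions, which are exactly the localized second variation $\tfrac12\langle\mathcal{S}_j''(\tilde{\ru}_j)\var,\var\rangle$ (retaining the $\phi_j$-localization on the momentum cross term $\int\varepsilon_1\varepsilon_2\phi_j$ and on the gradient), and sum over $j$ to produce the displayed quadratic form $\langle\mathcal{H}\var,\var\rangle$. The two nonlinear Hessian pieces $-\tfrac12\int|\tilde R_j^{(1)}|^{2p}|\varepsilon_1|^2$ and $-p\int|\tilde R_j^{(1)}|^{2p-2}(\tilde R_j^{(1)}\varepsilon_1)^2$ consolidate into the single term $-\tfrac{2p+1}{2}\int|\tilde R_j^{(1)}|^{2p}|\varepsilon_1|^2$ of the statement, and in the range $p>2$ the exponent $2p-2>0$ keeps $|\tilde R_j^{(1)}|^{2p-2}$ bounded, so this is a genuine bounded quadratic form. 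The cubic and higher Taylor remainders are controlled exactly as in \eqref{expantionG}, each being dominated by $\|\varepsilon_1\|_{H^1(\R)}^{3}$, hence $\mathcal{O}(e^{-3\omega_\star^{3/2}t})$ by the bootstrap bound \eqref{2segundoboostrap}.

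I expect the only delicate point to be the systematic bookkeeping of the $\phi_j$-localization: one must verify that replacing $\phi_j$ by $1$ in the soliton-only terms, integrating by parts against $\phi_j$, and discarding the $\partial_x\phi_j$ commutators each generate errors no larger than $\mathcal{O}(e^{-3\omega_\star^{3/2}t})$, all of which is governed by the interaction and decay estimates of Lemma~\ref{solitons2}. Unlike the subcritical argument, neither a speed modulation nor a second orthogonality relation enters, and no coercivity is asserted at this stage; the positivity of $\langle\mathcal{H}\var,\var\rangle$ is deferred to Proposition~\ref{coercivity1}.
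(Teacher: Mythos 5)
Your proposal is correct, and in substance it is what the paper intends: the paper offers no separate proof of Lemma \ref{taylorS2}, saying only that it follows ``the same process'' as Lemma \ref{taylorS} with the velocities no longer modulated, and your expansion of each $\mathcal{S}_{\operatorname{loc}}^{j}$, the use of Lemma \ref{solitons2} to discard interactions and to replace $\phi_j$ by $1$ in purely solitonic terms, and the consolidation of the two Hessian pieces into $-\tfrac{2p+1}{2}\int_\R|\tilde R_j^{(1)}|^{2p}|\varepsilon_1|^2\,dx$ (legitimate since everything is real-valued) reproduce that process. Where you genuinely add value is at the one step where a literal transcription of Lemma \ref{taylorS} would break down: the subcritical proof kills the terms linear in $\var$ through the orthogonality $(\varepsilon_m,\tilde R_j^{(m)})_{L^2(\R)}=0$ (see \eqref{orto1} and the completion of the elliptic equation there), but in the supercritical setting \eqref{2orthogonality} supplies only $(\varepsilon_m,\partial_x\tilde R_j^{(m)})_{L^2(\R)}=0$. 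Your replacement — the frozen-speed profile $\tilde{\ru}_j$ satisfies \eqref{pcrit} exactly, so the first variation of $\mathcal{E}+\omega_j\mathcal{M}_b$ at $\tilde{\ru}_j$ vanishes pointwise and the linear terms reduce to exponentially small $\partial_x\phi_j$ commutators — is precisely the mechanism the paper's one-line deferral presupposes, and it simultaneously explains, via translation invariance of $\mathcal{E}$ and of $\int_\R R_j^{(1)}R_j^{(2)}\,dx$, why the supercritical statement carries no $\mathcal{O}(|\tilde\omega_j-\omega_j|^2)$ remainder. Two cosmetic caveats: your pointwise cancellation requires the normalization of Remark \ref{observa1} (momentum without the factor $\tfrac12$), which is the reading under which \eqref{pcrit} holds and the lemma is true, the paper itself being inconsistent about this factor; and the $(2p+1)$ prefactor in the paper's displayed $\langle\mathcal{H}\var,\var\rangle$ should multiply only the nonlinear term, exactly as in your reconstruction.
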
 
			
			%%%%%%%%%%%%%%%%%%%%%%%%%%%%%%%%%%%%%%%%%%%%%%%%%%%%%%%%%%%%%%%%%%%%%%%%%%%%%%%%%%%%%%%%%%%%%%%%%%%%%5%%%%%%%%%%%%%%%%%%%%%%%%%%%%%%%%%%%%%%%%%%%%%%%%%%%%%%%%%%%%%%%%%%
			Also, we can obtain the following classical result.
			\begin{proposition}\label{coercivity2}
				There exists $K>0$ such that
				\[\begin{aligned}
					\langle \mathcal{H} \var,\var\rangle &\geq K\|\var\|_{H}^2-K\sum_{j=1}^{N}\left(\left(\var,\tilde{\bm{\Psi}}_j^{+}\right)_{L^2(\R)}^2+\left(\var,\tilde{\bm{\Psi}}_j^{-}\right)_{L^2(\R)}^2\right).
				\end{aligned}\]
			\end{proposition}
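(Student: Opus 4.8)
The plan is to prove Proposition \ref{coercivity2} by the same localization-and-gluing scheme that underlies the coercivity \eqref{coe0} in the subcritical case, but now feeding in the \emph{modified} scalar coercivity of Proposition \ref{coercivity1} in place of Proposition \ref{coercivity}. Recall from Lemma \ref{taylorS2} that
\[
\langle\mathcal{H}\var,\var\rangle=\tfrac12\|\var\|_{H}^2-(2p+1)\sum_{j=1}^N\left(\tfrac12\int_{\R}\big|\tilde R_j^{(1)}\big|^{2p}|\varepsilon_1|^2\,dx+\tfrac{\omega_j}{2}\int_{\R}\varepsilon_1\varepsilon_2\phi_j\,dx\right).
\]
First I would use the partition of unity $\sum_{j=1}^N\phi_j\equiv1$ from Subsection \ref{localiz} to split the global quadratic part as $\tfrac12\|\var\|_H^2=\tfrac12\sum_j\int_{\R}(|\partial_x\varepsilon_1|^2+|\varepsilon_1|^2+|\varepsilon_2|^2)\phi_j\,dx$, so that $\langle\mathcal{H}\var,\var\rangle$ becomes a sum over $j$ of pieces concentrated on $\operatorname{supp}\phi_j$, each one matching the localized second variation associated with the $j$-th soliton.

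On $\operatorname{supp}\phi_j$ every other profile $\tilde R_k$, $k\neq j$, is exponentially small, so by Lemma \ref{solitons2} all soliton-interaction contributions are $\mathcal{O}(e^{-4\omega_\star^{3/2}t})\|\var\|_H^2$ and the full potential may be replaced by the single profile $|\tilde R_j^{(1)}|^{2p}$. Next I would move the cutoff inside the derivative, writing $\int_{\R}|\partial_x\varepsilon_1|^2\phi_j=\int_{\R}|\partial_x(\phi_j^{1/2}\varepsilon_1)|^2\,dx-\int_{\R}|\partial_x\phi_j^{1/2}|^2|\varepsilon_1|^2\,dx$; since $|\partial_x\phi_j^{1/2}|^2\lesssim1/t$ on the $\sqrt t$-wide transition layer, the commutator contributes at most $\tfrac{C}{\sqrt t}\|\var\|_H^2$, negligible for $T_0$ large. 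This reduces the $j$-th piece to $\langle\mathcal{L}_j(\phi_j^{1/2}\var),\phi_j^{1/2}\var\rangle$ up to errors absorbable in a small multiple of $\|\var\|_H^2$.

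After translating (and rescaling) to the frame of the $j$-th soliton, in which $\mathcal{L}_j$ and its spectral objects $\mathbf{Z}_j^{\pm}$, $\partial_x\Phi_{\omega_j}$ are defined and whose images are $\tilde{\bm{\Psi}}_j^{\pm}$ and $\partial_x\tilde R_j$, Proposition \ref{coercivity1} applies to $\phi_j^{1/2}\var$ and gives
\[
\langle\mathcal{L}_j(\phi_j^{1/2}\var),\phi_j^{1/2}\var\rangle\geq C\|\phi_j^{1/2}\var\|_H^2-C\Big[(\phi_j^{1/2}\var,\tilde{\bm{\Psi}}_j^{+})_{L^2}^2+(\phi_j^{1/2}\var,\tilde{\bm{\Psi}}_j^{-})_{L^2}^2+(\phi_j^{1/2}\var,\partial_x\tilde R_j)_{L^2}^2\Big].
\]
The translation direction is then eliminated: the modulation orthogonality \eqref{2orthogonality} gives $(\varepsilon_m,\partial_x\tilde R_j^m)=0$, and since $\partial_x\tilde R_j$ and $\tilde{\bm{\Psi}}_j^{\pm}$ are concentrated where $\phi_j\equiv1$, replacing $\phi_j^{1/2}\var$ by $\var$ in the three projections costs only $\mathcal{O}(e^{-3\omega_\star^{3/2}t})$; hence $(\phi_j^{1/2}\var,\partial_x\tilde R_j)_{L^2}$ is exponentially small and the remaining projections become the stated $(\var,\tilde{\bm{\Psi}}_j^{\pm})_{L^2}$. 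Finally I would sum over $j$, use $\sum_j\|\phi_j^{1/2}\var\|_H^2=\|\var\|_H^2+\mathcal{O}(t^{-1/2})\|\var\|_H^2$, and absorb all localization and interaction errors into the leading term for $T_0$ large, obtaining the claim with a suitable $K>0$.

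The main obstacle I anticipate is the bookkeeping in passing from the \emph{localized} form $\mathcal{H}$ to a sum of \emph{global} scalar forms $\mathcal{L}_j$ to which Proposition \ref{coercivity1} applies: one must simultaneously control the cutoff commutators $\int|\partial_x\phi_j^{1/2}|^2|\varepsilon_1|^2$, discard the inter-soliton cross terms via Lemma \ref{solitons2}, and match the projection directions of Proposition \ref{coercivity1} (defined for the non-localized operator) with the localized directions $\tilde{\bm{\Psi}}_j^{\pm}$ and $\partial_x\tilde R_j$, all with errors genuinely smaller than $\|\var\|_H^2$ so that they can be absorbed rather than merely bounded. The elimination of the $\partial_x\tilde R_j$ direction hinges crucially on the modulation condition \eqref{2orthogonality}, which is precisely why only the two unstable directions $\tilde{\bm{\Psi}}_j^{\pm}$ survive in the final inequality.
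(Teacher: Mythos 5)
Your proposal is correct in outline and rests on the same pillars as the paper's proof of Proposition \ref{coercivity2} --- the single-soliton coercivity of Proposition \ref{coercivity1}, the exponential localization of $\tilde{\bm{\Psi}}_j^{\pm}$ and $\partial_x\tilde{R}_j$ to transfer projections, the orthogonality \eqref{2orthogonality} to remove the translation direction, and gluing through $\sum_k\phi_k\equiv 1$ --- but your localization device is genuinely different. The paper never multiplies by $\phi_j^{1/2}$: it introduces an auxiliary, \emph{strictly positive} exponential weight $\Phi_B(x)=\Phi(x/B)$, proves the weighted one-soliton bound \eqref{claim1} via the substitution ${\bm z}=\sqrt{\Phi_B}\,\var$, and then decomposes $\phi_k=\Phi_B(\cdot-x_k(t))+\bigl(\phi_k-\Phi_B(\cdot-x_k(t))\bigr)$, bounding the discrepancy term from below by $-Ce^{-L/(4B)}\|\var\|_H^2$ thanks to the pointwise positivity of the potential-free integrand (which uses $|\omega_k|<1$). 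What the paper's weight buys is that $|\Phi_B'|\leq (C/B)\Phi_B$ holds by construction, so every commutator is $O(1/B)$ with $B$ a free parameter, the $t$-dependence entering only through the gluing error $e^{-L/(4B)}$; what your route buys is that the splitting $\|\var\|_H^2=\sum_j\int_{\R}(\cdot)\,\phi_j\,dx$ is exact, so there are no difference terms at all, at the price of $O(t^{-1/2})$ errors --- equally acceptable since $t\geq T_0$ is large. One technical point you must not gloss over, and which is precisely what the strictly positive weight $\Phi_B$ is designed to sidestep: $\phi_j$ vanishes on a large set, so $|\partial_x\phi_j^{1/2}|=|\partial_x\phi_j|/(2\phi_j^{1/2})$ is not obviously bounded; you need the Glaeser-type inequality $|\partial_x\phi_j|^2\leq 2\,\phi_j\,\|\partial_{xx}\phi_j\|_{L^\infty}$ (valid because $\phi_j\geq 0$ is $C^2$, and giving $|\partial_x\phi_j^{1/2}|^2\lesssim 1/t$), or else you should redefine the cutoffs as squares of smooth functions. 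Two smaller remarks: your kinetic identity is missing the cross-term contribution $\tfrac12\int_{\R}\partial_{xx}\phi_j\,|\varepsilon_1|^2\,dx$, which is likewise $O(t^{-1})\|\var\|_H^2$ and harmless; and the numerical factors in front of $\omega_j\int_{\R}\varepsilon_1\varepsilon_2\phi_j\,dx$ differ between Lemma \ref{taylorS2} and the definition of $\mathcal{L}_j$ (an inconsistency already present in the paper itself), so the matching of your $j$-th piece with $\tfrac12\langle\mathcal{L}_j(\phi_j^{1/2}\var),\phi_j^{1/2}\var\rangle$ should be carried out with whichever normalization makes the two quadratic forms literally agree up to the stated errors.
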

			\begin{proof}
				First, we give a localized version of Proposition \ref{coercivity1}. Let $\Phi: \mathbb{R} \rightarrow \mathbb{R}$ be a $C^{2}$-function such that $\Phi(x)=\Phi(-x), \Phi^{\prime} \leqslant 0$ on $\mathbb{R}^{+}$with
				\[
				\begin{array}{ll}
					\Phi(x)=1 & \text { on }[0,1] ; \quad \Phi(x)=e^{-x} \quad \text { on }[2,+\infty) \\
					& e^{-x} \leqslant \Phi(x) \leqslant 3 e^{-x} \quad \text { on } \mathbb{R} .
				\end{array}
				\]
				
				Let $B>0$, and let $\Phi_{B}(x)=\Phi(x / B)$. Set
				\[
				\begin{aligned}
					\langle \mathcal{L}_{\Phi_{B}}\var, \var\rangle= & \int_{\R} \Phi_{B}\left(\cdot-x_{0}\right)\left\{\left|\partial_{x} \varepsilon_1\right|^{2}+|\varepsilon_1|^{2}+|\varepsilon_2|^{2}+\omega_0 \varepsilon_1 \varepsilon_2\right\}\,dx   -(2p+1)\int_{\R}|\Phi_{\omega_0}^{(1)}|^{2p}\varepsilon_1^2\,dx.
				\end{aligned}
		\]

				For the sake of simplicity, we assume that $x_{0}=0$. We set $z_1=\varepsilon_1\sqrt{\Phi_{B}}$ and $z_2=\varepsilon_2\sqrt{\Phi_{B}}$. Then, by simple calculations,
\[
 	\int_{\R}\left|\partial_{x} \varepsilon_1\right|^{2} \Phi_{B}\,dx=\int_{\R}\left|\partial_{x} z_1\right|^{2}\,dx+\frac{1}{4} \int|z_1|^{2}\left(\frac{\Phi_{B}^{\prime}}{\Phi_{B}}\right)^{2}\,dx-2 \int_{\R} \partial_{x} z_1 \bar{z}_1 \frac{\Phi_{B}^{\prime}}{\Phi_{B}}\,dx,\]
  \[
  \int_{\R}|\varepsilon_j|^{2} \Phi_{B}\,dx=\int_{\R}|z_j|^{2}\,dx,
  \quad j=1,2,\] 
				and \[\int_{\R} \varepsilon_1 \varepsilon_2 \Phi_{B}\,dx=\int_{\R}z_1 z_2\,dx.
\]
	 	Since, by definition of $\Phi_{B}$, we have $\left|\Phi_{B}^{\prime}\right| \leqslant(C / B) \Phi_{B}$, we obtain
 \[
 \begin{split}
 \int_{\R}\left|\partial_{x} z_1\right|^{2}\,dx-\frac{C}{B} \int_{\R}\left(\left|\partial_{x} z_1\right|^{2}+|z_1|^{2}\right) \,dx& \leqslant \int_{\R}\left|\partial_{x} \varepsilon_1\right|^{2} \Phi_{B}\,dx\\&
 \leqslant \int_{\R}\left|\partial_{x} z_1\right|^{2}\,dx+\frac{C}{B} \int_{\R}\left(\left|\partial_{x} z_1\right|^{2}+|z_1|^{2}\right)\,dx.
 \end{split}\]
 We also have
\[
 \begin{aligned} 	(2p+1)\int_{\R}|\Phi_{\omega_0}^{(1)}|^{2p}\varepsilon_1^2\,dx=(2p+1)\int_{\R}|\Phi_{\omega_0}^{(1)}|^{2p}z_1^2\frac{1}{\Phi_{B}}\,dx.
				\end{aligned}
\]
								Since $\Phi_{B} \equiv 1$ on $[-B, B]$ and $\Phi_{\omega_{0}}^{(1)}(x) \leqslant C e^{-\left(\sqrt{\omega_{0}} / 2\right)|x|}$, we have, for all $x \in \mathbb{R}$,
				
\[
				\left|\frac{1}{\Phi_{B}}-1\right| \Phi_{\omega_{0}}^{(1)}(x) \leqslant e^{-\left(\sqrt{\omega_{0}}-2 / B\right)|x| / 2} \leqslant C e^{-\sqrt{\omega_{0}} B / 4} \leqslant \frac{1}{B}
\]
								for $B$ large enough. Thus,
\[
				\begin{aligned}
					(2p+1)\int_{\R}|\Phi_{\omega_0}^{(1)}|^{2p}\varepsilon_1^2\,dx\leqslant (2p+1)\int_{\R}|\Phi_{\omega_0}^{(1)}|^{2p}z_1^2\,dx+\frac{C}{B} \int_{\R}|z_1|^{2}\,dx.
				\end{aligned}
\]
Collecting these calculations, we obtain
\[
				\langle \mathcal{L}_{\Phi_{B}}\var, \var\rangle  \geqslant \langle \mathcal{L}_{0}{\bm z}, {\bm z}\rangle-\frac{C}{B} \int\left(\left|\partial_{x} z_1\right|^{2}+|z_1|^{2}+ |z_2|^{2}\right).
\]
								Thanks to the orthogonality conditions on ${\bm z}=(z_1,z_2)$, we verify easily using the property of $\Phi_{B}$ that
$ 
({\bm z},\partial_{x}\Phi_{\omega_0})_{L^2(\R)}=0,
$ 
				for $B$ large enough. By Proposition \ref{coercivity1}, we obtain  for $B$ large enough that
\[
				\begin{aligned}
 	% &\langle \mathcal{L}_{\Phi_{B}}\var, \var\rangle +K\left(\left(\var,\bm{Z}_0^{+}\right)_{L^2(\R)}^2+\left(\var,\bm{Z}_0^{-}\right)_{L^2(\R)}^2\right) \\&
   \langle \mathcal{L}_{\Phi_{B}}\var, \var\rangle +C\left(\left({\bm z},{\bm Z}_0^{+}\right)_{L^2(\R)}^2+\left({\bm z},\bm{Z}_0^{-}\right)_{L^2(\R)}^2\right)  & \geqslant\left(C-\frac{C}{B}\right)\|{\bm z}\|_{H}^{2}\\& \geqslant \frac{C}{2}\|{\bm z}\|_{H}^{2} \\&\geqslant \frac{C}{2}\left(1-\frac{C}{B}\right) \int_{\R}\left(|\varepsilon_1|^{2}+\left|\partial_{x} \varepsilon_1\right|^{2}+|\varepsilon_1|^{2}\right) \Phi_{B}\,dx \\
					& \geqslant \frac{C}{4} \int_{\R}\left(|\varepsilon_1|^{2}+\left|\partial_{x} \varepsilon_1\right|^{2}+|\varepsilon_1|^{2}\right) \Phi_{B}\,dx,
				\end{aligned}
\]
				implying \begin{equation}\label{claim1}
					\langle \mathcal{L}_{\Phi_{B}}\var, \var\rangle +K\left(\left(\var,\bm{Z}_0^{+}\right)_{L^2(\R)}^2+\left(\var,\bm{Z}_0^{-}\right)_{L^2(\R)}^2\right)\geqslant K\|\var\|_{H}^{2}.   
				\end{equation}
 Now, let $B>B_{0}$, and   $L>0$. Since $\sum_{k=1}^{N} \phi_{k}(t) \equiv 1$, we decompose $\langle \mathcal{H}\var, \var\rangle$ as follows:
\[
				\begin{aligned}
					\langle \mathcal{H}\var, \var\rangle= & \sum_{k=1}^{N} \int_{\R} \Phi_{B}\left(\cdot-\omega_kt-\tilde{x}_{k}\right)\left\{\left|\partial_{x} \varepsilon_1\right|^{2}+|\varepsilon_1|^{2}+|\varepsilon_2|^{2}+\omega_k \varepsilon_1 \varepsilon_2\right\}\,dx \\
					& -(2p+1)\sum_{k=1}^{N} \int_{\R}|R_k^{(1)}|^{2p}\varepsilon_1^2\,dx \\
					& +\sum_{k=1}^{N}\int_{\R}\left(\phi_{k}-\Phi_{B}\left(\cdot-\omega_kt-\tilde{x}_{k}\right)\right)\left\{\left|\partial_{x} \varepsilon_1\right|^{2}+|\varepsilon_1|^{2}+|\varepsilon_2|^{2}+\omega_k \varepsilon_1 \varepsilon_2 \right\}\,dx.
				\end{aligned}
	\]
				By \eqref{claim1}, for any $k=1, \ldots, N$, we have  for $B$ large enough that
\[
				\begin{aligned}
			 \int_{\R} \Phi_{B}\left(\cdot-x_k(t)\right)		&\left\{\left|\partial_{x} \varepsilon_1\right|^{2}+|\varepsilon_1|^{2}+|\varepsilon_2|^{2}+\omega_k \varepsilon_1 \varepsilon_2\right\}\,dx  -(2p+1)\int_{\R}|R_k^{(1)}|^{2p}\varepsilon_1^2\,dx\\&
					\quad+K\left(\left(\var,\tilde{\bm{\Psi}}_k^{+}\right)_{L^2(\R)}^2+\left(\var,\tilde{\bm{\Psi}}_k^{-}\right)_{L^2(\R)}^2\right)\\&\geqslant \lambda_{k} \int_{\R}\Phi_{B}\left(\cdot-x_{k}(t)\right)\left(\left|\partial_{x} \varepsilon_1\right|^{2}+|\varepsilon_2|^{2}+|\varepsilon_1|^{2}\right)\,dx,
				\end{aligned}
\]
				where $x_k(t)=\omega_k t +\tilde{x}_k$.
								Moreover, by the properties of $\Phi_{B}$ and $\varphi_{k}(t)$, for $L$ large enough, we have
				
\[
				\varphi_{k}(t)-\Phi_{B}\left(\cdot-x_{k}(t)\right) \geqslant-e^{-L /(4 B)},
\]
	and  for
	 $\delta_{k}=\delta_{k}\left(\omega_{k}\right)>0$,
\[
				\left|\partial_{x} \varepsilon_1\right|^{2}+|\varepsilon_1|^{2}+|\varepsilon_2|^{2}+\omega_k\varepsilon_1 \varepsilon_2 \geqslant \delta_{k}\left(\left|\partial_{x} \varepsilon_1\right|^{2}+|\varepsilon_1|^{2}+|\varepsilon_2|^{2}\right) \geqslant 0.
\]
				So,
\[
				\begin{aligned}
					& \int_{\R}\left(\phi_{k}-\Phi_{B}\left(\cdot-x_k(t)\right)\right)\left\{\left|\partial_{x} \varepsilon_1\right|^{2}+|\varepsilon_1|^{2}+|\varepsilon_2|^{2}+\omega_k \varepsilon_1 \varepsilon_2 \right\}\,dx \\
					&  \geqslant \delta_{k} \int_{\R}\left(\varphi_{k}(t)-\Phi_{B}\left(\cdot-x_{k}(t)\right)\right)\left(\left|\partial_{x} \varepsilon_1\right|^{2}+|\varepsilon_1|^{2}+|\varepsilon_2|^{2}\right)\,dx\\
					&\geqslant-C e^{-L /(4 B)} \int_{\R}\left(\left|\partial_{x} \varepsilon_1\right|^{2}+|\varepsilon_1|^{2}+|\varepsilon_2|^{2}\right)\,dx.
				\end{aligned}
\]
    Thus, our above considerations reveal that 
\[
				\begin{aligned}
					&\langle \mathcal{H}\var, \var\rangle+ K\left(\left(\var,\tilde{\bm{\Psi}}_k^{+}\right)_{L^2(\R)}^2+\left(\var,\tilde{\bm{\Psi}}_k^{-}\right)_{L^2(\R)}^2\right) \\&\geqslant K \int_{\R}\left(\sum_{k=1}^{N} \phi_{k}\right)\left(\left|\partial_{x} \varepsilon_1\right|^{2}+|\varepsilon_1|^{2}+|\varepsilon_2|^{2}\right)\,dx-C e^{-L /(4 B)} \int_{\R}\left(\left|\partial_{x} \varepsilon_1\right|^{2}+|\varepsilon_1|^{2}+|\varepsilon_2|^{2}\right)\,dx  
				\end{aligned}
\]
				
				and since $\sum_{k=1}^{N} \phi_{k}(t) \equiv 1$, we obtain the result by taking $L$ large enough.
			\end{proof}
			Finally, we proceed to prove   Lemma \ref{2Bootstrap3}.
			\begin{proof}[Proof of Lemma \ref{2Bootstrap3}]
				Let us fix $t \in \left[t_{0}, T^{n}\right]$. Note that we have
				\begin{equation}\label{2estimateuniforme1}
					\begin{aligned}
						\left\|\bu^{n}(t)-\ru(t)\right\|_{H} &\leq\left\|\bu^{n}(t)-\tilde{\ru}(t)\right\|_{H}+\left\|\tilde{\ru}(t)-\ru(t)\right\|_{H} \\
						&\leq \left\|\var(t)\right\|_{H}+C\sum_{j=1}^N|\tilde{x}_{j}(t)-x_{j}|.
					\end{aligned}
				\end{equation}
				Next, we observe that from the definition of $\mathcal{E}_b$ (see \eqref{energia}) and of $\mathcal{S}$, we deduce,
\[
				\begin{aligned}
					\left|\mathcal{S}(\mathbf{u}(t))-\mathcal{S}(\mathbf{u}(T^{n}))\right|\leq\left| \mathcal{E}_b(\mathbf{u}(t))-\mathcal{E}_b(\mathbf{u}(T^{n}))\right|+C\left|  \sum_{j=1}^{N}\left(\mathcal{M}_j(\mathbf{u}(t))-\mathcal{M}_j(\mathbf{u}(T^{n}))\right)\right|.
				\end{aligned}
\]
			Inequalities 	\eqref{quaseener} and \eqref{quasemomen} imply that
				\begin{equation}\label{2SuTn}
					\begin{aligned}
						\left|\mathcal{S}(\mathbf{u}(t))-\mathcal{S}(\mathbf{u}(T^{n}))\right|\leq \frac{C}{\sqrt{t}} e^{-2 \omega_{\star}^{\frac32}t}.
					\end{aligned}   
				\end{equation}
				Therefore, from \eqref{2SuTn}, Lemma \ref{2direction} and Lemma \ref{taylorS2}, we get
\[
				\begin{aligned}
					\left\|\mathbf{\var}(t)\right\|_{H}^{2}&\leq C\langle \mathcal{H}\var, \var\rangle +K\sum_{j=1}^{N}\left(\left(\var,\tilde{\bm{\Psi}}_j^{+}\right)_{L^2(\R)}^2+\left(\var,\tilde{\bm{\Psi}}_j^{-}\right)_{L^2(\R)}^2\right) \\
					&=\mathcal{S}(\mathbf{u}(t))-\mathcal{S}(\mathbf{u}(T^{n}))+\mathcal{O}\left(\frac{1}{\sqrt{t}}e^{-2\omega_{\star}^{\frac32}t}\right) \leq  \frac{C}{\sqrt{t}}e^{-2\omega_{\star}^{\frac32}t}.
				\end{aligned}
\]
				This, combined with the last inequality and \eqref{omega},  shows that
				\begin{equation}\label{2epsilonestimative}
					\begin{aligned}
						\left\|\mathbf{\var}(t)\right\|_{H}^{2} \leq  \frac{C}{\sqrt{t}}e^{-2\omega_{\star}^{\frac32}t},
					\end{aligned}
				\end{equation}
				for $T_0$ sufficiently large. This last estimate plays a crucial role in proving  the required result.
      In fact,  we have from \eqref{modulated}, \eqref{omega}, and \eqref{2epsilonestimative} for $T_{0}$ sufficiently large that
\[
				\begin{split}
					\left|\partial_{t } \tilde{x}_{j}(t)\right|&\lesssim  \left\|\var(t)\right\|_{H}+ e^{-3 \omega_{\star}^{\frac32}t} 
				\lesssim\frac{1}{\sqrt{t}} e^{-2 \omega_{\star}^{\frac32}t}+ \left\|\var(t)\right\|_{H} 
					\lesssim \frac{1}{\sqrt{t}} e^{-2 \omega_{\star}^{\frac32}t}+\frac{1}{\sqrt{t}} e^{- \omega_{\star}^{\frac32}t} 
					\lesssim \frac{1}{\sqrt{t}} e^{- \omega_{\star}^{\frac32}t}.
				\end{split}
	\]
				Since $\tilde{x}_j(T^n) = x_j$, according to the fundamental theorem of calculus and the above estimate, 
 \begin{equation}\label{2estimativeCal}
					\begin{split}
						\left|\tilde{x}_{j}(t)-x_{j}\right|&=\left|\int_{t}^{T^n}\partial_{s}\tilde{x}_{j}(s)ds   \right| \leq  \int_{t}^{T^n}\left|\partial_{s}\tilde{x}_{1}(s)   \right|ds  \leq \frac{C}{\sqrt{t}} \int_{t}^{T^n}e^{- \omega_{\star}^{\frac32}s}\,ds\\
						&  \leq \frac{C}{\sqrt{t}}\left(e^{-\omega_{\star}^{\frac32}t}-e^{-\omega_{\star}^{\frac32}T_{n}}\right) \leq \frac{C}{\sqrt{t}} e^{- \omega_{\star}^{\frac32}t}.
					\end{split}  
 \end{equation}
				Therefore, using  \eqref{2epsilonestimative} and \eqref{2estimativeCal} in   estimate \eqref{2estimateuniforme1}, it follows that
\[
				\begin{aligned}
					\left\|\bu^{n}(t)-\ru(t)\right\|_{H} &\leq \frac{C}{\sqrt{t}}e^{-\omega_{\star}^{\frac32}t} 
     , \quad \, \text{for all $t \in\left[t_{0}, T^{n}\right]$.}
				\end{aligned}
\] 
				Therefore, for $T_{0}$ sufficiently large,
				\begin{equation*}
					\begin{aligned}
						\left\|\mathbf{u}^{n}(t) -\mathbf{R}(t) \right \|_{H}\leq  \frac{C}{\sqrt{t}}e^{- \sqrt{\omega_{\star} }\omega_{\star} t} , \quad \, \text{for all $t \in\left[t_{0}, T^{n}\right]$.}
					\end{aligned}
				\end{equation*}
				Taking $t \geq (2C)^2$, it follows that for all $t \in [t_0, T^n]$
\[
				\begin{aligned}
					\left\|\mathbf{u}^{n}(t)-\mathbf{R}(t)\right \|_{H} \leq   \frac{1}{2} e^{- \omega_{\star}^{\frac32}t}, 
				\end{aligned}
\]
				which shows the desired result.
				
			\end{proof}

			\section*{Acknowledgment}
			%The authors   wish  to thank the unknown referees for their
			%valuable suggestions which helped to improve the paper.
			
			The authors are supported by Nazarbayev University under the Faculty Development Competitive Research Grants Program for 2023-2025 (grant number 20122022FD4121).
			
			%%%%%%%%%%%%%%%%%%%%%%%%%%%%%%%%%%%%%%%%%%%%%

			\section*{Conflict of interest} The authors declare that they have no conflict of interest. 
			
			\section*{Data Availability}
			There is no data in this paper.

		\end{document}